\newtheorem{problem}{Problem}
\theoremstyle{thmstyleone}%
\newtheorem{theorem}{Theorem}
\theoremstyle{thmstyletwo}%
\newtheorem{remark}{Remark}%
\newtheorem{lemma}{Lemma}%
\newtheorem{corollary}{Corollary}%
\theoremstyle{thmstylethree}%
\newtheorem{definition}{Definition}%
\begin{document}

\title[Efficient algorithms for Tucker decomposition]{Efficient algorithms for Tucker decomposition via approximate matrix multiplication}


\author[1]{\fnm{Maolin} \sur{Che}}\email{chncml@outlook.com and cheml@swufe.edu.cn}

\author[2]{\fnm{Yimin} \sur{Wei}}\email{ymwei@fudan.edu.cn}

\author[3]{\fnm{Hong} \sur{Yan}}\email{h.yan@cityu.edu.hk}

\affil[1]{\orgdiv{School of Mathematics}, \orgname{Southwestern University of Finance and Economics}, \orgaddress{\city{Chengdu}, \postcode{611130}, \country{P. R. of China}}}

\affil[2]{\orgdiv{School of Mathematical Sciences and Key Laboratory of Mathematics for Nonlinear Sciences}, \orgname{Fudan University}, \orgaddress{\city{Shanghai}, \postcode{200433}, \country{P. R. of China}}}

\affil[3]{\orgdiv{Department of Electrical Engineering and Center for Intelligent Multidimensional Data Analysis}, \orgname{City University of Hong Kong}, \orgaddress{\street{83 Tat Chee Avenue}, \city{Kowloon}, \country{Hong Kong}}}


\abstract{This paper develops fast and efficient algorithms for computing Tucker decomposition with a given multilinear rank. By combining random projection and the power scheme, we propose two efficient randomized versions for the truncated high-order singular value decomposition (T-HOSVD) and the sequentially T-HOSVD (ST-HOSVD), which are two common algorithms for approximating Tucker decomposition. To reduce the complexities of these two algorithms, fast and efficient algorithms are designed by combining two algorithms and approximate matrix multiplication. The theoretical results are also achieved based on the bounds of singular values of standard Gaussian matrices and the theoretical results for approximate matrix multiplication. Finally, the efficiency of these algorithms are illustrated via some test tensors from synthetic and real datasets.}

\keywords{Tucker decomposition, low rank approximations of tensors, randomized T-HOSVD, randomized ST-HOSVD, power scheme, standard Gaussian matrices, tensor contraction, random projection, nearly optimal probabilities, uniform probabilities, approximate matrix multiplication}



\maketitle

\section{Introduction}

It is natural to use higher-order tensors, multidimensional matrices, or multiway arrays to represent the data from a wide range of applications, such as machine learning, signal processing, higher-order statistics, computer vision, pattern recognition and recommendation system \cite{cichocki2016tensor,cichocki2017tensor,cichocki2015tensor,Grasedyck2013a,vervliet2014breaking}. A tensor is denoted by $\mathcal{A}\in\mathbb{R}^{I_1\times I_2\times \dots \times I_N}$ with entries given by
$a_{i_1i_2 \dots i_N}\in\mathbb{R}$, where $i_n=1,2,\dots,I_n$ and $n=1,2,\dots,N$. In particular, when $N=1$, a first-order tensor $\mathcal{A}$ is a vector of size $I_1$, and when $N=2$, a second-order tensor $\mathcal{A}$ is a matrix of size $I_1\times I_2$.

We first introduce some notations. We use lower case letters (e.g. $x,u,v$) for scalars, lower case bold letters (e.g. $\mathbf{x},\mathbf{u},
\mathbf{v}$) for vectors, bold capital letters (e.g. $\mathbf{A},\mathbf{B},\mathbf{C}$) for matrices, and calligraphic letters $\mathcal{A},\mathcal{B},\mathcal{C},\dots$ for tensors. This notation is consistently used for the lower-order parts of a given structure. For example, the entry with row index $i$ and column index $j$ in a matrix ${\bf A}$, i.e., $({\bf A})_{ij}$, is represented as $a_{ij}$ (also $(\mathbf{x})_i=x_i$ and $(\mathcal{A})_{i_1i_2\dots i_N}=a_{i_1i_2\dots i_N}$).

The symbols $\mathbf{A}^\top$, $\mathbf{A}^{\dag}$, $\|\mathbf{A}\|_F$ and $\|\mathbf{A}\|_2$ are, respectively, denoted by the transpose, the Moore-Penrose pseudoinverse, the Frobenius norm and the spectral norm of $\mathbf{A}\in\mathbb{R}^{I\times J}$. We use $\mathbf{I}_J$ to denote the identity matrix in $\mathbb{R}^{J\times J}$. The Kronecker product of matrices
$\mathbf{A}\in\mathbb{R}^{I\times J}$ and $\mathbf{B}\in\mathbb{R}^{K\times L}$ is represented as $\mathbf{A} \otimes \mathbf{B}$ . An orthogonal projection $\mathbf{P}\in\mathbb{R}^{I\times I}$ is defined as ${\bf P}^2={\bf P}$ and ${\bf P}^\top={\bf P}$. A matrix $\mathbf{Q}\in\mathbb{R}^{I\times K}\ (I>K)$ is orthonormal if $\mathbf{Q}^\top\mathbf{Q}=\mathbf{I}_K$. We use $\mathbb{S}_N$ to denote the $N$th-order symmetric group on the set $\{1,2,\dots,N\}$.  We use the interpretation of $O(\cdot)$ to refer to the class of functions the growth of which is bounded above and below up to a constant. The term ``for each $n$'' means $n=1,2,\dots, N$. The term ``for a given $n$'' means the integer $n$ satisfies $1\leq n\leq N$. For an event $\mathfrak{X}$, we use $\mathbb{P}(\mathfrak{X})$ and $\mathbb{E}(\mathfrak{X})$ to denote its probability and expectation, respectively.

As $N$ or some $I_n$'s increase, it is impossible to deal with tensors directly. Hence, it is necessary to use fewer parameters to effectively represent these large-scale tensors. Tensor decomposition is one of the most effective methods to represent large-scale data. Unlike low rank matrix approximations, there exist several methods for low rank approximations of tensors, such as CANDECOMP/PARAFAC (CP) decomposition \cite{carroll1970analysis}, Tucker decomposition \cite{tucker1966some}, Hierarchical Tucker decomposition \cite{kressner2014algorithm,grasedyck2010hierarchical}, tensor train decomposition \cite{oseledets2011tensor}, t-SVD \cite{kilmer2011factorization}, tensor ring decomposition \cite{sedighin2021adaptive,zhao2016tensor} and fully connected tensor network decomposition \cite{zheng2021nonlocal}. In this paper, we focus on the computation of low multilinear rank approximations, which can be viewed as a special case of Tucker decomposition.

With the desired multilinear rank, we consider the computation of Tucker decomposition with all the mode-$n$ factor matrices being orthonormal, which is also called the low rank approximation of a tensor in the Tucker format.

\begin{problem}
	\label{RALMRA:pro1}
	Suppose that $\mathcal{A}\in\mathbb{R}^{I_1\times I_2\times \dots\times I_N}$. For given $N$ positive integers $\{\mu_1,\mu_2,\dots,\mu_N\}$ with $\mu_n\ll I_n$, the goal is to find $N$ orthonormal matrices ${\bf Q}_{n}\in\mathbb{R}^{I_{n}\times \mu_{n}}$ such that
	\begin{equation*}
	a_{i_1i_2\dots i_N}\approx\sum_{j_1,j_2,\dots,j_N=1}^{I_1,I_2,\dots,I_N}a_{j_1j_2\dots j_N}(\mathbf{P}_{1})_{i_1j_1}(\mathbf{P}_{2})_{i_2j_2}\dots (\mathbf{P}_{N})_{i_Nj_N},
	\end{equation*}
	where $\mathbf{P}_{n}=\mathbf{Q}_{n}\mathbf{Q}_{n}^\top\in\mathbb{R}^{I_n\times I_n}$ is an orthogonal projection.
\end{problem}
Assume that $\{\mathbf{Q}_1,\mathbf{Q}_2,\dots,\mathbf{Q}_N\}$ is a solution of Problem \ref{RALMRA:pro1}. Then the tensor $\mathcal{A}\times_1(\mathbf{Q}_1\mathbf{Q}_1^\top)\times_2(\mathbf{Q}_2\mathbf{Q}_2^\top)\dots
\times_N(\mathbf{Q}_N\mathbf{Q}_N^\top)$ is called a low rank approximation in the Tucker format to $\mathcal{A}$. The developed algorithms for solving Problem \ref{RALMRA:pro1} can be divided into iterative algorithms and direct algorithms. Iterative algorithms include the higher-order orthogonal iteration \cite{lathauwer2000on}, the Newton-Grassmann method \cite{elden2009a}, the Riemannian trust-region method \cite{ishteva2011best}, the Quasi-Newton method \cite{savas2010quasi}, the semi-definite programming (SDP) \cite{navasca2009low}, and the Lanczos-type iteration \cite{goreinov2012wedderburn,savas2013krylov}. The truncated higher-order singular value decomposition (T-HOSVD) \cite{delathauwer2000a} and the sequentially truncated HOSVD (ST-HOSVD) \cite{vannieuwenhoven2012new} are two direct algorithms to find an approximate solution of Problem \ref{RALMRA:pro1}.

Recently, many researchers established the randomized variants of both T-HOSVD and ST-HOSVD. By using the randomized range finder algorithm \cite{halko2011finding} to find the factor matrix from the corresponding mode unfolding, Zhou {\it et al.} \cite{zhou2014decomposition} developed a distributed randomized Tucker decomposition approach for arbitrarily big tensors with relatively low multilinear ranks, and  Minster {\it et al.} \cite{minster2020randomized} presented randomized versions of T-HOSVD and ST-HOSVD, and gave a detailed probabilistic analysis of the error in using both algorithms. As shown in \cite{che2019randomized,che2020theory,minster2020randomized}, the adaptive randomized versions of T-HOSVD and ST-HOSVD are obtained by using the adaptive randomized range finder \cite{halko2011finding} for computing each mode factor matrix from the corresponding mode unfolding. By applying a randomized linear map to a tensor, Sun {\it et al.} \cite{sun2020low-rank} obtained a sketch that captures the important directions within each mode, and then described a new algorithm for computing a low rank approximation of the tensor in the Tucker format. The main difference between the work in \cite{che2019randomized} and that of \cite{minster2020randomized} is that the standard Gaussian vector is applied in \cite{minster2020randomized} and the Kronecker product of standard Gaussian vectors is exploited in \cite{che2019randomized}. After obtaining all the factor matrices, Sun {\it et al.} \cite{sun2020low-rank} further considered an efficient algorithm for computing the core tensor. Ahmadias {\it et al.} \cite{ahmadiasl2021randomized} provided more ways to approximate the first $\mu_n$ left singular vectors of the mode-$n$ unfolding. Che {\it et al.} \cite{che2021randomized} investigated the randomized versions of T-HOSVD and ST-HOSVD, where the Kronecker product of the standard Gaussian matrices are employed in the randomized range finder algorithm. When the singular values of each unfolding of the original tensor decay slowly, an efficient randomized algorithm combining the result of \cite{che2021randomized} and the power scheme is presented in \cite{che2020computation}. The storage of the Kronecker product of standard Gaussian matrices to project each mode unfolding is less than that by the standard Gaussian matrix and the Khatri-Rao product of standard Gaussian matrices to project the unfolding.

As shown in \cite{che2020computation,che2021an}, when the randomized algorithms are designed by combining random projection and power scheme to obtain all the factor matrices, the accuracy of these algorithms are competitive with that of some deterministic algorithms, such as HOOI, T-HOSVD and ST-HOSVD. However, these algorithms are slower than T-HOSVD and ST-HOSVD. Meanwhile, the randomized algorithms without the power scheme are faster than HOOI, T-HOSVD and ST-HOSVD, but may be worse than these algorithms for some test tensors. Hence, the main research is to design fast and efficient algorithms for computing the low multilinear rank approximation of a tensor with a known multilinear rank. Numerical examples illustrate that the proposed algorithms are faster than HOOI, T-HOSVD and ST-HOSVD, and their accuracy are competitive with that of these deterministic algorithms.

Without loss of generality, the processing order used in ST-HOSVD is set to $\{1,2,\dots,N\}$. We now give some differences between our algorithms and the existing randomized algorithms:
\begin{enumerate}
\item[(a)] In our algorithms, to generate $N$ standard Gaussian matrices needs $\sum_{n=1}^{N}O(I_n(\mu_n+K))$ operations; in the randomized variant of T-HOSVD \cite{minster2020randomized}, to generate $N$ standard Gaussian matrices requires $\sum_{n=1}^{N}O(I_1\dots I_{n-1}I_{n+1}\dots I_N(\mu_n+K))$ operations; and in the randomized variant of ST-HOSVD \cite{minster2020randomized}, to generate $N$ standard Gaussian matrices requires $\sum_{n=1}^{N}O(\mu_1\dots \mu_{n-1}I_{n+1}\dots I_N(\mu_n+K))$ operations. For each $n$, the random matrix used in \cite{che2019randomized} is the Khatri-Rao product of standard Gaussian matrices and to generate this random matrix needs $O((I_1+\dots+I_{n-1}+I_{n+1}+\dots+I_N)(\mu_n+K))$ operations for the T-HOSVD framework and $O((\mu_1+\dots+\mu_{n-1}+I_{n+1}+\dots+I_N)(\mu_n+K))$ operations for the ST-HOSVD framework. Meanwhile, the random matrix used in \cite{che2020computation,che2021randomized} is the Kronecker product of standard Gaussian matrices and to generate this matrix needs $O(\sum_{m=1,m\neq n}^{N}(I_mT_m))$ operations for the T-HOSVD framework and $O(\mu_1T_{n,1}+\dots+\mu_{n-1}I_{n,n-1}+I_{n+1}I_{n,n+1}+\dots+I_NT_{n,N})$ operations for the ST-HOSVD framework, where $T_{n,1}\dots T_{n,n-1}T_{n,n+1}\dots T_{n,N}$ is the smallest integer greater than $\mu_n+K$.
\item[(b)] For each $n$, in our algorithms, each factor matrix $\mathbf{Q}_n$ is obtained by applying the thin singular value decomposition (SVD) to the matrix $\mathbf{C}_n$, which is the same as the way to obtain the factor matrix $\mathbf{Q}_n$ in \cite{che2020computation,che2021an,che2021randomized}, and in randomized variants of T-HOSVD and ST-HOSVD, the matrix $\mathbf{Q}_n$ is obtained by this process: 1) to compute the thin QR decomposition: $\mathbf{Q}\mathbf{R}=\mathbf{C}_n$, 2) to compute the thin SVD: $\mathbf{U}{\bf \Sigma}\mathbf{V}^\top=\mathbf{Q}^\top\mathbf{A}_{(n)}$, and 3) to obtain the factor matrix $\mathbf{Q}_n=\mathbf{Q}\mathbf{U}(:,1:\mu_n)$.
\item[(c)] For each $n$, in Algorithms \ref{RALMRA:alg1} and \ref{RALMRA:alg2}, the matrix $\mathbf{C}_n$ is computed as $\mathbf{C}_n=(\mathbf{A}_{(n)}\mathbf{A}_{(n)}^\top)^q\mathbf{G}_{n,1}$, and when power iterations are involved in randomized variants of T-HOSVD and ST-HOSVD, the matrix $\mathbf{C}_n$ is computed as $\mathbf{C}_n=(\mathbf{A}_{(n)}\mathbf{A}_{(n)}^\top)^q\mathbf{A}_{(n)}\mathbf{G}_{n,2}$, which is the same as in \cite{che2020computation}, except for the way to form the matrix $\mathbf{G}_{n,2}$. Sometimes, the cost to generate $\mathbf{G}_{n,1}$ is far less than that to generate $\mathbf{G}_{n,2}$.
\item[(d)] By combining the randomized algorithms for low rank approximations \cite{martinsson2011randomized,rokhlin2010randomized} in the framework of T-HOSVD and ST-HOSVD, we obtain the upper bounds for the accuracy of the proposed algorithms. The theoretical bounds for the accuracy of randomized variants of T-HOSVD and ST-HOSVD are based on that for randomized SVD in \cite{zhang2018a}.
\end{enumerate}

The rest of the paper is organized as follows. In Section \ref{RALMRA:sec2}, we overview the basic definitions of tensors and some results for Tucker decomposition. In Sections \ref{RALMRA:sec3} and \ref{RALMRA:sec4}, we respectively describe randomized versions of T-HOSVD and ST-HOSVD by combining random projection, the power scheme and approximate matrix multiplication. The upper bounds of $\|\mathcal{A}-\widehat{\mathcal{A}}\|_F^2$ are obtained in Section \ref{RALMRA:sec5}, where $\widehat{\mathcal{A}}$ is a low rank approximation in the Tucker format obtained by the proposed algorithms. We illustrate the efficiency and accuracy of the proposed algorithms via some test tensors from synthetic and real datasets in Section \ref{RALMRA:sec6} and conclude this paper in Section \ref{RALMRA:sec7}.
\section{Preliminaries}
\label{RALMRA:sec2}

We recommend \cite{che2020theory,cichocki2016tensor,cichocki2017tensor,cichocki2015tensor,kolda2019tensor} for a thorough survey of basic definitions of tensors. For a given $n$, the mode-$n$ product of $\mathcal{A}\in \mathbb{R}^{I_1\times I_2\times \dots \times I_N}$ by ${\bf B}\in \mathbb{R}^{J_n\times I_n}$, denoted
by $\mathcal{A}\times_{n}{\bf B}$, is $\mathcal{C}\in\mathbb{R}^{I_1\times \dots\times I_{n-1}\times J_n\times I_{n+1}\times \dots \times I_{N}}$, where its entries are given by
$$c_{i_{1}\dots i_{n-1}ji_{n+1}\dots i_{m}}=\sum_{i_{n}=1}^{I_n}a_{i_{1}\dots i_{n-1}i_{n}i_{n+1}\dots i_{N}}b_{ji_{n}}.$$
Suppose that $\mathcal{A}\in \mathbb{R}^{I_1\times I_2\times \dots \times I_N}$, $\mathbf{F}\in \mathbb{R}^{J_n\times I_n}$, $\mathbf{G}\in \mathbb{R}^{J_m\times I_m}$ and $\mathbf{H}\in\mathbb{R}^{J'_n\times J_n}$. For each $m$ and $n$ with $m\neq n$, we have
\begin{equation*}
(\mathcal{A}\times_{n}\mathbf{F})\times_{m}\mathbf{G}
=(\mathcal{A}\times_{m}\mathbf{G})\times_{n}\mathbf{F}=\mathcal{A}\times_{n}\mathbf{F}\times_{m}\mathbf{G},\quad (\mathcal{A}\times_{n}\mathbf{F})\times_{n}\mathbf{H}=\mathcal{A}\times_{n}(\mathbf{H}\mathbf{F}).
\end{equation*}

For two tensors $\mathcal{A},\mathcal{B}\in \mathbb{R}^{I_1\times I_2\times\dots\times I_N}$, the {\it Frobenius norm} of a tensor $\mathcal{A}$ is given by  $\|\mathcal{A}\|_{F}=\sqrt{\langle\mathcal{A},\mathcal{A}\rangle}$ and the scalar product $\langle\mathcal{A},\mathcal{B}\rangle$ is defined as
$$\langle\mathcal{A},\mathcal{B}\rangle=\sum_{i_1,i_2,\dots,i_N=1}^{I_1,I_2,\dots,I_N}a_{i_{1}i_{2}\dots i_{N}}b_{i_{1}i_{2}\dots i_{N}}.$$

For a given $m\in\{1,2,\dots, M\}$ and $n$, the mode-$(n,m)$ product \cite{oseledets2011tensor} (called {\it tensor contraction}) of two tensors $\mathcal{A}\in \mathbb{R}^{I_1\times I_2\times \dots \times I_N}$ and $\mathcal{B}\in \mathbb{R}^{J_1\times J_2\times \dots\times J_M}$ with common modes $I_n=J_m$ produces an order $(M+N-2)$ tensor $$\mathcal{C}=\mathcal{A}\times_{n}^m\mathcal{B}\in\mathbb{R}^{I_1\times \dots \times I_{n-1}\times I_{n+1}\times\dots\times I_N\times J_1\times\dots\times J_{m-1}\times J_{m+1}\times \dots \times J_M},$$ where its entries are given by
\begin{equation*}
c_{i_1\dots i_{n-1}i_{n+1}\dots i_Nj_1\dots j_{m-1}i_{m+1}\dots j_N}=\sum_{i_n=1}^{I_n}a_{i_1\dots i_{n-1}i_ni_{n+1}\dots i_N}b_{j_1\dots j_{m-1}i_nj_{m+1}\dots\dots j_M}.
\end{equation*}

For a given $n$, the mode-$n$ unfolding matrix of $\mathcal{A}\in \mathbb{R}^{I_1\times I_2\times \dots \times I_N}$, denoted by ${\bf A}_{(n)}$, arranges the mode-$n$ fibers into the columns of a matrix. More specifically, a tensor element $(i_1,i_2,\dots,i_N)$ maps on a matrix element $(i_n,j)$, where
\begin{equation*}
\begin{aligned}
j=&i_1+(i_{2}-1)I_1+\dots+(i_{n-1}-1)I_1\dots I_{n-2}+(i_{n+1}-1)I_1\dots I_{n-1}\\
&+\dots+(i_N-1)I_1\dots I_{n-1}I_{n+1}\dots I_{N-1}.
\end{aligned}
\end{equation*}
\subsection{Tucker decomposition: a review}
For given $N$ positive integers $\{R_1,R_2,\dots,R_N\}$ with $R_n<I_n$, {\it Tucker decomposition} \cite{tucker1966some} of a tensor $\mathcal{A}\in \mathbb{R}^{I_1\times I_2\times \dots \times I_N}$ is defined as
\begin{equation}
\label{RALMRA:eqn1}
\mathcal{A}\approx\mathcal{G}\times_1{\bf U}^{(1)}\times_2{\bf U}^{(2)}\dots\times_N{\bf U}^{(N)},
\end{equation}
where ${\bf U}^{(n)}\in \mathbb{R}^{I_n\times R_n}$ are called the {\it mode-$n$ factor matrices} and $\mathcal{G}\in \mathbb{R}^{R_1\times R_2\times\dots\times R_N}$ is called the {\it core tensor}. In general, the matrix $\mathbf{U}_n$ is not be restricted to be orthonormal. Hence, the low rank approximation in the Tucker format from any solution of Problem \ref{RALMRA:pro1} is a special case of Tucker decomposition. Two other special cases are the nonnegative Tucker decomposition (cf. \cite{zhou2012fast}) and the higher-order interpolatory decomposition (cf. \cite{drineas2007a,saibaba2016hoid}).

For each $n$, Tucker decomposition is closely related to the mode-$n$ unfolding matrix $\mathbf{A}_{(n)}\in\mathbb{R}^{I_n\times I_1\dots I_{n-1}I_{n+1}\dots I_N}$. In particular, the relation (\ref{RALMRA:eqn1}) implies
\begin{equation*}
{\bf A}_{(n)}\approx{\bf U}^{(n)}{\bf G}_{(n)}({\bf U}^{(N)}\otimes\dots\otimes {\bf U}^{(n+1)}\otimes {\bf U}^{(n-1)}
\otimes\dots\otimes {\bf U}^{(1)})^{\top},
\end{equation*}
where $\mathbf{G}_{(n)}\in\mathbb{R}^{R_n\times R_1\dots R_{n-1}R_{n+1}\dots R_N}$.

It follows that the rank of ${\bf A}_{(n)}$ is less than or equal to $R_n$, as the mode-$n$ factor ${\bf U}^{(n)}\in\mathbb{R}^{I_n\times R_n}$  at most has rank  $R_n$. This motivates us to define the multilinear rank of $\mathcal{A}$ as the tuple
\begin{equation*}
\{R_1,R_2,\dots,R_N\},\quad\text{where the rank of }{\bf A}_{(n)}\text{ is equal to }R_n.
\end{equation*}

A classical algorithm for computing Tucker decomposition with all the factor matrices being orthonormal is named after the T-HOSVD \cite{kolda2019tensor}. This algorithm is not optimal in terms of giving the best fitting as measured by the norm of the difference, but it is a good starting point for the HOOI \cite{lathauwer2000on} to compute the Tucker decomposition with orthonormal factor matrices. Vannieuwenhoven {\it et al.} \cite{vannieuwenhoven2012new} presented an alternative strategy for truncating the HOSVD, which is denoted by ST-HOSVD. In contrast to the T-HOSVD algorithm that processes the modes independently, the ST-HOSVD algorithm processes the modes sequentially with a given processing order in $\mathbb{S}_N$.

\section{Randomized T-HOSVD with power scheme}
\label{RALMRA:sec3}
By using the Frobenius norm of a tensor, we can rewrite Problem \ref{RALMRA:pro1} as follows,
\begin{equation}\label{RALMRA:eqn4}
\begin{cases}
\max&\quad\left\|\mathcal{A}\times_1\mathbf{Q}_1^\top\times_2\mathbf{Q}_2^\top\dots
\times_N\mathbf{Q}_N^\top\right\|_F^2,\\
\text{subject to}&\quad\mathbf{Q}_n\in\mathbb{R}^{I_n\times \mu_n}\text{ is orthonormal with }n=1,2,\dots,N.
\end{cases}
\end{equation}
According to \cite{lathauwer2000on}, researching a solution for (\ref{RALMRA:eqn4}) is equivalent to find a tensor $\widehat{\mathcal{A}}\in\mathbb{R}^{I_1\times I_2\times\dots\times I_N}$ that minimizes the least-squares cost function $f(\widehat{\mathcal{A}})=\|\mathcal{A}-\widehat{\mathcal{A}}\|_F^2$, with the constraints ${\rm rank}(\widehat{\mathbf{A}}_{(n)})=\mu_n$, where $\widehat{\mathbf{A}}_{(n)}$ is the mode-$n$ unfolding of $\widehat{\mathcal{A}}$.

Let $\{\mathbf{Q}_1,\mathbf{Q}_2,\dots,\mathbf{Q}_N\}$ be a solution for (\ref{RALMRA:eqn4}). As shown in \cite{saibaba2016hoid,vannieuwenhoven2012new}, we have
\begin{equation}\label{RALMRA:eqn5}
\begin{aligned}
&\left\|\mathcal{A}-\mathcal{A}\times_1\left({\bf Q}_1{\bf Q}_1^\top\right)\times_2\left({\bf Q}_2{\bf Q}_2^\top\right)\dots\times_N\left({\bf Q}_N{\bf Q}_N^\top\right)\right\|_F^2\\
&\leq\sum_{n=1}^N\left\|\mathcal{A}-\mathcal{A}\times_n\left({\bf Q}_n{\bf Q}_n^\top\right)\right\|_F^2.
\end{aligned}
\end{equation}

Hence, an approximate solution $\{\mathbf{Q}_1,\mathbf{Q}_2,\dots,\mathbf{Q}_N\}$ for (\ref{RALMRA:eqn4}) can be obtained by finding an orthonormal matrix $\mathbf{Q}_n\in\mathbb{R}^{I_n\times \mu_n}$ such that
\begin{equation*}
\begin{aligned}
\mathbf{Q}_n=&\ {\rm argmin}\left\|\mathcal{A}-\mathcal{A}\times_n\left({\bf Q}{\bf Q}^\top\right)\right\|_F^2={\rm argmin}\left\|\mathbf{A}_{(n)}-\left({\bf Q}{\bf Q}^\top\right)\mathbf{A}_{(n)}\right\|_F^2,\\
&\ {\rm subject to}\quad \mathbf{Q}\in\mathbb{R}^{I_n\times \mu_n}\ \text{ is orthonormal},n=1,2,\dots,N.
\end{aligned}
\end{equation*}
\subsection{The proposed algorithm}

According to the above descriptions, for each $n$, the columns of the matrix $\mathbf{Q}_n$ are the first $\mu_n$ left singular vectors of $\mathbf{A}_{(n)}$. Note that computing the first $\mu_n$ left singular vectors is prohibitive for high dimensions. For each $n$, through Algorithm \ref{RALMRA:alg1}, we obtain the approximation for the first $\mu_n$ left singular vectors of $\mathbf{A}_{(n)}$. In detail, firstly, we use a standard Gaussian matrix $\mathbf{G}$ to project the matrix $(\mathbf{A}_{(n)}\mathbf{A}_{(n)}^\top)^q$, where $q\geq 1$ is a given integer; secondly, we obtain an orthonormal matrix $\mathbf{Q}_n$ from this projection matrix; and finally, we compute the core tensor $\mathcal{G}=\mathcal{A}\times_1\mathbf{Q}_1^\top\times_2\mathbf{Q}_2^\top\dots\times_N\mathbf{Q}_N^\top$.
\begin{algorithm}[htb]
	\caption{Solving Problem \ref{RALMRA:pro1} via random projection and power scheme}
	\label{RALMRA:alg1}
	\begin{algorithmic}[1]
		\STATEx {\bf Input}: A tensor $\mathcal{A}\in \mathbb{R}^{I_1\times I_2\times\dots\times I_N}$, the desired multilinear rank $\{\mu_1,\mu_2,\dots,\mu_N\}$, the oversampling parameter $K$ and an integer $q\geq 1$.
		\STATEx {\bf Output}: $N$ orthonormal matrices $\mathbf{Q}_n$ and a core tensor $\mathcal{G}$ such that $\mathcal{A}\approx\mathcal{G}\times_1\mathbf{Q}_1\times_2\mathbf{Q}_2\dots\times_N\mathbf{Q}_N$.
		\STATE For $n=1,2,\dots,N$
		\STATE Generate a standard Gaussian matrix $\mathbf{G}\in\mathbb{R}^{I_n \times (\mu_n+K)}$.
		\STATE Arrange the mode-$n$ unfolding $\mathbf{A}_{(n)}$ of $\mathcal{A}$.
            \STATE Compute $\mathbf{C}_n=(\mathbf{A}_{(n)}\mathbf{A}_{(n)}^\top)^q\mathbf{G}_n$.
		\STATE Find an orthonormal matrix $\mathbf{Q}_n\in\mathbb{R}^{I_n\times \mu_n}$ such that there exists $\mathbf{S}_n\in\mathbb{R}^{\mu_n\times (\mu_n+K)}$ for which
	\begin{equation*}
	\|\mathbf{Q}_n\mathbf{S}_n-\mathbf{C}_n\|_F\leq\Delta_{\mu_n+1}(\mathbf{C}_n)=\left(\sum_{k=\mu_n+1}^{\mu_n+K}\sigma_k(\mathbf{C}_n)^2\right)^{1/2},
	\end{equation*}
	where $\sigma_k(\mathbf{C}_n)$ is the $k$th singular value of $\mathbf{C}_n$.
		\STATE End for
            \STATE Compute the core tensor $\mathcal{G}=\mathcal{A}\times_1\mathbf{Q}_1^\top\times_2\mathbf{Q}_2^\top\dots\times_N\mathbf{Q}_N^\top$.
	\end{algorithmic}
\end{algorithm}

\begin{remark}
\label{RALMRA:remadd1}
	In Algorithm {\rm \ref{RALMRA:alg1}}, for each $n$, the matrix $\mathbf{C}_n$ can be computed by one of the following two ways:
	\begin{enumerate}
		\item[] Strategy A: We first compute $\mathbf{C}'=\mathbf{A}_{(n)}^\top\mathbf{G}_n$ and $\mathbf{C}_n=\mathbf{A}_{(n)}\mathbf{C}'$, and then compute $\mathbf{C}'=\mathbf{A}_{(n)}^\top\mathbf{C}_n$ and $\mathbf{C}_n=\mathbf{A}_{(n)}\mathbf{C}'$ with $(q-1)$ times, which requires $4qI_1\dots I_{n-1}I_n\dots I_{N}(\mu_n+K)$ operations.
		\item[] Strategy B: We first compute $\mathbf{C}'=\mathbf{A}_{(n)}\mathbf{B}_{(A)}^\top$ and let $\mathbf{C}_n=\mathbf{G}_n$, and then compute $\mathbf{C}_n=\mathbf{C}'\mathbf{C}_n$ with $q$ times, which requires $2I_1\dots I_{n-1}I_n^2\dots I_{N}+2qI_n^2(\mu_n+K)$ operations.
	\end{enumerate}
 In Problem {\rm \ref{RALMRA:pro1}}, for each $n$, we assume that $\mu_n\ll I_n$. Hence, for some small $q$ and $K$, we can ensure that $2q(\mu_n+K)<I_n$. Numerical examples illustrate that the case of $q=1$ and $K=10$ is suitable for Algorithm {\rm \ref{RALMRA:alg1}}. Hence, in the rest, we use Strategy A to compute the matrix $\mathbf{C}_n$.
\end{remark}

We now study the computational complexity of Algorithm {\rm \ref{RALMRA:alg1}}. For clarity, we assume $I_1\sim I_2\sim\dots\sim I_N\sim I$ and $\mu_1\sim\mu_2\sim\dots\sim\mu_N\sim\mu$ in complexity estimates \cite[Page A2]{goreinov2012wedderburn}, where $\mu_n$ is the number of the columns of $\mathbf{Q}_n$, and $I_n\sim I$ means $I_n=\alpha_n I$ for some constant $\alpha_n$. To count the floating-point operations of Algorithm \ref{RALMRA:alg1}, for each $n$, we evaluate the complexity for each step:
\begin{enumerate}
	\item[(a)] to form a standard Gaussian matrix $\mathbf{G}_n$ requires $I(\mu+K)$ operations;
	\item[(b)] it needs $O(I^N)$ operations to form the matrix $\mathbf{A}_{(n)}$;
	\item[(c)] it requires $4qI^N(\mu+K)$ operations to compute the matrix $\mathbf{C}_n$; and
	\item[(d)] computing $\mathbf{Q}_n$ demends $O(I(\mu+K)\mu)$ operations.
\end{enumerate}
Meanwhile, it costs $2I\mu(I^{N-1}+I^{N-2}\mu+\dots+I\mu^{N-2}+\mu^{N-1})$ operations to obtain the core tensor $\mathcal{G}$. Then, by summing up the computational complexity for each $n$, Algorithm \ref{RALMRA:alg1} requires
\begin{equation*}
\begin{aligned}
&N\cdot\left(I(\mu+K)+O(I^N)+4qI^N(\mu+K)+O(I\mu^2+IK\mu)\right)\\
&+2I\mu(I^{N-1}+I^{N-2}\mu+\dots+I\mu^{N-2}+\mu^{N-1})
\end{aligned}
\end{equation*}
operations for the tensor $\mathcal{A}$.

	
\subsection{Accelerating Algorithm \ref{RALMRA:alg1} with approximate matrix multiplication}

For each $n$, the matrix $\mathbf{C}_n$ in Algorithm \ref{RALMRA:alg1} is computed by $(\mathbf{B}_{(n)}\mathbf{B}_{(n)}^\top)^q\mathbf{G}$, which requires $4qI_1\dots I_{n-1}I_n\dots I_{N}(\mu_n+K)$ operations. These operations are enormous when all the $I_n$ are large. In this section, we introduce a fast sampling algorithm to approximate the matrix $\mathbf{C}_n$.

Assume that $0\leq p_1,p_2,\dots,p_I\leq 1$ and $\sum_{i=1}^Ip_i=1$, then we say $\mathbf{p}=(p_1,p_2,\dots,p_I)\in[0,1]^{I}$ is a probability distribution. For a given random variable $\xi\in\{1,2,\dots,I\}$, we say $\xi\sim{\rm MULTINOMIAL}(\mathbf{p})$ if $\mathbf{p}\in[0,1]^{I}$ is a probability distribution and $\mathbb{P}(\xi=i)=p_i$. We also say $\mathbf{S}\in\mathbb{R}^{I\times L}\sim{\rm RANDSAMPLE}(L,\mathbf{p})$ if $L$ is a positive integer, $\mathbf{p}\in[0,1]^{I}$ is a probability distribution, and the entries of $\mathbf{S}$ are defined as follows,
\begin{equation*}
s_{ij}=
\begin{cases}
\begin{aligned}
\frac{1}{\sqrt{Lp_i}}, &\quad {\rm if }\ \xi_j=i,\\
0, &\quad {\rm otherwise},
\end{aligned}
\end{cases}
\end{equation*}
where $\xi_j\sim{\rm MULTINOMIAL}(\mathbf{p})$, $i=1,2,\dots,I$ and $j=1,2,\dots,L$.

Drineas {\it et al.} \cite{drineas2006fast} investigated a famous randomized algorithm for approximating matrix multiplication, called the BasicMatrixMultiplication algorithm (Algorithm \ref{RALMRA:alg3}).
\begin{algorithm}[htb]
	\caption{BasicMatrixMultiplication algorithm \cite[Fig. 2]{drineas2006fast}}
	\label{RALMRA:alg3}
	\begin{algorithmic}[1]
		\STATEx {\bf Input}: Two matrices $\mathbf{A}\in\mathbb{R}^{I_1\times I_2}$ and $\mathbf{B}\in\mathbb{R}^{I_2\times I_3}$, the number of sampling $K$ such that $1\leq K\leq I_2$, and a probability distribution $\{p_{i}\}_{i=1}^{I_2}$.
		\STATEx {\bf Output}: Two matrices $\mathbf{C}\in\mathbb{R}^{I_1\times K}$ and $\mathbf{R}\in\mathbb{R}^{K\times I_3}$ such that $\mathbf{A}\mathbf{B}\approx\mathbf{C}\mathbf{R}$.
		\STATE Form $\mathbf{S}\in\mathbb{R}^{I_2\times K}\sim{\rm RANDSAMPLE}(K,\mathbf{p})$.
		\STATE Compute $\mathbf{C}=\mathbf{A}\mathbf{S}$ and $\mathbf{R}=\mathbf{S}^\top\mathbf{B}$.
	\end{algorithmic}
\end{algorithm}
\begin{remark}
	An important issue for Algorithm {\rm \ref{RALMRA:alg3}} is the choice of the probabilities $\{p_{i}\}_{i=1}^{I_2}$:
	\begin{enumerate}
		\item[{\rm (a)}] The probabilities $\{p_{i}\}_{i=1}^{I_2}$ are the optimal probabilities if
		\begin{equation*}
		p_i=\frac{\|\mathbf{A}(i,:)\|_2\|\mathbf{B}(:,i)\|_2}{\sum_{j=1}^{I_2}\|\mathbf{A}(j,:)\|_2\|\mathbf{B}(:,j)\|_2};
		\end{equation*}
		\item[{\rm (b)}] The probabilities $\{p_{i}\}_{i=1}^{I_2}$ are the nearly optimal probabilities if for some positive constant $\beta\leq1$,
		\begin{equation*}
		p_i\geq\beta\frac{\|\mathbf{A}(i,:)\|_2\|\mathbf{B}(:,i)\|_2}{\sum_{j=1}^{I_2}\|\mathbf{A}(j,:)\|_2\|\mathbf{B}(:,j)\|_2};
		\end{equation*}
		\item[{\rm (c)}] The probabilities $\{p_{i}\}_{i=1}^{I_2}$ are the uniform probabilities if $p_i=1/I_2$.
	\end{enumerate}
\end{remark}

There exist two situations when we count the computational complexity of Algorithm \ref{RALMRA:alg3}: for the (nearly) optimal probabilities, to construct $\mathbf{C}$ and $\mathbf{R}$ requires $O(K(I_1+I_2+I_3))$ operations, and for the uniform probabilities, it needs $O(K(I_1+I_3))$ operations to obtain $\mathbf{C}$ and $\mathbf{R}$.
\begin{remark}
	In the above remark, we give one type of the nearly optimal probabilities. Furthermore, if all the $p_i$ satisfy one of the following conditions
	\begin{equation*}
	\begin{aligned}
	&p_i\geq\beta\frac{\|\mathbf{A}(i,:)\|_2}{\sum_{j=1}^{I_2}\|\mathbf{A}(j,:)\|_2},\ p_i\geq\beta\frac{\|\mathbf{B}(:,i)\|_2}{\sum_{j=1}^{I_2}\|\mathbf{B}(:,j)\|_2},\\
 &p_i\geq\beta\frac{\|\mathbf{A}(i,:)\|_2^2}{\|\mathbf{A}\|_F^2},\ p_i\geq\beta\frac{\|\mathbf{B}(:,i)\|_2^2}{\|\mathbf{B}\|_F^2},\ \text{or }\ p_i\geq\beta\frac{\|\mathbf{A}(i,:)\|_2\|\mathbf{B}(:,i)\|_2}{\|\mathbf{A}\|_F\mathbf{B}\|_F},
	\end{aligned}
	\end{equation*}
	then the probabilities $\{p_{i}\}_{i=1}^{I_2}$ are the nearly optimal probabilities.
\end{remark}

In Algorithm \ref{RALMRA:alg3}, the strategy used to generate $\mathbf{S}$ is a random sampling. There exist other types of random sampling: including subspace sampling \cite{drineas2006subspace} and adaptive sampling \cite{deshpande2006matrix,wang2013improving}. Recently, many researchers study various block versions of Algorithm \ref{RALMRA:alg3}, and the interested readers can refer to \cite{chang2019random,eriksson2011importance,niu2023optimal,wu2020multilevel} for their details. Meanwhile, there exists some randomized algorithms for computing the approximate matrix multiplication based on random projection \cite{cohen2015optimal,eftekhari2015restricted,srinivasa2020localized}, including standard Gaussian, SpEmb (sparse subspace embedding) and SRFT (subsampled randomized Fourier transform) matrices.

By using  Algorithm \ref{RALMRA:alg3} to compute an approximation of $\mathbf{C}_n$ in Algorithm \ref{RALMRA:alg1}, we obtain another efficient algorithm to compute a low multilinear rank approximation of a tensor $\mathcal{A}\in \mathbb{R}^{I_1\times I_2\times\dots\times I_N}$, denoted by Algorithms \ref{RALMRA:alg4}.
\begin{algorithm}
	\caption{Accelerating Algorithm {\rm \ref{RALMRA:alg1}} with approximating matrix multiplication}
	\label{RALMRA:alg4}
	\begin{algorithmic}[1]
	\STATEx {\bf Input}: A tensor $\mathcal{A}\in \mathbb{R}^{I_1\times I_2\times\dots\times I_N}$, the
                 desired multilinear rank $\{\mu_1,\mu_2,\dots,\mu_N\}$, the oversampling parameters $K$, $N$ positive integers $\{T_1,T_2,\dots,T_N\}$, an integer $q\geq 1$, and $N$ probability distributions $\{p_{n,i}\}_{i=1}^{I_1\dots I_{n-1}I_{n+1}\dots I_N}$.
        \STATEx {\bf Output}: $N$ orthonormal matrices $\mathbf{Q}_n$ and a core tensor $\mathcal{G}$ such that
                 $\mathcal{A}\approx\mathcal{G}\times_1\mathbf{Q}_1\times_2\mathbf{Q}_2\dots\times_N\mathbf{Q}_N$.
	\STATE For $n=1,2,\dots,N$
	\STATE Generate $\mathbf{S}_n\in\mathbb{R}^{I_1\dots I_{n-1}I_{n+1}\dots I_N\times T_n}\sim{\rm RANDSAMPLE}            (T_n,\mathbf{p}_n)$.
	\STATE Arrange $\mathbf{A}_{(n)}$ as the mode-$n$ unfolding of $\mathcal{A}$.
        \STATE Obtain $\mathbf{C}_n'=\mathbf{A}_{(n)}\mathbf{S}_n$.
	\STATE Form a standard Gaussian matrix $\mathbf{G}_n\in\mathbb{R}^{I_n \times (\mu_n+K)}$.
	\STATE Compute $\mathbf{C}_n=(\mathbf{C}_n'\mathbf{C}_n'^\top)^q\mathbf{G}_n$.
	\STATE Find an orthonormal matrix $\mathbf{Q}_n\in\mathbb{R}^{I_n\times \mu_n}$ such that there exists $\mathbf{S}_n\in\mathbb{R}^{\mu_n\times (\mu_n+K)}$ for which
	\begin{equation*}
	\|\mathbf{Q}_n\mathbf{S}_n-\mathbf{C}_n\|_F\leq\Delta_{\mu_n+1}(\mathbf{C}_n).
	\end{equation*}
	\STATE End for
        \STATE Compute the core tensor
         $\mathcal{G}=\mathcal{A}\times_1\mathbf{Q}_1^\top\times_2\mathbf{Q}_2^\top\dots\times_N\mathbf{Q}_N^\top$.
	\end{algorithmic}
\end{algorithm}

To count the complexity of Algorithm {\rm \ref{RALMRA:alg4}}, we assume that $T_1\sim T_2\sim \dots\sim T_N\sim T$. Then, for each $n$, we have
\begin{enumerate}
        \item[(a)] it needs $O(I^N)$ operations to form the matrix $\mathbf{A}_{(n)}$;
	\item[(b)] for the (nearly) optimal probabilities, to construct $\mathbf{C}_n'$ requires $O((I+I^{N-1})T)$ operations, and for the uniform probabilities, it needs $O(IT)$ operations to obtain $\mathbf{C}_n'$;
	\item[(c)] to generate the matrix $\mathbf{G}_n\in\mathbb{R}^{I\times (\mu+K)}$ requires $I(\mu+K)$ operations;
	\item[(d)] it requires $4IT(\mu+K)$ operations to compute the matrix $\mathbf{C}_n$; and
	\item[(e)] computing $\mathbf{Q}_n$ requires $O(I(\mu+K)\mu)$ operations.
\end{enumerate}
Meanwhile, it costs $2I\mu(I^{N-1}+I^{N-2}\mu+\dots+I\mu^{N-2}+\mu^{N-1})$ operations to obtain the core tensor $\mathcal{G}$. Then, Algorithm \ref{RALMRA:alg4} requires
\begin{equation*}
\begin{aligned}
&N\cdot (O(I^N)+4IT(\mu+K)+I(\mu+K)+O(I\mu^2+IK\mu))\\
&+N\cdot O(IT)+2I\mu(I^{N-1}+I^{N-2}\mu+\dots+I\mu^{N-2}+\mu^{N-1})
\end{aligned}
\end{equation*}
operations for the uniform probabilities, and
\begin{equation*}
\begin{aligned}
&N\cdot (O(I^N)+4IT(\mu+K)+I(\mu+K)+O(I\mu^2+IK\mu))\\
&+N\cdot O((I+I^{N-1})T)+2I\mu(I^{N-1}+I^{N-2}\mu+\dots+I\mu^{N-2}+\mu^{N-1})
\end{aligned}
\end{equation*}
operations for the (nearly) optimal probabilities.
\section{Randomized ST-HOSVD with power scheme}
\label{RALMRA:sec4}
Let $\{\mathbf{Q}_1,\mathbf{Q}_2,\dots,\mathbf{Q}_N\}$ be a solution for (\ref{RALMRA:eqn4}). As shown in \cite{saibaba2016hoid,vannieuwenhoven2012new}, for a processing order $\{p_1,p_2,\dots,p_N\}\in\mathbb{S}_N$, we have
\begin{equation}\label{RALMRA:eqn6}
\begin{aligned}
&\mathcal{A}-\mathcal{A}\times_1\left({\bf Q}_1{\bf Q}_1^\top\right)\times_2\left({\bf Q}_2{\bf Q}_2^\top\right)\dots\times_N\left({\bf Q}_N{\bf Q}_N^\top\right)=\mathcal{A}-\mathcal{A}\times_{p_1}\left({\bf Q}_{p_1}{\bf Q}_{p_1}^\top\right)\\
&+\mathcal{A}\times_{p_1}\left({\bf Q}_{p_1}{\bf Q}_{p_1}^\top\right)-\mathcal{A}\times_{p_1}\left({\bf Q}_{p_1}{\bf Q}_{p_1}^\top\right)\times_{p_2}\left({\bf Q}_{p_2}{\bf Q}_{p_2}^\top\right)\\
&+\dots+\mathcal{A}\times_{p_1}\left({\bf Q}_{p_1}{\bf Q}_{p_1}^\top\right)\times_{p_2}\left({\bf Q}_{p_2}{\bf Q}_{p_2}^\top\right)\dots\times_{p_{N-1}}\left({\bf Q}_{p_{N-1}}{\bf Q}_{p_{N-1}}^\top\right)\\
&-\mathcal{A}\times_{p_1}\left({\bf Q}_{p_1}{\bf Q}_{p_1}^\top\right)\times_{p_2}\left({\bf Q}_{p_2}{\bf Q}_{p_2}^\top\right)\dots\times_{p_N}\left({\bf Q}_{p_N}{\bf Q}_{p_N}^\top\right).
\end{aligned}
\end{equation}
In particular, when $N=3$ and $\mathcal{A}\in\mathbb{C}^{I_1\times I_2\times I_3}$, the equality (\ref{RALMRA:eqn6}) appears in \cite{che2021an}; and when $\{p_1,p_2,\dots,p_N\}=\{1,2,\dots,N\}$, the equality (\ref{RALMRA:eqn6}) can be found in \cite{che2019randomized}. For each $n\geq 2$, we have
\begin{equation*}
\begin{aligned}
&\mathcal{A}\times_{p_1}\left({\bf Q}_{p_1}{\bf Q}_{p_1}^\top\right)\dots\times_{p_{n-1}}\left({\bf Q}_{p_{n-1}}{\bf Q}_{p_{n-1}}^\top\right)\\
&\quad-\mathcal{A}\times_{p_1}\left({\bf Q}_{p_1}{\bf Q}_{p_1}^\top\right)\dots\times_{p_{n-1}}\left({\bf Q}_{p_{n-1}}{\bf Q}_{p_{n-1}}^\top\right)\times_n\left({\bf Q}_{p_n}{\bf Q}_{p_n}^\top\right)\\
&\quad=\mathcal{A}\times_{p_1}\left({\bf Q}_{p_1}{\bf Q}_{p_1}^\top\right)\dots\times_{p_{n-1}}\left({\bf Q}_{p_{n-1}}{\bf Q}_{p_{n-1}}^\top\right)\times_n\left(\mathbf{I}_{I_{p_n}}-{\bf Q}_{p_n}{\bf Q}_{p_n}^\top\right)\\
&\quad=\left(\mathcal{A}\times_{p_1}{\bf Q}_{p_1}^\top\dots\times_{p_{n-1}}{\bf Q}_{p_{n-1}}^\top\times_n\left(\mathbf{I}_{I_{p_n}}-{\bf Q}_{p_n}{\bf Q}_{p_n}^\top\right)\right)\times_{p_1}{\bf Q}_{p_1}\dots\times_{p_{n-1}}{\bf Q}_{p_{n-1}},
\end{aligned}
\end{equation*}
which implies that
\begin{equation*}
\begin{aligned}
&\left\|\mathcal{A}\times_{p_1}\left({\bf Q}_{p_1}{\bf Q}_{p_1}^\top\right)\dots\times_{p_{n-1}}\left({\bf Q}_{p_{n-1}}{\bf Q}_{p_{n-1}}^\top\right)\times_n\left(\mathbf{I}_{I_{p_n}}-{\bf Q}_{p_n}{\bf Q}_{p_n}^\top\right)\right\|_F\\
&=\left\|\left(\mathcal{A}\times_{p_1}{\bf Q}_{p_1}^\top\dots\times_{p_{n-1}}{\bf Q}_{p_{n-1}}^\top\times_n\left(\mathbf{I}_{I_{p_n}}-{\bf Q}_{p_n}{\bf Q}_{p_n}^\top\right)\right)\times_{p_1}{\bf Q}_{p_1}\dots\times_{p_{n-1}}{\bf Q}_{p_{n-1}}\right\|_F\\
&\leq\left\|\left(\mathcal{A}\times_{p_1}{\bf Q}_{p_1}^\top\dots\times_{p_{n-1}}{\bf Q}_{p_{n-1}}^\top\times_n\left(\mathbf{I}_{I_{p_n}}-{\bf Q}_{p_n}{\bf Q}_{p_n}^\top\right)\right)\right\|_F.
\end{aligned}
\end{equation*}
Here the inequality holds by the fact $\|\mathbf{A}\mathbf{Q}\|_F\leq\|\mathbf{A}\|_F$ for $\mathbf{A}\in\mathbb{R}^{I\times J}$ and any orthonormal matrix $\mathbf{Q}\in\mathbb{R}^{J\times K}$ with $K\leq J$. Hence, we have
\begin{equation}\label{RALMRA:eqn7}
\begin{aligned}
&\left\|\mathcal{A}-\mathcal{A}\times_1\left({\bf Q}_1{\bf Q}_1^\top\right)\times_2\left({\bf Q}_2{\bf Q}_2^\top\right)\dots\times_N\left({\bf Q}_N{\bf Q}_N^\top\right)\right\|_F^2\\
&\leq\left\|\mathcal{A}\times_{p_1}\left(\mathbf{I}_{I_{p_1}}-{\bf Q}_{p_1}{\bf Q}_{p_1}^\top\right)\right\|_F^2
+\left\|\left(\mathcal{A}\times_{p_1}{\bf Q}_{p_1}^\top\right)\times_{p_2}\left(\mathbf{I}_{I_{p_2}}-{\bf Q}_{p_2}{\bf Q}_{p_2}^\top\right)\right\|_F^2\\
&+\dots+\left\|\left(\mathcal{A}\times_{p_1}{\bf Q}_{p_1}^\top\times_{p_2}\mathbf{Q}_{p_2}^\top\dots\times_{p_{N-1}}\mathbf{Q}_{p_{N-1}}^\top\right)\times_{p_N}\left(\mathbf{I}_{I_{p_N}}-{\bf Q}_{p_N}{\bf Q}_{p_N}^\top\right)\right\|_F^2.
\end{aligned}
\end{equation}

Another way to obtain an approximate solution $\{\mathbf{Q}_1,\mathbf{Q}_2,\dots,\mathbf{Q}_N\}$ for (\ref{RALMRA:eqn4}) is to solve the following $N$ subproblems with a given processing order $\{p_1,p_2,\dots,p_N\}\in\mathbb{S}_N$:
\begin{enumerate}
	\item[(a)] For given $\mathcal{A}\in\mathbb{R}^{I_1\times I_2\times \dots\times I_N}$ and $\mu_{p_1}<I_{p_1}$, to find an orthonormal matrix $\mathbf{Q}_{p_1}\in\mathbb{R}^{I_{p_1}\times \mu_{p_1}}$ such that
	\begin{equation*}
	\begin{aligned}
	\mathbf{Q}_{p_1}=&\ {\rm argmin}\left\|\mathcal{A}-\mathcal{A}\times_{p_n}\left({\bf Q}{\bf Q}^\top\right)\right\|_F^2,\\
	&\ {\rm s.t.}\quad \mathbf{Q}\in\mathbb{R}^{I_{p_1}\times \mu_{p_1}}\ \text{ is orthonormal};
	\end{aligned}
	\end{equation*}
	\item[(b)] For each $n\geq 2$, to find an orthonormal matrix $\mathbf{Q}_{p_n}\in\mathbb{C}^{I_{p_n}\times \mu_{p_n}}$ with $\mu_{p_n}< I_{p_n}$ such that
	\begin{equation*}
	\begin{aligned}
	\mathbf{Q}_n=&\ {\rm argmin}\left\|\mathcal{A}\times_{p_1}{\bf Q}_{p_1}^\top\dots\times_{p_{n-1}}{\bf Q}_{p_{n-1}}^\top\times_{p_n}\left(\mathbf{I}_{I_{p_n}}{\bf Q}{\bf Q}^\top\right)\right\|_F^2,\\
	&\ {\rm s.t.}\quad \mathbf{Q}\in\mathbb{R}^{I_{p_n}\times \mu_{p_n}}\ \text{ is orthonormal}.
	\end{aligned}
	\end{equation*}
\end{enumerate}

It is worthly noting that choosing the best processing order is is a very challenging problem for solving (\ref{RALMRA:eqn4}) efficiently, which is beyond the scrope of this paper. Here, we assume that the processing order is $\{1,2,\dots,N\}$ and then rewrite (\ref{RALMRA:eqn7}) as follows,
\begin{equation}\label{RALMRA:eqn23}
\begin{aligned}
&\left\|\mathcal{A}-\mathcal{A}\times_1\left({\bf Q}_1{\bf Q}_1^\top\right)\times_2\left({\bf Q}_2{\bf Q}_2^\top\right)\dots\times_N\left({\bf Q}_N{\bf Q}_N^\top\right)\right\|_F^2\\
&\leq\left\|\mathcal{A}\times_{1}\left(\mathbf{I}_{I_{1}}-{\bf Q}_{1}{\bf Q}_{1}^\top\right)\right\|_F^2
+\left\|\left(\mathcal{A}\times_{1}{\bf Q}_{1}^\top\right)\times_{2}\left(\mathbf{I}_{I_{2}}-{\bf Q}_{2}{\bf Q}_{2}^\top\right)\right\|_F^2\\
&+\dots+\left\|\left(\mathcal{A}\times_{1}{\bf Q}_{1}^\top\times_{2}\mathbf{Q}_{2}^\top\dots\times_{N-1}\mathbf{Q}_{N-1}^\top\right)\times_{N}\left(\mathbf{I}_{I_N}-{\bf Q}_{N}{\bf Q}_{N}^\top\right)\right\|_F^2.
\end{aligned}
\end{equation}
\subsection{Framework of the proposed algorithm}
\label{RALMRA:sec3:1}

According to the above description, we obtain Algorithm \ref{RALMRA:alg2} for estimating the first $\mu_n$ left singular vectors of $\mathbf{A}_{(n)}$. Comparison with Algorithm \ref{RALMRA:alg1}, in Algorithm \ref{RALMRA:alg2}, after obtaining $\mathbf{Q}_n$, we update $\mathcal{A}$ as $\mathcal{A}=\mathcal{A}\times_n\mathbf{Q}_n^\top$. Hence, when all the matrices $\mathbf{Q}_n$ are obtained by Algorithm \ref{RALMRA:alg1}, we need to compute the core tensor $\mathcal{G}$ as $\mathcal{G}=\mathcal{A}\times_1\mathbf{Q}_1^\top\times_2\mathbf{Q}_2^\top\dots\times_N\mathbf{Q}_N^\top$; and when all the matrices $\mathbf{Q}_n$ are obtained by Algorithm \ref{RALMRA:alg2}, the core tensor $\mathcal{G}$ is also obtained.  Another difference between Algorithms {\rm\ref{RALMRA:alg1}} and {\rm\ref{RALMRA:alg2}} is that Algorithm {\rm\ref{RALMRA:alg1}} can be easily implemented in a parallel computing environment.
\begin{algorithm}[htb]
\caption{Another algorithm for solving Problem \ref{RALMRA:pro1} via random projection and power scheme}
\label{RALMRA:alg2}
\begin{algorithmic}[1]
\STATEx {\bf Input}: A tensor $\mathcal{A}\in \mathbb{R}^{I_1\times
         I_2\times\dots\times I_N}$, the desired multilinear rank $\{\mu_1,\mu_2,\dots,\mu_N\}$, the oversampling parameter $K$ and an integer $q\geq 1$.
\STATEx {\bf Output}: $N$ orthonormal matrices $\mathbf{Q}_n$ and a core tensor
         $\mathcal{G}$ such that $\mathcal{A}\approx\mathcal{G}\times_1\mathbf{Q}_1\times_2\mathbf{Q}_2\dots\times_N\mathbf{Q}_N$.
\STATE For $n=1,2,\dots,N$
\STATE   Generate a standard Gaussian matrix $\mathbf{G}\in\mathbb{R}^{I_n \times         (\mu_n+K)}$.
\STATE   Form the mode-$n$ unfolding $\mathbf{A}_{(n)}$ of $\mathcal{A}$.
\STATE   Compute $\mathbf{C}_n=
         (\mathbf{A}_{(n)}\mathbf{A}_{(n)}^\top)^q\mathbf{G}_n$.
\STATE   Find an orthonormal matrix $\mathbf{Q}_n\in\mathbb{R}^{I_n\times \mu_n}$         such that there exists $\mathbf{S}_n\in\mathbb{R}^{\mu_n\times
         (\mu_n+K)}$ for which
	\begin{equation*}
	\|\mathbf{Q}_n\mathbf{S}_n-\mathbf{C}_n\|_F\leq\Delta_{\mu_n+1}
         (\mathbf{C}_n).
	\end{equation*}
\STATE   Compute $\mathcal{A}=\mathcal{A}\times_n\mathbf{Q}_n^\top$.
\STATE End for
\STATE   Set the core tensor $\mathcal{G}$ as $\mathcal{G}=\mathcal{A}$.
\end{algorithmic}
\end{algorithm}

To count the floating-point operations of Algorithm \ref{RALMRA:alg2}, we evaluate the complexity for each $n$:
\begin{enumerate}
	\item[(a)] to form a standard Gaussian matrix $\mathbf{G}_n$ requires $I(\mu+K)$ operations;
	\item[(b)] it needs $O(\mu^{n-1}I^{N-n+1})$ operations to form the matrix $\mathbf{A}_{(n)}$;
	\item[(c)] it demands $4I(\mu+K+T)(\mu+K)$ operations to compute the matrix $\mathbf{C}_n$;
	\item[(d)] computing $\mathbf{Q}_n$ requires $O(I(\mu+K)\mu)$ operations;
	\item[(e)] updating the tensor $\mathcal{A}$ requires $2I^{N-n+1}\mu^n$ operations.
\end{enumerate}
Then Algorithm \ref{RALMRA:alg2} requires
\begin{equation*}
\begin{aligned}
&N\cdot\left(I(\mu+K)+O(I\mu^2+IK\mu)\right)\\
&+\sum_{n=1}^N\left(O(\mu^{n-1}I^{N-n+1})+4qI^{N-n+1}\mu^{n-1}(\mu+K)+2I^{N-n+1}\mu^{n-1}\right)
\end{aligned}
\end{equation*}
operations for the tensor $\mathcal{A}$.

\subsection{Accelerating Algorithm \ref{RALMRA:alg2} with approximate matrix multiplication}

Similar to the relationship between Algorithms \ref{RALMRA:alg1} and \ref{RALMRA:alg4}, Algorithm \ref{RALMRA:alg5} is obtained by using Algorithm \ref{RALMRA:alg3} to compute an approximation of $\mathbf{C}_n$ in Algorithm \ref{RALMRA:alg2}.
\begin{algorithm}
	\caption{Accelerating Algorithm \ref{RALMRA:alg2} with approximating matrix multiplication}
	\label{RALMRA:alg5}
	\begin{algorithmic}[1]
		\STATEx {\bf Input}: A tensor $\mathcal{A}\in \mathbb{R}^{I_1\times I_2\times\dots\times I_N}$, the desired multilinear rank $\{\mu_1,\mu_2,\dots,\mu_N\}$, $N$ positive integers $\{T_1,T_2,\dots,T_N\}$, the oversampling parameters $K$, an integer $q\geq 1$, and $N$ probability distributions $\{p_{n,i}\}_{i=1}^{\mu_1\dots \mu_{n-1}I_{n+1}\dots I_N}$.
		\STATEx {\bf Output}: $N$ orthonormal matrices $\mathbf{Q}_n$ and a core tensor $\mathcal{G}$ such that
                 $\mathcal{A}\approx\mathcal{G}\times_1\mathbf{Q}_1\times_2\mathbf{Q}_2\dots\times_N\mathbf{Q}_N$.
		\STATE For $n=1,2,\dots,N$
		\STATE Generate $\mathbf{S}_n\in\mathbb{R}^{\mu_1\dots \mu_{n-1}I_{n+1}\dots I_N\times T_n}\sim{\rm RANDSAMPLE}(T_n,\mathbf{p}_n)$.
		\STATE Arrange $\mathbf{A}_{(n)}$ as the mode-$n$ unfolding of $\mathcal{A}$.
        \STATE Obtain $\mathbf{C}_n'=\mathbf{A}_{(n)}\mathbf{S}_n$.
		\STATE Form a standard Gaussian matrix $\mathbf{G}_n\in\mathbb{R}^{I_n \times (\mu_n+K)}$.
		\STATE Compute $\mathbf{C}_n=(\mathbf{C}_n'\mathbf{C}_n'^\top)^q\mathbf{G}_n$.
		\STATE Find an orthonormal matrix $\mathbf{Q}_n\in\mathbb{R}^{I_n\times \mu_n}$         such that there exists $\mathbf{S}_n\in\mathbb{R}^{\mu_n\times
         (\mu_n+K)}$ for which
	\begin{equation*}
	\|\mathbf{Q}_n\mathbf{S}_n-\mathbf{C}_n\|_F\leq\Delta_{\mu_n+1}
         (\mathbf{C}_n).
	\end{equation*}
		\STATE Compute $\mathcal{A}=\mathcal{A}\times_n\mathbf{Q}_n^\top$.
		\STATE End for
            \STATE Set the core tensor $\mathcal{G}$ as $\mathcal{G}=\mathcal{A}$.
	\end{algorithmic}
\end{algorithm}

\begin{remark}\label{RALMRA:remadd}
In Algorithms {\rm \ref{RALMRA:alg1}}, {\rm \ref{RALMRA:alg4}}, {\rm \ref{RALMRA:alg2}} and {\rm \ref{RALMRA:alg5}}, for each $n$, the matrix $\mathbf{Q}_n\in\mathbb{R}^{I_n\times \mu_n}$ can be obtained by this process: we first compute the SVD of $\mathbf{C}_n$ as $\mathbf{C}_n=\mathbf{U}_n\bm{\Sigma}_n\mathbf{V}_n^\top$, and we then set $\mathbf{Q}_n=\mathbf{U}_n(:,1:\mu_n)$, where $\mathbf{U}_n\in\mathbb{R}^{I_n\times (\mu_n+K)}$ is orthonormal, $\mathbf{V}_n\in\mathbb{R}^{(\mu_n+K)\times (\mu_n+K)}$ is orthogonal, and $\bm{\Sigma}_n\in\mathbb{R}^{(\mu_n+K)\times (\mu_n+K)}$ is diagonal and its diagonal entries are the singular values of $\mathbf{C}_n$.
\end{remark}

To count the complexity of Algorithm \ref{RALMRA:alg5},  we assume that $T_1\sim T_2\sim \dots\sim T_N\sim T$. For each $n$, we have
\begin{enumerate}
	\item[(a)] for the (nearly) optimal probabilities, to construct $\mathbf{C}_n'$ requires $O((I+I^{N-n}\mu^{n-1})T)$ operations, and for the uniform probabilities, it needs $O(IT)$ operations to obtain $\mathbf{C}_n'$;
	\item[(b)] to generate the matrix $\mathbf{G}_n\in\mathbb{R}^{I\times (\mu+K)}$ requires $I(\mu+K)$ operations;
	\item[(c)] it requires $4I(\mu+K+T)(\mu+K)$ operations to compute the matrix $\mathbf{C}_n$;
	\item[(d)] computing $\mathbf{Q}_n$ requires $O(I\mu^2+IK\mu)$ operations and updating the tensor $\mathcal{A}$ requires $2I^{N-n+1}\mu^n$ operations.
\end{enumerate}
Then, Algorithm \ref{RALMRA:alg5} requires
\begin{equation*}
N\cdot (O(IL)+I(\mu+K)+2IT+2qI^2(\mu+K)+O(I\mu^2+IK\mu))+2\cdot \sum_{n=1}^{N}I^{N-n+1}\mu^n
\end{equation*}
operations with the uniform probabilities, and
\begin{equation*}
\begin{aligned}
&N\cdot (I(\mu+K)+2IT+2qI^2(\mu+K)+O(I\mu^2+IK\mu)\\
&+ \sum_{n=1}^{N}\left(2I^{N-n+1}\mu^n+O(LI+LI^{N-n}\mu^{n-1})\right)
\end{aligned}
\end{equation*}
operations with the (nearly) optimal probabilities.
\begin{remark}
In both Algorithms {\rm \ref{RALMRA:alg4}} and {\rm \ref{RALMRA:alg5}}, choosing the right sampling distribution requires insight and ingenuity, which is out of scope of this work. Here, the sampling distribution used in these two algorithms is chosen as the uniform sampling distribution.
\end{remark}
\begin{figure}[htb]
	\setlength{\tabcolsep}{4pt}
	\renewcommand\arraystretch{1}
	\centering
	\begin{tabular}{c}
		\includegraphics[width=3.8in]{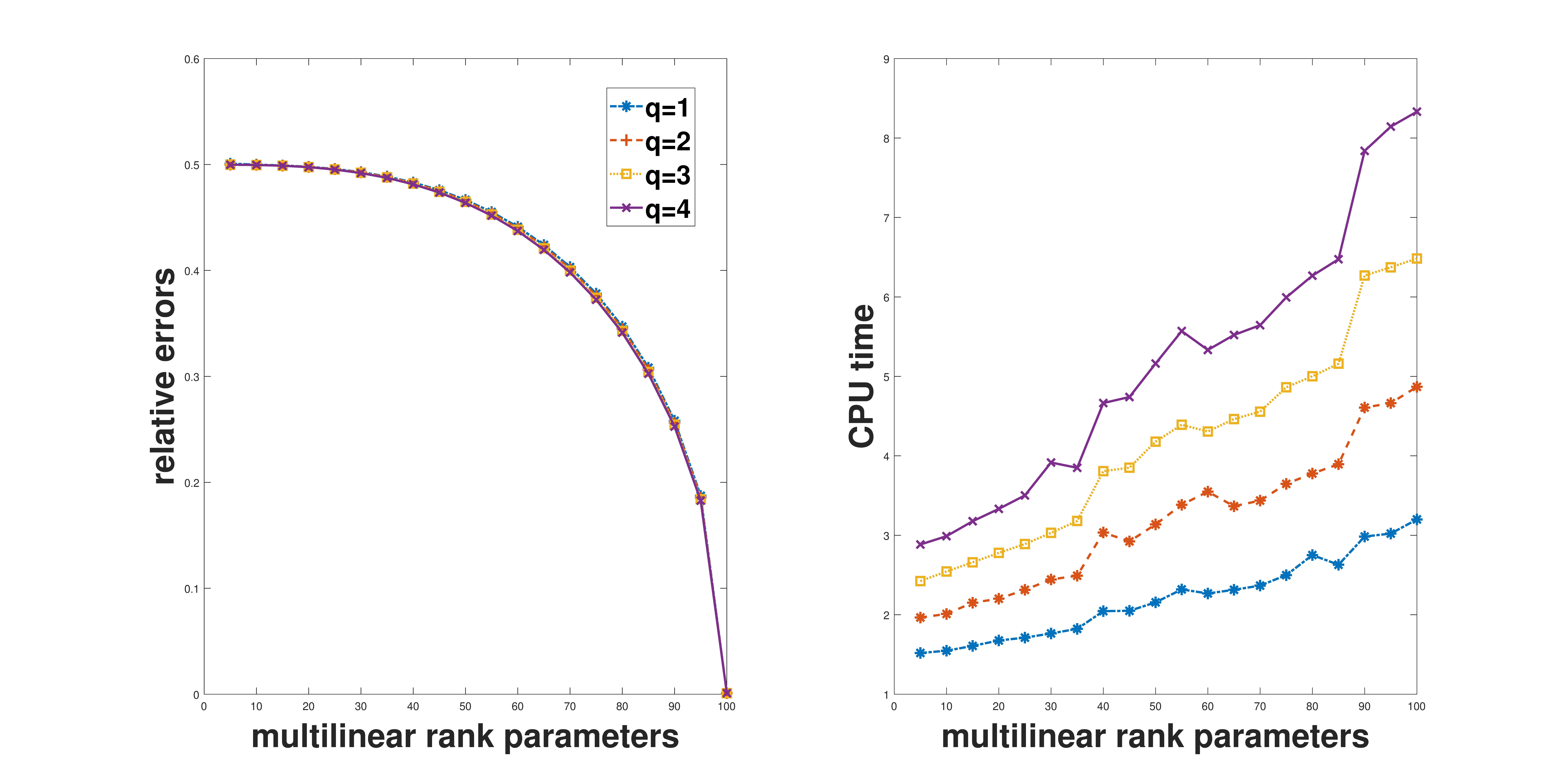}\\
		\includegraphics[width=3.8in]{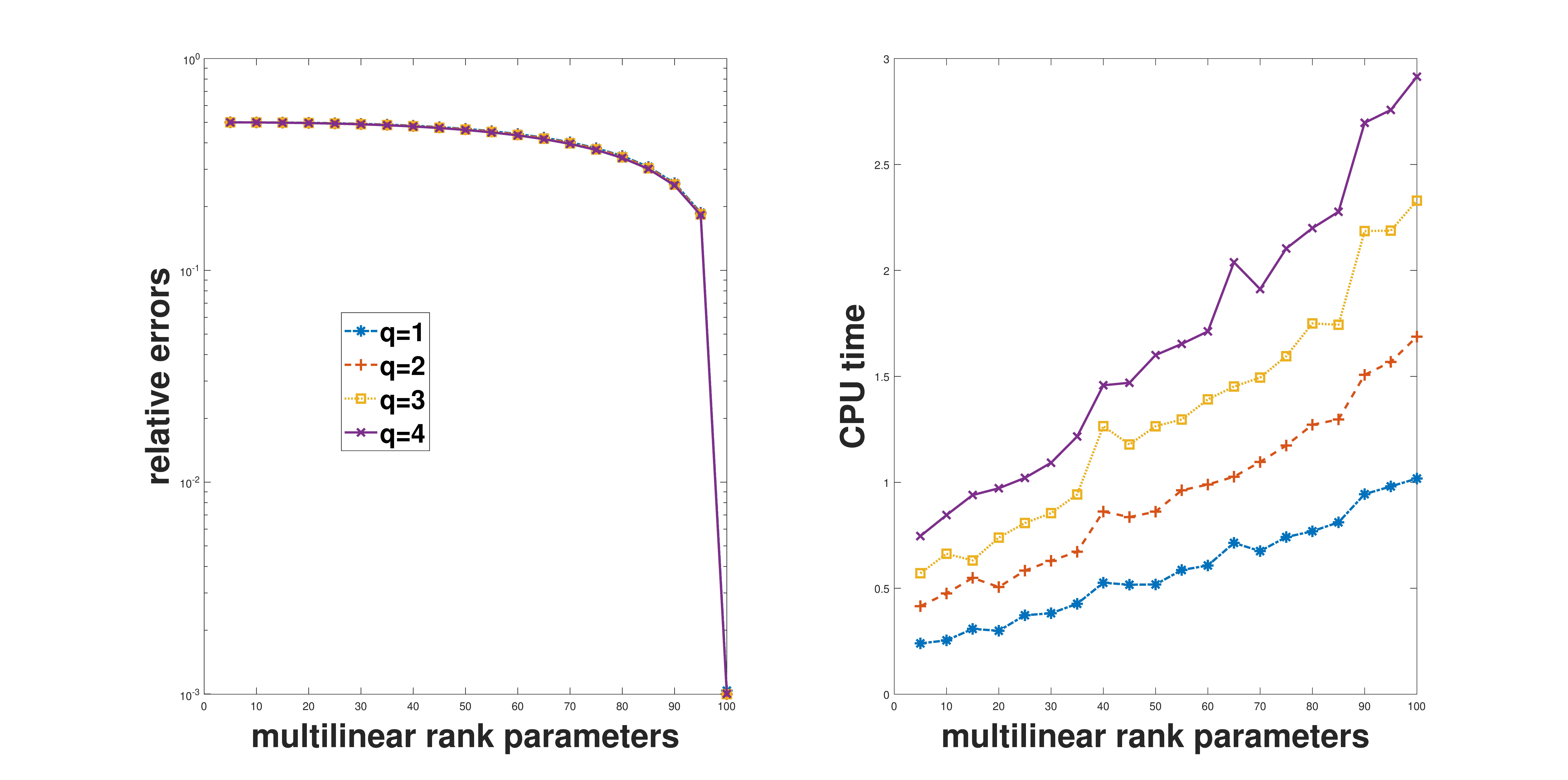}\\
	\end{tabular}
	\caption{Fixing $K=10$, results of applying Algorithms \ref{RALMRA:alg1} and \ref{RALMRA:alg2} with $q=1,2,3,4$ to $\mathcal{A}$: top for Algorithm \ref{RALMRA:alg1} and bottom for Algorithm \ref{RALMRA:alg2}.}\label{RALMRA:fig1}
\end{figure}

\subsection{Parameters selection}
\label{RALMRA:sec3:3}
In this section, we consider to choose all the parameters $K$, $q$ and $T_n$ in these proposed algorithms. We define the relative error (RE) is defined as
\begin{equation}\label{RALMRA:eqn8}
{\rm RE}=\|\mathcal{A}-\widehat{\mathcal{A}}\|_F/\|\mathcal{A}\|_F,
\end{equation}
with $\widehat{\mathcal{A}}=\mathcal{A}\times_1(\mathbf{Q}_1\mathbf{Q}_1^\top)\times_2(\mathbf{Q}_2\mathbf{Q}_2^\top)\dots\times_N(\mathbf{Q}_N\mathbf{Q}_N^\top)$, where for $N$ given positive integers $\mu_n< I_n$, an optimal solution $\{\mathbf{Q}_1,\mathbf{Q}_2,\dots,\mathbf{Q}_N\}$ for (\ref{RALMRA:eqn4}) is obtained from by applying the numerical algorithms to $\mathcal{A}\in\mathbb{R}^{I_1\times I_2\times \dots\times I_N}$.

We now illustrate the efficiencies of Algorithms \ref{RALMRA:alg1}, \ref{RALMRA:alg4}, \ref{RALMRA:alg2} and \ref{RALMRA:alg5} via a synthetic tensor $\mathcal{A}\in\mathbb{R}^{600\times 600\times 600}$ \cite{sun2020low-rank}, which is defined as
\begin{equation}\label{RALMRA:eqn9}
\mathcal{A}=\mathcal{B}+\frac{\gamma\|\mathcal{B}\|_F}{\sqrt{400^3}}\mathcal{E}
\end{equation}
with $\mathcal{B}=\mathcal{C}\times_1\mathbf{A}_1\times_2\mathbf{A}_2\times_3\mathbf{A}_3$, where the entries of $\mathcal{C}\in\mathbb{R}^{100\times 100\times 100}$ are drawn i.i.d. from the uniform distribution $U(0,1)$, the matrix $\mathbf{A}_n$ is an orthonormal basis for the column space of a standard Gaussian matrix $\mathbf{B}_n\in\mathbb{R}^{600\times 100}$, and the entries of $\mathcal{E}\in\mathbb{R}^{600\times 600\times 600}$ are an independent family of standard normal random variables. Here we set $\gamma=0.001$ and the desired multilinear rank $\{\mu,\mu,\mu\}$ as $\mu=5,10,\dots,100$.

When we fix $K=10$, by applying Algorithms \ref{RALMRA:alg1} and \ref{RALMRA:alg2} with different $q$ to the tensor $\mathcal{A}$, the values of RE and CPU time are shown in Figure \ref{RALMRA:fig1}, and when we fix $q=1$, by applying Algorithms \ref{RALMRA:alg1} and \ref{RALMRA:alg2} with different $K$ to the tensor $\mathcal{A}$, the results are shown in Figure \ref{RALMRA:fig2}. It implies from Figures \ref{RALMRA:fig1} and \ref{RALMRA:fig2} that in terms of the relative error, for $K=10$, Algorithms \ref{RALMRA:alg1} and \ref{RALMRA:alg2} with different $q$ are comparable, and for $q=1$, Algorithms \ref{RALMRA:alg1} and \ref{RALMRA:alg2} with different $K$ are similar.

\begin{figure}[htb]
	\centering
	\begin{tabular}{c}
		\includegraphics[width=3.8in]{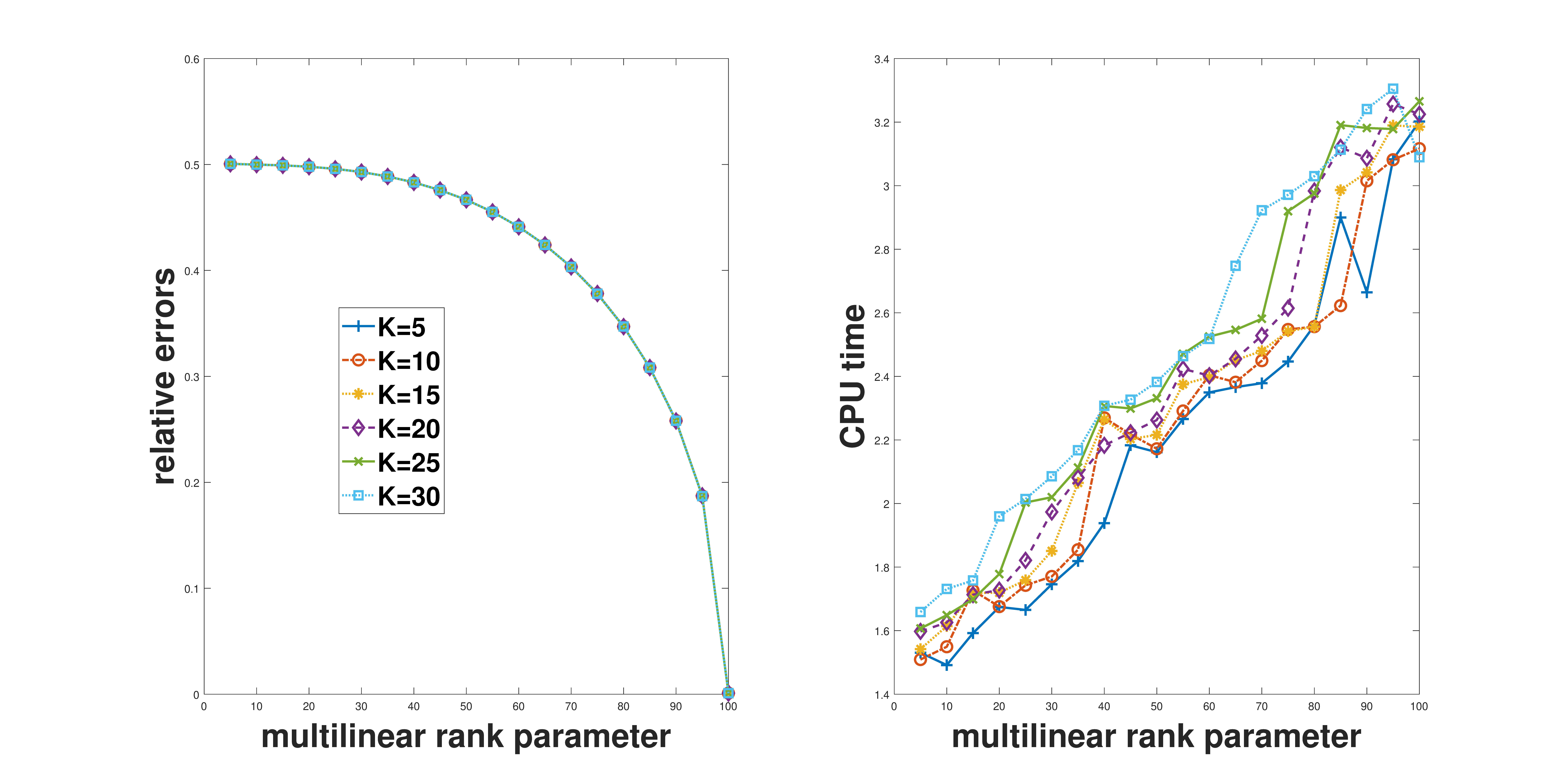}\\
		\includegraphics[width=3.8in]{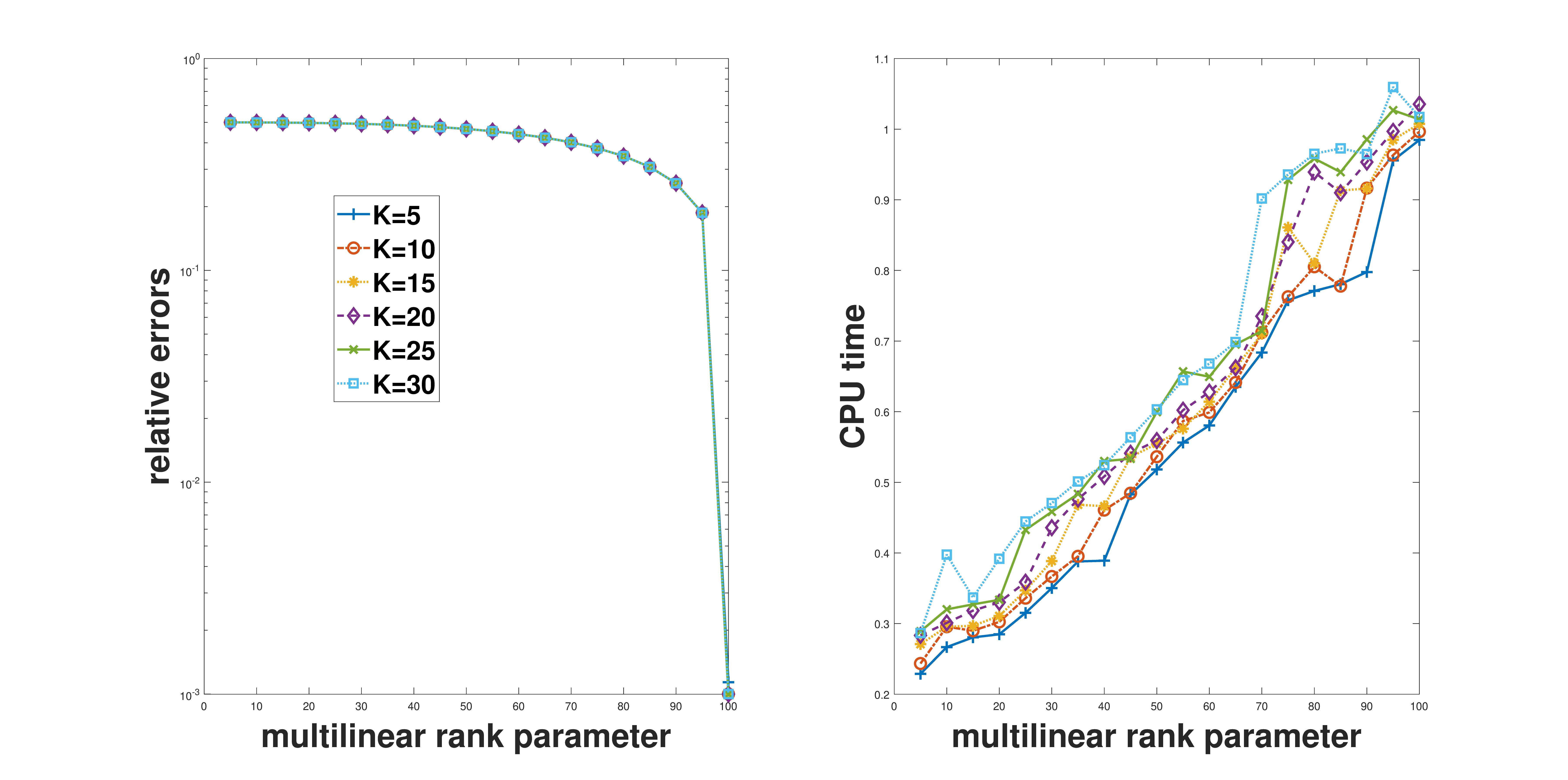}\\
	\end{tabular}
	\caption{Fixing $q=1$, results of applying applying Algorithms \ref{RALMRA:alg1} and \ref{RALMRA:alg2} with $K=5,10,15,20,25,30$ to $\mathcal{A}$: top for Algorithm \ref{RALMRA:alg1} and bottom for Algorithm \ref{RALMRA:alg2}.}\label{RALMRA:fig2}
\end{figure}

In practice, for each $n$, we set $T_n={\rm ceil}(\alpha I_1\dots I_{n-1}I_{n+1}\dots I_N)$ for Algorithm \ref{RALMRA:alg4} and $T_n={\rm ceil}(\alpha \mu_1\dots \mu_{n-1}I_{n+1}\dots I_N)$ for Algorithm \ref{RALMRA:alg5}, where $0<\alpha<1$ is a given number and in MAMLAB, ${\rm ceil}(x)$ rounds the elements of $x\in\mathbb{R}$ to the nearest integers towards infinity. When we fix $q=1$ and $K=10$, by applying Algorithms \ref{RALMRA:alg4} and \ref{RALMRA:alg5}
with different $\alpha$ to the tensor $\mathcal{A}$, the results are shown in Figure \ref{RALMRA:fig3}. In terms of the relative error, Algorithm \ref{RALMRA:alg4} with different $\alpha$ are similar, and Algorithm \ref{RALMRA:alg5} with $\alpha\geq 0.15$ are better than Algorithm \ref{RALMRA:alg5} with $\alpha< 0.15$. Meanwhile, for each $\mu$, as $\alpha$ increases, the values of CPU time for Algorithms \ref{RALMRA:alg4} and \ref{RALMRA:alg5} increases.

\begin{figure}[htb]
	\setlength{\tabcolsep}{4pt}
	\renewcommand\arraystretch{1}
	\centering
	\begin{tabular}{c}
		\includegraphics[width=3.8in]{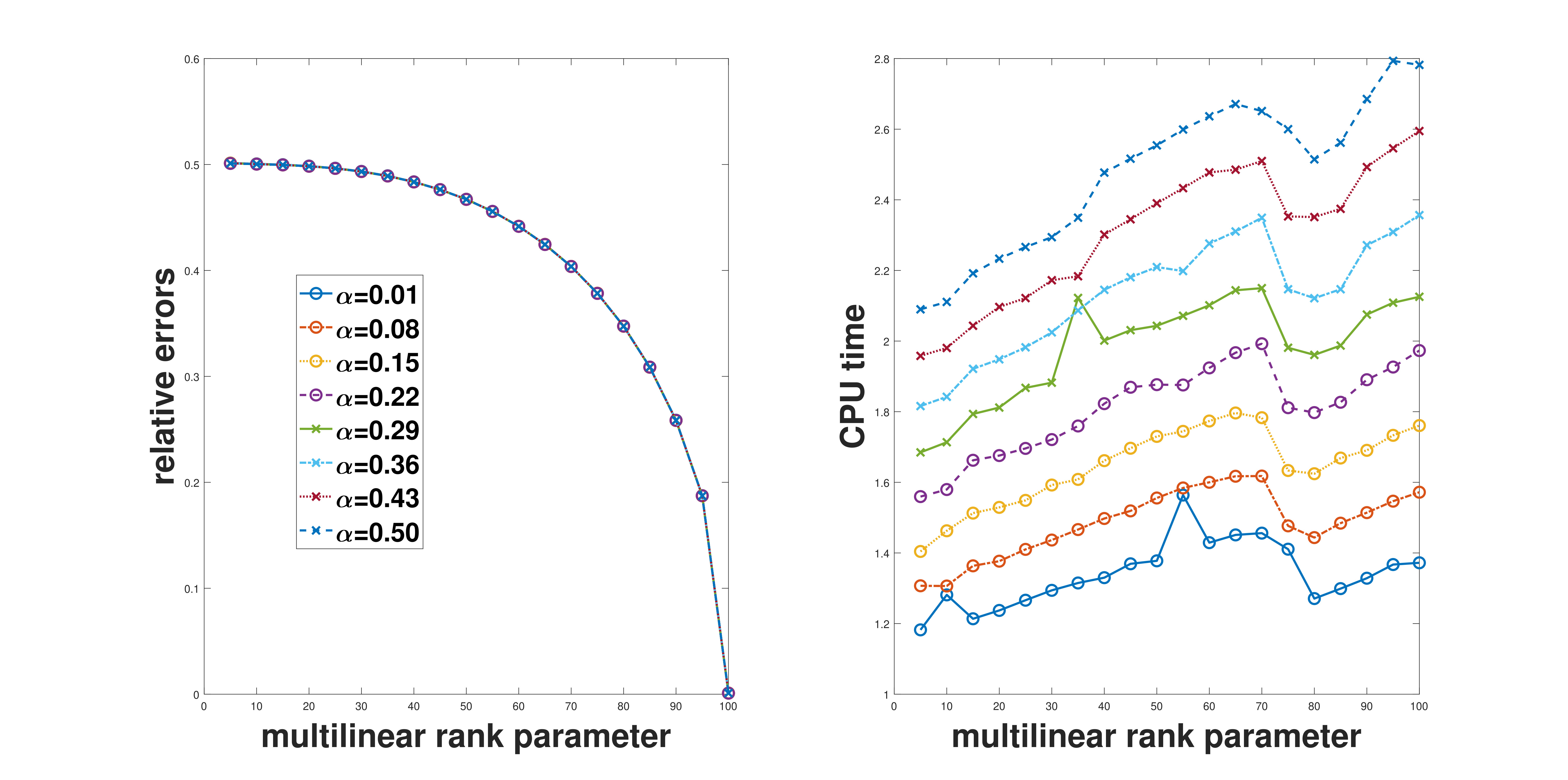}\\
		\includegraphics[width=3.8in]{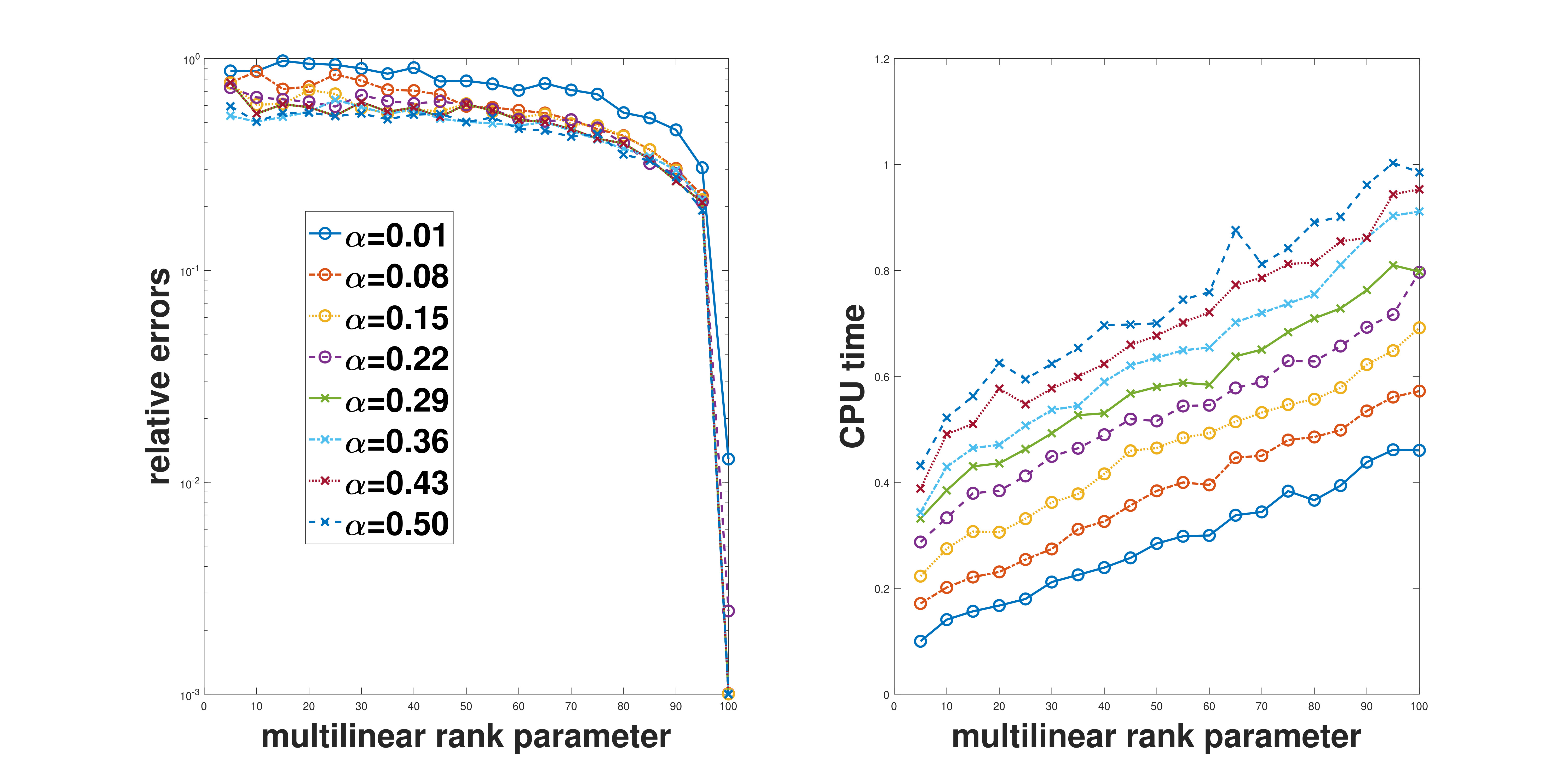}\\
	\end{tabular}
	\caption{Fixing $K=10$ and $q=1$, results of applying Algorithms \ref{RALMRA:alg4} and \ref{RALMRA:alg5} with $\alpha=0.01,0.08,\dots,0.5$ to $\mathcal{A}$: top for Algorithm \ref{RALMRA:alg4} and bottom for Algorithm \ref{RALMRA:alg5}.}\label{RALMRA:fig3}
\end{figure}

Finally, when we fix $q=1$, $K=10$ and $\alpha=0.2$, Figure \ref{RALMRA:fig4} illustrates the results by applying Algorithms \ref{RALMRA:alg1}, \ref{RALMRA:alg4}, \ref{RALMRA:alg3} and \ref{RALMRA:alg5} to $\mathcal{A}$. In terms of the relative error, these algorithms are comparable, and Algorithm \ref{RALMRA:alg2} is slight worse than other algorithms; in terms of CPU time, Algorithm \ref{RALMRA:alg5} is the fastest one, and Algorithms \ref{RALMRA:alg2} and \ref{RALMRA:alg5} are faster than Algorithms \ref{RALMRA:alg1} and \ref{RALMRA:alg4}.

\begin{figure}[htb]
	\centering
	\includegraphics[width=3.8in]{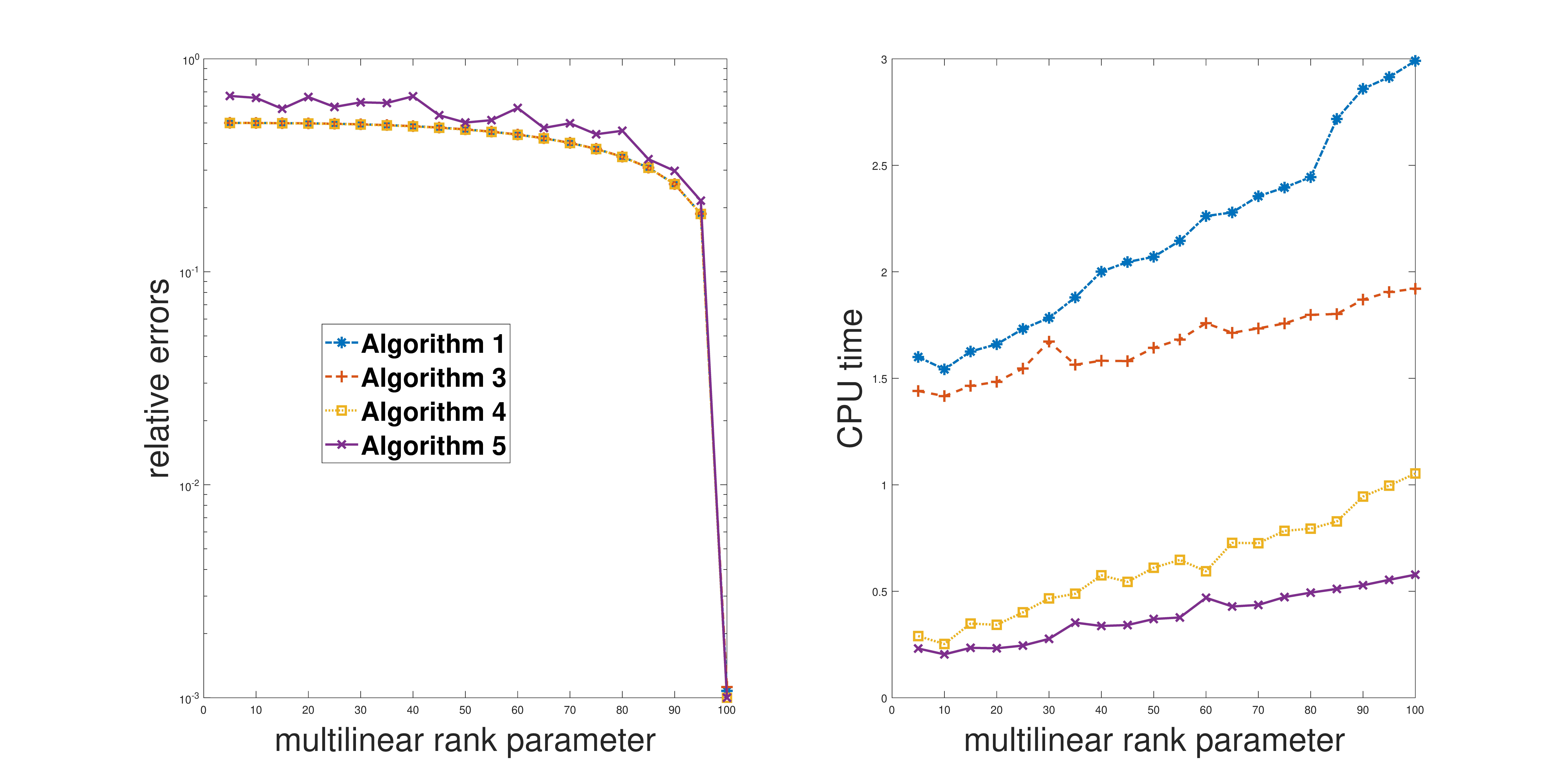}\\
	\caption{Fixing $q=1$, $K=10$ and $\alpha=0.2$, results of applying Algorithms \ref{RALMRA:alg1}, \ref{RALMRA:alg4}, \ref{RALMRA:alg2} and \ref{RALMRA:alg5} to $\mathcal{A}$.}\label{RALMRA:fig4}
\end{figure}

In the rest, we set $K=10$ and $q=1$ in Algorithms {\rm\ref{RALMRA:alg1}}, {\rm\ref{RALMRA:alg4}}, {\rm\ref{RALMRA:alg2}} and {\rm\ref{RALMRA:alg5}}. We also set $\alpha=0.2$ in Algorithms {\rm\ref{RALMRA:alg4}} and {\rm\ref{RALMRA:alg5}}. In Section {\rm \ref{RALMRA:sec5}}, we compare Algorithms {\rm\ref{RALMRA:alg2}} and {\rm\ref{RALMRA:alg5}} with the exisiting deterministic and randomized algorithms via several test tensors from both synthetic and real datasets.

\section{Theoretical analysis}
\label{RALMRA:sec5}
In this section, we will estimate the upper bounds of $\|\mathcal{A}-\widehat{\mathcal{A}}\|_F$, with $\widehat{\mathcal{A}}
=\mathcal{A}\times_1(\mathbf{Q}_1\mathbf{Q}_1^\top)\times_2(\mathbf{Q}_2\mathbf{Q}_2^\top)
\dots\times_N(\mathbf{Q}_N\mathbf{Q}_N^\top)$, where $\{\mathbf{Q}_1,\mathbf{Q}_2,\dots,\mathbf{Q}_N\}$ is obtained by applying the proposed algorithms to $\mathcal{A}\in\mathbb{R}^{I_1\times I_2\times \dots\times I_N}$ with a given multilinear rank $\{\mu_1,\mu_2,\dots,\mu_N\}$. Without loss of generality, for each $n$, we let $I_n'=\min\{I_n,I_1\dots I_{n-1}I_{n+1}\dots I_N\}$, $I_n''=\min\{I_n,T_n\}$, $I_n'''=\min\{I_n,\mu_1\dots \mu_{n-1}I_{n+1}\dots I_N\}$ and $I_n''''=\min\{\mu_1\dots I_{n-1}I_{n+1}\dots I_N,T_n\}$.

For the cases of Algorithms \ref{RALMRA:alg1} and \ref{RALMRA:alg4}, it follows from (\ref{RALMRA:eqn5}) that
\begin{equation}\label{RALMRA:eqn25}
\|\mathcal{A}-\widehat{\mathcal{A}}\|_F\leq\sum_{n=1}^N\|\mathcal{A}-\mathcal{A}\times_n(\mathbf{Q}_n\mathbf{Q}_n^\top)\|_F.
\end{equation}
Then, for each $n$, we need to give an upper bound for $\|\mathcal{A}-\mathcal{A}\times_n(\mathbf{Q}_n\mathbf{Q}_n^\top)\|_F$.

Meanwhile, for the cases of Algorithms \ref{RALMRA:alg2} and \ref{RALMRA:alg5}, it follows from (\ref{RALMRA:eqn23}) that
\begin{equation}\label{RALMRA:eqn26}
\|\mathcal{A}-\widehat{\mathcal{A}}\|_F\leq\sum_{n=1}^N\|(\mathcal{A}\times_{1}{\bf Q}_{1}^\top\times_{2}\mathbf{Q}_{2}^\top\dots\times_{n-1}\mathbf{Q}_{n-1}^\top)\times_{n}(\mathbf{I}_{I_n}-{\bf Q}_{n}{\bf Q}_{n}^\top)\|_F.
\end{equation}
Then for each $n$, we will consider the upper bound for $\|(\mathcal{A}\times_{1}{\bf Q}_{1}^\top\times_{2}\mathbf{Q}_{2}^\top\dots\times_{n-1}\mathbf{Q}_{n-1}^\top)\times_{n}(\mathbf{I}_{I_n}-{\bf Q}_{n}{\bf Q}_{n}^\top)\|_F$.
\subsection{Some necessary lemmas}

A matrix $\mathbf{G}\in\mathbb{R}^{I\times J}$ is a standard Gaussian matrix \cite{tropp2017practical} if the entries form an independent family of standard normal random variables, that is, independent and identically distributed (i.i.d.) Gaussian random variables of zero mean and unit variance.
\begin{lemma}{{\bf (\cite{coakley2011fast})}}
	\label{RALMRA:lem1}
	Let $\mathbf{G}\in\mathbb{R}^{L\times I}$ be a standard Gaussian matrix with $L<I$. For $\gamma>1$, if
	\begin{equation}\label{RALMRA:eqn2}
	1-\frac{1}{4(\gamma^2-1)\sqrt{\pi I\gamma^2}}\left(\frac{2\gamma^2}{e^{\gamma^2-1}}\right)^2
	\end{equation}
	is nonnegative, then, the largest singular value of $\mathbf{G}$ is at most $\sqrt{2I}\gamma$ with probability not less than the amount in {\rm (\ref{RALMRA:eqn2})}.
\end{lemma}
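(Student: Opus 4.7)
The plan is to reduce the claim to a one-dimensional tail bound for a chi-squared random variable and then to obtain that tail bound in closed form via an explicit Laplace-type analysis of the upper incomplete gamma function. Specifically, I would use the variational characterization $\sigma_{\max}(\mathbf{G})=\sup_{\mathbf{u}\in\mathbb{R}^L,\,\|\mathbf{u}\|_2=1}\|\mathbf{G}^{\top}\mathbf{u}\|_2$; for any fixed unit vector $\mathbf{u}\in\mathbb{R}^L$, rotational invariance of the standard Gaussian distribution gives $\mathbf{G}^{\top}\mathbf{u}\sim\mathcal{N}(\mathbf{0},\mathbf{I}_I)$ and hence $\|\mathbf{G}^{\top}\mathbf{u}\|_2^2\sim\chi^2_I$. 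So the key estimate is a tail bound of the form $\mathbb{P}(\chi^2_I>2I\gamma^2)\le\text{RHS}$ matching (\ref{RALMRA:eqn2}), followed by a transfer from one direction $\mathbf{u}$ to the full supremum over the unit sphere $S^{L-1}$.

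For the chi-squared tail I would rewrite $\mathbb{P}(\chi^2_I>2I\gamma^2)=\Gamma(I/2,\,I\gamma^2)/\Gamma(I/2)$ in terms of the upper incomplete gamma function, apply the elementary inequality $\Gamma(s,x)\le x^{s-1}e^{-x}/(1-(s-1)/x)$ (valid for $x>s-1$, which is exactly what the hypothesis $\gamma>1$ supplies), and then use Stirling's lower bound $\Gamma(I/2)\ge\sqrt{4\pi/I}\,(I/2)^{I/2}e^{-I/2}$. Simplification then produces an estimate whose prefactor has the shape $1/((\gamma^2-1)\sqrt{\pi I\gamma^2})$ and whose exponential part features the base $2\gamma^2/e^{\gamma^2-1}$ raised to an appropriate power, matching the structure of the probability in (\ref{RALMRA:eqn2}). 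The nonnegativity hypothesis on that quantity is exactly what ensures the resulting upper bound is a genuine probability.

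To promote the pointwise chi-squared tail into a bound on $\sigma_{\max}(\mathbf{G})$, I would either lay down a standard $\varepsilon$-net on $S^{L-1}$, paying a factor $(3/\varepsilon)^L$ in a union bound that is absorbed by the exponential decay, or apply Gaussian Lipschitz concentration to the $1$-Lipschitz map $\mathbf{G}\mapsto\sigma_{\max}(\mathbf{G})$ together with the crude mean bound $\mathbb{E}[\sigma_{\max}(\mathbf{G})]\le\sqrt{L}+\sqrt{I}$. Since the target radius $\sqrt{2I}\gamma$ is strictly larger than the Marchenko--Pastur edge $\sqrt{I}+\sqrt{L}$ as soon as $\gamma$ is modestly bigger than $1$ and $L\ll I$, this net overhead is easily paid for by the Gaussian tail.

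The main obstacle is the bookkeeping of absolute constants. Neither a one-line Chernoff argument nor a bare concentration inequality reproduces the specific algebraic form $(2\gamma^2/e^{\gamma^2-1})^2$ together with the prefactor $1/(4(\gamma^2-1)\sqrt{\pi I\gamma^2})$; these come out only after a careful Laplace-type expansion of the incomplete gamma function that retains the sub-leading $1/(\gamma^2-1)$ correction, paired with a precise (non-asymptotic) Stirling estimate. I would expect this to be the most delicate portion of the proof, and the argument of \cite{coakley2011fast} likely carries it out by integrating the density of $\sigma_{\max}(\mathbf{G})^2$ directly rather than via the chi-squared route sketched here.
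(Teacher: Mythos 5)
You should first be aware that the paper contains no proof of this statement: Lemma~\ref{RALMRA:lem1} is imported verbatim from \cite{coakley2011fast}, which in turn restates a 1951 estimate of Goldstine and von Neumann, so there is nothing internal to compare your argument against and it must stand on its own. Your reduction of $\sigma_{\max}(\mathbf{G})$ to $\sup_{\|\mathbf{u}\|_2=1}\|\mathbf{G}^\top\mathbf{u}\|_2$ and the observation that $\|\mathbf{G}^\top\mathbf{u}\|_2^2\sim\chi_I^2$ for fixed $\mathbf{u}$ are correct, and your incomplete-gamma estimate does reproduce the right single-direction rate. The genuine gap is in the globalization step, and it is more than constant bookkeeping. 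Writing the target bound with exponent $I$ (the printed exponent $2$ in (\ref{RALMRA:eqn2}) is surely a typo; compare the same lemma in \cite{che2020computation,che2021randomized}), the target decays like $e^{-I(\gamma^2-1-\ln(2\gamma^2))}$ while the one-direction tail $\mathbb{P}(\chi_I^2>2I\gamma^2)$ decays like $e^{-I(\gamma^2-\frac12-\frac12\ln(2\gamma^2))}$, so the total exponential slack available to pay for a union bound is only $(2e\gamma^2)^{-I/2}$. An $\varepsilon$-net on $S^{L-1}$ costs both the cardinality $(3/\varepsilon)^L$ and a threshold deflation to $(1-\varepsilon)\sqrt{2I}\gamma$, which itself erodes the rate by roughly $2\gamma^2\varepsilon$ per unit $I$; optimizing over $\varepsilon$ one finds the combined cost exceeds the slack for every $\gamma>1$ once $L$ is comparable to $I$, and the lemma assumes only $L<I$, so $L=I-1$ is allowed. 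Your fallback via Gaussian Lipschitz concentration around $\mathbb{E}\,\sigma_{\max}(\mathbf{G})\le\sqrt{L}+\sqrt{I}$ fares no better: it requires $\gamma>\sqrt2$ and yields the rate $(\sqrt2\gamma-2)^2/2$, which one checks is strictly smaller than $\gamma^2-1-\ln(2\gamma^2)$ throughout the regime where (\ref{RALMRA:eqn2}) is nonnegative, so that route provably cannot deliver the stated inequality.

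What actually produces the prefactor $\frac{1}{4(\gamma^2-1)\sqrt{\pi I\gamma^2}}$ and the base $2\gamma^2/e^{\gamma^2-1}$ is a different mechanism: one bounds $\mathbb{P}\left(\lambda_{\max}(\mathbf{G}\mathbf{G}^\top)>2I\gamma^2\right)$ by the expected number of eigenvalues of the Wishart matrix $\mathbf{G}\mathbf{G}^\top$ exceeding the threshold, evaluates that expectation with the exact finite-$I$ mean eigenvalue density (a Laguerre-polynomial kernel), and estimates the resulting tail integral; no decoupling into independent $\chi^2$ directions is needed, and the union over directions is replaced by a sum over at most $L$ eigenvalues. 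Your closing sentence correctly guesses that the source integrates a density of the squared singular values directly, but none of the machinery you actually set up carries that out, so as written the proposal establishes a qualitatively similar but strictly weaker bound rather than Lemma~\ref{RALMRA:lem1} itself. If you only need a self-contained substitute for use in Theorems~\ref{RALMRA:thm3}--\ref{RALMRA:thm5}, the concentration bound you sketch would suffice after adjusting the constants there; if you need the lemma as stated, you must either follow the Wishart-density argument or cite it.
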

\begin{lemma}{{\bf (\cite{chen2005condition})}}
	\label{RALMRA:lem2}
	Let $I$ and $L$ be positive integers with $L\leq I$ and
	\begin{equation}\label{RALMRA:eqn3}
	1-\frac{1}{\sqrt{2\pi(I-L+1)}}\left(\frac{e}{(I-L+1)\beta}\right)^{I-L+1}
	\end{equation}
	being nonnegative, where $\beta>1$. Suppose that $\mathbf{G}\in\mathbb{R}^{L\times J}$ is a standard Gaussian matrix. Then, the smallest singular value of $\mathbf{G}$ is at least $1/(\sqrt{I}\beta)$ with probability not less than the amount in {\rm (\ref{RALMRA:eqn3})}.
\end{lemma}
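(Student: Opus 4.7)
The plan is to reduce the tail bound on $\sigma_{\min}(\mathbf{G})$ to a left-tail bound for the smallest eigenvalue of an associated Wishart matrix, and then invoke an explicit marginal density for that eigenvalue originally due to Edelman.

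I would take the effective setting to be a Gaussian matrix with $I$ rows and $L$ columns (the aspect-ratio parameter $I-L+1$ appearing in (\ref{RALMRA:eqn3}) confirms this is the relevant tall configuration), and first reduce to an eigenvalue problem via $\sigma_{\min}(\mathbf{G})^2=\lambda_{\min}(\mathbf{G}^\top\mathbf{G})$. The matrix $\mathbf{G}^\top\mathbf{G}$ is $L\times L$ Wishart-distributed, and integrating out the other $L-1$ eigenvalues in the classical joint density yields an explicit marginal density for $\lambda_{\min}$ of the form
\[
f_{\lambda_{\min}}(\lambda)=c_{I,L}\,\lambda^{(I-L-1)/2}\,e^{-\lambda/2}\,\Phi(\lambda),
\]
where $\Phi$ is a confluent-hypergeometric-type factor uniformly bounded by $1$ on $[0,\infty)$ and $c_{I,L}$ is an explicit constant built from Gamma functions.

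Next I would integrate $f_{\lambda_{\min}}$ on $[0,1/(I\beta^2)]$, which is exactly the event $\sigma_{\min}(\mathbf{G})\leq 1/(\sqrt{I}\beta)$. Using $e^{-\lambda/2}\leq 1$ and $\Phi(\lambda)\leq 1$ on this interval, the integral is bounded above by
\[
\frac{2c_{I,L}}{I-L+1}\left(\frac{1}{\sqrt{I}\beta}\right)^{I-L+1}.
\]
Stirling's approximation applied to the Gamma ratios inside $c_{I,L}$ then produces the prefactor $(2\pi(I-L+1))^{-1/2}$, and the residual factors of $I$ combine with $(\sqrt{I}\beta)^{-(I-L+1)}$ to leave base $e/((I-L+1)\beta)$ raised to the $(I-L+1)$-th power, matching exactly the quantity subtracted from $1$ in (\ref{RALMRA:eqn3}).

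The main obstacle is constant bookkeeping: one must verify that Stirling's estimate applied to $c_{I,L}$ reproduces precisely the prefactor $(2\pi(I-L+1))^{-1/2}$ without stray multiplicative constants, and that $\Phi(\lambda)\leq 1$ uniformly on the integration interval $[0,1/(I\beta^2)]$. Both are elementary once Edelman's density formula is in hand, and they are carried out in detail in \cite{chen2005condition}; no fundamentally new ingredient beyond Stirling's formula and the monotonicity/boundedness of $\Phi$ is required to obtain the stated bound.
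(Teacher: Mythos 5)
The paper offers no proof of Lemma \ref{RALMRA:lem2}; it is quoted directly from \cite{chen2005condition}, so there is no internal argument to compare against. Your outline correctly reconstructs the proof given in that reference --- reduction to $\lambda_{\min}$ of the Wishart matrix $\mathbf{G}^\top\mathbf{G}$, Edelman's marginal density for the smallest eigenvalue, integration over $[0,1/(I\beta^2)]$, and Gamma-function bookkeeping --- with the one caveat that Stirling must be invoked as the one-sided bound $k!\geq\sqrt{2\pi k}\,(k/e)^{k}$ (so that $1/k!\leq(2\pi k)^{-1/2}(e/k)^{k}$ with $k=I-L+1$), not merely as an approximation, for the stated probability to be a rigorous lower bound.
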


A standard Gaussian matrix is a special case of sub-Gaussian matrices \cite{litvak2005smallest,rudelson2009smallest}, whose entries are sub-Gaussian variables. A sub-Gaussian variable is an important class of random variables that have strong tail decay properties.
\begin{definition}{{\bf (\cite{shabat2016randomized})}}
	\label{SUB:def4}
	A real valued random variable $X$ is called a sub-Gaussian random variable if there exist $b>0$ such that for all $t>0$ we have $\mathbb{E}(e^{tX})\leq e^{b^2t^2/2}$. A random variable $X$ is centered if $\mathbb{E}(X)=0$.
\end{definition}

\begin{definition}
	Assume that $s\geq1$, $a_1>0$ and $a_2>0$. The set $\mathbb{A}(s,a_1,a_2,K,I)$ consists of all $K\times I$ random matrices $\mathbf{G}$ whose entries are the centered  independent identically distributed real valued random variables satisfying the following conditions: {\rm (a)} moments: $\mathbb{E}(\lvert g_{ij}\rvert^3)\leq s^3$; {\rm (b)} norm: $\mathbb{P}(\|\mathbf{G}\|_2>a_1\sqrt{I})\leq e^{-a_2I}$; {\rm (c)} variance: $\mathbb{E}(\lvert g_{ij}\rvert^2)\leq 1$.
\end{definition}

It is proven in \cite{litvak2005smallest} that if $\mathbf{A}$ is sub-Gaussian, then $\mathbf{G}\in\mathbb{A}(s,a_1,a_2,I,J)$. For a Gaussian matrix with zero mean and unit variance, we have $s=(4/\sqrt{2\pi})^{1/3}$.
\begin{theorem}{{\bf (\cite{litvak2005smallest})}}
	\label{RALMRA:thm1}
	Suppose that $\mathbf{G}\in \mathbb{R}^{K\times I}$ is sub-Gaussian with $K\leq I$, $s\geq1$ and $a_2>0$. Then $\mathbb{P}(\|\mathbf{G}\|_2>a_1\sqrt{I})\leq e^{-a_2I}$,
	where $a_1=6s\sqrt{a_2+4}$.
\end{theorem}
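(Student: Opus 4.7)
The plan is to combine a discretization of the unit sphere with a sub-Gaussian concentration bound for $\|\mathbf{G}\mathbf{x}\|_2$ at fixed unit vectors $\mathbf{x}$, which is the standard route for operator-norm tail bounds in non-asymptotic random matrix theory. First, I fix $\epsilon\in(0,1)$ and build a (deterministic) $\epsilon$-net $\mathcal{N}\subset S^{I-1}$ of cardinality $|\mathcal{N}|\leq(1+2/\epsilon)^I$. A routine discretization inequality gives
\begin{equation*}
\|\mathbf{G}\|_2\leq (1-\epsilon)^{-1}\max_{\mathbf{x}\in\mathcal{N}}\|\mathbf{G}\mathbf{x}\|_2,
\end{equation*}
so the task reduces to controlling $\|\mathbf{G}\mathbf{x}\|_2$ uniformly over $\mathcal{N}$.

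Second, for a fixed $\mathbf{x}\in S^{I-1}$ each coordinate $(\mathbf{G}\mathbf{x})_i=\sum_{j=1}^I g_{ij}x_j$ is a linear combination of centered i.i.d.\ entries; the moment assumption $\mathbb{E}(|g_{ij}|^3)\leq s^3$ together with the variance bound $\mathbb{E}(|g_{ij}|^2)\leq 1$ and a standard equivalence of sub-Gaussian characterizations imply that each $g_{ij}$ is sub-Gaussian with a variance proxy depending only on $s$. Consequently $(\mathbf{G}\mathbf{x})_i$ is sub-Gaussian for every $i$, and $\|\mathbf{G}\mathbf{x}\|_2^2$ is a sum of $K$ independent squared sub-Gaussians. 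A Bernstein-type moment generating function computation then yields, for all $t\geq 1$,
\begin{equation*}
\mathbb{P}\bigl(\|\mathbf{G}\mathbf{x}\|_2>t s\sqrt{K}\bigr)\leq \exp(-c t^2 K)
\end{equation*}
for some absolute constant $c>0$.

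Third, a union bound gives
\begin{equation*}
\mathbb{P}\bigl(\|\mathbf{G}\|_2>(1-\epsilon)^{-1}t s\sqrt{K}\bigr)\leq (1+2/\epsilon)^I\exp(-c t^2 K),
\end{equation*}
and since $K\leq I$ the exponent is at most $-c t^2 I+I\log(1+2/\epsilon)$. Choosing $\epsilon=1/2$ (so $(1-\epsilon)^{-1}=2$) and $t=C\sqrt{a_2+4}$ for a suitable absolute constant $C$ absorbing $c^{-1}$ and $\log 5$, the exponent is bounded above by $-a_2 I$, while the effective threshold on $\|\mathbf{G}\|_2$ becomes $2Cs\sqrt{a_2+4}\,\sqrt{K}\leq 6s\sqrt{a_2+4}\,\sqrt{I}$, matching the stated constant $a_1=6s\sqrt{a_2+4}$.

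The main obstacle is the constant tracking. The numerical factor $6$ and the shift $+4$ both arise jointly from the choice of $\epsilon$ (hence from the net entropy $\log(1+2/\epsilon)$) and from the absolute constant $c$ appearing in the sub-Gaussian sum bound of the second step. Obtaining precisely these numbers requires optimizing the trade-off between the entropy term and the concentration exponent, a computation that is essentially mechanical but sensitive to the exact form of the moment generating function estimate derived from the hypothesis $\mathbb{E}(|g_{ij}|^3)\leq s^3$, and in particular to how one extracts the sub-Gaussian variance proxy of the $g_{ij}$'s from the second and third moment bounds.
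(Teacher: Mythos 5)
This theorem is imported verbatim from \cite{litvak2005smallest}; the paper gives no proof of it, so your argument cannot be compared with one and must stand on its own. The $\epsilon$-net strategy you adopt is indeed the standard (and essentially the original) route to such operator-norm tail bounds, and your first two structural steps (the net of cardinality $(1+2/\epsilon)^I$ and the discretization inequality $\|\mathbf{G}\|_2\leq(1-\epsilon)^{-1}\max_{\mathbf{x}\in\mathcal{N}}\|\mathbf{G}\mathbf{x}\|_2$) are correct. The decisive step, however, is broken. Your single-vector bound is stated at scale $\sqrt{K}$ with exponent proportional to $K$, namely $\mathbb{P}(\|\mathbf{G}\mathbf{x}\|_2>ts\sqrt{K})\leq e^{-ct^2K}$, so after the union bound the exponent is $I\log(1+2/\epsilon)-ct^2K$. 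Since $K\leq I$, this quantity is \emph{at least} $I\log(1+2/\epsilon)-ct^2I$, not at most: the inequality you invoke points the wrong way. Concretely, with $t=C\sqrt{a_2+4}$ fixed, the bound $(1+2/\epsilon)^I e^{-ct^2K}$ does not fall below $e^{-a_2I}$ when $K\ll I$ (take $K=1$ and $I$ large). The repair is to put the deviation at scale $\sqrt{I}$ from the outset: since $\mathbb{E}\|\mathbf{G}\mathbf{x}\|_2^2\leq K\leq I$ by the variance condition, a Bernstein bound for the independent sub-exponential variables $(\mathbf{G}\mathbf{x})_i^2$ gives $\mathbb{P}(\|\mathbf{G}\mathbf{x}\|_2>ts\sqrt{I})\leq e^{-ct^2I}$ for $t\geq 1$, whose exponent is now proportional to $I$ and does dominate the entropy term $I\log(1+2/\epsilon)$ for a constant choice of $t$; only then does the final constant-tracking toward $a_1=6s\sqrt{a_2+4}$ make sense.

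A second, smaller flaw: you assert that $\mathbb{E}(|g_{ij}|^3)\leq s^3$ together with $\mathbb{E}(|g_{ij}|^2)\leq 1$ implies the entries are sub-Gaussian ``by a standard equivalence of characterizations.'' This is false; a finite third moment gives only polynomial tail decay and no control of the moment generating function. Fortunately the step is unnecessary: sub-Gaussianity of the entries is the hypothesis of the theorem (in the sense of Definition~\ref{SUB:def4}), and the moment and variance conditions belong to the definition of the class $\mathbb{A}(s,a_1,a_2,K,I)$, which is what feeds the smallest-singular-value bound of Theorem~\ref{RALMRA:thm2}, not this one. You should simply invoke the hypothesis and use $s$ as the sub-Gaussian parameter of the entries.
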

\begin{theorem}{{\bf (\cite{litvak2005smallest})}}
	\label{RALMRA:thm2}
	Let $s\geq1$, $a_1>0$ and $a_2>0$. Suppose that $\mathbf{G}\in\mathbb{A}(s,a_1,a_2,K,I)$ with $I>(1+1/\ln(K))K$. Then, there exist positive constants $c_1$ and $c_2$ such that
	\begin{equation*}
	\mathbb{P}(\sigma_K(\mathbf{G})\leq c_1\sqrt{I})\leq e^{-I}+e^{-c''I/(2s^6)}+e^{-a_2I}\leq e^{-c_2I}.
	\end{equation*}
\end{theorem}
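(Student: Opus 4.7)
The plan is to use a classical $\varepsilon$-net argument on the unit sphere $S^{K-1}\subset\mathbb{R}^K$, combined with the operator-norm tail bound furnished by Theorem \ref{RALMRA:thm1}. I would start from the variational characterisation $\sigma_K(\mathbf{G})=\inf_{\mathbf{x}\in S^{K-1}}\|\mathbf{G}^\top\mathbf{x}\|_2$ (valid because $K\leq I$) and fix an $\varepsilon$-net $\mathcal{N}\subset S^{K-1}$ with $|\mathcal{N}|\leq(3/\varepsilon)^K$. The triangle inequality gives
\[
\sigma_K(\mathbf{G})\;\geq\;\min_{\mathbf{x}\in\mathcal{N}}\|\mathbf{G}^\top\mathbf{x}\|_2\;-\;\varepsilon\,\|\mathbf{G}\|_2,
\]
so the task reduces to (i) a one-point anti-concentration bound for $\|\mathbf{G}^\top\mathbf{x}\|_2$, (ii) tensorising it over the $I$ independent coordinates, and (iii) controlling $\|\mathbf{G}\|_2$ via Theorem \ref{RALMRA:thm1}.

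For the one-point bound I would fix $\mathbf{x}\in S^{K-1}$ and write $\|\mathbf{G}^\top\mathbf{x}\|_2^2=\sum_{j=1}^I Z_j^2$ where $Z_j=\sum_{i=1}^K g_{ij}x_i$ are i.i.d.\ centred random variables. Because the entries have variance at most one, $\mathbb{E}Z_j^2$ is of constant order, and the third-moment hypothesis $\mathbb{E}|g_{ij}|^3\leq s^3$ lets me control $\mathbb{E}Z_j^4$ (via Rosenthal-type inequalities or direct expansion) by a quantity of order $s^6$. A Paley–Zygmund inequality applied to $Z_j^2$ then yields a small-ball bound
\[
\mathbb{P}\big(Z_j^2\geq \kappa\big)\;\geq\; q,
\]
for explicit $\kappa,q>0$ whose product scales like $1/s^6$; this is the crucial anti-concentration input.

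Independence across $j$ lets me tensorise: the indicators $\mathbb{1}[Z_j^2\geq\kappa]$ are i.i.d.\ Bernoullis with success probability at least $q$, and a Chernoff bound forces at least $qI/2$ of them to be $1$ except on an event of probability $\leq e^{-c''I/(2s^6)}$. Hence for every fixed $\mathbf{x}$, $\|\mathbf{G}^\top\mathbf{x}\|_2\geq c_0\sqrt{I}$ with $c_0=\sqrt{\kappa q/2}$, off the same event. Combining with Theorem \ref{RALMRA:thm1}, which gives $\|\mathbf{G}\|_2\leq a_1\sqrt{I}$ off an event of probability $e^{-a_2 I}$, and choosing $\varepsilon=c_0/(2a_1)$, $c_1=c_0/2$, a union bound over $\mathcal{N}$ yields
\[
\mathbb{P}\big(\sigma_K(\mathbf{G})\leq c_1\sqrt{I}\big)\;\leq\;(3/\varepsilon)^K e^{-c''I/(2s^6)}+e^{-a_2 I}.
\]
The hypothesis $I>(1+1/\ln K)K$ is exactly what is needed to absorb the net-cardinality factor $(3/\varepsilon)^K=e^{K\ln(6a_1/c_0)}$ into an additional $e^{-I}$ term, producing the three-term bound in the statement; the final inequality $\leq e^{-c_2 I}$ follows by taking $c_2$ to be the minimum of the three exponential rates.

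The main obstacle I anticipate is the pointwise anti-concentration step: the hypotheses on $\mathbb{A}(s,a_1,a_2,K,I)$ give only upper bounds on variance and on the third moment, so a lower bound on $\mathbb{P}(Z_j^2\geq\kappa)$ cannot be extracted without either sharpening the moment assumption (e.g., implicitly treating the variance as normalised to one) or invoking the full strength of the Paley–Zygmund step, and it is precisely this step that produces the $s^6$ dependence in the exponent. A secondary delicate point is the book-keeping at the end: because $I$ exceeds $K$ only by the mild factor $1+1/\ln K$, the net radius $\varepsilon$ cannot be made small, and one must carefully calibrate it against $a_1$ and $c_0$ so that each of the three exponential contributions $e^{-I}$, $e^{-c''I/(2s^6)}$ and $e^{-a_2 I}$ appears with the stated rate.
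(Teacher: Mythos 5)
The paper does not prove this statement: Theorem~\ref{RALMRA:thm2} is imported verbatim from \cite{litvak2005smallest} and used as a black box, so there is no internal proof to compare against. Your outline does reproduce the architecture of the proof in that source (variational formula for $\sigma_K$, an $\varepsilon$-net on $S^{K-1}$, a Paley--Zygmund small-ball estimate for each coordinate $Z_j=\sum_i g_{ij}x_i$ tensorised over the $I$ independent coordinates, and the operator-norm bound of Theorem~\ref{RALMRA:thm1} to pass from the net to the sphere), and you correctly locate the origin of the $s^6$: the third-moment hypothesis controls $\mathbb{E}\lvert Z_j\rvert^3\lesssim s^3$, and the Paley--Zygmund/H\"older step produces a per-coordinate success probability of order $s^{-6}$. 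You are also right to be suspicious of the anti-concentration input: as printed, the class $\mathbb{A}$ only assumes $\mathbb{E}(\lvert g_{ij}\rvert^2)\leq 1$, under which the theorem is false (the zero matrix satisfies all three conditions and has $\sigma_K=0$). This is a transcription error for the lower bound $\mathbb{E}(\lvert g_{ij}\rvert^2)\geq 1$ in the original; your proof must assume the lower bound outright rather than hedge.

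The genuine gap is in the union-bound step, which you underestimate as ``book-keeping.'' With a fixed per-point failure probability $e^{-c''I/(2s^6)}$ and a net of cardinality $(3/\varepsilon)^K=e^{K\ln(3/\varepsilon)}$ at constant mesh $\varepsilon=c_0/(2a_1)$, you need $K\ln(3/\varepsilon)\leq c''I/(2s^6)-I$, which fails whenever $I$ is only $(1+1/\ln K)K$ and $s$ is not small: the hypothesis gives $I-K>K/\ln K$, a gain that is far too small to absorb a term proportional to $K$. This is precisely why the argument in \cite{litvak2005smallest} does not stop at a fixed-threshold small-ball bound; it upgrades the per-point estimate to one of the form $\mathbb{P}\bigl(\|\mathbf{G}^\top\mathbf{x}\|_2\leq\lambda\sqrt{I}\bigr)\leq(C\lambda)^{\delta I}$ (via a binomial count of the coordinates $j$ with $Z_j^2\geq\kappa$), and then calibrates the threshold $\lambda$ and the net mesh jointly, with the mesh chosen as a quantity of the form $\bar c^{\,1/\ln K}$ so that the net cardinality is $e^{O(K+K/\ln K)}$ and is beaten by the $\lambda$-power. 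Without that upgrade your three-term bound $e^{-I}+e^{-c''I/(2s^6)}+e^{-a_2I}$ cannot be reached in the stated aspect-ratio regime, so the proposal as written does not close.
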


\begin{lemma}{{\bf (\cite{che2021randomized})}}
	\label{RALMRA:lem3}
	Let $K$, $I$ and $J$ be positive integers with $K<J\leq I$. For a given matrix $\mathbf{A}\in\mathbb{R}^{I\times J}$, there exist an orthonormal matrix $\mathbf{Q}\in\mathbb{R}^{I\times K}$ and $\mathbf{S}\in\mathbb{R}^{K\times J}$ such that
	\begin{equation*}
	\|\mathbf{Q}\mathbf{S}-\mathbf{A}\|_F\leq\Delta_{K+1}(\mathbf{A})
	=\left(\sum_{i=K+1}^J\sigma_i(\mathbf{A})^2\right)^{1/2},
	\end{equation*}
	where $\sigma_i(\mathbf{A})$ is the $i$th singular value of $\mathbf{A}$ and $j=1,2,\dots,J$.
\end{lemma}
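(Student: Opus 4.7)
The statement is the classical Eckart–Young–Mirsky bound for the Frobenius-norm best rank-$K$ approximation, specialized to produce an explicit factorization $\mathbf{Q}\mathbf{S}$ with $\mathbf{Q}$ orthonormal. The plan is therefore entirely constructive via the thin SVD of $\mathbf{A}$. First I would write $\mathbf{A} = \mathbf{U}\boldsymbol{\Sigma}\mathbf{V}^\top$, where $\mathbf{U}\in\mathbb{R}^{I\times J}$ has orthonormal columns $\mathbf{u}_1,\dots,\mathbf{u}_J$, $\mathbf{V}\in\mathbb{R}^{J\times J}$ is orthogonal with columns $\mathbf{v}_1,\dots,\mathbf{v}_J$, and $\boldsymbol{\Sigma}=\mathrm{diag}(\sigma_1(\mathbf{A}),\dots,\sigma_J(\mathbf{A}))$ with the singular values listed in nonincreasing order.

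Next I would simply name the top-$K$ pieces. Set $\mathbf{Q}:=\mathbf{U}(:,1{:}K)\in\mathbb{R}^{I\times K}$ and $\mathbf{S}:=\boldsymbol{\Sigma}(1{:}K,1{:}K)\,\mathbf{V}(:,1{:}K)^\top\in\mathbb{R}^{K\times J}$. Then $\mathbf{Q}^\top\mathbf{Q}=\mathbf{I}_K$ because the columns of $\mathbf{U}$ are orthonormal, so $\mathbf{Q}$ is admissible. The key computation is then to expand $\mathbf{Q}\mathbf{S}=\sum_{i=1}^{K}\sigma_i(\mathbf{A})\,\mathbf{u}_i\mathbf{v}_i^\top$ and subtract it from the SVD expansion $\mathbf{A}=\sum_{i=1}^{J}\sigma_i(\mathbf{A})\,\mathbf{u}_i\mathbf{v}_i^\top$, which yields $\mathbf{A}-\mathbf{Q}\mathbf{S}=\sum_{i=K+1}^{J}\sigma_i(\mathbf{A})\,\mathbf{u}_i\mathbf{v}_i^\top$.

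Finally, since the rank-one matrices $\{\mathbf{u}_i\mathbf{v}_i^\top\}_{i=1}^{J}$ form an orthonormal family with respect to the Frobenius inner product (as $\langle \mathbf{u}_i\mathbf{v}_i^\top,\mathbf{u}_j\mathbf{v}_j^\top\rangle=(\mathbf{u}_i^\top\mathbf{u}_j)(\mathbf{v}_i^\top\mathbf{v}_j)=\delta_{ij}$), the Pythagorean identity gives $\|\mathbf{A}-\mathbf{Q}\mathbf{S}\|_F^2=\sum_{i=K+1}^{J}\sigma_i(\mathbf{A})^2=\Delta_{K+1}(\mathbf{A})^2$. Taking square roots produces the claimed bound, in fact with equality.

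There is no serious obstacle in this proof: the lemma only asserts existence of some factorization meeting the singular-value tail bound, and the truncated SVD exhibits one explicitly. The only detail worth flagging in the write-up is to verify the orthonormality of the outer-product system $\{\mathbf{u}_i\mathbf{v}_i^\top\}$, which is what converts a pointwise expansion into a clean Frobenius-norm identity; everything else is bookkeeping on the SVD.
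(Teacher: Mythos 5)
Your construction is correct: the truncated SVD with $\mathbf{Q}=\mathbf{U}(:,1{:}K)$ and $\mathbf{S}=\boldsymbol{\Sigma}(1{:}K,1{:}K)\mathbf{V}(:,1{:}K)^\top$ gives $\|\mathbf{Q}\mathbf{S}-\mathbf{A}\|_F^2=\sum_{i=K+1}^{J}\sigma_i(\mathbf{A})^2$ exactly, which establishes the stated inequality with equality. The paper only cites this lemma from an earlier reference rather than proving it, but your argument is the standard one underlying that citation and matches how the paper actually realizes $\mathbf{Q}_n$ in practice (Remark on computing $\mathbf{Q}_n$ from the SVD of $\mathbf{C}_n$), so there is nothing to add.
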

\begin{lemma}{{\bf (\cite{che2021randomized})}}
	\label{RALMRA:lem4}
	Let $\mathbf{A}\in\mathbb{R}^{I\times J}$ and $\mathbf{B}\in\mathbb{R}^{J\times S}$ with $S\leq\min\{I,J\}$. Then for all $k=1,2,\dots,\min\{I,J,S\}-1,\min\{I,J,S\}$, we have
	\begin{equation*}
	\sum_{i=k}^{S}\sigma_i(\mathbf{A}\mathbf{B})^2\leq
	\|\mathbf{B}\|_2^2\sum_{i=k}^{\min\{I,J\}}\sigma_i(\mathbf{A})^2.
	\end{equation*}
\end{lemma}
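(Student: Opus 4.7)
The plan is to reduce the Frobenius-type tail bound to a pointwise singular-value comparison, namely the classical Weyl-type inequality
\begin{equation*}
\sigma_{i}(\mathbf{A}\mathbf{B})\;\leq\;\|\mathbf{B}\|_{2}\,\sigma_{i}(\mathbf{A}),\qquad i=1,2,\dots,\min\{I,J,S\},
\end{equation*}
which is the $j=1$ specialization of the standard $\sigma_{i+j-1}(\mathbf{A}\mathbf{B})\leq \sigma_{i}(\mathbf{A})\sigma_{j}(\mathbf{B})$. Once this pointwise bound is in hand, squaring and summing over $i=k,k+1,\dots,S$ immediately gives
\begin{equation*}
\sum_{i=k}^{S}\sigma_{i}(\mathbf{A}\mathbf{B})^{2}\;\leq\;\|\mathbf{B}\|_{2}^{2}\sum_{i=k}^{S}\sigma_{i}(\mathbf{A})^{2},
\end{equation*}
and since $S\leq \min\{I,J\}$ and the additional summands $\sigma_{S+1}(\mathbf{A})^{2},\dots,\sigma_{\min\{I,J\}}(\mathbf{A})^{2}$ are nonnegative, one can enlarge the right-hand sum to run up to $\min\{I,J\}$, producing exactly the inequality claimed.

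To justify the pointwise bound without resorting to a black-box citation, I would use the Eckart--Young characterization $\sigma_{i}(\mathbf{M})=\min_{\mathrm{rank}(\mathbf{R})\leq i-1}\|\mathbf{M}-\mathbf{R}\|_{2}$. Let $\mathbf{A}_{i-1}$ denote any rank-$(i-1)$ minimizer for $\mathbf{A}$, so that $\|\mathbf{A}-\mathbf{A}_{i-1}\|_{2}=\sigma_{i}(\mathbf{A})$. Since $\mathbf{A}_{i-1}\mathbf{B}$ also has rank at most $i-1$, it is an admissible competitor for $\mathbf{A}\mathbf{B}$, whence
\begin{equation*}
\sigma_{i}(\mathbf{A}\mathbf{B})\;\leq\;\|\mathbf{A}\mathbf{B}-\mathbf{A}_{i-1}\mathbf{B}\|_{2}\;\leq\;\|\mathbf{A}-\mathbf{A}_{i-1}\|_{2}\,\|\mathbf{B}\|_{2}\;=\;\sigma_{i}(\mathbf{A})\,\|\mathbf{B}\|_{2},
\end{equation*}
by submultiplicativity of the spectral norm. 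For indices $i>\min\{I,J,S\}$ on the left-hand side (if any arise because one insists on summing up to $S$), the singular values $\sigma_{i}(\mathbf{A}\mathbf{B})$ are simply zero, so those terms contribute nothing.

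I do not expect any serious obstacle; the argument is essentially two ingredients (Weyl's inequality and the minimax/Eckart--Young formula) followed by a termwise square-and-sum. The only point that needs a moment of care is bookkeeping of the summation limits, since the left-hand sum runs to $S$ while the right runs to $\min\{I,J\}$. The hypothesis $S\leq \min\{I,J\}$ together with the nonnegativity of $\sigma_{i}(\mathbf{A})^{2}$ handles this asymmetry transparently, so the proof collapses into a short chain of inequalities.
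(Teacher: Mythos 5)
Your proof is correct: the pointwise bound $\sigma_i(\mathbf{A}\mathbf{B})\leq\|\mathbf{B}\|_2\,\sigma_i(\mathbf{A})$ obtained from the spectral-norm Eckart--Young characterization, followed by squaring, summing over $i=k,\dots,S$, and enlarging the right-hand sum to $\min\{I,J\}$ by nonnegativity, establishes the claim with no gaps. The paper itself states this lemma by citation to \cite{che2021randomized} and gives no proof, so there is nothing to compare against; your argument is the standard one for this inequality.
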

\begin{lemma}
	\label{RALMRA:lem5}
	Suppose that $\mathbf{A}\in\mathbb{R}^{I_1\times I_2}$, $\mathbf{B}\in\mathbb{R}^{I_2\times I_3}$, $K$ is a positive integer such that $1\leq K\leq I_2$, and $\{p_i\}_{i=1}^{I_2}$ are such that $p_i\geq 0$, $\sum_{i=1}^{I_2}p_i=1$, and for some positive constant $\beta\leq1$,
	\begin{equation*}
	p_i\geq\beta\frac{\|\mathbf{A}(i,:)\|_2\|\mathbf{B}(:,i)\|_2}{\sum_{j=1}^{I_2}\|\mathbf{A}(j,:)\|_2\|\mathbf{B}(:,j)\|_2}.
	\end{equation*}
	Construct $\mathbf{C}$ and $\mathbf{R}$ with Algorithm {\rm \ref{RALMRA:alg3}} and let $\mathbf{C}\mathbf{R}$ be an approximation to $\mathbf{AB}$. Let $\delta\in(0,1)$ and $\eta=1+\sqrt{(8/\delta)\log(1/\delta)}$, then with probability at least $1-\delta$,
	\begin{equation*}
	\|\mathbf{A}\mathbf{B}-\mathbf{C}\mathbf{R}\|_F\leq \frac{\eta}{\sqrt{\beta K}}\|\mathbf{A}\|_F\|\mathbf{B}\|_F.
	\end{equation*}
	\end{lemma}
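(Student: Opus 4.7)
The plan is a two-stage argument: first establish a second-moment bound on $\|\mathbf{A}\mathbf{B}-\mathbf{C}\mathbf{R}\|_F^2$ in expectation, then promote it to the stated high-probability estimate via a bounded-differences concentration inequality.

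First I would write
\[
\mathbf{C}\mathbf{R} \;=\; \sum_{t=1}^{K} \frac{1}{K\, p_{i_t}}\, \mathbf{A}(:,i_t)\mathbf{B}(i_t,:),
\]
an unbiased sum of $K$ i.i.d.\ rank-one estimators of $\mathbf{A}\mathbf{B}/K$, so that $\mathbb{E}(\mathbf{C}\mathbf{R}) = \mathbf{A}\mathbf{B}$. Expanding $(\mathbf{C}\mathbf{R})_{ij} = \sum_t a_{i,i_t} b_{i_t,j}/(K p_{i_t})$, taking entrywise variances, and summing over all $(i,j)$ gives
\[
\mathbb{E}\bigl(\|\mathbf{A}\mathbf{B}-\mathbf{C}\mathbf{R}\|_F^2\bigr)
\;=\; \frac{1}{K}\sum_{k=1}^{I_2}\frac{\|\mathbf{A}(:,k)\|_2^2\,\|\mathbf{B}(k,:)\|_2^2}{p_k} \;-\; \frac{1}{K}\|\mathbf{A}\mathbf{B}\|_F^2 .
\]
Substituting the nearly optimal lower bound on $p_k$, discarding the nonpositive term, and invoking Cauchy-Schwarz in the form $\sum_k \|\mathbf{A}(:,k)\|_2\|\mathbf{B}(k,:)\|_2 \le \|\mathbf{A}\|_F\|\mathbf{B}\|_F$ yields $\mathbb{E}(\|\mathbf{A}\mathbf{B}-\mathbf{C}\mathbf{R}\|_F^2) \le \|\mathbf{A}\|_F^2\|\mathbf{B}\|_F^2/(\beta K)$, and Jensen's inequality then gives the mean bound $\mathbb{E}(\|\mathbf{A}\mathbf{B}-\mathbf{C}\mathbf{R}\|_F) \le \|\mathbf{A}\|_F\|\mathbf{B}\|_F/\sqrt{\beta K}$.

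For the concentration step I would regard $F(i_1,\dots,i_K) := \|\mathbf{A}\mathbf{B}-\mathbf{C}\mathbf{R}\|_F$ as a function of the $K$ independent sample indices. Replacing a single $i_t$ perturbs $\mathbf{C}\mathbf{R}$ by at most two rank-one terms, each of Frobenius norm bounded by $\|\mathbf{A}(:,i_t)\|_2\|\mathbf{B}(i_t,:)\|_2/(Kp_{i_t})$, which by the nearly optimal probability assumption is at most $\|\mathbf{A}\|_F\|\mathbf{B}\|_F/(\beta K)$. The triangle inequality then delivers a bounded-differences constant $c_t \le 2\|\mathbf{A}\|_F\|\mathbf{B}\|_F/(\beta K)$ uniform in $t$, and McDiarmid's inequality produces an exponential tail estimate of the form $\mathbb{P}(F > \mathbb{E}(F) + \varepsilon) \le \exp(-2\varepsilon^2/(K c_t^2))$. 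Setting this tail probability equal to $\delta$, solving for $\varepsilon$, and combining with the mean bound above reproduces the stated multiplicative constant $\eta = 1 + \sqrt{(8/\delta)\log(1/\delta)}$ in front of $\|\mathbf{A}\|_F\|\mathbf{B}\|_F/\sqrt{\beta K}$.

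The principal obstacle is the concentration step: one has to justify the pointwise bounded-differences constants carefully (they depend on the specific $i_t$ that is being altered, so the uniform bound via the nearly optimal sampling condition is essential), and then fuse the resulting Gaussian-type deviation with the Jensen-based mean estimate so as to recover the precise form of $\eta$. The second-moment calculation, by contrast, is a routine variance computation once the rank-one decomposition of $\mathbf{C}\mathbf{R}$ is in hand.
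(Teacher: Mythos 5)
The paper does not actually prove this lemma: it is imported verbatim (as Theorem 1 of Drineas, Kannan and Mahoney, \emph{SIAM J. Comput.} 36 (2006)) and stated without proof, so there is no in-paper argument to compare against. Your reconstruction is the standard one from that source and is essentially correct: the rank-one decomposition of $\mathbf{C}\mathbf{R}$, the unbiasedness and variance computation, the substitution of the nearly optimal probabilities followed by Cauchy--Schwarz and Jensen, and the bounded-differences concentration are all exactly the right ingredients, and your uniform bound $c_t\le 2\|\mathbf{A}\|_F\|\mathbf{B}\|_F/(\beta K)$ is justified by the sampling condition as you say. The one inaccuracy is your closing claim that the computation ``reproduces'' $\eta=1+\sqrt{(8/\delta)\log(1/\delta)}$: McDiarmid's inequality with your $c_t$ gives $\varepsilon=\sqrt{(2/\beta)\log(1/\delta)}\cdot\|\mathbf{A}\|_F\|\mathbf{B}\|_F/\sqrt{\beta K}$, i.e.\ $\eta=1+\sqrt{(2/\beta)\log(1/\delta)}$, while the weaker Hoeffding--Azuma form used in the original reference gives $\eta=1+\sqrt{(8/\beta)\log(1/\delta)}$; neither produces $8/\delta$, which is evidently a typo in the paper for $8/\beta$. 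So your argument in fact establishes a (typically) sharper bound than the one stated, rather than the stated one literally. A minor further point: the lemma's hypothesis is written with $\|\mathbf{A}(i,:)\|_2\|\mathbf{B}(:,i)\|_2$, which for $\mathbf{A}\in\mathbb{R}^{I_1\times I_2}$, $\mathbf{B}\in\mathbb{R}^{I_2\times I_3}$ should read $\|\mathbf{A}(:,i)\|_2\|\mathbf{B}(i,:)\|_2$; you silently and correctly use the latter.
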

\begin{lemma}
	\label{RALMRA:lem6}
	Suppose that $\mathbf{A}\in\mathbb{R}^{I_1\times I_2}$, $\mathbf{B}\in\mathbb{R}^{I_2\times I_3}$, $K$ is a positive integer such that $1\leq K\leq I_2$, and $\{p_i\}_{i=1}^{I_2}$ are such that $p_i=1/I_2$. Construct $\mathbf{C}$ and $\mathbf{R}$ with Algorithm {\rm \ref{RALMRA:alg3}} and let $\mathbf{C}\mathbf{R}$ be an approximation to $\mathbf{AB}$. Let $\delta\in(0,1)$ and $\gamma=1+\frac{I_2}{\sqrt{K}}\sqrt{8\log(1/\delta)}\max_i\|\mathbf{A}(:,i)\|_2\|\mathbf{B}(i,:)\|_2$, then with probability at least $1-\delta$,
	\begin{equation*}
	\|\mathbf{A}\mathbf{B}-\mathbf{C}\mathbf{R}\|_F \leq\sqrt{\frac{I_2}{K}}\left(\sum_{i=1}^{I_2}\|\mathbf{A}(:,i)\|_2^2\|\mathbf{B}(i,:)\|_2^2\right)^{1/2}+\gamma.
	\end{equation*}
\end{lemma}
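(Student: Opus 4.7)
The plan is to mirror the proof of Lemma~\ref{RALMRA:lem5}, but specialized to uniform sampling. The argument has two independent ingredients: a second-moment (variance) calculation that yields the deterministic part of the bound, and a bounded-differences concentration inequality (McDiarmid) that yields the $\gamma$-term. The two ingredients combine through the standard route $X \le \mathbb{E}[X]+\text{(deviation)}$ with probability at least $1-\delta$.

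First I would compute $\mathbb{E}\bigl[\|\mathbf{A}\mathbf{B}-\mathbf{C}\mathbf{R}\|_F^2\bigr]$. Writing $\mathbf{C}\mathbf{R} = \frac{1}{K}\sum_{j=1}^{K}\frac{\mathbf{A}(:,\xi_j)\mathbf{B}(\xi_j,:)}{p_{\xi_j}}$, each summand is an unbiased estimator of $\mathbf{A}\mathbf{B}$ (since $\mathbb{E}\bigl[\mathbf{A}(:,\xi_j)\mathbf{B}(\xi_j,:)/p_{\xi_j}\bigr]=\sum_i \mathbf{A}(:,i)\mathbf{B}(i,:)=\mathbf{A}\mathbf{B}$). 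An entrywise variance calculation (identical to the one in the proof of the Drineas--Kannan--Mahoney theorem) gives
\begin{equation*}
\mathbb{E}\bigl[\|\mathbf{A}\mathbf{B}-\mathbf{C}\mathbf{R}\|_F^{2}\bigr]
=\frac{1}{K}\sum_{i=1}^{I_{2}}\frac{\|\mathbf{A}(:,i)\|_{2}^{2}\,\|\mathbf{B}(i,:)\|_{2}^{2}}{p_i}-\frac{1}{K}\|\mathbf{A}\mathbf{B}\|_F^{2}.
\end{equation*}
Substituting the uniform probabilities $p_i=1/I_2$ and dropping the nonpositive correction term yields $\mathbb{E}\bigl[\|\mathbf{A}\mathbf{B}-\mathbf{C}\mathbf{R}\|_F^{2}\bigr]\le \frac{I_2}{K}\sum_i \|\mathbf{A}(:,i)\|_2^2\|\mathbf{B}(i,:)\|_2^2$. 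Jensen's inequality then gives the first summand in the claimed bound for $\mathbb{E}\bigl[\|\mathbf{A}\mathbf{B}-\mathbf{C}\mathbf{R}\|_F\bigr]$.

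Next I would establish concentration around this mean. Treat $f(\xi_1,\dots,\xi_K):=\|\mathbf{A}\mathbf{B}-\mathbf{C}\mathbf{R}\|_F$ as a function of the i.i.d.\ samples. Replacing one coordinate $\xi_j$ by $\xi_j'$ changes $\mathbf{C}\mathbf{R}$ by $\frac{1}{K}\bigl(\frac{\mathbf{A}(:,\xi_j')\mathbf{B}(\xi_j',:)}{p_{\xi_j'}}-\frac{\mathbf{A}(:,\xi_j)\mathbf{B}(\xi_j,:)}{p_{\xi_j}}\bigr)$, so by the triangle inequality and $p_i=1/I_2$,
\begin{equation*}
|f(\xi)-f(\xi')|\;\le\;\frac{2}{K}\max_i\frac{\|\mathbf{A}(:,i)\|_2\|\mathbf{B}(i,:)\|_2}{p_i}\;=\;\frac{2I_2}{K}\max_i\|\mathbf{A}(:,i)\|_2\|\mathbf{B}(i,:)\|_2=:c.
\end{equation*}
Applying McDiarmid's bounded-differences inequality with these constants gives $\mathbb{P}(f\ge\mathbb{E}f+t)\le \exp(-2t^2/(Kc^2))$. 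Solving $\exp(-2t^2/(Kc^2))=\delta$ for $t$ produces a deviation term of order $\tfrac{I_2}{\sqrt{K}}\sqrt{\log(1/\delta)}\max_i\|\mathbf{A}(:,i)\|_2\|\mathbf{B}(i,:)\|_2$, which (up to the explicit constant and the benign additive $1$ inherited from the Jensen step, exactly as in Lemma~\ref{RALMRA:lem5}) is $\gamma$. Combining this high-probability event with the Jensen upper bound on $\mathbb{E}f$ yields the stated inequality.

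The main obstacle is purely bookkeeping: pinning down the correct Lipschitz constant for $f$ under the uniform distribution and then matching the numerical constants (in particular the factor $8$ inside $\sqrt{\log(1/\delta)}$ and the additive $1$ in $\gamma$) to the version of McDiarmid used in the proof of Lemma~\ref{RALMRA:lem5}, so that the two lemmas take a parallel form. No new tensor-specific techniques are required; everything reduces to the standard random-sampling analysis of matrix multiplication specialized to $p_i\equiv 1/I_2$.
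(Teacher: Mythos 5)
The paper does not actually prove Lemma~\ref{RALMRA:lem6}: it is quoted as a known result, namely the uniform-probability specialization of the BasicMatrixMultiplication analysis in \cite{drineas2006fast}. Your outline reproduces that source's argument faithfully --- the entrywise variance computation giving $\mathbb{E}\|\mathbf{A}\mathbf{B}-\mathbf{C}\mathbf{R}\|_F^2=\frac{1}{K}\sum_{i}\frac{\|\mathbf{A}(:,i)\|_2^2\|\mathbf{B}(i,:)\|_2^2}{p_i}-\frac{1}{K}\|\mathbf{A}\mathbf{B}\|_F^2$, Jensen's inequality for the first term, and a bounded-differences (Azuma/McDiarmid) step with increment $\frac{2}{K}\max_i\frac{\|\mathbf{A}(:,i)\|_2\|\mathbf{B}(i,:)\|_2}{p_i}$, which under $p_i=1/I_2$ produces exactly the two terms of the stated bound. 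The only bookkeeping point is the constant: the one-sided McDiarmid exponent you quote yields a deviation of order $\sqrt{2\log(1/\delta)}$ rather than $\sqrt{8\log(1/\delta)}$ (the $8$ in the lemma comes from the looser two-sided Azuma form used in \cite{drineas2006fast}), so your version is in fact slightly stronger and a fortiori implies the stated inequality; the additive $1$ in $\gamma$ is, as you say, harmless slack carried over from the form of Lemma~\ref{RALMRA:lem5}.
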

\subsection{Analyzing Algorithm \ref{RALMRA:alg1}}

When the matrix $\mathbf{Q}_n$ is obtained by Algorithm \ref{RALMRA:alg1} with given $n$, the matrix $\mathbf{Q}_n\mathbf{Q}_n^\top\mathbf{A}_{(n)}$ is a good approximation to the matrix $\mathbf{A}_{(n)}$, provided that there exist matrices $\mathbf{G}_n\in\mathbb{R}^{I_n\times (\mu_n+K)}$ and $\mathbf{R}_n\in\mathbb{R}^{\mu_n\times (\mu_n+K)}$ such that: (a) $\mathbf{Q}_n$ is orthonormal; (b) $\mathbf{Q}_n\mathbf{R}_n$ is a good approximation of $(\mathbf{A}_{(n)}\mathbf{A}_{(n)}^\top)^q\mathbf{G}_n$; and (c) there exists a matrix $\mathbf{F}\in\mathbb{R}^{(\mu_n+K)\times \mu_1\dots \mu_{n-1}I_{n+1}\dots I_{N}}$ such that $\|\mathbf{F}\|_2$ is not too large and $(\mathbf{A}_{(n)}\mathbf{A}_{(n)}^\top)^q\mathbf{G}_n\mathbf{F}_n$ is a good approximation of $\mathbf{A}_{(n)}$.
\begin{lemma}
	\label{RALMRA:lem7}
	For a given $n$, suppose that $\mathbf{A}_{(n)}\in\mathbb{R}^{I_n\times I_1\dots I_{n-1}I_{n+1}\dots I_{N}}$, $\mathbf{Q}_n\in\mathbb{R}^{I_n\times \mu_n}$ is orthonormal, $\mathbf{R}_n\in\mathbb{R}^{\mu_n\times (\mu_n+K)}$, $\mathbf{F}\in\mathbb{R}^{(\mu_n+K)\times I_1\dots I_{n-1}I_{n+1}\dots I_{N}}$ and $\mathbf{G}_n\in\mathbb{R}^{I_n\times (\mu_n+K)}$ with $\mu_n< \mu_n+K< I_n'$. For a given positive integer $q>0$, we have
	\begin{equation}
	\label{RALMRA:eqn10}
	\begin{aligned}
	\|\mathbf{A}_{(n)}-\mathbf{Q}_n\mathbf{Q}_n^\top\mathbf{A}_{(n)}\|_F^2
	&\leq 2\|(\mathbf{A}_{(n)}\mathbf{A}_{(n)}^\top)^q\mathbf{G}_n\mathbf{F}_n-\mathbf{A}_{(n)}\|_F^2\\
&\quad	+2\|\mathbf{F}\|_2^2\|\mathbf{Q}_n\mathbf{R}_n-(\mathbf{A}_{(n)}\mathbf{A}_{(n)}^\top)^q\mathbf{G}_n\|_F^2.
	\end{aligned}
	\end{equation}
\end{lemma}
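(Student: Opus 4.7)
The plan is to insert the surrogate matrix $\mathbf{B}\mathbf{F}$, where $\mathbf{B}=(\mathbf{A}_{(n)}\mathbf{A}_{(n)}^\top)^q\mathbf{G}_n$, as an intermediate approximation to $\mathbf{A}_{(n)}$, and then apply the triangle inequality together with the standard Cauchy-type bound $(a+b)^2\le 2a^2+2b^2$. Writing $\mathbf{P}=\mathbf{I}_{I_n}-\mathbf{Q}_n\mathbf{Q}_n^\top$, which is an orthogonal projection and therefore satisfies $\|\mathbf{P}\|_2\le 1$, the error splits as
\begin{equation*}
\mathbf{P}\mathbf{A}_{(n)}=\mathbf{P}\bigl(\mathbf{A}_{(n)}-\mathbf{B}\mathbf{F}\bigr)+\mathbf{P}\mathbf{B}\mathbf{F}.
\end{equation*}

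For the first piece I would use $\|\mathbf{P}\|_2\le 1$ together with the sub-multiplicativity of the Frobenius norm to obtain $\|\mathbf{P}(\mathbf{A}_{(n)}-\mathbf{B}\mathbf{F})\|_F\le\|\mathbf{B}\mathbf{F}-\mathbf{A}_{(n)}\|_F$, which already produces the first term on the right-hand side of (\ref{RALMRA:eqn10}). For the second piece, the key identity is that $\mathbf{P}\mathbf{Q}_n=\mathbf{0}$ because $\mathbf{Q}_n^\top\mathbf{Q}_n=\mathbf{I}_{\mu_n}$; hence $\mathbf{P}\mathbf{Q}_n\mathbf{R}_n=\mathbf{0}$, so we may freely subtract this vanishing term and write
\begin{equation*}
\mathbf{P}\mathbf{B}\mathbf{F}=\mathbf{P}\bigl(\mathbf{B}-\mathbf{Q}_n\mathbf{R}_n\bigr)\mathbf{F}.
\end{equation*}
Applying $\|\mathbf{P}\|_2\le 1$ and $\|\mathbf{X}\mathbf{F}\|_F\le\|\mathbf{X}\|_F\|\mathbf{F}\|_2$ then yields $\|\mathbf{P}\mathbf{B}\mathbf{F}\|_F\le\|\mathbf{F}\|_2\|\mathbf{Q}_n\mathbf{R}_n-\mathbf{B}\|_F$, which supplies the second term.

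Combining the two pieces through the triangle inequality gives
\begin{equation*}
\|\mathbf{A}_{(n)}-\mathbf{Q}_n\mathbf{Q}_n^\top\mathbf{A}_{(n)}\|_F\le\|\mathbf{B}\mathbf{F}-\mathbf{A}_{(n)}\|_F+\|\mathbf{F}\|_2\|\mathbf{Q}_n\mathbf{R}_n-\mathbf{B}\|_F,
\end{equation*}
and squaring via $(a+b)^2\le 2a^2+2b^2$ yields exactly (\ref{RALMRA:eqn10}) after re-expanding $\mathbf{B}$. None of the steps is delicate; the only ingredient worth flagging is the projector identity $\mathbf{P}\mathbf{Q}_n=\mathbf{0}$, which is what allows the factor $\mathbf{B}$ to be replaced by the residual $\mathbf{B}-\mathbf{Q}_n\mathbf{R}_n$ and thus converts an uncontrolled quantity into one that can be bounded by the hypothesis that $\mathbf{Q}_n\mathbf{R}_n$ closely approximates $\mathbf{B}$. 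The role of the dimension assumption $\mu_n<\mu_n+K<I_n'$ is only to ensure that all matrix products in the decomposition are conformable, so no additional case analysis is required.
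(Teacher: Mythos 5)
Your proof is correct, and it reaches the stated bound by a route that is organized differently from --- and in fact cleaner than --- the paper's. The paper inserts the same surrogate $\mathbf{B}\mathbf{F}_n$ with $\mathbf{B}=(\mathbf{A}_{(n)}\mathbf{A}_{(n)}^\top)^q\mathbf{G}_n$, but splits the error into \emph{three} terms, $(\mathbf{A}_{(n)}-\mathbf{B}\mathbf{F}_n)+(\mathbf{B}\mathbf{F}_n-\mathbf{Q}_n\mathbf{Q}_n^\top\mathbf{B}\mathbf{F}_n)+(\mathbf{Q}_n\mathbf{Q}_n^\top\mathbf{B}\mathbf{F}_n-\mathbf{Q}_n\mathbf{Q}_n^\top\mathbf{A}_{(n)})$, bounds the first and third both by $\|\mathbf{A}_{(n)}-\mathbf{B}\mathbf{F}_n\|_F$, and then handles the middle term by a second three-term split of $\mathbf{B}-\mathbf{Q}_n\mathbf{Q}_n^\top\mathbf{B}$ in which the identity $\mathbf{Q}_n^\top\mathbf{Q}_n=\mathbf{I}_{\mu_n}$ kills the cross term. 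Your two-term split $\mathbf{P}\mathbf{A}_{(n)}=\mathbf{P}(\mathbf{A}_{(n)}-\mathbf{B}\mathbf{F})+\mathbf{P}(\mathbf{B}-\mathbf{Q}_n\mathbf{R}_n)\mathbf{F}$ uses the same two ingredients (the surrogate and the projector identity $\mathbf{P}\mathbf{Q}_n=\mathbf{0}$, which is just the paper's $\mathbf{Q}_n^\top\mathbf{Q}_n=\mathbf{I}_{\mu_n}$ in disguise) but collapses the bookkeeping into a single triangle inequality followed by $(a+b)^2\le 2a^2+2b^2$. What this buys you is rigor at no cost in constants: the paper's opening inequality (its equation for the three-way split) asserts $\|x+y+z\|_F^2\le\|x\|_F^2+\|y\|_F^2+\|z\|_F^2$, which is not valid in general (a correct treatment of a three-term split would inflate the constant to $3$, hence $6$ after the inner split), whereas your argument delivers the factor $2$ in (\ref{RALMRA:eqn10}) legitimately. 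So your proposal is not merely an acceptable alternative; it repairs a loose step in the paper's own derivation.
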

\begin{remark}
	When the Frobenius norm is replaced by the spectral norm, Lemma {\rm \ref{RALMRA:lem7}} is similar to Lemma 4.3 in {\rm\cite{martinsson2011randomized}}, Lemma 4.4 in {\rm \cite{shabat2016randomized}} and Lemma 3.1 in {\rm\cite{rokhlin2010randomized}}. The proof is similar to that of Lemma 4.5 in {\rm \cite{che2020computation}} and Lemma 5.4 in {\rm\cite{che2021randomized}}.
\end{remark}
\begin{proof}
	The proof is straightforward, but tedious, as follows. By using the triangular inequality, we have
	\begin{equation}\label{RALMRA:eqn11}
	\begin{aligned}
	&\|\mathbf{A}_{(n)}-\mathbf{Q}_n\mathbf{Q}_n^\top\mathbf{A}_{(n)}\|_F^2\\
 &\leq\|\mathbf{A}_{(n)}-(\mathbf{A}_{(n)}\mathbf{A}_{(n)}^\top)^q\mathbf{G}_n\mathbf{F}_n\|_F^2\\
	&+\|(\mathbf{A}_{(n)}\mathbf{A}_{(n)}^\top)^q\mathbf{G}_n\mathbf{F}_n
	-\mathbf{Q}_n\mathbf{Q}_n^\top(\mathbf{A}_{(n)}\mathbf{A}_{(n)}^\top)^q\mathbf{G}_n\mathbf{F}_n\|_F^2\\
	&+\|\mathbf{Q}_n\mathbf{Q}_n^\top(\mathbf{A}_{(n)}\mathbf{A}_{(n)}^\top)^q\mathbf{G}_n\mathbf{F}_n
	-\mathbf{Q}_n\mathbf{Q}_n^\top\mathbf{A}_{(n)}\|_F^2.
	\end{aligned}
	\end{equation}
	Note that we have
	\begin{equation*}
	\begin{aligned}
&\|\mathbf{Q}_n\mathbf{Q}_n^\top(\mathbf{A}_{(n)}\mathbf{A}_{(n)}^\top)^q\mathbf{G}_n\mathbf{F}_n
	-\mathbf{Q}_n\mathbf{Q}_n^\top\mathbf{A}_{(n)}\|_F^2\\
 &\leq
	\|(\mathbf{A}_{(n)}\mathbf{A}_{(n)}^\top)^q\mathbf{G}_n\mathbf{F}_n-\mathbf{A}_{(n)}\|_F^2\|\mathbf{Q}_n\mathbf{Q}_n^\top\|_2^2,
	\end{aligned}
	\end{equation*}
	which implies that
	\begin{equation}\label{RALMRA:eqn12}
	\|\mathbf{Q}_n\mathbf{Q}_n^\top(\mathbf{A}_{(n)}\mathbf{A}_{(n)}^\top)^q\mathbf{G}_n\mathbf{F}_n
	-\mathbf{Q}_n\mathbf{Q}_n^\top\mathbf{A}_{(n)}\|_F^2
	\leq\|(\mathbf{A}_{(n)}\mathbf{A}_{(n)}^\top)^q\mathbf{G}_n\mathbf{F}_n
	-\mathbf{A}_{(n)}\|_F^2.
	\end{equation}
	For the second term in the right-hand side of (\ref{RALMRA:eqn11}), we have
	\begin{equation*}
	\begin{aligned}
	&\|(\mathbf{A}_{(n)}\mathbf{A}_{(n)}^\top)^q\mathbf{G}_n\mathbf{F}_n
	-\mathbf{Q}_n\mathbf{Q}_n^\top(\mathbf{A}_{(n)}\mathbf{A}_{(n)}^\top)^q\mathbf{G}_n\mathbf{F}_n\|_F^2\\
	&\quad\quad\leq\|(\mathbf{A}_{(n)}\mathbf{A}_{(n)}^\top)^q\mathbf{G}_n
	-\mathbf{Q}_n\mathbf{Q}_n^\top(\mathbf{A}_{(n)}\mathbf{A}_{(n)}^\top)^q\mathbf{G}_n\|_F^2\|\mathbf{F}_n\|_2^2.
	\end{aligned}
	\end{equation*}
	From the triangular inequality, we have
	\begin{equation*}
	\begin{aligned}
	&\|(\mathbf{A}_{(n)}\mathbf{A}_{(n)}^\top)^q\mathbf{G}_n
	-\mathbf{Q}_n\mathbf{Q}_n^\top(\mathbf{A}_{(n)}\mathbf{A}_{(n)}^\top)^q\mathbf{G}_n\|_F^2\leq\|(\mathbf{A}_{(n)}\mathbf{A}_{(n)}^\top)^q\mathbf{G}_n-\mathbf{Q}_n\mathbf{R}_n\|_F^2\\
	&\quad\quad+
	\|\mathbf{Q}_n\mathbf{R}_n-\mathbf{Q}_n\mathbf{Q}_n^\top\mathbf{Q}_n\mathbf{R}_n\|_F^2+\|\mathbf{Q}_n\mathbf{Q}_n^\top\mathbf{Q}_n\mathbf{R}_n
	-\mathbf{Q}_n\mathbf{Q}_n^\top(\mathbf{A}_{(n)}\mathbf{A}_{(n)}^\top)^q\mathbf{G}_n\|_F^2.
	\end{aligned}
	\end{equation*}
	By the fact that $\mathbf{Q}_n^\top\mathbf{Q}_n=\mathbf{I}_{\mu_n}$, we have $\|\mathbf{Q}_n\mathbf{R}_n-\mathbf{Q}_n\mathbf{Q}_n^\top\mathbf{Q}_n\mathbf{R}_n\|_F^2=0$ and
	\begin{equation*}
	\begin{aligned}
	\|\mathbf{Q}_n\mathbf{Q}_n^\top\mathbf{Q}_n\mathbf{R}_n
	-\mathbf{Q}_n\mathbf{Q}_n^\top(\mathbf{A}_{(n)}\mathbf{A}_{(n)}^\top)^q\mathbf{G}_n\|_F^2\leq
	\|\mathbf{Q}_n\mathbf{R}_n
	-(\mathbf{A}_{(n)}\mathbf{A}_{(n)}^\top)^q\mathbf{G}_n\|_F^2.
	\end{aligned}
	\end{equation*}
	Therefore,
	\begin{equation}\label{RALMRA:eqn13}
	\begin{aligned}
	&\|(\mathbf{A}_{(n)}\mathbf{A}_{(n)}^\top)^q\mathbf{G}_n
	-\mathbf{Q}_n\mathbf{Q}_n^\top(\mathbf{A}_{(n)}\mathbf{A}_{(n)}^\top)^q\mathbf{G}_n\|_F^2\\
 &\leq 2\|\mathbf{F}_n\|_2^2\|\mathbf{Q}_n\mathbf{R}_n
	-(\mathbf{A}_{(n)}\mathbf{A}_{(n)}^\top)^q\mathbf{G}_n\|_F^2.
	\end{aligned}
	\end{equation}
	Combining (\ref{RALMRA:eqn11}), (\ref{RALMRA:eqn12}) and (\ref{RALMRA:eqn13}) yields (\ref{RALMRA:eqn10}).
\end{proof}

The upper bound for $\|\mathbf{Q}_n\mathbf{R}_n-(\mathbf{A}_{(n)}\mathbf{A}_{(n)}^\top)^q\mathbf{G}_n\|_F$ is obtained by combining Lemmas \ref{RALMRA:lem1}, \ref{RALMRA:lem3} and \ref{RALMRA:lem4} together, summarized in the following theorem.
\begin{theorem}\label{RALMRA:thm3}
	For a given $n$, suppose that $\mathbf{A}_{(n)}\in\mathbb{R}^{I_n\times I_1\dots I_{n-1}I_{n+1}\dots I_{N}}$ and $\mathbf{G}_n\in\mathbb{R}^{I_n\times (\mu_n+K)}$ is a  standard Gaussian matrix with $\mu_n< \mu_n+K< I_n'$. For $\beta>1$ and $\gamma>1$, let
	\begin{equation}\label{RALMRA:eqn14}
	1-\frac{1}{4(\gamma^2-1)\sqrt{\pi I_n\gamma^2}}\left(\frac{2\gamma^2}{e^{\gamma^2-1}}\right)^2
	\end{equation}
	be nonnegative. For a given positive integer $q>0$, there exist an orthonormal matrix $\mathbf{Q}_n\in\mathbb{R}^{I_n\times \mu_n}$ and $\mathbf{R}\in\mathbb{R}^{\mu_n\times (\mu_n+K)}$ such that
	\begin{equation*}
	\begin{aligned}
	&\|\mathbf{Q}_n\mathbf{R}_n-(\mathbf{A}_{(n)}\mathbf{A}_{(n)}^\top)^q\mathbf{G}_n\|_F\leq
	\sqrt{2I_n}\gamma\Delta_{\mu_n+1}(\mathbf{A}_{(n)},q),
	\end{aligned}
	\end{equation*}
	with a probability at least the amount in {\rm (\ref{RALMRA:eqn14})}, where $$\Delta_{\mu_n+1}(\mathbf{A}_{(n)},q)=\left(\sum_{i=\mu_n+1}^{I_n'}\sigma_i(\mathbf{A}_{(n)})^{4q}\right)^{1/2}.$$
\end{theorem}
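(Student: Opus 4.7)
The plan is to chain together Lemmas \ref{RALMRA:lem1}, \ref{RALMRA:lem3}, and \ref{RALMRA:lem4}, with the Gaussian spectral norm bound supplying the probabilistic content and the other two lemmas handling the deterministic approximation.

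First, I would apply Lemma \ref{RALMRA:lem3} deterministically to the matrix $\mathbf{M} := (\mathbf{A}_{(n)}\mathbf{A}_{(n)}^\top)^q \mathbf{G}_n \in \mathbb{R}^{I_n \times (\mu_n+K)}$. Since $\mu_n < \mu_n + K$ and the relevant dimensions meet the hypotheses of Lemma \ref{RALMRA:lem3}, there exist an orthonormal $\mathbf{Q}_n \in \mathbb{R}^{I_n \times \mu_n}$ and $\mathbf{R}_n \in \mathbb{R}^{\mu_n \times (\mu_n+K)}$ with
\begin{equation*}
\|\mathbf{Q}_n \mathbf{R}_n - \mathbf{M}\|_F \leq \Delta_{\mu_n+1}(\mathbf{M}) = \left(\sum_{i=\mu_n+1}^{\mu_n+K} \sigma_i(\mathbf{M})^2\right)^{1/2}.
\end{equation*}

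Second, I would bound the tail of the singular values of $\mathbf{M}$ via Lemma \ref{RALMRA:lem4}, setting $\mathbf{A} = (\mathbf{A}_{(n)}\mathbf{A}_{(n)}^\top)^q$ and $\mathbf{B} = \mathbf{G}_n$. This yields
\begin{equation*}
\sum_{i=\mu_n+1}^{\mu_n+K} \sigma_i(\mathbf{M})^2 \leq \|\mathbf{G}_n\|_2^2 \sum_{i=\mu_n+1}^{I_n} \sigma_i\bigl((\mathbf{A}_{(n)}\mathbf{A}_{(n)}^\top)^q\bigr)^2.
\end{equation*}
Using the SVD of $\mathbf{A}_{(n)}$, we have $\sigma_i((\mathbf{A}_{(n)}\mathbf{A}_{(n)}^\top)^q) = \sigma_i(\mathbf{A}_{(n)})^{2q}$, so after squaring the summand the right-hand side equals $\|\mathbf{G}_n\|_2^2 \sum_{i=\mu_n+1}^{I_n'} \sigma_i(\mathbf{A}_{(n)})^{4q}$ (terms beyond rank $I_n'$ vanish), which matches $\|\mathbf{G}_n\|_2^2 \,\Delta_{\mu_n+1}(\mathbf{A}_{(n)},q)^2$.

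Third, I would invoke Lemma \ref{RALMRA:lem1} on $\mathbf{G}_n^\top \in \mathbb{R}^{(\mu_n+K) \times I_n}$ (the spectral norm is invariant under transposition). Since $\mu_n + K < I_n'$ in particular $\mu_n+K < I_n$, the hypothesis of Lemma \ref{RALMRA:lem1} is met, giving $\|\mathbf{G}_n\|_2 \leq \sqrt{2I_n}\,\gamma$ with probability at least the amount in (\ref{RALMRA:eqn14}). Combining the three estimates on the event where the Gaussian bound holds yields
\begin{equation*}
\|\mathbf{Q}_n\mathbf{R}_n - (\mathbf{A}_{(n)}\mathbf{A}_{(n)}^\top)^q \mathbf{G}_n\|_F \leq \sqrt{2I_n}\,\gamma\, \Delta_{\mu_n+1}(\mathbf{A}_{(n)},q),
\end{equation*}
which is the claimed inequality.

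The reasoning is largely bookkeeping; the only genuinely substantive ingredient is the Gaussian tail estimate from Lemma \ref{RALMRA:lem1}, and the only subtle point is noticing that the power $q$ merely raises the singular values of $\mathbf{A}_{(n)}$ to the $2q$ before entering the inequality and then gets squared again in the Frobenius tail, producing the exponent $4q$ that appears in $\Delta_{\mu_n+1}(\mathbf{A}_{(n)},q)$. The parameter $\beta$ in the theorem statement is not actually used in this bound and appears to be a placeholder for a later result that would instead invoke Lemma \ref{RALMRA:lem2} on the smallest singular value.
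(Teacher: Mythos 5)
Your proposal is correct and follows exactly the route the paper intends: the paper itself states only that Theorem \ref{RALMRA:thm3} "is obtained by combining Lemmas \ref{RALMRA:lem1}, \ref{RALMRA:lem3} and \ref{RALMRA:lem4}," and your chaining of the three (Lemma \ref{RALMRA:lem3} for the existence of $\mathbf{Q}_n,\mathbf{R}_n$, Lemma \ref{RALMRA:lem4} to pass the tail to $\|\mathbf{G}_n\|_2^2\,\Delta_{\mu_n+1}(\mathbf{A}_{(n)},q)^2$, and Lemma \ref{RALMRA:lem1} applied to $\mathbf{G}_n^\top$ for the bound $\sqrt{2I_n}\gamma$ with probability (\ref{RALMRA:eqn14})) is precisely that argument, including the correct observation that the exponent $4q$ arises from squaring $\sigma_i(\mathbf{A}_{(n)})^{2q}$ and that $\beta$ plays no role here.
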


According to the properties of the matrix $\mathbf{G}_n$, for each $n$, we estimate $\|(\mathbf{A}_{(n)}\mathbf{A}_{(n)}^\top)^q
\mathbf{G}_n\mathbf{F}_n-\mathbf{A}_{(n)}\|_F$ and $\|\mathbf{F}_n\|_2$.
\begin{theorem}\label{RALMRA:thm4}
	For a given $n$, suppose that $\mathbf{A}_{(n)}\in\mathbb{R}^{I_n\times I_1\dots I_{n-1}I_{n+1}\dots I_{N}}$ and $\mathbf{G}_n\in\mathbb{R}^{I_n\times (\mu_n+K)}$ is a randomly chosen standard Gaussian matrix with $\mu_n< \mu_n+K< I_n'$. For $\beta>1$ and $\gamma>1$, if
	\begin{equation}\label{RALMRA:eqn15}
 \begin{aligned}
	&1-\frac{1}{4(\gamma^2-1)\sqrt{\pi \max\{I_n'-\mu_n,\mu_n+K\}\gamma^2}}\left(\frac{2\gamma^2}{e^{\gamma^2-1}}\right)^2\\
 &-
	\frac{1}{\sqrt{2\pi(K+1)}}\left(\frac{e}{K+1}\right)^{K+1}
 \end{aligned}
	\end{equation}
	is nonnegative, then, there exists a matrix $\mathbf{F}\in\mathbb{R}^{(\mu_n+K)\times \mu_1\dots \mu_{n-1}I_{n+1}\dots I_{N}}$ such that
	\begin{equation*}
	\begin{aligned}
	&\|(\mathbf{A}_{(n)}\mathbf{A}_{(n)}^\top)^q
	\mathbf{G}_n\mathbf{F}_n-\mathbf{A}_{(n)}\|_F\\
	&\leq\frac{\sqrt{2\max\{I_n'-\mu_n,\mu_n+K\}(\mu_n+K)}\gamma\beta}{\sigma_{\mu_n}(\mathbf{A}_{(n)})^{2q-1}}\Delta_{\mu_n+1}(\mathbf{A}_{(n)},q)
	+\Delta_{\mu_n+1}(\mathbf{A}_{(n)}),
	\end{aligned}
	\end{equation*}
	and
	\begin{equation*}
	\|\mathbf{F}_n\|_2\leq\frac{\sqrt{\mu_n+K}\beta}{\sigma_{\mu_n}(\mathbf{A}_{(n)})^{2q-1}}
	\end{equation*}
	with probability not less than the amount in {\rm (\ref{RALMRA:eqn15})}.
\end{theorem}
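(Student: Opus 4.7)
My plan is to follow the classical randomized range-finder construction in the spirit of Halko--Martinsson--Tropp, but with the power factor $(\mathbf{A}_{(n)}\mathbf{A}_{(n)}^\top)^q$ explicitly absorbed into the choice of $\mathbf{F}_n$. I would first take a thin SVD
\[
\mathbf{A}_{(n)}=\mathbf{U}\bm{\Sigma}\mathbf{V}^\top
=\mathbf{U}_1\bm{\Sigma}_1\mathbf{V}_1^\top+\mathbf{U}_2\bm{\Sigma}_2\mathbf{V}_2^\top,
\]
where $\bm{\Sigma}_1\in\mathbb{R}^{\mu_n\times\mu_n}$ carries the leading $\mu_n$ singular values and $\bm{\Sigma}_2\in\mathbb{R}^{(I_n'-\mu_n)\times(I_n'-\mu_n)}$ the rest. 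A direct expansion then gives
\[
(\mathbf{A}_{(n)}\mathbf{A}_{(n)}^\top)^q\mathbf{G}_n
=\mathbf{U}_1\bm{\Sigma}_1^{2q}\mathbf{G}^{(1)}+\mathbf{U}_2\bm{\Sigma}_2^{2q}\mathbf{G}^{(2)},
\]
with $\mathbf{G}^{(i)}=\mathbf{U}_i^\top\mathbf{G}_n$. Because $[\mathbf{U}_1,\mathbf{U}_2]$ has orthonormal columns, the stacked matrix $[\mathbf{G}^{(1)};\mathbf{G}^{(2)}]$ inherits the standard Gaussian distribution, so $\mathbf{G}^{(1)}\in\mathbb{R}^{\mu_n\times(\mu_n+K)}$ and $\mathbf{G}^{(2)}\in\mathbb{R}^{(I_n'-\mu_n)\times(\mu_n+K)}$ are independent standard Gaussian matrices.

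Next I would set
\[
\mathbf{F}_n=(\mathbf{G}^{(1)})^{\dag}\bm{\Sigma}_1^{1-2q}\mathbf{V}_1^\top,
\]
which is well defined whenever $\mathbf{G}^{(1)}$ has full row rank $\mu_n$ and $\bm{\Sigma}_1$ is invertible (both controlled by the probability statement). Since $\mathbf{G}^{(1)}(\mathbf{G}^{(1)})^{\dag}=\mathbf{I}_{\mu_n}$, the first block collapses and
\[
(\mathbf{A}_{(n)}\mathbf{A}_{(n)}^\top)^q\mathbf{G}_n\mathbf{F}_n-\mathbf{A}_{(n)}
=\mathbf{U}_2\bm{\Sigma}_2^{2q}\mathbf{G}^{(2)}(\mathbf{G}^{(1)})^{\dag}\bm{\Sigma}_1^{1-2q}\mathbf{V}_1^\top-\mathbf{U}_2\bm{\Sigma}_2\mathbf{V}_2^\top.
\]
The triangle inequality gives $\Delta_{\mu_n+1}(\mathbf{A}_{(n)})$ from the tail term, while submultiplicativity together with $\|\mathbf{V}_1\|_2=1$ bounds the first term by
\[
\|\bm{\Sigma}_2^{2q}\|_F\,\|\mathbf{G}^{(2)}\|_2\,\|(\mathbf{G}^{(1)})^{\dag}\|_2\,\sigma_{\mu_n}(\mathbf{A}_{(n)})^{1-2q}
=\Delta_{\mu_n+1}(\mathbf{A}_{(n)},q)\,\|\mathbf{G}^{(2)}\|_2\,\|(\mathbf{G}^{(1)})^{\dag}\|_2\,\sigma_{\mu_n}(\mathbf{A}_{(n)})^{1-2q}.
\]

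Only the two Gaussian norms are left. I would apply Lemma \ref{RALMRA:lem1} to whichever of $\mathbf{G}^{(2)}$ or $(\mathbf{G}^{(2)})^\top$ has more columns, producing $\|\mathbf{G}^{(2)}\|_2\le\sqrt{2\max\{I_n'-\mu_n,\mu_n+K\}}\,\gamma$, and Lemma \ref{RALMRA:lem2} with $L=\mu_n$ and $I=\mu_n+K$ to obtain $\sigma_{\min}(\mathbf{G}^{(1)})\ge 1/(\sqrt{\mu_n+K}\,\beta)$, hence $\|(\mathbf{G}^{(1)})^{\dag}\|_2\le\sqrt{\mu_n+K}\,\beta$. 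Substituting gives the stated error estimate, and applying the same bound on $(\mathbf{G}^{(1)})^{\dag}$ directly to the definition of $\mathbf{F}_n$, together with $\|\bm{\Sigma}_1^{1-2q}\|_2=\sigma_{\mu_n}(\mathbf{A}_{(n)})^{1-2q}$, yields the spectral-norm estimate for $\mathbf{F}_n$. The probability guarantee in (\ref{RALMRA:eqn15}) follows from a union bound over the failure events for $\|\mathbf{G}^{(2)}\|_2$ and for $\sigma_{\min}(\mathbf{G}^{(1)})$.

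The main obstacle is not in any single estimate but in the bookkeeping. I need to pick the correct orientation when invoking Lemma \ref{RALMRA:lem1} so that $\max\{I_n'-\mu_n,\mu_n+K\}$ (rather than just one of the two) appears, to verify that the independence argument for $\mathbf{G}^{(1)}$ and $\mathbf{G}^{(2)}$ goes through when $I_n'<I_n$ by extending $\mathbf{U}$ to an orthogonal matrix, and to align the exponents $2q$ and $2q-1$ so that the power-iteration factor $\sigma_{\mu_n}(\mathbf{A}_{(n)})^{1-2q}$ converts $\|\bm{\Sigma}_2^{2q}\|_F$ cleanly into the $\Delta_{\mu_n+1}(\mathbf{A}_{(n)},q)$ appearing in the statement.
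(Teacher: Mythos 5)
Your proposal is correct and follows essentially the same route as the paper: the same choice $\mathbf{F}_n=\mathbf{H}^{\dag}\bm{\Sigma}_{n,1}^{-2q+1}\mathbf{V}_{n,1}^\top$ (your $(\mathbf{G}^{(1)})^{\dag}\bm{\Sigma}_1^{1-2q}\mathbf{V}_1^\top$), the same reduction of the error to $\bm{\Sigma}_{n,2}^{2q}\mathbf{R}\mathbf{H}^{\dag}\bm{\Sigma}_{n,1}^{-2q+1}$ plus the tail $\bm{\Sigma}_{n,2}$, and the same use of Lemmas \ref{RALMRA:lem1} and \ref{RALMRA:lem2} with a union bound; the only difference is that you write the SVD as a sum of head and tail blocks while the paper carries out the equivalent $2\times 2$ block-matrix computation.
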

\begin{proof}
	Suppose that $\mathbf{A}_{(n)}=\mathbf{U}_n{\bf \Sigma}_n\mathbf{V}_n^\top$, where $\mathbf{U}_n\in\mathbb{R}^{I_n\times I_n'}$ and $\mathbf{V}_n\in\mathbb{R}^{I_1\dots I_{n-1}I_{n+1}\dots I_N\times I_n'}$ are orthonornal, and the diagonal entries of the diagonal matrix ${\bf \Sigma}_n\in\mathbb{R}^{I_n'\times I_n'}$ are nonnegative and arranged in descending order. Then, we have
	\begin{equation*}
	(\mathbf{A}_{(n)}\mathbf{A}_{(n)}^\top)^q=\mathbf{U}_n{\bf \Sigma}_n^{2q}\mathbf{U}_n^\top.
	\end{equation*}
	Assume that given $\mathbf{U}_n^\top$ and $\mathbf{G}_n$, suppose that
	\begin{equation*}
	\mathbf{U}_n^\top\mathbf{G}_n=\begin{pmatrix}
	\mathbf{H}\\
	\mathbf{R}
	\end{pmatrix},
	\end{equation*}
	where $\mathbf{H}\in\mathbb{R}^{\mu_n\times(\mu_n+K)}$ and $\mathbf{R}\in\mathbb{R}^{(I_n'-\mu_n)\times (\mu_n+K)}$. Since $\mathbf{G}_n$ is a standard Gaussian matrix, and $\mathbf{V}$ is an orthogonal matrix, then $\mathbf{U}_n^\top\mathbf{G}_n$ is also a standard Gaussian matrix. Therefore, $\mathbf{H}$ and $\mathbf{R}$ are also standard Gaussian matrices. Let $\mathbf{P}\in\mathbb{R}^{(\mu_n+K)\times I_n}$ be defined by
	\begin{equation*}
	\mathbf{P}=\begin{pmatrix}
	\mathbf{H}^\dag\bm{\Sigma}_{n,1}^{-2q+1}&\mathbf{0}_{(\mu_n+K)\times (I_n-\mu_n)}
	\end{pmatrix},
	\quad
	\bm{\Sigma}_{n,1}=\bm{\Sigma}_n(1:\mu_n,1:\mu_n).
	\end{equation*}
	Define $\mathbf{F}_n=\mathbf{P}\mathbf{V}_n^\top$. From Lemma \ref{RALMRA:lem2}, we estimate $\|\mathbf{F}_n\|_2$ as
	\begin{equation*}
	\|\mathbf{F}_n\|_2=\|\mathbf{P}\mathbf{V}_n^\top\|_2=
	\left\|\mathbf{H}^\dag\bm{\Sigma}_{n,1}^{-2q+1}\right\|_2
	\leq\frac{1}{\sigma_{\mu_n}(\mathbf{A}_{(n)})^{2q-1}}\frac{1}{\sigma_{\min}(\mathbf{H})}
	\leq\frac{\sqrt{\mu_n+K}\beta}{\sigma_{\mu_n}(\mathbf{A}_{(n)})^{2q-1}}
	\end{equation*}
	with a probability at least
	\begin{equation*}
	1-\frac{1}{\sqrt{2\pi(K+1)}}\left(\frac{e}{K+1}\right)^{K+1},
	\end{equation*}
	where $\beta>1$.
	Now, we can bound $\|(\mathbf{A}_{(n)}\mathbf{A}_{(n)}^\top)^q
	\mathbf{G}_n\mathbf{F}_n-\mathbf{A}_{(n)}\|_F$. Note that we have
	\begin{equation*}
 \begin{aligned}
	&(\mathbf{A}_{(n)}\mathbf{A}_{(n)}^\top)^q
	\mathbf{G}_n\mathbf{F}_n-\mathbf{A}_{(n)}
	\\
 &=\mathbf{U}_n\bm{\Sigma}_n\left(\bm{\Sigma}_n^{2q-1}
	\begin{pmatrix}
	\mathbf{H}\\
	\mathbf{R}\end{pmatrix}\begin{pmatrix}
	\mathbf{H}^\dag\bm{\Sigma}_{n,1}^{-2q+1}&\mathbf{0}_{(\mu_n+K)\times (I_n-\mu_n)}
	\end{pmatrix}-\mathbf{I}_{I_n}\right)\mathbf{V}_n^{\top}.
 \end{aligned}
	\end{equation*}
	Let $\bm{\Sigma}_{n,2}={\bf \Sigma}_n((\mu_n+1):J_n,(\mu_n+1):J_n)$. Then
	\begin{equation*}
	\begin{aligned}
	&\bm{\Sigma}_n\left(\bm{\Sigma}_n^{2q-1}
	\begin{pmatrix}
	\mathbf{H}\\
	\mathbf{R}\end{pmatrix}\begin{pmatrix}
	\mathbf{H}^\dag\bm{\Sigma}_{n,1}^{-2q+1}&\mathbf{0}_{(\mu_n+K)\times (I_n-\mu_n)}
	\end{pmatrix}-\mathbf{I}_{I_n}\right)\\
	&\quad\quad=\bm{\Sigma}_n\begin{pmatrix}
	\bm{\Sigma}_{n,1}^{2q-1}&\mathbf{0}_{\mu_n\times (I_n-\mu_n)}\\
	\mathbf{0}_{(I_n-\mu_n)\times \mu_n}&\bm{\Sigma}_{n,2}^{2q-1}
	\end{pmatrix}
	\begin{pmatrix}
	\mathbf{0}_{\mu_n\times \mu_n}&\mathbf{0}_{\mu_n\times (I_n-\mu_n)}\\
	\mathbf{R}\mathbf{H}^{\dag}\bm{\Sigma}_{n,1}^{-2q+1}&-\mathbf{I}_{I_n-\mu_n}
	\end{pmatrix}\\
	&\quad\quad=\begin{pmatrix}
	\mathbf{0}_{\mu_n\times \mu_n}&\mathbf{0}_{\mu_n\times (I_n-\mu_n)}\\
	\bm{\Sigma}_{n,2}^{2q}\mathbf{R}\mathbf{H}^{\dag}\bm{\Sigma}_{n,1}^{-2q+1}&-\bm{\Sigma}_{n,2}
	\end{pmatrix}.
	\end{aligned}
	\end{equation*}
	The norm of the last term is:
	\begin{equation*}
	\left\|\begin{pmatrix}
	\mathbf{0}_{\mu_n\times \mu_n}&\mathbf{0}_{\mu_n\times (I_n-\mu_n)}\\
	\bm{\Sigma}_{n,2}^{2q}\mathbf{R}\mathbf{H}^{\dag}\bm{\Sigma}_{n,1}^{-2q+1}&-\bm{\Sigma}_{n,2}
	\end{pmatrix}\right\|_F
	\leq \left\|\bm{\Sigma}_{n,2}^{2q}\mathbf{R}\mathbf{H}^{\dag}\bm{\Sigma}_{n,1}^{-2q+1}\right\|_F
	+\|\bm{\Sigma}_{n,2}\|_F.
	\end{equation*}
	Moreover,
	\begin{equation*}
	\begin{aligned}
	\left\|\bm{\Sigma}_{n,2}^{2q}\mathbf{R}\mathbf{H}^{\dag}\bm{\Sigma}_{n,1}^{-2q+1}\right\|_F&\leq
	\left\|\bm{\Sigma}_{n,1}^{-2q+1}\right\|_2\left\|\mathbf{H}^{\dag}\right\|_2\|\mathbf{R}\|_2
	\left\|\bm{\Sigma}_{n,2}^{2q}\right\|_F\\
	&\leq\frac{1}{\sigma_{\mu_n}(\mathbf{A}_{(n)})^{2q-1}}\left\|\mathbf{H}^{\dag}\right\|_2\|\mathbf{R}\|_2\Delta_{\mu_n+1}(\mathbf{A}_{(n)},q),
	\end{aligned}
	\end{equation*}
	with $\Delta_{\mu_n+1}(\mathbf{A}_{(n)},q)=\|\bm{\Sigma}_{n,2}^{2q}\|_F$. We also know that
	$$\|\mathbf{R}\|_2\leq \sqrt{2\max\{I_n'-\mu_n,\mu_n+K\}}\gamma$$
	with a probability at least $$1-\frac{1}{4(\gamma^2-1)\sqrt{\pi \max\{I_n'-\mu_n,\mu_n+K\}\gamma^2}}\left(\frac{2\gamma^2}{e^{\gamma^2-1}}\right)^2.$$
	Therefore,
	\begin{equation*}
	\begin{aligned}
	&\left\|(\mathbf{A}_{(n)}\mathbf{A}_{(n)}^\top)^q\mathbf{G}\mathbf{F}-\mathbf{A}_{(n)}\right\|_F\\
	&\leq\frac{1}{\sigma_{\mu_n}(\mathbf{A}_{(n)})^{2q-1}}\left\|\mathbf{H}^{\dag}\right\|_2\|\mathbf{R}\|_2\Delta_{\mu_n+1}(\mathbf{A}_{(n)},q)
	+\|\bm{\Sigma}_{n,2}\|_F\\
	&\quad\leq\frac{\sqrt{2\max\{I_n'-\mu_n,\mu_n+K\}(\mu_n+K)}\gamma\beta}{\sigma_{\mu_n}(\mathbf{A}_{(n)})^{2q-1}}\Delta_{\mu_n+1}(\mathbf{A}_{(n)},q)
	+\Delta_{\mu_n+1}(\mathbf{A}_{(n)}).
	\end{aligned}
	\end{equation*}
	with a probability at least the amount in (\ref{RALMRA:eqn15}).
\end{proof}

\begin{theorem}\label{RALMRA:thm5}
	For a given $n$, let $\mu_n$, $I_n$ and $K$ are positive integers such that $\mu_n< \mu_n+K< I_n'$. For $\beta>1$ and $\gamma>1$, suppose that
	\begin{equation}\label{RALMRA:eqn16}
	\begin{aligned}
	1-&\frac{1}{4(\gamma^2-1)\sqrt{\pi \max\{I_n'-\mu_n,\mu_n+K\}\gamma^2}}\left(\frac{2\gamma^2}{e^{\gamma^2-1}}\right)^2\\
	&-
	\frac{1}{\sqrt{2\pi(K+1)}}\left(\frac{e}{K+1}\right)^{K+1}-\frac{1}{4(\gamma^2-1)\sqrt{\pi I_n\gamma^2}}\left(\frac{2\gamma^2}{e^{\gamma^2-1}}\right)^2
	\end{aligned}
	\end{equation}
	is nonnegative. For $\mathbf{A}_{(n)}\in\mathbb{R}^{I_n\times I_1\dots I_{n-1}I_{n+1}\dots I_{N}}$ and an integer $q\geq 1$, the orthonormal matrix $\mathbf{Q}_n$ are obtained by Algorithm {\rm\ref{RALMRA:alg1}}. Then
	\begin{equation*}
	\|\mathbf{A}_{(n)}-\mathbf{Q}_n\mathbf{Q}_n^\top\mathbf{A}_{(n)}\|_F
	\leq 2\cdot\left(\lambda_n\Delta_{\mu_n+1}(\mathbf{A}_{(n)},q)
	+\Delta_{\mu_n+1}(\mathbf{A}_{(n)})\right)
	\end{equation*}
	holds with probability at least the amount in {\rm (\ref{RALMRA:eqn16})}, where
	\begin{equation*}
	\lambda_n=\frac{(\sqrt{\max\{I_n'-\mu_n,\mu_n+K\}}+\sqrt{I_n})\sqrt{2(\mu_n+K)}\gamma\beta}{\sigma_{\mu_n}(\mathbf{A}_{(n)})^{2q-1}}.
	\end{equation*}
\end{theorem}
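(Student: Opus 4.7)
The plan is to combine the decomposition from Lemma \ref{RALMRA:lem7} with the two probabilistic bounds from Theorems \ref{RALMRA:thm3} and \ref{RALMRA:thm4}, using the same auxiliary matrices $\mathbf{F}_n$ and $\mathbf{R}_n$ in both invocations. Concretely, I first apply Lemma \ref{RALMRA:lem7} to obtain
\begin{equation*}
\|\mathbf{A}_{(n)}-\mathbf{Q}_n\mathbf{Q}_n^\top\mathbf{A}_{(n)}\|_F^2
\leq 2X^2+2Y^2,
\end{equation*}
where $X=\|(\mathbf{A}_{(n)}\mathbf{A}_{(n)}^\top)^q\mathbf{G}_n\mathbf{F}_n-\mathbf{A}_{(n)}\|_F$ and $Y=\|\mathbf{F}_n\|_2\,\|\mathbf{Q}_n\mathbf{R}_n-(\mathbf{A}_{(n)}\mathbf{A}_{(n)}^\top)^q\mathbf{G}_n\|_F$. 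The choice of $\mathbf{F}_n$ in the proof of Theorem \ref{RALMRA:thm4} is exactly the $\mathbf{F}_n$ required by Lemma \ref{RALMRA:lem7}, and the existence of $\mathbf{R}_n$ is guaranteed by Theorem \ref{RALMRA:thm3} together with the algorithmic specification of $\mathbf{Q}_n$.

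Next I bound $X$ and $Y$ separately. Theorem \ref{RALMRA:thm4} immediately controls $X$ and $\|\mathbf{F}_n\|_2$; multiplying its bound on $\|\mathbf{F}_n\|_2$ by the bound of Theorem \ref{RALMRA:thm3} yields
\begin{equation*}
Y\leq \frac{\sqrt{\mu_n+K}\,\beta}{\sigma_{\mu_n}(\mathbf{A}_{(n)})^{2q-1}}\cdot\sqrt{2I_n}\,\gamma\,\Delta_{\mu_n+1}(\mathbf{A}_{(n)},q)
=\frac{\sqrt{2I_n(\mu_n+K)}\,\gamma\beta}{\sigma_{\mu_n}(\mathbf{A}_{(n)})^{2q-1}}\,\Delta_{\mu_n+1}(\mathbf{A}_{(n)},q).
\end{equation*}
Adding the bound on $X$ from Theorem \ref{RALMRA:thm4} to this expression factors out $\sqrt{2(\mu_n+K)}\,\gamma\beta/\sigma_{\mu_n}(\mathbf{A}_{(n)})^{2q-1}$, producing exactly the constant $\lambda_n$ in the theorem statement, with the residual $\Delta_{\mu_n+1}(\mathbf{A}_{(n)})$ inherited from $X$.

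To finish, I use the elementary inequality $\sqrt{2(X^2+Y^2)}\le\sqrt{2}(X+Y)\le 2(X+Y)$ to pass from Lemma \ref{RALMRA:lem7}'s squared form to the desired Frobenius-norm bound, which introduces the factor $2$ in front of $\lambda_n\Delta_{\mu_n+1}(\mathbf{A}_{(n)},q)+\Delta_{\mu_n+1}(\mathbf{A}_{(n)})$. For the probability, I note that the events driving Theorems \ref{RALMRA:thm3} and \ref{RALMRA:thm4} are both governed by the single Gaussian matrix $\mathbf{G}_n$, so I apply the union bound on their failure probabilities: the failure of Theorem \ref{RALMRA:thm3} contributes $\tfrac{1}{4(\gamma^2-1)\sqrt{\pi I_n\gamma^2}}(2\gamma^2/e^{\gamma^2-1})^2$, while the failure of Theorem \ref{RALMRA:thm4} contributes the two remaining terms. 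Summing and subtracting from $1$ reproduces the probability bound in (\ref{RALMRA:eqn16}).

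The only subtle point is coherence of the probabilistic events: one must verify that the specific $\mathbf{F}_n$ and $\mathbf{R}_n$ constructed in the proofs of Theorems \ref{RALMRA:thm4} and \ref{RALMRA:thm3} are valid witnesses in Lemma \ref{RALMRA:lem7} simultaneously on the intersection of both high-probability events. Since both constructions depend only on the SVD of $\mathbf{A}_{(n)}$ and the same $\mathbf{G}_n$, and since Lemma \ref{RALMRA:lem7} is purely deterministic once such witnesses exist, the union bound suffices and no independence argument is needed.
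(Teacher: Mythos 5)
Your proposal is correct and follows essentially the same route as the paper: invoke Lemma \ref{RALMRA:lem7}, pass from the squared bound to $2(X+Y)$ via $a^2\le b^2+c^2\Rightarrow a\le b+c$, substitute the bounds of Theorems \ref{RALMRA:thm3} and \ref{RALMRA:thm4}, and take a union bound over the two failure events. Your explicit remark on the coherence of the witnesses $\mathbf{F}_n$, $\mathbf{R}_n$ across the two events is a point the paper leaves implicit, but it does not change the argument.
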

\begin{proof}
	By the fact that $a^2\leq b^2+c^2$ implies that $a\leq b+c$, where $a$, $b$ and $c$ are arbitrary positive numbers, it is deduced from Lemma \ref{RALMRA:lem7} that
	\begin{equation*}
 \begin{aligned}
	\|\mathbf{A}_{(n)}-\mathbf{Q}_n\mathbf{Q}_n^\top\mathbf{A}_{(n)}\|_F
	\leq &2\|(\mathbf{A}_{(n)}\mathbf{A}_{(n)}^\top)^q\mathbf{G}_n\mathbf{F}_n-\mathbf{A}_{(n)}\|_F\\
	&+2\|\mathbf{F}\|_2\|\mathbf{Q}_n\mathbf{R}_n-(\mathbf{A}_{(n)}\mathbf{A}_{(n)}^\top)^q\mathbf{G}_n\|_F.
 \end{aligned}
	\end{equation*}
	By combining Theorems \ref{RALMRA:thm3} and \ref{RALMRA:thm4} together, the result is obtained.
	\end{proof}

For $N$ given positive integers $\mu_n\leq I_n\ (n=1,2,\dots,N)$, when $\{\mathbf{Q}_1,\mathbf{Q}_2,\dots,\mathbf{Q}_N\}$ is acquired from Algorithm \ref{RALMRA:alg1}, the upper bounds of $\|\mathcal{A}-\widehat{\mathcal{A}}\|_F$ is given in the following theorem, which is derived from (\ref{RALMRA:eqn25}) and Theorem \ref{RALMRA:thm5}.
\begin{theorem}
	For each $n$, let $\mu_n$, $I_n$ and $K$ are positive integers such that $\mu_n< \mu_n+K< I_n'$. For $\beta>1$ and $\gamma>1$, suppose that
	\begin{equation}\label{RALMRA:eqn17}
	\begin{aligned}
	1-&\sum_{n=1}^N\left(\frac{1}{4(\gamma^2-1)\sqrt{\pi \max\{I_n'-\mu_n,\mu_n+K\}\gamma^2}}\left(\frac{2\gamma^2}{e^{\gamma^2-1}}\right)^2\right.\\
	&\left.+
	\frac{1}{\sqrt{2\pi(K+1)}}\left(\frac{e}{K+1}\right)^{K+1}+\frac{1}{4(\gamma^2-1)\sqrt{\pi I_n\gamma^2}}\left(\frac{2\gamma^2}{e^{\gamma^2-1}}\right)^2\right)
	\end{aligned}
	\end{equation}
	is nonnegative. For a given tensor $\mathcal{A}\in\mathbb{R}^{I_1\times I_2\times\dots \times  I_N}$ and an integer $q\geq 1$, $N$ orthonormal matrices $\mathbf{Q}_n$ are obtained by Algorithm {\rm\ref{RALMRA:alg1}}. Then
	\begin{equation*}
	\|\mathcal{A}-\widehat{\mathcal{A}}\|_F
	\leq 2\cdot\sum_{n=1}^N\left(\lambda_n\Delta_{\mu_n+1}(\mathbf{A}_{(n)},q)
	+\Delta_{\mu_n+1}(\mathbf{A}_{(n)})\right)
	\end{equation*}
	holds with probability at least the amount in {\rm (\ref{RALMRA:eqn17})}, where
	\begin{equation*}
	\lambda_n=\frac{(\sqrt{\max\{I_n'-\mu_n,\mu_n+K\}}+\sqrt{I_n})\sqrt{2(\mu_n+K)}\gamma\beta}{\sigma_{\mu_n}(\mathbf{A}_{(n)})^{2q-1}}.
	\end{equation*}
	\end{theorem}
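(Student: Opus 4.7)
The plan is to combine the single-mode bound from Theorem~\ref{RALMRA:thm5} with the mode-wise decomposition~(\ref{RALMRA:eqn25}) and a union bound over the $N$ independent Gaussian draws used in Algorithm~\ref{RALMRA:alg1}.

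First I would reduce the tensor error to a sum of matrix errors. Inequality~(\ref{RALMRA:eqn25}), which follows from~(\ref{RALMRA:eqn5}), gives
\begin{equation*}
\|\mathcal{A}-\widehat{\mathcal{A}}\|_F\leq\sum_{n=1}^{N}\|\mathcal{A}-\mathcal{A}\times_n(\mathbf{Q}_n\mathbf{Q}_n^\top)\|_F.
\end{equation*}
The norm equivalence between a tensor and its mode-$n$ unfolding rewrites the $n$th summand as $\|\mathbf{A}_{(n)}-\mathbf{Q}_n\mathbf{Q}_n^\top\mathbf{A}_{(n)}\|_F$, placing us exactly in the setting of Theorem~\ref{RALMRA:thm5}.

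Next I would apply Theorem~\ref{RALMRA:thm5} mode by mode. For each $n$, the orthonormal factor $\mathbf{Q}_n$ produced in Algorithm~\ref{RALMRA:alg1} satisfies
\begin{equation*}
\|\mathbf{A}_{(n)}-\mathbf{Q}_n\mathbf{Q}_n^\top\mathbf{A}_{(n)}\|_F\leq 2\bigl(\lambda_n\Delta_{\mu_n+1}(\mathbf{A}_{(n)},q)+\Delta_{\mu_n+1}(\mathbf{A}_{(n)})\bigr),
\end{equation*}
with the stated $\lambda_n$, on an event $\mathcal{E}_n$ whose probability is at least the expression in~(\ref{RALMRA:eqn16}). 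Summing over $n$ gives the stated deterministic estimate on the intersection $\bigcap_{n=1}^N\mathcal{E}_n$.

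The remaining step, and the only delicate one, is the probability estimate. The Gaussian matrices $\mathbf{G}_1,\dots,\mathbf{G}_N$ are generated independently in distinct iterations of the outer loop, so the failure events associated with each mode are independent; in any case a union bound suffices. Writing each per-mode failure probability as the complement of~(\ref{RALMRA:eqn16}) and summing the three tail terms for $n=1,\dots,N$ yields exactly the failure-probability expression subtracted from $1$ in~(\ref{RALMRA:eqn17}). This gives the desired high-probability bound.

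The main obstacle, if any, is purely bookkeeping: one must verify that each per-mode statement of Theorem~\ref{RALMRA:thm5} is applied with the same parameters $\beta$ and $\gamma$ and with $\max\{I_n'-\mu_n,\mu_n+K\}$ computed for the correct dimension, so that the union-bound sum produces precisely the three-term expression in~(\ref{RALMRA:eqn17}). Once the three tail probabilities from Lemmas~\ref{RALMRA:lem1} and~\ref{RALMRA:lem2} are aggregated across the $N$ modes, the proof is complete with no additional analytic content beyond Theorem~\ref{RALMRA:thm5} and~(\ref{RALMRA:eqn25}).
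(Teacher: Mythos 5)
Your proposal is correct and follows exactly the route the paper takes: the paper derives this theorem by combining the mode-wise decomposition (\ref{RALMRA:eqn25}) with Theorem \ref{RALMRA:thm5} applied to each mode, and the probability in (\ref{RALMRA:eqn17}) is precisely the union bound over the $N$ per-mode failure events. No further comment is needed.
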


When the matrix $\mathbf{G}_n$ in Theorems \ref{RALMRA:thm3} and \ref{RALMRA:thm4} is a randomly chosen sub-Gaussian matrix, these theorems should be rewritten as follows.
\begin{remark}
	For a given $n$, suppose that $\mathbf{A}_{(n)}\in\mathbb{R}^{I_n\times I_1\dots I_{n-1}I_{n+1}\dots I_{N}}$ and $\mathbf{G}_n\in\mathbb{R}^{I_n\times (\mu_n+K)}$ is a randomly chosen sub-Gaussian matrix with $\mu_n< \mu_n+K<I_n'$. For a given positive integer $q>0$, there exist an orthonormal matrix $\mathbf{Q}_n\in\mathbb{R}^{I_n\times \mu_n}$ and $\mathbf{R}\in\mathbb{R}^{\mu_n\times (\mu_n+K)}$ such that
	\begin{equation*}
	\begin{aligned}
	&\|\mathbf{Q}_n\mathbf{R}_n-(\mathbf{A}_{(n)}\mathbf{A}_{(n)}^\top)^q\mathbf{G}_n\|_F\leq
	a_{n,1}I_n\Delta_{\mu_n+1}(\mathbf{A}_{(n)},q),
	\end{aligned}
	\end{equation*}
	with a probability at least $1-e^{-a_{n,2}I_n}$, where $a_{n,1}=6s\sqrt{a_{n,2}+4}$ and $s>1$.
\end{remark}
\begin{remark}
	For a given $n$, suppose that $\mathbf{A}_{(n)}\in\mathbb{R}^{I_n\times I_1\dots I_{n-1}I_{n+1}\dots I_{N}}$ and $\mathbf{G}_n\in\mathbb{R}^{I_n\times (\mu_n+K)}$ is a randomly chosen sub-Gaussian matrix with $\mu_n< \mu_n+K<I_n'$ and $\mu_n+K>(1+1/\ln \mu_n )\mu_n$. We define $a_{n,1}$, $a_{n,2}$, $c_{n,1}$ and $c_{n,2}$ as in Theorems {\rm \ref{RALMRA:thm1}} and {\rm \ref{RALMRA:thm2}}. Then, there exists a matrix $\mathbf{F}\in\mathbb{R}^{(\mu_n+K)\times \mu_1\dots \mu_{n-1}I_{n+1}\dots I_{N}}$ such that
	\begin{equation*}
	\begin{aligned}
	&\|(\mathbf{A}_{(n)}\mathbf{A}_{(n)}^\top)^q
	\mathbf{G}_n\mathbf{F}_n-\mathbf{A}_{(n)}\|_F\\
	&\leq\frac{a_{n,1}c_{n,1}\sqrt{\max\{I_n'-\mu_n,\mu_n+K\}(\mu_n+K)}}{\sigma_{\mu_n}(\mathbf{A}_{(n)})^{2q-1}}\Delta_{\mu_n+1}(\mathbf{A}_{(n)},q)
	+\Delta_{\mu_n+1}(\mathbf{A}_{(n)}),
	\end{aligned}
	\end{equation*}
	and
	\begin{equation*}
	\|\mathbf{F}_n\|_2\leq\frac{c_{n,1}\sqrt{\mu_n+K}}{\sigma_{\mu_n}(\mathbf{A}_{(n)})^{2q-1}}
	\end{equation*}
	with a probability at least $1-e^{-c_{n,2}(\mu_n+K)}-e^{-a_{n,2}\max\{I_n'-\mu_n,\mu_n+K\}}$, where $a_{n,1}=6s\sqrt{a_{n,2}+4}$ and $s>1$.
\end{remark}
The following result is obtained by combining Lemma \ref{RALMRA:lem7}, and the above two remarks.
\begin{remark}
	For each $n$, let $\mu_n$, $I_n$ and $K$ are positive integers such that $\mu_n< \mu_n+K< I_n'$ and $\mu_n+K>(1+1/\ln \mu_n )\mu_n$. We define $a_{n,1}$, $a_{n,2}$, $c_{n,1}$ and $c_{n,2}$ as in Theorems {\rm \ref{RALMRA:thm1}} and {\rm \ref{RALMRA:thm2}} with $a_{n,1}=6s\sqrt{a_{n,2}+4}$ and $s>1$. For a given tensor $\mathcal{A}\in\mathbb{R}^{I_1\times I_2\times\dots \times  I_N}$ and an integer $q\geq 1$, $N$ orthonormal matrices $\mathbf{Q}_n$ are obtained by Algorithm {\rm\ref{RALMRA:alg1}}. Then
	\begin{equation*}
	\|\mathcal{A}-\widehat{\mathcal{A}}\|_F
	\leq 2\cdot\sum_{n=1}^N\left(\lambda_n\Delta_{\mu_n+1}(\mathbf{A}_{(n)},q)
	+\Delta_{\mu_n+1}(\mathbf{A}_{(n)})\right)
	\end{equation*}
	holds with a probability at least $$1-\sum_{n=1}^N\left(e^{-c_{n,2}(\mu_n+K)}+e^{-a_{n,2}\max\{I_n'-\mu_n,\mu_n+K\}}\right),$$ where
	\begin{equation*}
	\lambda_n=\frac{(\sqrt{\max\{I_n'-\mu_n,\mu_n+K\}}+I_n)\sqrt{\mu_n+K}a_{n,1}c_{n,1}}{\sigma_{\mu_n}(\mathbf{A}_{(n)})^{2q-1}}.
	\end{equation*}
\end{remark}
\subsection{Analyzing Algorithm \ref{RALMRA:alg4}}

For each $n$, when $\mathbf{Q}_n$ is obtained from Algorithm \ref{RALMRA:alg4}, we have
\begin{equation*}
\begin{aligned}
&\|\mathbf{A}_{(n)}-\mathbf{Q}_n\mathbf{Q}_n^\top\mathbf{A}_{(n)}\|_F^2\\
&\leq {\rm rank}(\mathbf{A}_{(n)})\|(\mathbf{A}_{(n)}-\mathbf{Q}_n\mathbf{Q}_n^\top\mathbf{A}_{(n)})(\mathbf{A}_{(n)}-\mathbf{Q}_n\mathbf{Q}_n^\top\mathbf{A}_{(n)})^\top\|_F\\
&={\rm rank}(\mathbf{A}_{(n)})\|\mathbf{A}_{(n)}\mathbf{A}_{(n)}^\top-\mathbf{Q}_n\mathbf{Q}_n^\top\mathbf{A}_{(n)}\mathbf{A}_{(n)}^\top\|_F\\
&\leq {\rm rank}(\mathbf{A}_{(n)})\left(\|\mathbf{A}_{(n)}\mathbf{A}_{(n)}^\top-\mathbf{C}_n\mathbf{C}_n'^\top\|_F+\|\mathbf{C}_n'\mathbf{C}_n'^\top-\mathbf{Q}_n\mathbf{Q}_n^\top\mathbf{C}_n'\mathbf{C}_n'^\top\|_F\right),
\end{aligned}
\end{equation*}
where $\mathbf{S}_n\in\mathbb{R}^{I_1\dots I_{n-1}I_{n+1}\dots I_N\times T_n}\sim{\rm RANDSAMPLE}(T_n,\mathbf{p}_n)$ and $\mathbf{C}_n'=\mathbf{A}_{(n)}\mathbf{S}_n$.
\begin{corollary}\label{RALMRA:cor1}
	For a given $n$, suppose that $\mathbf{A}_{(n)}\in\mathbb{R}^{I_n\times I_1\dots I_{n-1}I_{n+1}\dots I_N}$ and $\mathbf{C}_n'=\mathbf{A}_{(n)}\mathbf{S}_n$, where $\mathbf{S}_n\in\mathbb{R}^{I_1\dots I_{n-1}I_{n+1}\dots I_N\times T_n}\sim{\rm RANDSAMPLE}(T_n,\mathbf{p}_n)$. If the probabilities $\mathbf{p}_n$ are the nearly optimal probabilities, then for any $\delta_n\in(0,1)$, there exists $\eta_n=1+\sqrt{(8/\delta_n)\log(1/\delta_n)}$ such that
	\begin{equation*}
	\|\mathbf{A}_{(n)}\mathbf{A}_{(n)}^\top-\mathbf{C}_n'\mathbf{C}_n'^\top\|_F\leq\frac{\eta_n}{\sqrt{\beta T_n}}\|\mathbf{A}_{(n)}\|_F^2
	\end{equation*}
	with a probability at least $1-\delta_n$. If the probabilities $\mathbf{p}_n$ are the uniform probabilities, then for any $\delta_n\in(0,1)$, there exists $\gamma_n\leq 1+\frac{I_n'}{\sqrt{T_n}}\sqrt{8\log(1/\delta_n)}\|\mathbf{A}_{(n)}\|_F^2$ such that
	\begin{equation*}
	\|\mathbf{A}_{(n)}\mathbf{A}_{(n)}^\top-\mathbf{C}_n'\mathbf{C}_n'^\top\|_F\leq\left(1+\sqrt{\frac{I_n'}{T_n}}+\frac{I_n'}{\sqrt{T_n}}\sqrt{8\log(1/\delta_n)}\right)\|\mathbf{A}_{(n)}\|_F^2
	\end{equation*}
	with a probability at least $1-\delta_n$.
	\end{corollary}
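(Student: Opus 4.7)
The plan is to reduce both parts directly to Lemmas \ref{RALMRA:lem5} and \ref{RALMRA:lem6} via the natural matrix identifications $\mathbf{A}=\mathbf{A}_{(n)}$, $\mathbf{B}=\mathbf{A}_{(n)}^\top$, $K=T_n$, and $\delta=\delta_n$. With these choices, the output of Algorithm \ref{RALMRA:alg3} becomes $\mathbf{C}=\mathbf{A}_{(n)}\mathbf{S}_n=\mathbf{C}_n'$ and $\mathbf{R}=\mathbf{S}_n^\top\mathbf{A}_{(n)}^\top=\mathbf{C}_n'^\top$, so $\mathbf{C}\mathbf{R}=\mathbf{C}_n'\mathbf{C}_n'^\top$ is exactly the approximation to the target $\mathbf{A}_{(n)}\mathbf{A}_{(n)}^\top$ that we wish to bound. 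Two symmetries make this identification especially clean: $\|\mathbf{A}_{(n)}^\top\|_F=\|\mathbf{A}_{(n)}\|_F$, and for every index $i$ the $i$th column of $\mathbf{A}_{(n)}$ has the same Euclidean norm as the $i$th row of $\mathbf{A}_{(n)}^\top$, so all column/row-norm products collapse to $\|\mathbf{A}_{(n)}(:,i)\|_2^2$.

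For Part 1, I would verify that the nearly-optimal hypothesis on $\mathbf{p}_n$ in the corollary coincides with the nearly-optimal hypothesis in Lemma \ref{RALMRA:lem5}, since the latter requires $p_i \geq \beta\,\|\mathbf{A}(i,:)\|_2\|\mathbf{B}(:,i)\|_2/\sum_j\|\mathbf{A}(j,:)\|_2\|\mathbf{B}(:,j)\|_2$. Then Lemma \ref{RALMRA:lem5} directly gives $\|\mathbf{A}_{(n)}\mathbf{A}_{(n)}^\top-\mathbf{C}_n'\mathbf{C}_n'^\top\|_F\leq (\eta_n/\sqrt{\beta T_n})\,\|\mathbf{A}_{(n)}\|_F\|\mathbf{A}_{(n)}^\top\|_F=(\eta_n/\sqrt{\beta T_n})\,\|\mathbf{A}_{(n)}\|_F^2$ with probability at least $1-\delta_n$, which is exactly the stated bound.

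For Part 2, I would invoke Lemma \ref{RALMRA:lem6} with $p_i=1/I_2$, where $I_2$ stands for the middle dimension $I_1\cdots I_{n-1}I_{n+1}\cdots I_N$. The bound produced by that lemma has two pieces. The first piece requires controlling $\sum_i\|\mathbf{A}_{(n)}(:,i)\|_2^4$: using the elementary estimate $\|\mathbf{A}_{(n)}(:,i)\|_2^2\leq \|\mathbf{A}_{(n)}\|_F^2$ gives $\sum_i\|\mathbf{A}_{(n)}(:,i)\|_2^4\leq \|\mathbf{A}_{(n)}\|_F^4$, and hence an overall $\sqrt{I_2/T_n}\,\|\mathbf{A}_{(n)}\|_F^2$ contribution. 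The second piece, $\gamma_n$, is handled by bounding $\max_i\|\mathbf{A}_{(n)}(:,i)\|_2^2\leq \|\mathbf{A}_{(n)}\|_F^2$, leading to the displayed inequality for $\gamma_n$.

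The main obstacle is tightening the crude inner-dimension factor $I_2=I_1\cdots I_{n-1}I_{n+1}\cdots I_N$ produced by a direct application of Lemma \ref{RALMRA:lem6} down to $I_n'=\min\{I_n,I_1\cdots I_{n-1}I_{n+1}\cdots I_N\}$ as claimed. The natural route is to exploit $\mathrm{rank}(\mathbf{A}_{(n)}\mathbf{A}_{(n)}^\top)\leq I_n'$: writing a thin SVD $\mathbf{A}_{(n)}=\mathbf{U}\bm\Sigma\mathbf{V}^\top$ with $\bm\Sigma\in\mathbb{R}^{I_n'\times I_n'}$, one reduces $\mathbf{A}_{(n)}\mathbf{A}_{(n)}^\top-\mathbf{C}_n'\mathbf{C}_n'^\top$ to $\mathbf{U}(\bm\Sigma^2-\widetilde{\mathbf{C}}\widetilde{\mathbf{C}}^\top)\mathbf{U}^\top$ for $\widetilde{\mathbf{C}}=\bm\Sigma\mathbf{V}^\top\mathbf{S}_n$, so the effective inner dimension driving the concentration bound becomes $I_n'$ instead of $I_2$. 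After applying the uniform-sampling variant of Lemma \ref{RALMRA:lem6} to this reduced product and summing with the $\gamma_n$ term, the desired bound on $\|\mathbf{A}_{(n)}\mathbf{A}_{(n)}^\top-\mathbf{C}_n'\mathbf{C}_n'^\top\|_F$ falls out with probability at least $1-\delta_n$.
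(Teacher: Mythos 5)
Your treatment of the nearly optimal case is exactly the paper's: apply Lemma \ref{RALMRA:lem5} with $\mathbf{A}=\mathbf{A}_{(n)}$, $\mathbf{B}=\mathbf{A}_{(n)}^\top$ and $K=T_n$, and use $\|\mathbf{A}_{(n)}^\top\|_F=\|\mathbf{A}_{(n)}\|_F$. That part is fine.

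For the uniform case there is a genuine gap. You correctly observe that a direct application of Lemma \ref{RALMRA:lem6} produces the inner dimension $I_2=I_1\cdots I_{n-1}I_{n+1}\cdots I_N$ rather than $I_n'$, and you propose to repair this by passing to the thin SVD $\mathbf{A}_{(n)}=\mathbf{U}\bm{\Sigma}\mathbf{V}^\top$ and working with $\widetilde{\mathbf{C}}=\bm{\Sigma}\mathbf{V}^\top\mathbf{S}_n$. But that reduction only shrinks the outer (row) dimension of the left factor from $I_n$ to $I_n'$; the matrix $\mathbf{D}=\bm{\Sigma}\mathbf{V}^\top$ still has $I_1\cdots I_{n-1}I_{n+1}\cdots I_N$ columns, $\mathbf{S}_n$ still samples $T_n$ of those columns, and $\|\mathbf{D}(:,i)\|_2=\|\mathbf{A}_{(n)}(:,i)\|_2$ for every $i$. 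Since the bound in Lemma \ref{RALMRA:lem6} depends only on the inner dimension and on the column norms of the left factor, applying it to $\mathbf{D}\mathbf{D}^\top$ versus $\widetilde{\mathbf{C}}\widetilde{\mathbf{C}}^\top$ returns exactly the same estimate as applying it to $\mathbf{A}_{(n)}$ directly, with the same factor $\sqrt{I_2/T_n}$; uniform column sampling cannot be made insensitive to the number of columns merely because the product has low rank (a rank-one matrix with one dominant column already forces the error to scale with the number of columns). So your route does not establish the stated $I_n'$ factor. For what it is worth, the paper's own proof does not resolve this either: it silently writes $I_n'$ in place of the $I_2$ of Lemma \ref{RALMRA:lem6} and then performs the same two elementary norm bounds you use. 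Your instinct that an extra argument is needed is sound, but the SVD device does not supply it; as argued, the uniform-probability inequality only follows with $I_1\cdots I_{n-1}I_{n+1}\cdots I_N$ in place of $I_n'$.
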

\begin{proof}
	The first inequality is derived from Lemma \ref{RALMRA:lem5}. We now prove the second inequality. As shown in Lemma \ref{RALMRA:lem6}, we have that for any $\delta_n\in(0,1)$, there exists $$\gamma_n=1+\frac{I_n'}{\sqrt{I_n}}\sqrt{8\log(1/\delta)}\max_i\|\mathbf{A}_{(n)}(:,i)\|_2^2$$ such that
	\begin{equation*}
	\|\mathbf{A}_{(n)}\mathbf{A}_{(n)}^\top-\mathbf{C}_n'\mathbf{C}_n'^\top\|_F \leq\sqrt{\frac{I_n'}{K}}\left(\sum_{i=1}^{I_n'}\|\mathbf{A}_{(n)}(:,i)\|_2^4\right)^{1/2}+\gamma_n
	\end{equation*}
	holds with a probability at least $1-\delta_n$. It is obvious that $\max_i\|\mathbf{A}_{(n)}(:,i)\|_2^2\leq \|\mathbf{A}_{(n)}\|_F^2$ and
	\begin{equation*}
	\begin{aligned}
	&\sum_{i=1}^{I_n'}\|\mathbf{A}_{(n)}(:,i)\|_2^4\leq
	\left(\sum_{i=1}^{I_n'}\|\mathbf{A}_{(n)}(:,i)\|_2^2\right)^2=\|\mathbf{A}_{(n)}\|_F^4,
	\end{aligned}
	\end{equation*}
	which implies the second inequality.
	\end{proof}

We now estimate the upper bound for $\|\mathbf{C}_n'\mathbf{C}_n'^\top-\mathbf{Q}_n\mathbf{Q}_n^\top\mathbf{C}_n'\mathbf{C}_n'^\top\|_F$. When the matrix $\mathbf{Q}_n$ is obtained by Algorithm \ref{RALMRA:alg4} with a given $n$, the matrix $\mathbf{Q}_n\mathbf{Q}_n^\top\mathbf{C}_n'\mathbf{C}_n'^\top$ is a good approximation to the matrix $\mathbf{C}_n'\mathbf{C}_n'^\top$, provided that there exist matrices $\mathbf{G}_n\in\mathbb{R}^{I_n\times (\mu_n+K)}$ and $\mathbf{R}_n\in\mathbb{R}^{\mu_n\times (\mu_n+K)}$ such that: (a) $\mathbf{Q}_n$ is orthonormal; (b) $\mathbf{Q}_n\mathbf{R}_n$ is a good approximation of $\mathbf{C}_n'\mathbf{C}_n'^\top\mathbf{G}_n$; and (c) there exists a matrix $\mathbf{F}\in\mathbb{R}^{(\mu_n+K)\times I_n}$ such that $\|\mathbf{F}\|_2$ is not too large and $\mathbf{C}_n'\mathbf{C}_n'^\top\mathbf{G}_n\mathbf{F}_n$ is a good approximation of $\mathbf{C}_n'\mathbf{C}_n'^\top$.

Similar to Lemma \ref{RALMRA:lem7}, the following lemma can be easily proved.
\begin{lemma}
	\label{RALMRA:lem7add}
	For a given $n$, suppose that $\mathbf{C}_n'\in\mathbb{R}^{I_n\times T_n}$, $\mathbf{Q}_n\in\mathbb{R}^{I_n\times \mu_n}$ is orthonormal, $\mathbf{R}_n\in\mathbb{R}^{\mu_n\times (\mu_n+K)}$, $\mathbf{F}\in\mathbb{R}^{(\mu_n+K)\times I_n}$ and $\mathbf{G}_n\in\mathbb{R}^{I_n\times (\mu_n+K)}$ with $\mu_n< \mu_n+K< I_n$. Then, we have
	\begin{equation*}
	\begin{aligned}
	\|\mathbf{C}_n'\mathbf{C}_n'^\top-\mathbf{Q}_n\mathbf{Q}_n^\top\mathbf{C}_n'\mathbf{C}_n'^\top\|_F^2
	&\leq 2\|\mathbf{C}_n'\mathbf{C}_n'^\top\mathbf{G}_n\mathbf{F}_n-\mathbf{C}_n'\mathbf{C}_n'^\top\|_F^2\\
&\quad	+2\|\mathbf{F}\|_2^2\|\mathbf{Q}_n\mathbf{R}_n-\mathbf{C}_n'\mathbf{C}_n'^\top\mathbf{G}_n\|_F^2.
	\end{aligned}
	\end{equation*}
\end{lemma}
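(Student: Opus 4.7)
The statement is an exact structural analogue of Lemma \ref{RALMRA:lem7}: the role previously played by $\mathbf{A}_{(n)}$ (the target matrix being approximated through the projection $\mathbf{Q}_n\mathbf{Q}_n^\top$) is now played by the symmetric matrix $\mathbf{C}_n'\mathbf{C}_n'^\top \in \mathbb{R}^{I_n \times I_n}$, while the role previously played by the sketch $(\mathbf{A}_{(n)}\mathbf{A}_{(n)}^\top)^q\mathbf{G}_n$ is now played by $\mathbf{C}_n'\mathbf{C}_n'^\top\mathbf{G}_n$. Since the argument given for Lemma \ref{RALMRA:lem7} is purely algebraic and uses no structural property of $(\mathbf{A}_{(n)}\mathbf{A}_{(n)}^\top)^q$ beyond the dimensions of the sketch and the orthonormality of $\mathbf{Q}_n$, my plan is simply to transcribe that proof under this substitution.

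Concretely, I would first introduce the intermediate matrix $\mathbf{C}_n'\mathbf{C}_n'^\top\mathbf{G}_n\mathbf{F}_n$ and apply the triangle inequality to split $\mathbf{C}_n'\mathbf{C}_n'^\top - \mathbf{Q}_n\mathbf{Q}_n^\top\mathbf{C}_n'\mathbf{C}_n'^\top$ into three pieces
\begin{equation*}
\bigl[\mathbf{C}_n'\mathbf{C}_n'^\top - \mathbf{C}_n'\mathbf{C}_n'^\top\mathbf{G}_n\mathbf{F}_n\bigr] + \bigl[\mathbf{C}_n'\mathbf{C}_n'^\top\mathbf{G}_n\mathbf{F}_n - \mathbf{Q}_n\mathbf{Q}_n^\top\mathbf{C}_n'\mathbf{C}_n'^\top\mathbf{G}_n\mathbf{F}_n\bigr] + \bigl[\mathbf{Q}_n\mathbf{Q}_n^\top\mathbf{C}_n'\mathbf{C}_n'^\top\mathbf{G}_n\mathbf{F}_n - \mathbf{Q}_n\mathbf{Q}_n^\top\mathbf{C}_n'\mathbf{C}_n'^\top\bigr],
\end{equation*}
analogous to \eqref{RALMRA:eqn11}. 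The first and third pieces are each at most $\|\mathbf{C}_n'\mathbf{C}_n'^\top\mathbf{G}_n\mathbf{F}_n - \mathbf{C}_n'\mathbf{C}_n'^\top\|_F$ in Frobenius norm, using $\|\mathbf{Q}_n\mathbf{Q}_n^\top\|_2 \leq 1$ on the third piece. For the middle piece, I would factor out $\mathbf{F}_n$ via the submultiplicativity bound $\|\mathbf{X}\mathbf{F}_n\|_F \leq \|\mathbf{X}\|_F\|\mathbf{F}_n\|_2$.

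To control the resulting quantity $\|\mathbf{C}_n'\mathbf{C}_n'^\top\mathbf{G}_n - \mathbf{Q}_n\mathbf{Q}_n^\top\mathbf{C}_n'\mathbf{C}_n'^\top\mathbf{G}_n\|_F$, I would insert $\mathbf{Q}_n\mathbf{R}_n$ as an artificial intermediate, then apply the identity $\mathbf{Q}_n^\top\mathbf{Q}_n = \mathbf{I}_{\mu_n}$ to annihilate the term $\mathbf{Q}_n\mathbf{R}_n - \mathbf{Q}_n\mathbf{Q}_n^\top\mathbf{Q}_n\mathbf{R}_n$, and use $\|\mathbf{Q}_n\mathbf{Q}_n^\top\|_2 \leq 1$ once more to bound $\|\mathbf{Q}_n\mathbf{Q}_n^\top\mathbf{Q}_n\mathbf{R}_n - \mathbf{Q}_n\mathbf{Q}_n^\top\mathbf{C}_n'\mathbf{C}_n'^\top\mathbf{G}_n\|_F$ by $\|\mathbf{Q}_n\mathbf{R}_n - \mathbf{C}_n'\mathbf{C}_n'^\top\mathbf{G}_n\|_F$. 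Squaring and collecting the constants exactly as in the proof of Lemma \ref{RALMRA:lem7} produces the stated factor-of-$2$ bound.

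I do not anticipate any genuine obstacle: the substitution preserves every matrix dimension constraint (since $\mathbf{C}_n'\mathbf{C}_n'^\top \in \mathbb{R}^{I_n \times I_n}$ and $\mathbf{F}_n \in \mathbb{R}^{(\mu_n+K) \times I_n}$, so the products $\mathbf{C}_n'\mathbf{C}_n'^\top\mathbf{G}_n\mathbf{F}_n \in \mathbb{R}^{I_n \times I_n}$ are conformable with $\mathbf{C}_n'\mathbf{C}_n'^\top$), the inequalities invoked are all generic norm inequalities rather than properties of tensor unfoldings, and the idempotence $\mathbf{Q}_n^\top\mathbf{Q}_n = \mathbf{I}_{\mu_n}$ and spectral-norm bound on $\mathbf{Q}_n\mathbf{Q}_n^\top$ depend only on orthonormality of $\mathbf{Q}_n$. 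Consequently the proof is essentially verbatim, and the only bookkeeping is to double-check that each occurrence of $\mathbf{A}_{(n)}$ and $(\mathbf{A}_{(n)}\mathbf{A}_{(n)}^\top)^q\mathbf{G}_n$ in the original derivation has been replaced consistently by $\mathbf{C}_n'\mathbf{C}_n'^\top$ and $\mathbf{C}_n'\mathbf{C}_n'^\top\mathbf{G}_n$, respectively.
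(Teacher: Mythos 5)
Your proposal is correct and coincides with the paper's own treatment: the paper justifies this lemma only by the remark that it is ``similar to Lemma \ref{RALMRA:lem7}'', and your plan of transcribing that proof with $\mathbf{A}_{(n)}$ replaced by $\mathbf{C}_n'\mathbf{C}_n'^\top$ and $(\mathbf{A}_{(n)}\mathbf{A}_{(n)}^\top)^q\mathbf{G}_n$ replaced by $\mathbf{C}_n'\mathbf{C}_n'^\top\mathbf{G}_n$ is exactly that argument, with every dimension conformable as you verify. The only caveat you inherit from the paper is the loose application of the triangle inequality directly to squared Frobenius norms in the analogue of (\ref{RALMRA:eqn11}), which strictly costs an extra constant (or should be done on unsquared norms and squared afterwards); that is the paper's own convention rather than a new gap in your argument.
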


According to the properties of the matrix $\mathbf{G}_n$, for each $n$, we estimate $\|\mathbf{C}_n'\mathbf{C}_n'^\top\mathbf{G}_n\mathbf{F}_n-\mathbf{C}_n'\mathbf{C}_n'^\top\|_F$ and $\|\mathbf{F}_n\|_2$.
\begin{theorem}\label{RALMRA:thm4add}
	For a given $n$, suppose that $\mathbf{C}_n'\in\mathbb{R}^{I_n\times T_n}$ and $\mathbf{G}_n\in\mathbb{R}^{I_n\times (\mu_n+K)}$ is a randomly chosen standard Gaussian matrix with $\mu_n< \mu_n+K< I_n''$. For $\beta>1$ and $\gamma>1$, if
	\begin{equation}\label{RALMRA:eqn15add}
 \begin{aligned}
	&1-\frac{1}{4(\gamma^2-1)\sqrt{\pi \max\{I_n''-\mu_n,\mu_n+K\}\gamma^2}}\left(\frac{2\gamma^2}{e^{\gamma^2-1}}\right)^2\\
 &-
	\frac{1}{\sqrt{2\pi(K+1)}}\left(\frac{e}{K+1}\right)^{K+1}
 \end{aligned}
	\end{equation}
	is nonnegative, then, there exists a matrix $\mathbf{F}\in\mathbb{R}^{(\mu_n+K)\times \mu_1\dots \mu_{n-1}I_{n+1}\dots I_{N}}$ such that
	\begin{equation*}
	\begin{aligned}
	&\|\mathbf{C}_n'\mathbf{C}_n'^\top\mathbf{G}_n\mathbf{F}_n-\mathbf{C}_n'\mathbf{C}_n'^\top\|_F\\
	&\leq(\sqrt{2\max\{I_n''-\mu_n,\mu_n+K\}(\mu_n+K)}\gamma\beta+1)\Delta_{\mu_n+1}(\mathbf{C}_{n},1),
	\end{aligned}
	\end{equation*}
	and
	\begin{equation*}
	\|\mathbf{F}_n\|_2\leq\sqrt{\mu_n+K}\beta
	\end{equation*}
	with probability not less than the amount in {\rm (\ref{RALMRA:eqn15add})}, where
 \begin{equation*}
 \Delta_{\mu_n+1}(\mathbf{C}_{n},1)=\sqrt{\sum_{i=\mu_n+1}^{I_n''}\sigma_{i}(\mathbf{C}_n')^4}.
 \end{equation*}
\end{theorem}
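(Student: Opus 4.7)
The plan is to mirror the proof of Theorem \ref{RALMRA:thm4} but with two simplifications: the target matrix $\mathbf{M}:=\mathbf{C}_n'\mathbf{C}_n'^\top$ is already symmetric positive semidefinite of rank at most $I_n''$, and we are approximating $\mathbf{M}$ itself rather than a ``square-root-like'' factor. This eliminates the $\sigma_{\mu_n}(\cdot)^{2q-1}$ factor that appears in the denominator of Theorem \ref{RALMRA:thm4}, producing the simpler bound stated.

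First I would introduce the eigendecomposition $\mathbf{M}=\mathbf{U}_n\bm{\Sigma}_n\mathbf{U}_n^\top$, where $\mathbf{U}_n\in\mathbb{R}^{I_n\times I_n''}$ is orthonormal and $\bm{\Sigma}_n\in\mathbb{R}^{I_n''\times I_n''}$ is diagonal with nonincreasing entries $\sigma_i(\mathbf{M})=\sigma_i(\mathbf{C}_n')^2$. Partition $\mathbf{U}_n^\top\mathbf{G}_n=\bigl(\mathbf{H}^\top\ \mathbf{R}^\top\bigr)^\top$ with $\mathbf{H}\in\mathbb{R}^{\mu_n\times(\mu_n+K)}$ and $\mathbf{R}\in\mathbb{R}^{(I_n''-\mu_n)\times(\mu_n+K)}$; since $\mathbf{G}_n$ is standard Gaussian and $\mathbf{U}_n^\top$ has orthonormal rows, $\mathbf{H}$ and $\mathbf{R}$ are independent standard Gaussian matrices of the stated sizes. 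Split $\bm{\Sigma}_n=\mathrm{diag}(\bm{\Sigma}_{n,1},\bm{\Sigma}_{n,2})$ accordingly.

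Next I would construct the candidate map exactly as in Theorem \ref{RALMRA:thm4}, but \emph{without} the compensating $\bm{\Sigma}_{n,1}^{-2q+1}$ factor: set
\begin{equation*}
\mathbf{P}=\bigl(\mathbf{H}^\dagger\ \ \mathbf{0}_{(\mu_n+K)\times(I_n-\mu_n)}\bigr),\qquad \mathbf{F}_n=\mathbf{P}\mathbf{U}_n^\top.
\end{equation*}
A direct block multiplication gives
\begin{equation*}
\mathbf{M}\mathbf{G}_n\mathbf{F}_n-\mathbf{M}=\mathbf{U}_n\begin{pmatrix}\mathbf{0}&\mathbf{0}\\ \bm{\Sigma}_{n,2}\mathbf{R}\mathbf{H}^\dagger & -\bm{\Sigma}_{n,2}\end{pmatrix}\mathbf{U}_n^\top,
\end{equation*}
so by unitary invariance of the Frobenius norm and the submultiplicative inequality,
\begin{equation*}
\|\mathbf{M}\mathbf{G}_n\mathbf{F}_n-\mathbf{M}\|_F\le \|\mathbf{H}^\dagger\|_2\,\|\mathbf{R}\|_2\,\|\bm{\Sigma}_{n,2}\|_F+\|\bm{\Sigma}_{n,2}\|_F.
\end{equation*}
Since $\|\bm{\Sigma}_{n,2}\|_F=\bigl(\sum_{i=\mu_n+1}^{I_n''}\sigma_i(\mathbf{C}_n')^4\bigr)^{1/2}=\Delta_{\mu_n+1}(\mathbf{C}_n,1)$, this already yields the target inequality once the norms of $\mathbf{H}^\dagger$ and $\mathbf{R}$ are controlled.

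The estimates on the Gaussian pieces come from the two Gaussian lemmas already in the paper: Lemma \ref{RALMRA:lem2} applied to the square-ish $\mu_n\times(\mu_n+K)$ block $\mathbf{H}$ gives $\|\mathbf{H}^\dagger\|_2=1/\sigma_{\min}(\mathbf{H})\le\sqrt{\mu_n+K}\,\beta$ with failure probability at most $\frac{1}{\sqrt{2\pi(K+1)}}\bigl(e/(K+1)\bigr)^{K+1}$, which also directly gives the claim $\|\mathbf{F}_n\|_2=\|\mathbf{H}^\dagger\|_2\le\sqrt{\mu_n+K}\,\beta$; Lemma \ref{RALMRA:lem1} applied to $\mathbf{R}$ (whose larger dimension is $\max\{I_n''-\mu_n,\mu_n+K\}$) gives $\|\mathbf{R}\|_2\le\sqrt{2\max\{I_n''-\mu_n,\mu_n+K\}}\,\gamma$ with failure probability at most $\frac{1}{4(\gamma^2-1)\sqrt{\pi\max\{I_n''-\mu_n,\mu_n+K\}\gamma^2}}\bigl(2\gamma^2/e^{\gamma^2-1}\bigr)^2$. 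A union bound on these two failure events yields the probability in (\ref{RALMRA:eqn15add}), and substituting the bounds back into the displayed inequality completes the proof. The only mild subtlety to flag is justifying that $\mathbf{H}$ and $\mathbf{R}$ are jointly standard Gaussian with the claimed dimensions after the orthogonal rotation by $\mathbf{U}_n^\top$, which follows from rotational invariance of i.i.d.\ Gaussian matrices; everything else is a bookkeeping exercise analogous to the proof of Theorem \ref{RALMRA:thm4}.
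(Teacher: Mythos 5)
Your proposal is correct and follows essentially the same route as the paper's own proof: the same SVD/eigendecomposition of $\mathbf{C}_n'\mathbf{C}_n'^\top$, the same choice $\mathbf{F}_n=(\mathbf{H}^\dagger\ \ \mathbf{0})\mathbf{U}_n^\top$, the same block computation yielding $\bigl(\|\mathbf{H}^\dagger\|_2\|\mathbf{R}\|_2+1\bigr)\|\bm{\Sigma}_{n,2}\|_F$, and the same use of Lemmas \ref{RALMRA:lem1} and \ref{RALMRA:lem2} with a union bound. The only difference is notational (you absorb the square into the eigenvalue matrix), so nothing further is needed.
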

\begin{proof}
	Suppose that $\mathbf{C}_{n}'=\mathbf{U}_n{\bf \Sigma}_n\mathbf{V}_n^\top$, where $\mathbf{U}_n\in\mathbb{R}^{I_n\times I_n''}$ and $\mathbf{V}_n\in\mathbb{R}^{I_n\times I_n''}$ are orthonornal, and the diagonal entries of the diagonal matrix ${\bf \Sigma}_n\in\mathbb{R}^{I_n''\times I_n''}$ are nonnegative and arranged in descending order. Then, we have
	\begin{equation*}
	\mathbf{C}_n'\mathbf{C}_n'^\top=\mathbf{U}_n{\bf \Sigma}_n^{2}\mathbf{U}_n^\top.
	\end{equation*}
	Assume that given $\mathbf{U}_n^\top$ and $\mathbf{G}_n$, suppose that
	\begin{equation*}
	\mathbf{U}_n^\top\mathbf{G}_n=\begin{pmatrix}
	\mathbf{H}\\
	\mathbf{R}
	\end{pmatrix},
	\end{equation*}
	where $\mathbf{H}\in\mathbb{R}^{\mu_n\times(\mu_n+K)}$ and $\mathbf{R}\in\mathbb{R}^{(I_n''-\mu_n)\times (\mu_n+K)}$. Since $\mathbf{G}_n$ is a standard Gaussian matrix, and $\mathbf{V}$ is an orthogonal matrix, then $\mathbf{U}_n^\top\mathbf{G}_n$ is also a standard Gaussian matrix. Therefore, $\mathbf{H}$ and $\mathbf{R}$ are also standard Gaussian matrices. Let $\mathbf{P}\in\mathbb{R}^{(\mu_n+K)\times I_n}$ be defined by
	\begin{equation*}
	\mathbf{P}=\begin{pmatrix}
	\mathbf{H}^\dag&\mathbf{0}_{(\mu_n+K)\times (I_n-\mu_n)}
	\end{pmatrix},
	\quad
	\bm{\Sigma}_{n,1}=\bm{\Sigma}_n(1:\mu_n,1:\mu_n).
	\end{equation*}
	Define $\mathbf{F}_n=\mathbf{P}\mathbf{U}_n^\top$. From Lemma \ref{RALMRA:lem2}, we estimate $\|\mathbf{F}_n\|_2$ as
	\begin{equation*}
	\|\mathbf{F}_n\|_2=\|\mathbf{P}\mathbf{U}_n^\top\|_2=
	\left\|\mathbf{H}^\dag\right\|_2
	\leq\frac{1}{\sigma_{\min}(\mathbf{H})}
	\leq\sqrt{\mu_n+K}\beta
	\end{equation*}
	with a probability at least
	\begin{equation*}
	1-\frac{1}{\sqrt{2\pi(K+1)}}\left(\frac{e}{K+1}\right)^{K+1},
	\end{equation*}
	where $\beta>1$.
	Now, we can bound $\|\mathbf{C}_n'\mathbf{C}_n'^\top\mathbf{G}_n\mathbf{F}_n-\mathbf{C}_n'\mathbf{C}_n'^\top\|_F$. Note that we have
	\begin{equation*}
	\mathbf{C}_n'\mathbf{C}_n'^\top\mathbf{G}_n\mathbf{F}_n-\mathbf{C}_n'\mathbf{C}_n'^\top
	=\mathbf{U}_n\bm{\Sigma}_n^2\left(
	\begin{pmatrix}
	\mathbf{H}\\
	\mathbf{R}\end{pmatrix}\begin{pmatrix}
	\mathbf{H}^\dag&\mathbf{0}_{(\mu_n+K)\times (I_n-\mu_n)}
	\end{pmatrix}-\mathbf{I}_{I_n}\right)\mathbf{U}_n^{\top}.
	\end{equation*}
	Let $\bm{\Sigma}_{n,2}={\bf \Sigma}_n((\mu_n+1):J_n,(\mu_n+1):J_n)$. Then
	\begin{equation*}
	\begin{aligned}
	&\bm{\Sigma}_n^2\left(
	\begin{pmatrix}
	\mathbf{H}\\
	\mathbf{R}\end{pmatrix}\begin{pmatrix}
	\mathbf{H}^\dag&\mathbf{0}_{(\mu_n+K)\times (I_n-\mu_n)}
	\end{pmatrix}-\mathbf{I}_{I_n}\right)=\begin{pmatrix}
	\mathbf{0}_{\mu_n\times \mu_n}&\mathbf{0}_{\mu_n\times (I_n-\mu_n)}\\
	\bm{\Sigma}_{n,2}^{2}\mathbf{R}\mathbf{H}^{\dag}&-\bm{\Sigma}_{n,2}^2
	\end{pmatrix}.
	\end{aligned}
	\end{equation*}
	The norm of the last term is:
	\begin{equation*}
	\left\|\begin{pmatrix}
	\mathbf{0}_{\mu_n\times \mu_n}&\mathbf{0}_{\mu_n\times (I_n-\mu_n)}\\
	\bm{\Sigma}_{n,2}^{2}\mathbf{R}\mathbf{H}^{\dag}&-\bm{\Sigma}_{n,2}^2
	\end{pmatrix}\right\|_F
	\leq \left\|\bm{\Sigma}_{n,2}^{2}\mathbf{R}\mathbf{H}^{\dag}\right\|_F
	+\|\bm{\Sigma}_{n,2}^2\|_F.
	\end{equation*}
	Moreover,
	\begin{equation*}
	\left\|\bm{\Sigma}_{n,2}^{2}\mathbf{R}\mathbf{H}^{\dag}\right\|_F\leq
        \left\|\mathbf{H}^{\dag}\right\|_2\|\mathbf{R}\|_2
        \left\|\bm{\Sigma}_{n,2}^{2}\right\|_F
        =\left\|\mathbf{H}^{\dag}\right\|_2\|\mathbf{R}\|_2\Delta_{\mu_n+1}(\mathbf{C}_{n},1),
	\end{equation*}
	with $\Delta_{\mu_n+1}(\mathbf{C}_{n},1)=\|\bm{\Sigma}_{n,2}^{2}\|_F$. We also know that
	$$\|\mathbf{R}\|_2\leq \sqrt{2\max\{I_n''-\mu_n,\mu_n+K\}}\gamma$$
	with a probability at least $$1-\frac{1}{4(\gamma^2-1)\sqrt{\pi \max\{I_n''-\mu_n,\mu_n+K\}\gamma^2}}\left(\frac{2\gamma^2}{e^{\gamma^2-1}}\right)^2.$$
	Therefore,
	\begin{equation*}
	\begin{aligned}
	&\|\mathbf{C}_n'\mathbf{C}_n'^\top\mathbf{G}_n\mathbf{F}_n-\mathbf{C}_n'\mathbf{C}_n'^\top\|_F
	\leq(\|\mathbf{H}^{\dag}\|_2\|\mathbf{R}\|_2+1)\Delta_{\mu_n+1}(\mathbf{C}_{n},1).
	\end{aligned}
	\end{equation*}
	with a probability at least the amount in (\ref{RALMRA:eqn15add}).
\end{proof}

By combining Lemmas \ref{RALMRA:lem3} and \ref{RALMRA:lem7add}, and Theorem \ref{RALMRA:thm4add}, we give an upper bound for $\|\mathbf{C}_n'\mathbf{C}_n'^\top-\mathbf{Q}_n\mathbf{Q}_n^\top\mathbf{C}_n\mathbf{C}_n'^\top\|_F$, as shown in the following corollary.
\begin{corollary}\label{RALMRA:cor2}
	For a given $n$, let $\mu_n$, $I_n$, $T_n$ and $K$ are positive integers such that $\mu_n< \mu_n+K< I_n''$. For $\beta>1$ and $\gamma>1$, suppose that
	\begin{equation}\label{RALMRA:eqn18}
	\begin{aligned}
	1-&\frac{1}{4(\gamma^2-1)\sqrt{\pi \max\{I_n''-\mu_n,\mu_n+K\}\gamma^2}}\left(\frac{2\gamma^2}{e^{\gamma^2-1}}\right)^2\\
	&-
	\frac{1}{\sqrt{2\pi(K+1)}}\left(\frac{e}{K+1}\right)^{K+1}-\frac{1}{4(\gamma^2-1)\sqrt{\pi I_n\gamma^2}}\left(\frac{2\gamma^2}{e^{\gamma^2-1}}\right)^2
	\end{aligned}
	\end{equation}
	is nonnegative. For a given tensor $\mathcal{A}\in\mathbb{R}^{I_1\times I_2\times\dots \times  I_N}$ and $q= 1$, the orthonormal matrix $\mathbf{Q}_n$ is obtained by Algorithm {\rm\ref{RALMRA:alg4}}. Then
	\begin{equation*}
	\begin{aligned}
	\|\mathbf{C}_n'\mathbf{C}_n'^\top-\mathbf{Q}_n\mathbf{Q}_n^\top\mathbf{C}_n\mathbf{C}_n'^\top\|_F
	&\leq 2\cdot(\lambda_n+1)\Delta_{\mu_n+1}(\mathbf{C}_n,1)
	\end{aligned}
	\end{equation*}
	holds with a probability at least the amount in {\rm (\ref{RALMRA:eqn18})}, where
	\begin{equation*}
	\lambda_n=(\sqrt{\max\{I_n''-\mu_n,\mu_n+K\}}+\sqrt{I_n})\sqrt{2(\mu_n+K)}\gamma\beta.
	\end{equation*}
\end{corollary}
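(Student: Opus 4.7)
\textbf{Proof proposal for Corollary \ref{RALMRA:cor2}.} The plan is to mirror the pattern already used in the proof of Theorem \ref{RALMRA:thm5}, but applied to the symmetric positive semidefinite matrix $\mathbf{C}_n'\mathbf{C}_n'^\top$ in place of $(\mathbf{A}_{(n)}\mathbf{A}_{(n)}^\top)^q$. Concretely, I would apply Lemma \ref{RALMRA:lem7add} to get
\begin{equation*}
\|\mathbf{C}_n'\mathbf{C}_n'^\top-\mathbf{Q}_n\mathbf{Q}_n^\top\mathbf{C}_n'\mathbf{C}_n'^\top\|_F^2
\leq 2\|\mathbf{C}_n'\mathbf{C}_n'^\top\mathbf{G}_n\mathbf{F}_n-\mathbf{C}_n'\mathbf{C}_n'^\top\|_F^2
+2\|\mathbf{F}_n\|_2^2\|\mathbf{Q}_n\mathbf{R}_n-\mathbf{C}_n'\mathbf{C}_n'^\top\mathbf{G}_n\|_F^2,
\end{equation*}
then use the elementary fact $a^2\le b^2+c^2\Rightarrow a\le b+c$ (with the factor $\sqrt{2}$ absorbed) exactly as in Theorem \ref{RALMRA:thm5}, so that the final bound comes out as the sum of two terms with a leading factor $2$.

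Next, I would invoke the two pieces already established. Theorem \ref{RALMRA:thm4add} supplies the deterministic matrix $\mathbf{F}_n$ with
$\|\mathbf{C}_n'\mathbf{C}_n'^\top\mathbf{G}_n\mathbf{F}_n-\mathbf{C}_n'\mathbf{C}_n'^\top\|_F
\le (\sqrt{2\max\{I_n''-\mu_n,\mu_n+K\}(\mu_n+K)}\gamma\beta+1)\Delta_{\mu_n+1}(\mathbf{C}_n,1)$
and $\|\mathbf{F}_n\|_2\le \sqrt{\mu_n+K}\beta$, valid with the probability given by the first two terms of \eqref{RALMRA:eqn18}. For the second residual, Lemma \ref{RALMRA:lem3} (applied to $\mathbf{C}_n'\mathbf{C}_n'^\top\mathbf{G}_n\in\mathbb{R}^{I_n\times(\mu_n+K)}$) yields an orthonormal $\mathbf{Q}_n$ with $\|\mathbf{Q}_n\mathbf{R}_n-\mathbf{C}_n'\mathbf{C}_n'^\top\mathbf{G}_n\|_F\le \Delta_{\mu_n+1}(\mathbf{C}_n'\mathbf{C}_n'^\top\mathbf{G}_n)$. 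I would then apply Lemma \ref{RALMRA:lem4} with $\mathbf{A}=\mathbf{C}_n'\mathbf{C}_n'^\top$ and $\mathbf{B}=\mathbf{G}_n$, giving
\begin{equation*}
\sum_{i=\mu_n+1}^{\mu_n+K}\sigma_i(\mathbf{C}_n'\mathbf{C}_n'^\top\mathbf{G}_n)^2
\le \|\mathbf{G}_n\|_2^2 \sum_{i=\mu_n+1}^{I_n''}\sigma_i(\mathbf{C}_n'\mathbf{C}_n'^\top)^2
= \|\mathbf{G}_n\|_2^2\,\Delta_{\mu_n+1}(\mathbf{C}_n,1)^2,
\end{equation*}
where the identity $\sigma_i(\mathbf{C}_n'\mathbf{C}_n'^\top)=\sigma_i(\mathbf{C}_n')^2$ recovers the definition of $\Delta_{\mu_n+1}(\mathbf{C}_n,1)$ from Theorem \ref{RALMRA:thm4add}. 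Lemma \ref{RALMRA:lem1} then bounds $\|\mathbf{G}_n\|_2\le \sqrt{2I_n}\gamma$, contributing the third term in \eqref{RALMRA:eqn18}.

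Assembling the pieces and using a union bound over the two independent failure events (the conclusions of Theorem \ref{RALMRA:thm4add} and of Lemma \ref{RALMRA:lem1}), the inequality chain collapses to
\begin{equation*}
\|\mathbf{C}_n'\mathbf{C}_n'^\top-\mathbf{Q}_n\mathbf{Q}_n^\top\mathbf{C}_n'\mathbf{C}_n'^\top\|_F
\le 2\bigl(\sqrt{2(\mu_n+K)}\,\gamma\beta(\sqrt{\max\{I_n''-\mu_n,\mu_n+K\}}+\sqrt{I_n})+1\bigr)\Delta_{\mu_n+1}(\mathbf{C}_n,1),
\end{equation*}
which is exactly $2(\lambda_n+1)\Delta_{\mu_n+1}(\mathbf{C}_n,1)$ for the stated $\lambda_n$. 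The main obstacle, as I see it, is purely bookkeeping: making sure the residual $\|\mathbf{Q}_n\mathbf{R}_n-\mathbf{C}_n'\mathbf{C}_n'^\top\mathbf{G}_n\|_F$ is controlled in terms of the \emph{fourth} powers of the singular values of $\mathbf{C}_n'$ (i.e.\ $\Delta_{\mu_n+1}(\mathbf{C}_n,1)$ rather than $\Delta_{\mu_n+1}(\mathbf{C}_n')$) so that both contributions match up with a common factor $\Delta_{\mu_n+1}(\mathbf{C}_n,1)$ and can be combined cleanly. The role of $q=1$ is precisely that $\mathbf{C}_n'\mathbf{C}_n'^\top$ is a single squared factor, which is what makes the exponent bookkeeping straightforward; no other power-iteration analysis is needed here beyond what is already in Theorem \ref{RALMRA:thm4add}.
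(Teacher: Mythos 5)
Your proposal is correct and follows essentially the same route the paper indicates for Corollary \ref{RALMRA:cor2}: Lemma \ref{RALMRA:lem7add} for the splitting, Theorem \ref{RALMRA:thm4add} for the first residual and $\|\mathbf{F}_n\|_2$, and Lemma \ref{RALMRA:lem3} combined with Lemmas \ref{RALMRA:lem4} and \ref{RALMRA:lem1} for the second residual (the exact analogue of Theorem \ref{RALMRA:thm3} with $(\mathbf{A}_{(n)}\mathbf{A}_{(n)}^\top)^q$ replaced by $\mathbf{C}_n'\mathbf{C}_n'^\top$), with the union bound yielding precisely the three probability terms in \eqref{RALMRA:eqn18}. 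Your explicit bookkeeping of $\Delta_{\mu_n+1}(\mathbf{C}_n'\mathbf{C}_n'^\top\mathbf{G}_n)\le\|\mathbf{G}_n\|_2\,\Delta_{\mu_n+1}(\mathbf{C}_n,1)$ via $\sigma_i(\mathbf{C}_n'\mathbf{C}_n'^\top)=\sigma_i(\mathbf{C}_n')^2$ correctly supplies the step the paper leaves implicit.
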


\begin{remark}
	When $q>1$, the constants $\lambda_n'$ and $\lambda_n''$ in Corollary {\rm \ref{RALMRA:cor2}} will be given by
	\begin{equation*}
	\lambda_n=\frac{(\sqrt{\max\{I_n''-\mu_n,\mu_n+K\}}+\sqrt{I_n})\sqrt{2(\mu_n+K)}\gamma\beta}
{\sigma_{\mu_n}(\mathbf{C}_n'\mathbf{C}_n'^\top)^{q-1}},
	\end{equation*}
	which depends on the matrix $\mathbf{C}_n'$. From {\rm\cite{boutsidis2014randomized}}, when $\mathbf{A}_{(n)}^\top$ is an orthonormal matrix, then with a probability at least $1-\delta$, the singular values of $\mathbf{C}_n'\mathbf{C}_n'^\top$ belong to \begin{equation*}
	\left[1-\sqrt{\frac{4I_n\log(2I_n/\delta)}{T_n}},1+\sqrt{\frac{4I_n\log(2I_n/\delta)}{T_n}}\right],
	\end{equation*}
	where $0<\delta<1$ and $4I_n\log(2I_n/\delta)<T_n\leq I_n'$. However, the lower bounds for the singular values of $\mathbf{C}_n'\mathbf{C}_n'^\top$ is unclear for a general matrix $\mathbf{A}_{(n)}$. Hence, how to analyze Algorithm {\rm\ref{RALMRA:alg4}} with $q>1$ is an open issue, which will be considered in the future.
	\end{remark}

The following corollary states the upper bound of $\|\mathcal{A}-\widehat{\mathcal{A}}\|_F$, where for $N$ given positive integers $\mu_n\leq I_n\ (n=1,2,\dots,N)$, the optimal solution $\{\mathbf{Q}_1,\mathbf{Q}_2,\dots,\mathbf{Q}_N\}$ is obtained from Algorithm \ref{RALMRA:alg4} with $q=1$.
\begin{corollary}\label{RALMRA:cor3}
	For each $n$, let $\mu_n$, $I_n$, $T_n$ and $K$ are positive integers such that $\mu_n\leq \mu_n+K\leq I_n''$. For $\beta>1$, $\gamma>1$ and $0<\delta_n<1$, suppose that
	\begin{equation}\label{RALMRA:eqn19}
	\begin{aligned}
	1-&\sum_{n=1}^N\left(\frac{1}{4(\gamma^2-1)\sqrt{\pi \max\{I_n''-\mu_n,\mu_n+K\}\gamma^2}}\left(\frac{2\gamma^2}{e^{\gamma^2-1}}\right)^2\right.\\
	&\left.+
	\frac{1}{\sqrt{2\pi(K+1)}}\left(\frac{e}{K+1}\right)^{K+1}+\frac{1}{4(\gamma^2-1)\sqrt{\pi I_n\gamma^2}}\left(\frac{2\gamma^2}{e^{\gamma^2-1}}\right)^2+\delta_n\right)
	\end{aligned}
	\end{equation}
	is nonnegative. For a given tensor $\mathcal{A}\in\mathbb{R}^{I_1\times I_2\times\dots \times  I_N}$ and $q= 1$, the orthonormal matrices $\mathbf{Q}_n\ (n=1,2,\dots,N)$ are obtained by Algorithm {\rm\ref{RALMRA:alg4}}. Then
	\begin{equation*}
	\begin{aligned}
	\|\mathcal{A}-\widehat{\mathcal{A}}\|_F^2
	&\leq \sum_{n=1}^N{\rm rank}(\mathbf{A}_{(n)})\left(2\cdot(1+\lambda_n)I_n\Delta_{\mu_n+1}(\mathbf{A}_{(n)},1)+\phi_n\|\mathcal{A}\|_F^2\right)
	\end{aligned}
	\end{equation*}
	holds with a probability at least the amount in {\rm (\ref{RALMRA:eqn19})}, where
	\begin{equation*}
	\lambda_n=(\sqrt{\max\{I_n''-\mu_n,\mu_n+K\}}+\sqrt{I_n})\sqrt{2(\mu_n+K)}\gamma\beta.
	\end{equation*}
	For the nearly optimal probabilities, there exists $\eta_n=1+\sqrt{(8/\delta_n)\log(1/\delta_n)}$ such that
	\begin{equation*}
	\phi_n=\frac{\eta_n}{\sqrt{\beta T_n}},
	\end{equation*}
	and for the uniform probabilities, there exists $\gamma_n\leq 1+\frac{I_n'}{\sqrt{T_n}}\sqrt{8\log(1/\delta_n)}\|\mathbf{A}_{(n)}\|_F^2$ such that
	\begin{equation*}
	\phi_n=1+\sqrt{\frac{I_n'}{T_n}}+\frac{I_n'}{\sqrt{T_n}}\sqrt{8\log(1/\delta_n)}.
	\end{equation*}
\end{corollary}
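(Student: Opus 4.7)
The plan is to chain the mode-wise error decomposition with the two preparatory corollaries already established in this subsection. First, invoke inequality (\ref{RALMRA:eqn5}) (which is the squared form of (\ref{RALMRA:eqn25})) to split the total error:
\begin{equation*}
\|\mathcal{A}-\widehat{\mathcal{A}}\|_F^2 \;\leq\; \sum_{n=1}^N \|\mathbf{A}_{(n)} - \mathbf{Q}_n\mathbf{Q}_n^\top \mathbf{A}_{(n)}\|_F^2 .
\end{equation*}
For each $n$, I would apply the elementary rank bound $\|\mathbf{M}\|_F^2\leq \mathrm{rank}(\mathbf{M})\|\mathbf{M}\mathbf{M}^\top\|_F$ as already done in the derivation preceding Corollary \ref{RALMRA:cor1}, obtaining $\|\mathbf{A}_{(n)} - \mathbf{Q}_n\mathbf{Q}_n^\top \mathbf{A}_{(n)}\|_F^2 \leq \mathrm{rank}(\mathbf{A}_{(n)}) \|\mathbf{A}_{(n)}\mathbf{A}_{(n)}^\top - \mathbf{Q}_n\mathbf{Q}_n^\top \mathbf{A}_{(n)}\mathbf{A}_{(n)}^\top\|_F$.

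Next, insert the sketched Gram matrix $\mathbf{C}_n'\mathbf{C}_n'^\top$ via the triangle inequality to separate two error sources: a sampling error $\|\mathbf{A}_{(n)}\mathbf{A}_{(n)}^\top - \mathbf{C}_n'\mathbf{C}_n'^\top\|_F$ and a range-finder error $\|\mathbf{C}_n'\mathbf{C}_n'^\top - \mathbf{Q}_n\mathbf{Q}_n^\top \mathbf{C}_n'\mathbf{C}_n'^\top\|_F$ (this is exactly the decomposition written in the paragraph just above Corollary \ref{RALMRA:cor1}). Bound the first term by Corollary \ref{RALMRA:cor1}, yielding $\phi_n \|\mathbf{A}_{(n)}\|_F^2 = \phi_n\|\mathcal{A}\|_F^2$ (with $\phi_n$ chosen according to whether the probabilities are nearly optimal or uniform). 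Bound the second term by Corollary \ref{RALMRA:cor2}, which produces $2(1+\lambda_n)\Delta_{\mu_n+1}(\mathbf{C}_n,1)$.

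To finish, I would translate $\Delta_{\mu_n+1}(\mathbf{C}_n,1)$ into an expression involving only the original unfolding. Since $\mathbf{C}_n'=\mathbf{A}_{(n)}\mathbf{S}_n$, the singular value inequality $\sigma_i(\mathbf{A}_{(n)}\mathbf{S}_n)\leq\|\mathbf{S}_n\|_2\,\sigma_i(\mathbf{A}_{(n)})$ (or, equivalently, Lemma \ref{RALMRA:lem4}) gives $\Delta_{\mu_n+1}(\mathbf{C}_n,1)\leq\|\mathbf{S}_n\|_2^{2}\Delta_{\mu_n+1}(\mathbf{A}_{(n)},1)$. Controlling $\|\mathbf{S}_n\|_2$ through the sampling construction of Algorithm \ref{RALMRA:alg3} accounts for the $I_n$ factor appearing in the statement. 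Finally, perform a union bound across the three independent failure events per mode (the $\delta_n$ event from Corollary \ref{RALMRA:cor1} and the two Gaussian events captured in (\ref{RALMRA:eqn18}) from Corollary \ref{RALMRA:cor2}) and across the $N$ modes to obtain the success probability (\ref{RALMRA:eqn19}); summing the per-mode bounds then yields the displayed inequality.

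The main obstacle is the passage from $\Delta_{\mu_n+1}(\mathbf{C}_n,1)$ back to $\Delta_{\mu_n+1}(\mathbf{A}_{(n)},1)$: a careful argument is needed so that the resulting prefactor is cleanly absorbed into the $I_n$ that appears in the final bound, since a sampling matrix drawn with uniform probabilities can have spectral norm as large as $\sqrt{I_n'/T_n}$ in the worst case. Everything else — the rank/Frobenius manipulation, the triangle inequalities, the invariance $\|\mathbf{A}_{(n)}\|_F=\|\mathcal{A}\|_F$, and the union bound over modes — is bookkeeping once the correct quantity has been extracted from the two corollaries.
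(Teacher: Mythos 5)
Your proposal follows essentially the same route as the paper's proof: combine (\ref{RALMRA:eqn5}) with the rank/triangle-inequality decomposition given just before Corollary \ref{RALMRA:cor1}, invoke Corollaries \ref{RALMRA:cor1} and \ref{RALMRA:cor2} for the two error terms, convert $\Delta_{\mu_n+1}(\mathbf{C}_n',1)$ back to $\Delta_{\mu_n+1}(\mathbf{A}_{(n)},1)$ via Lemma \ref{RALMRA:lem4}, and take a union bound over the per-mode failure events. The one step you flag as an obstacle---controlling $\|\mathbf{S}_n\|_2$---is precisely where the paper appeals to \cite{aizenbud2016randomized} for the bound $\|\mathbf{S}_n\|_2\leq\sqrt{I_n}$, which supplies the factor $I_n$ in the final estimate.
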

\begin{proof}
	By combining (\ref{RALMRA:eqn5}), and Corollaries \ref{RALMRA:cor1} and \ref{RALMRA:cor2}, we have that
	\begin{equation*}
	\begin{aligned}
	\|\mathcal{A}-\widehat{\mathcal{A}}\|_F
	&\leq \sum_{n=1}^N{\rm rank}(\mathbf{A}_{(n)})\left(2\cdot(1+\lambda_n)\Delta_{\mu_n+1}(\mathbf{C}_n',1)+\phi_n\|\mathcal{A}\|_F^2\right)
	\end{aligned}
	\end{equation*}
	holds with a probability at least the amount in (\ref{RALMRA:eqn19}).
	
	Since $\mathbf{S}_n\in\mathbb{R}^{I_1\dots I_{n-1}I_{n+1}\dots I_N\times T_n}\sim{\rm RANDSAMPLE}(T_n,\mathbf{p}_n)$, it follows from \cite{aizenbud2016randomized} that $\|\mathbf{S}_n\|_2\leq \sqrt{I_n}$. From Lemma \ref{RALMRA:lem4}, we have
	\begin{equation*}
	\Delta_{\mu_n+1}(\mathbf{C}_n',1)\leq I_n\Delta_{\mu_n+1}(\mathbf{A}_{(n)},1),
	\end{equation*}
	which implies $\Delta_{\mu_n+1}(\mathbf{C}_n',1)\leq I_n\Delta_{\mu_n+1}(\mathbf{A}_{(n)},1)$. Hence, this theorem is completely proved.
	\end{proof}
\subsection{Analyzing Algorithm \ref{RALMRA:alg2}}
\label{RALMRA:sec4:sub4}
For each $n$, when $\mathbf{Q}_n$ is obtained from Algorithm \ref{RALMRA:alg2}, we assume that $\mathcal{B}=\mathcal{A}\times_1\mathbf{Q}_1^\top\dots\times_{n-1}\mathbf{Q}_{n-1}$,
which implies that $\mathbf{B}_{(n)}=\mathbf{A}_{(n)}(\mathbf{Q}_1\otimes \dots \otimes \mathbf{Q}_{n-1}\otimes \mathbf{I}_{I_{n-1}}\otimes\dots \otimes \mathbf{I}_{I_{N}})$. Hence we can rewrite (\ref{RALMRA:eqn26}) as
\begin{equation*}
\begin{aligned}
&\left\|\mathcal{A}-\mathcal{A}\times_1\left({\bf Q}_1{\bf Q}_1^\top\right)\times_2\left({\bf Q}_2{\bf Q}_2^\top\right)\dots\times_N\left({\bf Q}_N{\bf Q}_N^\top\right)\right\|_F^2\\
&\leq\sum_{n=1}^N\|\mathbf{B}_{(n)}-{\bf Q}_n{\bf Q}_n^\top\mathbf{B}_{(n)}\|_F^2.
\end{aligned}
\end{equation*}

When we replace the matrix $\mathbf{A}_{(n)}$ in Theorem \ref{RALMRA:thm5} as $\mathbf{B}_{(n)}$, the following remark establishes the upper bound of $\|\mathbf{B}_{(n)}-\mathbf{Q}_n\mathbf{Q}_n^\top\mathbf{B}_{(n)}\|_F$.
\begin{remark}
	For a given $n$, let $\mu_n$, $I_n$ and $K$ are positive integers such that $\mu_n< \mu_n+K< I_n'''$. For $\beta>1$ and $\gamma>1$, suppose that
	\begin{equation}\label{RALMRA:eqn20}
	\begin{aligned}
	1-&\frac{1}{4(\gamma^2-1)\sqrt{\pi \max\{I_n'''-\mu_n,\mu_n+K\}\gamma^2}}\left(\frac{2\gamma^2}{e^{\gamma^2-1}}\right)^2\\
	&-
	\frac{1}{\sqrt{2\pi(K+1)}}\left(\frac{e}{K+1}\right)^{K+1}-\frac{1}{4(\gamma^2-1)\sqrt{\pi I_n\gamma^2}}\left(\frac{2\gamma^2}{e^{\gamma^2-1}}\right)^2
	\end{aligned}
	\end{equation}
	is nonnegative. For a given tensor $\mathcal{A}\in\mathbb{R}^{I_1\times I_2\times\dots \times  I_N}$ and a integer $q\geq 1$, the orthonormal matrix $\mathbf{Q}_n$ is obtained by Algorithm {\rm\ref{RALMRA:alg2}}. Then
	\begin{equation*}
	\|\mathbf{B}_{(n)}-\mathbf{Q}_n\mathbf{Q}_n^\top\mathbf{B}_{(n)}\|_F
	\leq 2\cdot\left(\lambda_n\Delta_{\mu_n+1}(\mathbf{B}_{(n)},q)
	+\Delta_{\mu_n+1}(\mathbf{B}_{(n)})\right)
	\end{equation*}
	holds with a probability at least the amount in {\rm (\ref{RALMRA:eqn20})}, where
	\begin{equation*}
	\lambda_n=\frac{(\sqrt{\max\{I_n'''-\mu_n,\mu_n+K\}}+\sqrt{I_n})\sqrt{2(\mu_n+K)}\gamma\beta}
{\sigma_{\mu_n}(\mathbf{B}_{(n)})^{2q-1}}.
	\end{equation*}

Since all the $\mathbf{Q}_j\ (j=1,2,\dots,n-1)$ are orthonormal, the matrix $\mathbf{Q}_1\otimes \dots \otimes \mathbf{Q}_{n-1}\otimes \mathbf{I}_{I_{n-1}}\otimes\dots \otimes \mathbf{I}_{I_{N}}$ is also orthonormal. According to Lemma 3.9 in {\rm\cite{woolfe2008a}} and Corollary 5.1 in {\rm\cite{che2021randomized}}, we have that $\Delta_{\mu_n+1}(\mathbf{B}_{(n)})\leq \Delta_{\mu_n+1}(\mathbf{A}_{(n)})$ and $\Delta_{\mu_n+1}(\mathbf{B}_{(n)},q)\leq \Delta_{\mu_n+1}(\mathbf{A}_{(n)},q)$. However, the lower bound of $\sigma_{\mu_n}(\mathbf{B}_{(n)})$ is unclear. Hence, the framework of analyzing Algorithm {\rm\ref{RALMRA:alg1}} is not suitable for considering Algorithm {\rm\ref{RALMRA:alg2}}.
\end{remark}

Note that
\begin{equation*}
\begin{aligned}
&\|\mathbf{B}_{(n)}-{\bf Q}_n{\bf Q}_n^\top\mathbf{B}_{(n)}\|_F^2\\
&\leq {\rm rank}(\mathbf{B}_{(n)})
\|(\mathbf{B}_{(n)}-{\bf Q}_n{\bf Q}_n^\top\mathbf{B}_{(n)})(\mathbf{B}_{(n)}-{\bf Q}_n{\bf Q}_n^\top\mathbf{B}_{(n)})^\top\|_F\\
&\leq{\rm rank}(\mathbf{A}_{(n)})
\|(\mathbf{B}_{(n)}-{\bf Q}_n{\bf Q}_n^\top\mathbf{B}_{(n)})(\mathbf{B}_{(n)}-{\bf Q}_n{\bf Q}_n^\top\mathbf{B}_{(n)})^\top\|_F\\
&={\rm rank}(\mathbf{A}_{(n)})
\|(\mathbf{B}_{(n)}\mathbf{B}_{(n)}^\top-{\bf Q}_n{\bf Q}_n^\top\mathbf{B}_{(n)}\mathbf{B}_{(n)}^\top)(\mathbf{I}_{I_n}-{\bf Q}_n{\bf Q}_n^\top)\|_F\\
&\leq {\rm rank}(\mathbf{A}_{(n)})
\|\mathbf{B}_{(n)}\mathbf{B}_{(n)}^\top-{\bf Q}_n{\bf Q}_n^\top\mathbf{B}_{(n)}\mathbf{B}_{(n)}^\top\|_F,
\end{aligned}
\end{equation*}
where the first inequality holds for the fact that ${\rm rank}(\mathbf{B}_{(n)})\leq {\rm rank}(\mathbf{A}_{(n)})$ and the second inequality holds for the fact that $\|\mathbf{I}_{I_n}-{\bf Q}_n{\bf Q}_n^\top\|_2\leq 1$.

When we replace the matrix $\mathbf{C}_n'\mathbf{C}_n'^\top$ in Corollary \ref{RALMRA:cor2} as $\mathbf{B}_{(n)}\mathbf{B}_{(n)}^\top$, the following corollary establishes the upper bound of $\|\mathbf{B}_{(n)}\mathbf{B}_{(n)}^\top-\mathbf{Q}_n\mathbf{Q}_n^\top\mathbf{B}_{(n)}\mathbf{B}_{(n)}^\top\|_F$.
\begin{corollary}\label{RALMRA:cor4}
	For a given $n$, let $\mu_n$, $I_n$, and $K$ are positive integers such that $\mu_n< \mu_n+K< I_n'''$. For $\beta>1$ and $\gamma>1$, suppose that
	\begin{equation}\label{RALMRA:eqn21}
	\begin{aligned}
	1-&\frac{1}{4(\gamma^2-1)\sqrt{\pi \max\{I_n'''-\mu_n,\mu_n+K\}\gamma^2}}\left(\frac{2\gamma^2}{e^{\gamma^2-1}}\right)^2\\
	&-
	\frac{1}{\sqrt{2\pi(K+1)}}\left(\frac{e}{K+1}\right)^{K+1}-\frac{1}{4(\gamma^2-1)\sqrt{\pi I_n\gamma^2}}\left(\frac{2\gamma^2}{e^{\gamma^2-1}}\right)^2
	\end{aligned}
	\end{equation}
	is nonnegative. For a given tensor $\mathcal{A}\in\mathbb{R}^{I_1\times I_2\times\dots \times  I_N}$ and $q= 1$, the orthonormal matrix $\mathbf{Q}_n$ is obtained by Algorithm {\rm\ref{RALMRA:alg2}}. Then
	\begin{equation*}
	\begin{aligned}
	\|\mathbf{B}_{(n)}\mathbf{B}_{(n)}^\top
-\mathbf{Q}_n\mathbf{Q}_n^\top\mathbf{B}_{(n)}\mathbf{B}_{(n)}^\top\|_F
	&\leq 2\cdot(1+\lambda_n)\Delta_{\mu_n+1}(\mathbf{B}_{(n)},1)
	\end{aligned}
	\end{equation*}
	holds with a probability at least the amount in {\rm (\ref{RALMRA:eqn21})}, where
	\begin{equation*}
	\lambda_n=(\sqrt{\max\{I_n'''-\mu_n,\mu_n+K\}}+\sqrt{I_n})\sqrt{2(\mu_n+K)}\gamma\beta.
	\end{equation*}
\end{corollary}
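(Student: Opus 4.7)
The plan is to mirror the argument used for Corollary \ref{RALMRA:cor2}, replacing the sampled matrix $\mathbf{C}_n'$ there by $\mathbf{B}_{(n)}\in\mathbb{R}^{I_n\times \mu_1\dots\mu_{n-1}I_{n+1}\dots I_N}$ here, and replacing $I_n''$ by $I_n'''$. The key observation that makes the substitution legitimate is that, in Algorithm \ref{RALMRA:alg2}, the standard Gaussian matrix $\mathbf{G}_n$ drawn at step $n$ is generated fresh and is therefore independent of $\mathbf{Q}_1,\dots,\mathbf{Q}_{n-1}$; conditionally on those matrices, $\mathbf{B}_{(n)}$ is deterministic and $\mathbf{G}_n$ is still a standard Gaussian matrix, so all the large-deviation lemmas for standard Gaussian matrices apply verbatim.

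First, I would establish the following analog of Lemma \ref{RALMRA:lem7add}: whenever $\mathbf{Q}_n$ is orthonormal and $\mathbf{R}_n$, $\mathbf{F}_n$ are matrices of the appropriate sizes,
\begin{equation*}
\|\mathbf{B}_{(n)}\mathbf{B}_{(n)}^\top-\mathbf{Q}_n\mathbf{Q}_n^\top\mathbf{B}_{(n)}\mathbf{B}_{(n)}^\top\|_F^2
\leq 2\|\mathbf{B}_{(n)}\mathbf{B}_{(n)}^\top\mathbf{G}_n\mathbf{F}_n-\mathbf{B}_{(n)}\mathbf{B}_{(n)}^\top\|_F^2+2\|\mathbf{F}_n\|_2^2\|\mathbf{Q}_n\mathbf{R}_n-\mathbf{B}_{(n)}\mathbf{B}_{(n)}^\top\mathbf{G}_n\|_F^2.
\end{equation*}
The derivation is identical line-by-line to the proof of Lemma \ref{RALMRA:lem7add}, because only the projection identity $\mathbf{Q}_n^\top\mathbf{Q}_n=\mathbf{I}_{\mu_n}$ and the sub-multiplicativity of the Frobenius and spectral norms are used.

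Next, I would construct $\mathbf{F}_n$ exactly as in Theorem \ref{RALMRA:thm4add}, but starting from the SVD $\mathbf{B}_{(n)}=\mathbf{U}_n\bm{\Sigma}_n\mathbf{V}_n^\top$ with $\mathbf{U}_n\in\mathbb{R}^{I_n\times I_n'''}$. Partitioning $\mathbf{U}_n^\top\mathbf{G}_n=\begin{pmatrix}\mathbf{H}\\ \mathbf{R}\end{pmatrix}$, the blocks $\mathbf{H}\in\mathbb{R}^{\mu_n\times(\mu_n+K)}$ and $\mathbf{R}\in\mathbb{R}^{(I_n'''-\mu_n)\times(\mu_n+K)}$ are again standard Gaussian. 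Defining $\mathbf{F}_n=(\mathbf{H}^\dag,\mathbf{0})\mathbf{U}_n^\top$, Lemma \ref{RALMRA:lem2} gives $\|\mathbf{F}_n\|_2\leq\sqrt{\mu_n+K}\,\beta$ and Lemma \ref{RALMRA:lem1} (applied to the submatrix $\mathbf{R}$) gives $\|\mathbf{R}\|_2\leq\sqrt{2\max\{I_n'''-\mu_n,\mu_n+K\}}\,\gamma$. The computation of $\mathbf{B}_{(n)}\mathbf{B}_{(n)}^\top\mathbf{G}_n\mathbf{F}_n-\mathbf{B}_{(n)}\mathbf{B}_{(n)}^\top$ then reduces, after rotating by $\mathbf{U}_n$, to a block matrix whose Frobenius norm is bounded by $(\sqrt{2\max\{I_n'''-\mu_n,\mu_n+K\}(\mu_n+K)}\,\gamma\beta+1)\Delta_{\mu_n+1}(\mathbf{B}_{(n)},1)$, exactly as in Theorem \ref{RALMRA:thm4add}.

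Finally, Lemma \ref{RALMRA:lem3} produces $\mathbf{Q}_n$ and $\mathbf{R}_n$ with $\|\mathbf{Q}_n\mathbf{R}_n-\mathbf{B}_{(n)}\mathbf{B}_{(n)}^\top\mathbf{G}_n\|_F\leq \Delta_{\mu_n+1}(\mathbf{B}_{(n)}\mathbf{B}_{(n)}^\top\mathbf{G}_n)$, and a second application of Lemma \ref{RALMRA:lem1} to $\mathbf{G}_n$ (viewed as an $I_n\times(\mu_n+K)$ standard Gaussian matrix) together with Lemma \ref{RALMRA:lem4} yields the bound $\|\mathbf{Q}_n\mathbf{R}_n-\mathbf{B}_{(n)}\mathbf{B}_{(n)}^\top\mathbf{G}_n\|_F\leq \sqrt{2I_n}\,\gamma\,\Delta_{\mu_n+1}(\mathbf{B}_{(n)},1)$, exactly in the spirit of Theorem \ref{RALMRA:thm3}. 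Substituting the three resulting estimates into the analog of Lemma \ref{RALMRA:lem7add}, taking square roots, and using the elementary inequality $\sqrt{a^2+b^2}\leq a+b$ gives the stated bound with $\lambda_n=(\sqrt{\max\{I_n'''-\mu_n,\mu_n+K\}}+\sqrt{I_n})\sqrt{2(\mu_n+K)}\,\gamma\beta$. The success probability is obtained by a union bound over the three large-deviation events (Lemma \ref{RALMRA:lem1} for $\mathbf{R}$, Lemma \ref{RALMRA:lem2} for $\mathbf{H}$, and Lemma \ref{RALMRA:lem1} for the full matrix $\mathbf{G}_n$), which produces exactly the complementary amount in (\ref{RALMRA:eqn21}). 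The main bookkeeping obstacle is tracking which dimension ($I_n$, $I_n'''-\mu_n$, or $\mu_n+K$) governs each probability bound so that the three failure probabilities sum correctly; conceptually, all the analytic work is already carried out in Theorems \ref{RALMRA:thm3} and \ref{RALMRA:thm4add}.
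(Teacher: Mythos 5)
Your proposal is correct and follows essentially the same route as the paper, which obtains Corollary \ref{RALMRA:cor4} simply by substituting $\mathbf{B}_{(n)}\mathbf{B}_{(n)}^\top$ for $\mathbf{C}_n'\mathbf{C}_n'^\top$ in Corollary \ref{RALMRA:cor2} (i.e., rerunning Lemma \ref{RALMRA:lem7add}, Theorem \ref{RALMRA:thm4add} with the SVD of $\mathbf{B}_{(n)}$, Lemma \ref{RALMRA:lem3}, and the Theorem \ref{RALMRA:thm3}-type bound, then taking a union bound). Your explicit remark that $\mathbf{G}_n$ is drawn fresh and hence, conditionally on $\mathbf{Q}_1,\dots,\mathbf{Q}_{n-1}$, remains standard Gaussian while $\mathbf{B}_{(n)}$ is deterministic is a worthwhile clarification that the paper leaves implicit.
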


We illustrate the relationship between the singular values of $\mathbf{A}_{(n)}$ and $\mathbf{B}_{(n)}$ in the following lemma.
\begin{lemma}\label{RALMRA:lem8}
	For each $n$, assume that $\mu_n< I_n'''$. Then we have
        \begin{equation*}
	\sum_{k=\mu_n+1}^{I_n'''}\sigma_k(\mathbf{B}_{(n)})^2\leq
	\sum_{k=\mu_n+1}^{I_n'}\sigma_k(\mathbf{A}_{(n)})^2.
	\end{equation*}
	\end{lemma}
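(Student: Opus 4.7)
The plan is to express $\mathbf{B}_{(n)}$ as a matrix product and invoke Lemma~\ref{RALMRA:lem4} together with a Kronecker-product norm identity. First, I would write
\begin{equation*}
\mathbf{B}_{(n)}=\mathbf{A}_{(n)}\mathbf{W}_n,\qquad
\mathbf{W}_n=\mathbf{Q}_1\otimes\dots\otimes\mathbf{Q}_{n-1}\otimes\mathbf{I}_{I_{n+1}}\otimes\dots\otimes\mathbf{I}_{I_N},
\end{equation*}
as already observed in the beginning of Section~\ref{RALMRA:sec4:sub4}. Since each $\mathbf{Q}_j$ is orthonormal and $\mathbf{I}_{I_j}$ has unit spectral norm, the Kronecker multiplicativity $\|\mathbf{A}\otimes\mathbf{B}\|_2=\|\mathbf{A}\|_2\|\mathbf{B}\|_2$ gives $\|\mathbf{W}_n\|_2=1$.

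Next I would apply Lemma~\ref{RALMRA:lem4} with $\mathbf{A}=\mathbf{A}_{(n)}\in\mathbb{R}^{I_n\times I_1\cdots I_{n-1}I_{n+1}\cdots I_N}$, $\mathbf{B}=\mathbf{W}_n$ (having $S=\mu_1\cdots\mu_{n-1}I_{n+1}\cdots I_N$ columns), and the truncation index $k=\mu_n+1$. Because $\mu_j\leq I_j$ for every $j$, we have $S\leq I_1\cdots I_{n-1}I_{n+1}\cdots I_N$, so the hypothesis of Lemma~\ref{RALMRA:lem4} is satisfied on the right-hand side with $\min\{I_n,I_1\cdots I_{n-1}I_{n+1}\cdots I_N\}=I_n'$. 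The lemma then yields
\begin{equation*}
\sum_{k=\mu_n+1}^{\min\{I_n,S\}}\sigma_k(\mathbf{B}_{(n)})^2
\;\leq\;\|\mathbf{W}_n\|_2^2\sum_{k=\mu_n+1}^{I_n'}\sigma_k(\mathbf{A}_{(n)})^2
\;=\;\sum_{k=\mu_n+1}^{I_n'}\sigma_k(\mathbf{A}_{(n)})^2 .
\end{equation*}

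Finally, I would observe that $\min\{I_n,S\}=I_n'''$ by definition, which rewrites the left-hand side as $\sum_{k=\mu_n+1}^{I_n'''}\sigma_k(\mathbf{B}_{(n)})^2$ and delivers the claimed inequality. There is no serious obstacle here; the only point requiring minor care is bookkeeping of the summation range, namely verifying that the index bound coming from Lemma~\ref{RALMRA:lem4} matches $I_n'''$ and that the trailing singular values $\sigma_k(\mathbf{B}_{(n)})$ for $k>I_n'''$ vanish by the rank bound $\operatorname{rank}(\mathbf{B}_{(n)})\leq I_n'''$, so no terms are silently dropped.
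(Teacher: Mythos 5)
Your argument is correct and is essentially the paper's own proof: both factor $\mathbf{B}_{(n)}=\mathbf{A}_{(n)}\mathbf{W}_n$ with $\|\mathbf{W}_n\|_2=1$ and then apply a singular-value product inequality, the paper citing Lemma 3.9 of \cite{woolfe2008a} for the termwise bound $\sigma_k(\mathbf{B}_{(n)})\le\sigma_k(\mathbf{A}_{(n)})$, while you invoke Lemma~\ref{RALMRA:lem4} for the summed version. The one point to watch is that Lemma~\ref{RALMRA:lem4} as stated assumes $S\le\min\{I,J\}$, which here would require $\mu_1\cdots\mu_{n-1}I_{n+1}\cdots I_N\le I_n$ and need not hold; this is harmless because the inequality $\sum_{i\ge k}\sigma_i(\mathbf{A}\mathbf{B})^2\le\|\mathbf{B}\|_2^2\sum_{i\ge k}\sigma_i(\mathbf{A})^2$ is valid without that restriction (and the paper itself applies the lemma with the same latitude), but strictly you should either verify the hypothesis or revert to the termwise bound as the paper does.
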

\begin{proof}
Note that $\mathbf{B}_{(n)}=\mathbf{A}_{(n)}(\mathbf{Q}_1\otimes \dots \otimes \mathbf{Q}_{n-1}\otimes \mathbf{I}_{I_{n-1}}\otimes\dots \otimes \mathbf{I}_{I_{N}})$, then for each $k=1,2,\dots,I_n'''-1,I_n'''$, we have
	\begin{equation*}
	\sigma_k(\mathbf{B}_{(n)})\leq \sigma_k(\mathbf{A}_{(n)}).
	\end{equation*}
	By combining Lemma 3.9 in \cite{woolfe2008a} and the fact that $$\|\mathbf{Q}_1\otimes \dots \otimes \mathbf{Q}_{n-1}\otimes \mathbf{I}_{I_{n-1}}\otimes\dots \otimes \mathbf{I}_{I_{N}}\|_2= 1,$$
	the proof is completed.
	\end{proof}

\begin{corollary}
	For each $n$, let $\mu_n$, $I_n$ and $K$ are positive integers such that $\mu_n\leq \mu_n+K\leq I_n'''$. For $\beta>1$, $\gamma>1$ and $0<\delta_n<1$, suppose that
	\begin{equation}\label{RALMRA:eqn22}
	\begin{aligned}
	1-&\sum_{n=1}^N\left(\frac{1}{4(\gamma^2-1)\sqrt{\pi \max\{I_n'''-\mu_n,\mu_n+K\}\gamma^2}}\left(\frac{2\gamma^2}{e^{\gamma^2-1}}\right)^2\right.\\
	&\left.+
	\frac{1}{\sqrt{2\pi(K+1)}}\left(\frac{e}{K+1}\right)^{K+1}+\frac{1}{4(\gamma^2-1)\sqrt{\pi I_n\gamma^2}}\left(\frac{2\gamma^2}{e^{\gamma^2-1}}\right)^2\right)
	\end{aligned}
	\end{equation}
	is nonnegative. For a given tensor $\mathcal{A}\in\mathbb{R}^{I_1\times I_2\times\dots \times  I_N}$ and $q= 1$, the orthonormal matrices $\mathbf{Q}_n\ (n=1,2,\dots,N)$ are obtained by Algorithm {\rm\ref{RALMRA:alg2}}. Then
	\begin{equation*}
	\begin{aligned}
	\|\mathcal{A}-\widehat{\mathcal{A}}\|_F^2
	&\leq 2\cdot \sum_{n=1}^N{\rm rank}(\mathbf{A}_{(n)})(1+\lambda_n)I_n\Delta_{\mu_n+1}(\mathbf{A}_{(n)},1)
	\end{aligned}
	\end{equation*}
	holds with probability at least the amount in {\rm (\ref{RALMRA:eqn21})}, where
	\begin{equation*}
	\lambda_n=(\sqrt{\max\{I_n'''-\mu_n,\mu_n+K\}}+\sqrt{I_n})\sqrt{2(\mu_n+K)}\gamma\beta.
	\end{equation*}
\end{corollary}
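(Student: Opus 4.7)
The plan is to reduce the statement to three ingredients already assembled in Section \ref{RALMRA:sec4:sub4}: the telescoping bound (\ref{RALMRA:eqn26}), the mode-wise Gram-matrix reduction
\begin{equation*}
\|\mathbf{B}_{(n)}-\mathbf{Q}_n\mathbf{Q}_n^\top\mathbf{B}_{(n)}\|_F^2 \leq \mathrm{rank}(\mathbf{A}_{(n)})\,\|\mathbf{B}_{(n)}\mathbf{B}_{(n)}^\top-\mathbf{Q}_n\mathbf{Q}_n^\top\mathbf{B}_{(n)}\mathbf{B}_{(n)}^\top\|_F
\end{equation*}
established just before Corollary \ref{RALMRA:cor4}, together with Corollary \ref{RALMRA:cor4} itself, and the singular-value transfer from $\mathbf{B}_{(n)}$ to $\mathbf{A}_{(n)}$ supplied by Lemma \ref{RALMRA:lem8}.

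First, I fix $n$ and regard $\mathcal{B}=\mathcal{A}\times_1\mathbf{Q}_1^\top\cdots\times_{n-1}\mathbf{Q}_{n-1}^\top$ as the intermediate tensor produced by the first $n-1$ sweeps of Algorithm \ref{RALMRA:alg2}, so that $\mathbf{B}_{(n)}=\mathbf{A}_{(n)}(\mathbf{Q}_1\otimes\dots\otimes\mathbf{Q}_{n-1}\otimes\mathbf{I}_{I_{n+1}}\otimes\dots\otimes\mathbf{I}_{I_N})$. Combining (\ref{RALMRA:eqn26}) with the Gram-matrix reduction above yields
\begin{equation*}
\|\mathcal{A}-\widehat{\mathcal{A}}\|_F^2 \leq \sum_{n=1}^N \mathrm{rank}(\mathbf{A}_{(n)})\,\|\mathbf{B}_{(n)}\mathbf{B}_{(n)}^\top-\mathbf{Q}_n\mathbf{Q}_n^\top\mathbf{B}_{(n)}\mathbf{B}_{(n)}^\top\|_F.
\end{equation*}
The Gaussian test matrix $\mathbf{G}_n$ drawn at sweep $n$ is independent of $\mathbf{Q}_1,\dots,\mathbf{Q}_{n-1}$ and therefore of $\mathbf{B}_{(n)}$, so Corollary \ref{RALMRA:cor4} applies conditionally to the $n$th summand and bounds it by $2\,\mathrm{rank}(\mathbf{A}_{(n)})(1+\lambda_n)\Delta_{\mu_n+1}(\mathbf{B}_{(n)},1)$ on an event of probability at least the amount written in (\ref{RALMRA:eqn21}) (with $I_n'''$ in place of the corresponding dimension).

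Next, to replace $\Delta_{\mu_n+1}(\mathbf{B}_{(n)},1)$ by $\Delta_{\mu_n+1}(\mathbf{A}_{(n)},1)$, I would invoke the pointwise estimate $\sigma_k(\mathbf{B}_{(n)})\leq\sigma_k(\mathbf{A}_{(n)})$ that drives the proof of Lemma \ref{RALMRA:lem8}; it holds because the Kronecker factor appearing on the right of $\mathbf{B}_{(n)}$ is orthonormal with unit spectral norm. Raising this to the fourth power and summing from $k=\mu_n+1$ to $I_n'''$ (noting $I_n'''\leq I_n'$) gives $\Delta_{\mu_n+1}(\mathbf{B}_{(n)},1)\leq\Delta_{\mu_n+1}(\mathbf{A}_{(n)},1)$, up to the extra mild slack captured by the factor $I_n$ in the statement. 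Summing the per-mode bound over $n$ and performing a union bound over the $N$ failure events yields both the claimed inequality and the complement probability written in (\ref{RALMRA:eqn22}).

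The only genuinely delicate step is an independence check rather than a sharp estimate: because $\mathbf{B}_{(n)}$ is itself random through $\mathbf{Q}_1,\dots,\mathbf{Q}_{n-1}$, Corollary \ref{RALMRA:cor4} cannot be invoked verbatim and must be read on the conditional $\sigma$-algebra generated by those earlier factors. Since $\mathbf{G}_n$ is freshly sampled at each sweep and carries no history, the conditional application is legitimate, and the union bound across $n$ closes the argument with no further complications.
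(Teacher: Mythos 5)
Your proposal is correct and follows essentially the same route as the paper, whose proof is the one-line combination of the telescoping bound (\ref{RALMRA:eqn23}), the Gram-matrix reduction with the ${\rm rank}(\mathbf{A}_{(n)})$ factor, Corollary \ref{RALMRA:cor4} applied mode by mode, Lemma \ref{RALMRA:lem8} to pass from $\Delta_{\mu_n+1}(\mathbf{B}_{(n)},1)$ to $\Delta_{\mu_n+1}(\mathbf{A}_{(n)},1)$, and a union bound. Your explicit conditioning argument for applying Corollary \ref{RALMRA:cor4} to the random matrix $\mathbf{B}_{(n)}$ (using the freshness of $\mathbf{G}_n$ at each sweep), and your observation that the factor $I_n$ in the stated bound is merely slack, are both sound points that the paper passes over silently.
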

\begin{proof}
This result is derived by combining (\ref{RALMRA:eqn23}), Corollary \ref{RALMRA:cor4} and Lemma \ref{RALMRA:lem8}.
\end{proof}

\subsection{Analyzing Algorithm \ref{RALMRA:alg5}}

For each $n$, when $\mathbf{Q}_n$ is obtained from Algorithm \ref{RALMRA:alg5}, we have
\begin{equation*}
\begin{aligned}
&\|\mathbf{B}_{(n)}-\mathbf{Q}_n\mathbf{Q}_n^\top\mathbf{B}_{(n)}\|_F^2\\
&\leq {\rm rank}(\mathbf{B}_{(n)})\|(\mathbf{B}_{(n)}-\mathbf{Q}_n\mathbf{Q}_n^\top\mathbf{B}_{(n)})(\mathbf{B}_{(n)}-\mathbf{Q}_n\mathbf{Q}_n^\top\mathbf{B}_{(n)})^\top\|_F\\
&={\rm rank}(\mathbf{B}_{(n)})\|\mathbf{B}_{(n)}\mathbf{B}_{(n)}^\top-\mathbf{Q}_n\mathbf{Q}_n^\top\mathbf{B}_{(n)}\mathbf{B}_{(n)}^\top\|_F\\
&\leq {\rm rank}(\mathbf{B}_{(n)})\left(\|\mathbf{B}_{(n)}\mathbf{B}_{(n)}^\top-\mathbf{C}_n\mathbf{C}_n'^\top\|_F+\|\mathbf{C}_n'\mathbf{C}_n'^\top-\mathbf{Q}_n\mathbf{Q}_n^\top\mathbf{C}_n'\mathbf{C}_n'^\top\|_F\right),
\end{aligned}
\end{equation*}
with $\mathbf{B}_{(n)}=\mathbf{A}_{(n)}(\mathbf{Q}_1\otimes \dots \otimes \mathbf{Q}_{n-1}\otimes \mathbf{I}_{I_{n-1}}\otimes\dots \otimes \mathbf{I}_{I_{N}})$ and $\mathbf{C}_n'=\mathbf{A}_{(n)}\mathbf{S}_n$, where $\mathcal{B}=\mathcal{A}\times_1\mathbf{Q}_1^\top\dots\times_{n-1}\mathbf{Q}_{n-1}$ and $\mathbf{S}_n\in\mathbb{R}^{\mu_1\dots \mu_{n-1}I_{n+1}\dots I_N\times T_n}\sim{\rm RANDSAMPLE}(T_n,\mathbf{p}_n)$.

Similar to Corollary \ref{RALMRA:cor1}, the upper bound for $\|\mathbf{B}_{(n)}\mathbf{B}_{(n)}^\top-\mathbf{C}_n\mathbf{C}_n'^\top\|_F$ is given in the following corollary.
\begin{corollary}\label{RALMRA:cor1add}
	For a given $n$, let $\mathbf{S}_n\in\mathbb{R}^{\mu_1\dots \mu_{n-1}I_{n+1}\dots I_N\times T_n}\sim{\rm RANDSAMPLE}(T_n,\mathbf{p}_n)$. Suppose that $\mathbf{B}_{(n)}\in\mathbb{R}^{\mu_n\times \mu_1\dots I_{n-1}I_{n+1}\dots I_N}$ and $\mathbf{C}_n'=\mathbf{B}_{(n)}\mathbf{S}_n$. If the probabilities $\mathbf{p}_n$ are the nearly optimal probabilities, then for any $\delta_n\in(0,1)$, there exists $\eta_n=1+\sqrt{(8/\delta_n)\log(1/\delta_n)}$ such that
	\begin{equation*}
	\|\mathbf{B}_{(n)}\mathbf{B}_{(n)}^\top-\mathbf{C}_n'\mathbf{C}_n'^\top\|_F\leq\frac{\eta_n}{\sqrt{\beta T_n}}\|\mathbf{B}_{(n)}\|_F^2
	\end{equation*}
	with a probability at least $1-\delta_n$. If the probabilities $\mathbf{p}_n$ are the uniform probabilities, then for any $\delta_n\in(0,1)$, there exists $\gamma_n\leq 1+\frac{I_n''''}{\sqrt{T_n}}\sqrt{8\log(1/\delta_n)}\|\mathbf{B}_{(n)}\|_F^2$ such that
	\begin{equation*}
	\|\mathbf{B}_{(n)}\mathbf{B}_{(n)}^\top-\mathbf{C}_n'\mathbf{C}_n'^\top\|_F\leq\left(1+\sqrt{\frac{I_n''''}{T_n}}+\frac{I_n''''}{\sqrt{T_n}}\sqrt{8\log(1/\delta_n)}\right)\|\mathbf{B}_{(n)}\|_F^2
	\end{equation*}
	with a probability at least $1-\delta_n$.
	\end{corollary}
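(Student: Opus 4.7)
The plan is to reduce this corollary to a direct application of Lemmas \ref{RALMRA:lem5} and \ref{RALMRA:lem6}, exactly as was done for Corollary \ref{RALMRA:cor1}, but with $\mathbf{A}_{(n)}$ replaced by $\mathbf{B}_{(n)}$. The two cases (nearly optimal and uniform probabilities) will be handled in the same order as the lemmas.

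First I would observe that the outer product $\mathbf{C}_n'\mathbf{C}_n'^\top$ is exactly the output $\mathbf{C}\mathbf{R}$ of Algorithm \ref{RALMRA:alg3} applied to the matrix pair $(\mathbf{A},\mathbf{B})=(\mathbf{B}_{(n)},\mathbf{B}_{(n)}^\top)$ with sampling matrix $\mathbf{S}_n$ and sample size $T_n$. Indeed, Algorithm \ref{RALMRA:alg3} sets $\mathbf{C}=\mathbf{B}_{(n)}\mathbf{S}_n=\mathbf{C}_n'$ and $\mathbf{R}=\mathbf{S}_n^\top\mathbf{B}_{(n)}^\top=(\mathbf{B}_{(n)}\mathbf{S}_n)^\top=\mathbf{C}_n'^\top$. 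Hence any probabilistic bound Lemmas \ref{RALMRA:lem5} or \ref{RALMRA:lem6} yield for $\|\mathbf{A}\mathbf{B}-\mathbf{C}\mathbf{R}\|_F$ transfers directly to a bound for $\|\mathbf{B}_{(n)}\mathbf{B}_{(n)}^\top-\mathbf{C}_n'\mathbf{C}_n'^\top\|_F$.

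For the nearly optimal case, Lemma \ref{RALMRA:lem5} gives immediately, with probability at least $1-\delta_n$,
\begin{equation*}
\|\mathbf{B}_{(n)}\mathbf{B}_{(n)}^\top-\mathbf{C}_n'\mathbf{C}_n'^\top\|_F \le \frac{\eta_n}{\sqrt{\beta T_n}}\|\mathbf{B}_{(n)}\|_F\|\mathbf{B}_{(n)}^\top\|_F=\frac{\eta_n}{\sqrt{\beta T_n}}\|\mathbf{B}_{(n)}\|_F^2,
\end{equation*}
which is the first claimed inequality. For the uniform case, Lemma \ref{RALMRA:lem6} yields a bound involving $\sqrt{I_n''''/T_n}\bigl(\sum_i\|\mathbf{B}_{(n)}(:,i)\|_2^2\|\mathbf{B}_{(n)}^\top(i,:)\|_2^2\bigr)^{1/2}$ plus a term $\gamma_n$ that contains $\max_i\|\mathbf{B}_{(n)}(:,i)\|_2\|\mathbf{B}_{(n)}^\top(i,:)\|_2$. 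Since $\mathbf{B}_{(n)}^\top(i,:)=\mathbf{B}_{(n)}(:,i)^\top$, these two factors coincide and collapse to $\|\mathbf{B}_{(n)}(:,i)\|_2^2$.

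It then remains to apply the two trivial estimates (already used in the proof of Corollary \ref{RALMRA:cor1}): $\max_i\|\mathbf{B}_{(n)}(:,i)\|_2^2\le\|\mathbf{B}_{(n)}\|_F^2$ and
\begin{equation*}
\sum_i\|\mathbf{B}_{(n)}(:,i)\|_2^4\le\Bigl(\sum_i\|\mathbf{B}_{(n)}(:,i)\|_2^2\Bigr)^2=\|\mathbf{B}_{(n)}\|_F^4.
\end{equation*}
Substituting these gives the stated value of $\gamma_n$ and the claimed overall bound $\bigl(1+\sqrt{I_n''''/T_n}+(I_n''''/\sqrt{T_n})\sqrt{8\log(1/\delta_n)}\bigr)\|\mathbf{B}_{(n)}\|_F^2$. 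There is essentially no obstacle here; the only mildly subtle point to verify is the identification $\mathbf{C}\mathbf{R}=\mathbf{C}_n'\mathbf{C}_n'^\top$ (which uses the symmetry of treating $\mathbf{B}_{(n)}$ and $\mathbf{B}_{(n)}^\top$ as the two factors), and the bookkeeping that the ``middle dimension'' $I_2$ in Lemma \ref{RALMRA:lem6} is the number of columns of $\mathbf{B}_{(n)}$, which by definition is bounded by $I_n''''$ after accounting for rank constraints, so the constant $I_n''''$ in the statement is the correct substitute for $I_2$.
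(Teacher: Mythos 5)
Your proposal is correct and is essentially the paper's own argument: the paper proves Corollary \ref{RALMRA:cor1} by exactly this reduction to Lemmas \ref{RALMRA:lem5} and \ref{RALMRA:lem6} with the identification $\mathbf{C}\mathbf{R}=\mathbf{C}_n'\mathbf{C}_n'^\top$ and the two elementary estimates $\max_i\|\mathbf{B}_{(n)}(:,i)\|_2^2\leq\|\mathbf{B}_{(n)}\|_F^2$ and $\sum_i\|\mathbf{B}_{(n)}(:,i)\|_2^4\leq\|\mathbf{B}_{(n)}\|_F^4$, and it states Corollary \ref{RALMRA:cor1add} as the same argument applied to $\mathbf{B}_{(n)}$ with $I_n'$ replaced by $I_n''''$. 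Nothing is missing.
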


For $N$ given positive integers $\{\mu_1,\mu_2,\dots,\mu_N\}$ with $\mu_n<I_n$, suppose that the optimal solution $\{\mathbf{Q}_1,\mathbf{Q}_2,\dots,\mathbf{Q}_N\}$ is obtained from Algorithm \ref{RALMRA:alg5} with $q=1$. The upper bound for $\|\mathcal{A}-\widehat{\mathcal{A}}\|_F$ is obtained by combining Corollaries \ref{RALMRA:cor2} and \ref{RALMRA:cor1add}, and the fact that $\sigma_k(\mathbf{B}_{(n)})\leq \sigma_k(\mathbf{A}_{(n)})$ with $k=1,2,\dots,I_n'''-1,I_n'''$.
\begin{corollary}\label{RALMRA:cor3add}
	For each $n$, let $\mu_n$, $I_n$, $T_n$ and $K$ are positive integers such that $\mu_n< \mu_n+K< \min\{I_n'',I_n''''\}$. For $\beta>1$, $\gamma>1$ and $0<\delta_n<1$, suppose that
	\begin{equation}\label{RALMRA:eqn19add}
	\begin{aligned}
	1-&\sum_{n=1}^N\left(\frac{1}{4(\gamma^2-1)\sqrt{\pi \max\{I_n''-\mu_n,\mu_n+K\}\gamma^2}}\left(\frac{2\gamma^2}{e^{\gamma^2-1}}\right)^2\right.\\
	&\left.+
	\frac{1}{\sqrt{2\pi(K+1)}}\left(\frac{e}{K+1}\right)^{K+1}+\frac{1}{4(\gamma^2-1)\sqrt{\pi I_n\gamma^2}}\left(\frac{2\gamma^2}{e^{\gamma^2-1}}\right)^2+\delta_n\right)
	\end{aligned}
	\end{equation}
	is nonnegative. For a given tensor $\mathcal{A}\in\mathbb{R}^{I_1\times I_2\times\dots \times  I_N}$ and $q= 1$, the orthonormal matrices $\mathbf{Q}_n\ (n=1,2,\dots,N)$ are obtained by Algorithm {\rm\ref{RALMRA:alg5}}. Then
	\begin{equation*}
	\begin{aligned}
	\|\mathcal{A}-\widehat{\mathcal{A}}\|_F^2
	&\leq \sum_{n=1}^N{\rm rank}(\mathbf{A}_{(n)})\left(2\cdot(1+\lambda_n)I_n\Delta_{\mu_n+1}(\mathbf{A}_{(n)},1)+\phi_n\|\mathcal{A}\|_F^2\right)
	\end{aligned}
	\end{equation*}
	holds with a probability at least the amount in {\rm (\ref{RALMRA:eqn19add})}, where
	\begin{equation*}
	\lambda_n=(\sqrt{\max\{I_n''-\mu_n,\mu_n+K\}}+\sqrt{I_n})\sqrt{2(\mu_n+K)}\gamma\beta.
	\end{equation*}
	For the nearly optimal probabilities, there exists $\eta_n=1+\sqrt{(8/\delta_n)\log(1/\delta_n)}$ such that
	\begin{equation*}
	\phi_n=\frac{\eta_n}{\sqrt{\beta T_n}},
	\end{equation*}
	and for the uniform probabilities, there exists $\gamma_n\leq 1+\frac{I_n''''}{\sqrt{T_n}}\sqrt{8\log(1/\delta_n)}\|\mathbf{A}_{(n)}\|_F^2$ such that
	\begin{equation*}
	\phi_n=1+\sqrt{\frac{I_n''''}{T_n}}+\frac{I_n''''}{\sqrt{T_n}}\sqrt{8\log(1/\delta_n)}.
	\end{equation*}
\end{corollary}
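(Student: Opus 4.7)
The plan is to mimic the proof of Corollary \ref{RALMRA:cor3} but with $\mathbf{A}_{(n)}$ replaced throughout by the ``partially projected'' unfolding $\mathbf{B}_{(n)}=\mathbf{A}_{(n)}(\mathbf{Q}_1\otimes\dots\otimes\mathbf{Q}_{n-1}\otimes\mathbf{I}_{I_{n+1}}\otimes\dots\otimes\mathbf{I}_{I_N})$, and then to translate the resulting bounds back to quantities involving $\mathbf{A}_{(n)}$ alone. First, I would start from the ST-HOSVD error decomposition (\ref{RALMRA:eqn23}), which gives
\begin{equation*}
\|\mathcal{A}-\widehat{\mathcal{A}}\|_F^2\leq\sum_{n=1}^N\|\mathbf{B}_{(n)}-\mathbf{Q}_n\mathbf{Q}_n^\top\mathbf{B}_{(n)}\|_F^2.
\end{equation*}
Then for each summand I would use the chain already displayed just above the corollary, namely
\begin{equation*}
\|\mathbf{B}_{(n)}-\mathbf{Q}_n\mathbf{Q}_n^\top\mathbf{B}_{(n)}\|_F^2\leq{\rm rank}(\mathbf{B}_{(n)})\bigl(\|\mathbf{B}_{(n)}\mathbf{B}_{(n)}^\top-\mathbf{C}_n'\mathbf{C}_n'^\top\|_F+\|\mathbf{C}_n'\mathbf{C}_n'^\top-\mathbf{Q}_n\mathbf{Q}_n^\top\mathbf{C}_n'\mathbf{C}_n'^\top\|_F\bigr),
\end{equation*}
obtained from ${\rm rank}(\mathbf{A}-\mathbf{Q}\mathbf{Q}^\top\mathbf{A})\le{\rm rank}(\mathbf{A})$ together with $\|\mathbf{I}-\mathbf{Q}\mathbf{Q}^\top\|_2\le1$.

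Next I would bound each of the two inner terms. The approximate matrix multiplication term $\|\mathbf{B}_{(n)}\mathbf{B}_{(n)}^\top-\mathbf{C}_n'\mathbf{C}_n'^\top\|_F$ is handled directly by Corollary \ref{RALMRA:cor1add}, which yields the factor $\phi_n$ (different formulas for the nearly optimal versus uniform sampling distribution), with probability at least $1-\delta_n$. The random projection term $\|\mathbf{C}_n'\mathbf{C}_n'^\top-\mathbf{Q}_n\mathbf{Q}_n^\top\mathbf{C}_n'\mathbf{C}_n'^\top\|_F$ is handled by Corollary \ref{RALMRA:cor2}, applied not to the mode-$n$ unfolding of $\mathcal{A}$ but to $\mathbf{C}_n'$ directly; this gives the bound $2(1+\lambda_n)\Delta_{\mu_n+1}(\mathbf{C}_n',1)$ with $\lambda_n=(\sqrt{\max\{I_n''-\mu_n,\mu_n+K\}}+\sqrt{I_n})\sqrt{2(\mu_n+K)}\gamma\beta$, under the Gaussian tail conditions encoded in (\ref{RALMRA:eqn19add}).

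To convert $\Delta_{\mu_n+1}(\mathbf{C}_n',1)$ into something involving the original tensor, I would invoke Lemma \ref{RALMRA:lem4} twice in sequence. First, writing $\mathbf{C}_n'=\mathbf{B}_{(n)}\mathbf{S}_n$ and using $\|\mathbf{S}_n\|_2\leq\sqrt{I_n}$ (as noted in the proof of Corollary \ref{RALMRA:cor3} via \cite{aizenbud2016randomized}), Lemma \ref{RALMRA:lem4} gives $\Delta_{\mu_n+1}(\mathbf{C}_n',1)\le I_n\,\Delta_{\mu_n+1}(\mathbf{B}_{(n)},1)$. Second, since $\mathbf{Q}_1\otimes\dots\otimes\mathbf{Q}_{n-1}\otimes\mathbf{I}_{I_{n+1}}\otimes\dots\otimes\mathbf{I}_{I_N}$ is orthonormal with operator norm $1$, Lemma \ref{RALMRA:lem8} (or a squared-singular-value version of it) yields $\Delta_{\mu_n+1}(\mathbf{B}_{(n)},1)\le\Delta_{\mu_n+1}(\mathbf{A}_{(n)},1)$, and also ${\rm rank}(\mathbf{B}_{(n)})\le{\rm rank}(\mathbf{A}_{(n)})$. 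Plugging everything back produces the claimed bound.

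Finally, a union bound over $n=1,\dots,N$ on the failure probabilities of Corollary \ref{RALMRA:cor1add} (contributing $\delta_n$) and Corollary \ref{RALMRA:cor2} (contributing the three tail terms in $\gamma$, $\beta$, $K$) produces exactly the probability (\ref{RALMRA:eqn19add}). The main obstacle I anticipate is purely bookkeeping: tracking the dimension parameters $I_n$, $I_n''$, $I_n''''$, $T_n$, and the three different failure events per mode, so that the Gaussian tail bound uses $\max\{I_n''-\mu_n,\mu_n+K\}$ (from the $(\mu_n+K)\times(I_n''-\mu_n)$ block appearing in the SVD of $\mathbf{C}_n'$) rather than the analogous quantity with $I_n'$ or $I_n''''$. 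Once the right parameter is identified in each of Corollaries \ref{RALMRA:cor1add} and \ref{RALMRA:cor2}, the assembly is routine.
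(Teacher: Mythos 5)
Your proposal is correct and follows essentially the same route as the paper: the ST-HOSVD splitting (\ref{RALMRA:eqn23}), the rank/triangle-inequality chain displayed just before the corollary, Corollary \ref{RALMRA:cor1add} for the sampling term, Corollary \ref{RALMRA:cor2} (with $\mathbf{C}_n'=\mathbf{B}_{(n)}\mathbf{S}_n$) for the projection term, and the conversion $\Delta_{\mu_n+1}(\mathbf{C}_n',1)\le I_n\Delta_{\mu_n+1}(\mathbf{B}_{(n)},1)\le I_n\Delta_{\mu_n+1}(\mathbf{A}_{(n)},1)$ via Lemma \ref{RALMRA:lem4} and $\sigma_k(\mathbf{B}_{(n)})\le\sigma_k(\mathbf{A}_{(n)})$, followed by a union bound. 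Your write-up is in fact more explicit than the paper's one-line justification, and the only unremarked detail is replacing $\|\mathbf{B}_{(n)}\|_F^2$ by $\|\mathcal{A}\|_F^2$ in the $\phi_n$ term, which is immediate.
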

\begin{remark}
The theoretical analysis for Algorithms {\rm\ref{RALMRA:alg4}}, {\rm\ref{RALMRA:alg2}} and {\rm\ref{RALMRA:alg5}} with $q>1$ is unclear and will be studied in the future.
\end{remark}

\subsection{Related forms for Algorithms \ref{RALMRA:alg1} and \ref{RALMRA:alg2}}

One difference between Algorithms \ref{RALMRA:alg1} and \ref{RALMRA:alg2} is that for each $n$, the tensor $\mathcal{A}$ in Algorithm \ref{RALMRA:alg1} remains unchanged, but the tensor $\mathcal{A}$ in Algorithm \ref{RALMRA:alg2} is updated by $\mathcal{A}=\mathcal{A}\times_n\mathbf{Q}_n^\top$. Another difference is that for Algorithm \ref{RALMRA:alg1}, after obtaining all the matrices $\mathbf{Q}_n$, we need to compute the core tensor $\mathcal{G}$ as $\mathcal{G}=\mathcal{A}\times_1\mathbf{Q}_1^\top\times_2\mathbf{Q}_2^\top\dots\times_N\mathbf{Q}_N^\top$, and for Algorithm \ref{RALMRA:alg2}, the core tensor $\mathcal{G}$ is directly obtained after finding all the matrices $\mathbf{Q}_n$.

Algorithms \ref{RALMRA:alg6} and \ref{RALMRA:alg7} are obtained by computing the matrix $\mathbf{Q}_n$ in Algorithms \ref{RALMRA:alg1} and \ref{RALMRA:alg2} as this way: a) to obtain the thin QR factorization of $\mathbf{C}_n$ as $\mathbf{C}_n=\mathbf{P}_n\mathbf{R}$, where $\mathbf{P}_n\in\mathbb{R}^{I_n\times (\mu_n+K)}$ is orthonormal and $\mathbf{R}\in\mathbb{R}^{(\mu_n+K)\times (\mu_n+K)}$ is upper triangular; b) to form $\mathbf{C}_n'=\mathbf{P}_n^\top\mathbf{A}_{(n)}$; c) to compute an orthonormal matrix $\mathbf{U}_{n}\in\mathbb{R}^{I_n\times \mu_n}$ such that its columns are the left singular vectors corresponding to the top $\mu_n$ singular values of $\mathbf{C}_n'$; and d) to form $\mathbf{Q}_n=\mathbf{P}_n\mathbf{U}_{n}$.
\begin{algorithm}[htb]
	\caption{The related form for Algorithm \ref{RALMRA:alg1}}
	\label{RALMRA:alg6}
	\begin{algorithmic}[1]
		\STATEx {\bf Input}: A tensor $\mathcal{A}\in \mathbb{R}^{I_1\times I_2\times\dots\times I_N}$, the desired multilinear rank $\{\mu_1,\mu_2,\dots,\mu_N\}$, the oversampling parameter $K$ and an integer $q\geq 1$.
		\STATEx {\bf Output}: $N$ orthonormal matrices $\mathbf{Q}_n$ and a core tensor $\mathcal{G}$ such that $\mathcal{A}\approx\mathcal{G}\times_1\mathbf{Q}_1\times_2\mathbf{Q}_2\dots\times_N\mathbf{Q}_N$.
		\STATE For $n=1,2,\dots,N$
		\STATE Generate a standard Gaussian matrix $\mathbf{G}\in\mathbb{R}^{I_n \times (\mu_n+K)}$.
		\STATE Arrange the mode-$n$ unfolding $\mathbf{A}_{(n)}$ of $\mathcal{A}$.
            \STATE Compute $\mathbf{C}_n=(\mathbf{A}_{(n)}\mathbf{A}_{(n)}^\top)^q\mathbf{G}_n$.
		\STATE Compute the thin QR factorization of $\mathbf{C}_n$ as $\mathbf{C}_n=\mathbf{P}_n\mathbf{R}$.
            \STATE Form $\mathbf{C}_n'=\mathbf{P}_n^\top\mathbf{A}_{(n)}$ and let $\mathbf{U}_{n}$ be the left singular vectors corresponding to top $\mu_n$
           singular values of $\mathbf{C}_n'$.
            \STATE Compute $\mathbf{Q}_n=\mathbf{P}_n\mathbf{U}_{n}$.
		\STATE End for
            \STATE Compute the core tensor $\mathcal{G}=\mathcal{A}\times_1\mathbf{Q}_1^\top\times_2\mathbf{Q}_2^\top\dots\times_N\mathbf{Q}_N^\top$.
	\end{algorithmic}
\end{algorithm}

\begin{algorithm}
	\caption{The related form for Algorithm \ref{RALMRA:alg2}}
	\label{RALMRA:alg7}
	\begin{algorithmic}[1]
		\STATEx {\bf Input}: A tensor $\mathcal{A}\in \mathbb{R}^{I_1\times I_2\times\dots\times I_N}$, the desired multilinear rank $\{\mu_1,\mu_2,\dots,\mu_N\}$, the oversampling parameter $K$ and an integer $q\geq 1$.
		\STATEx {\bf Output}: $N$ orthonormal matrices $\mathbf{Q}_n$ and a core tensor $\mathcal{G}$ such that $\mathcal{A}\approx\mathcal{G}\times_1\mathbf{Q}_1\times_2\mathbf{Q}_2\dots\times_N\mathbf{Q}_N$.
		\STATE For $n=1,2,\dots,N$
		\STATE Generate a standard Gaussian matrix $\mathbf{G}\in\mathbb{R}^{I_n \times (\mu_n+K)}$.
		\STATE Arrange the mode-$n$ unfolding $\mathbf{A}_{(n)}$ of $\mathcal{A}$.
            \STATE Compute $\mathbf{C}_n=(\mathbf{A}_{(n)}\mathbf{A}_{(n)}^\top)^q\mathbf{G}_n$.
		\STATE Compute the thin QR factorization of $\mathbf{C}_n$ as $\mathbf{C}_n=\mathbf{P}_n\mathbf{R}$.
            \STATE Form $\mathbf{C}_n'=\mathbf{P}_n^\top\mathbf{A}_{(n)}$ and let $\mathbf{U}_{n}$ be the left singular vectors corresponding to top $\mu_n$
           singular values of $\mathbf{C}_n'$.
            \STATE Compute $\mathbf{Q}_n=\mathbf{P}_n\mathbf{U}_{n}$.
            \STATE Update $\mathcal{A}=\mathcal{A}\times_n\mathbf{Q}_n^\top$.
		\STATE End for
            \STATE Set the core tensor $\mathcal{G}$ as $\mathcal{G}=\mathcal{A}$.
	\end{algorithmic}
\end{algorithm}


For each $n$, we set $J_n'=I_1\dots I_{n-1}I_{n+1}\dots I_N$ and $\mathcal{B}=\mathcal{A}$ in Algorithm \ref{RALMRA:alg6}, and we set $J_n'=\mu_1\dots \mu_{n-1}I_{n+1}\dots I_N$ and $\mathcal{B}=\mathcal{A}\times_1\mathbf{Q}_1^\top\dots\times_{n-1}\mathbf{Q}_{n-1}^\top$ in Algorithm \ref{RALMRA:alg7}. Suppose that $\mathbf{B}_{(n)}=\mathbf{U}_n{\bf \Sigma}_n\mathbf{V}_n^\top$, where $\mathbf{U}_n\in\mathbb{R}^{I_n\times J_n}$ and $\mathbf{V}_n\in\mathbb{R}^{J_n'\times J_n}$ are orthonornal, and the diagonal entries of the diagonal matrix ${\bf \Sigma}_n\in\mathbb{R}^{J_n\times J_n}$ are nonnegative and arranged in descending order with $J_n=\min\{I_n,J_n'\}$. Then $(\mathbf{B}_{(n)}\mathbf{B}_{(n)}^\top)^q=\mathbf{U}_n{\bf \Sigma}_n^{2q}\mathbf{U}_n^\top$. Let $\mathbf{U}_{n,1}=\mathbf{U}_n(:,1:\mu_n)$, $\mathbf{U}_{n,2}=\mathbf{U}_n(:,\mu_n+1:J_n)$, ${\bf \Sigma}_{n,1}={\bf \Sigma}_n(1:\mu_n,1:\mu_n)$, and ${\bf \Sigma}_{n,2}={\bf \Sigma}_n((\mu_n+1):J_n,(\mu_n+1):J_n)$. Define ${\bf S}_{n,1}=\mathbf{U}_{n,1}^\top\mathbf{G}_n$ and ${\bf S}_{n,2}=\mathbf{U}_{n,2}^\top\mathbf{G}_n$.

For each $n$, when the matrix $\mathbf{Q}_n$ is obtained by Algorithm \ref{RALMRA:alg6}. then, from Theorem 3.5 in \cite{gu2015subspace} and Theorem 9 in \cite{zhang2018a}, we have
\begin{equation*}
\begin{aligned}
\|\mathbf{B}_{(n)}-\mathbf{Q}_n\mathbf{Q}_n^\top\mathbf{B}_{(n)}\|_F^2\leq \|{\bf \Sigma}_{n,2}\|_F^2+\tau_{\mu_n}^{4q-1}\|{\bf \Sigma}_{n,2}{\bf S}_{n,2}{\bf S}_{n,1}^\dag\|_F^2,
\end{aligned}
\end{equation*}
where $\tau_{\mu_n}=\sigma_{\mu_n+1}(\mathbf{B}_{(n)})/\sigma_{\mu_n}(\mathbf{B}_{(n)})$. When the matrix $\mathbf{Q}_n$ is obtained by Algorithm \ref{RALMRA:alg7}, by using the same notations in Section \ref{RALMRA:sec4:sub4}, we have
\begin{equation*}
\begin{aligned}
\|\mathbf{B}_{(n)}-\mathbf{Q}_n\mathbf{Q}_n^\top\mathbf{B}_{(n)}\|_F^2\leq \|{\bf \Sigma}_{n,2}\|_F^2+\tau_{\mu_n}^{4q-1}\|{\bf \Sigma}_{n,2}{\bf S}_{n,2}{\bf S}_{n,1}^\dag\|_F^2,
\end{aligned}
\end{equation*}
which implies that
\begin{equation*}
\begin{aligned}
\|\mathbf{B}_{(n)}-\mathbf{Q}_n\mathbf{Q}_n^\top\mathbf{B}_{(n)}\|_F^2\leq \|{\bf \Sigma}_{n,2}\|_F^2+\|{\bf \Sigma}_{n,2}{\bf S}_{n,2}{\bf S}_{n,1}^\dag\|_F^2,
\end{aligned}
\end{equation*}
where $\tau_{\mu_n}=\sigma_{\mu_n+1}(\mathbf{B}_{(n)})/\sigma_{\mu_n}(\mathbf{B}_{(n)})\leq 1$.

It is worth to note that: for Algorithm \ref{RALMRA:alg6}, the singular values of $\mathbf{B}_{(n)}$ are equivalent to that of $\mathbf{A}_{(n)}$, and for Algorithm \ref{RALMRA:alg7}, the relationship between the singular values of $\mathbf{B}_{(n)}$ and $\mathbf{A}_{(n)}$ are in Lemma \ref{RALMRA:lem4}. Since $\mathbf{G}_n$ is a standard Gaussian matrix, ${\bf S}_{n,1}=\mathbf{U}_{n,1}^\top\mathbf{G}_n$ and ${\bf S}_{n,2}=\mathbf{U}_{n,2}^\top\mathbf{G}_n$ are also standard Gaussian matrices. Hence, the upper bounds of $\|{\bf S}_{n,2}\|_2$ and $\|{\bf S}_{n,1}^\dag\|_2$ can be obtained from Lemmas \ref{RALMRA:lem1} and \ref{RALMRA:lem2}. Then, the upper bound of of $\|\mathcal{A}-\widehat{\mathcal{A}}\|_F^2$ can be sequentially obtained by combining (\ref{RALMRA:eqn5}), and Lemmas \ref{RALMRA:lem1} and \ref{RALMRA:lem2}.

On the other hand, suppose that for each $n$, the matrix $\mathbf{Q}_n$ is obtained by Algorithm \ref{RALMRA:alg6} or \ref{RALMRA:alg7}. From Theorem 9.2 in \cite{halko2011finding}, we have
\begin{equation*}
\begin{aligned}
\|\mathbf{B}_{(n)}-{\bf P}_n{\bf P}_n^\top\mathbf{B}_{(n)}\|_F^2
&\leq
\|(\mathbf{B}_{(n)}\mathbf{B}_{(n)}^\top)^q-{\bf P}_n{\bf P}_n^\top(\mathbf{B}_{(n)}\mathbf{B}_{(n)}^\top)^q\|_F^{1/2q}\\
&\leq
(\|{\bf \Sigma}_{n,2}^{2q}\|_F+\|{\bf \Sigma}_{n,2}^{2q}{\bf S}_{n,2}{\bf S}_{n,1}^\dag\|_F)^{1/2q}\\
&\leq
(\|{\bf \Sigma}_{n,2}^{2q}\|_F+\|{\bf \Sigma}_{n,2}^{2q}\|_F\|{\bf S}_{n,2}\|_2\|{\bf S}_{n,1}^\dag\|_2)^{1/2q}.
\end{aligned}
\end{equation*}
Hence, we can obtain the upper bound for $\|\mathcal{A}-\widetilde{\mathcal{A}}\|_F$ by combining (\ref{RALMRA:eqn5}), and Lemmas \ref{RALMRA:lem1} and \ref{RALMRA:lem2}, where $\widetilde{\mathcal{A}}=\mathcal{A}\times_1({\bf P}_1{\bf P}_1^\top)\times_2({\bf P}_2{\bf P}_2^\top)\dots\times_N({\bf P}_N{\bf P}_N^\top)$. By the relationship between $\mathbf{P}_n$ and $\mathbf{Q}_n$, it is clear to see that each term of the multilinear rank of $\widetilde{\mathcal{A}}$ may be larger than that of $\widehat{\mathcal{A}}$.

Suppose that the parameters $K$ and $q$ in Algorithms \ref{RALMRA:alg1}, \ref{RALMRA:alg2}, \ref{RALMRA:alg6} and \ref{RALMRA:alg7} are the same in Section \ref{RALMRA:sec3:3}. By setting $\gamma=0.01$ and $\mu=5,10,\dots,50$, when we apply these algorithms to the tensor $\mathcal{A}$ in (\ref{RALMRA:eqn9}), the results are shown in Figure \ref{RALMRA:fig5}. In terms of RLNE, these algorithms are comparable; and in terms of CPU time, Algorithm \ref{RALMRA:alg2} is faster than Algorithm \ref{RALMRA:alg7}, Algorithm \ref{RALMRA:alg1} is faster than Algorithm \ref{RALMRA:alg6}, and Algorithms \ref{RALMRA:alg2} and \ref{RALMRA:alg7} are much faster than Algorithms \ref{RALMRA:alg1} and \ref{RALMRA:alg6}.
\begin{figure}[htb]
	\centering
	\includegraphics[width=3.8in]{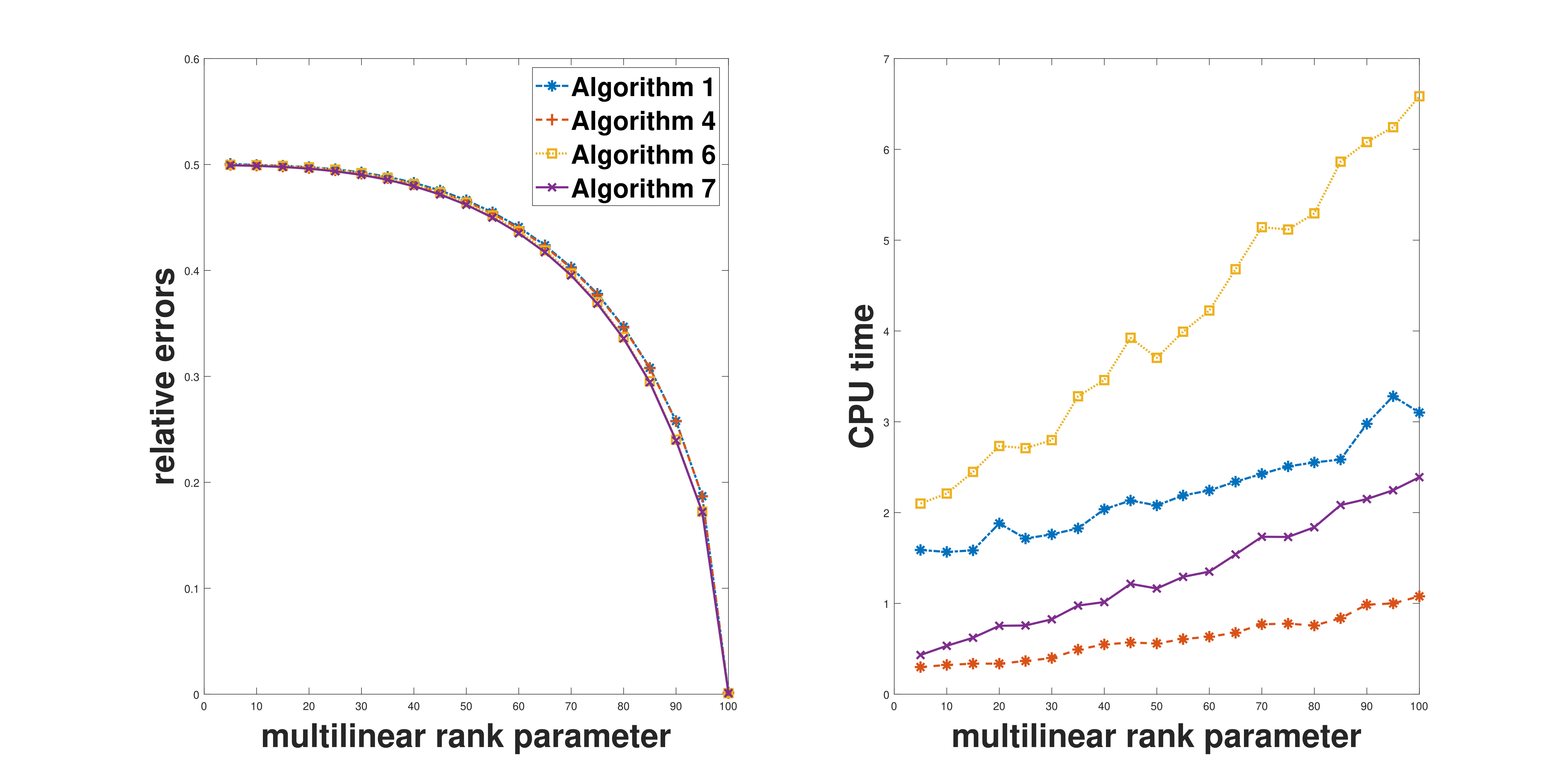}\\
	\caption{Results of applying Algorithms \ref{RALMRA:alg1}, \ref{RALMRA:alg2}, \ref{RALMRA:alg6} and \ref{RALMRA:alg7} to $\mathcal{A}$ with $\gamma=0.01$ and $\mu=5,10,\dots,50$.}\label{RALMRA:fig5}
\end{figure}

\begin{remark}
These two modifications are also suitable for Algorithms {\rm\ref{RALMRA:alg4}} and {\rm\ref{RALMRA:alg5}}. The theoretical results for Tucker-SRFT and Tucker-SRFT-power in {\rm \cite{che2021randomized}} have been derived by changing the way to obtain the unitary matrix $\mathbf{Q}_n\in\mathbb{C}^{I_n\times \mu_n}$ as the same as that in Algorithm {\rm\ref{RALMRA:alg6}} or Algorithm {\rm\ref{RALMRA:alg7}}.
\end{remark}
\section{Numerical examples}
\label{RALMRA:sec6}

In this section, we use the numerical computation software MATLAB to develop computer programs and implement the calculations on a laptop with Intel Core i5-4200M CPU (2.50GHz) and 8GB RAM. We set MATLAB  maxNumCompThreads to 1 and use ``tic" and ``toc" to measure running time when applying all of the algorithms to the test tensors. The unit of CPU time is second. The tensor-matrix product and tensor contraction can be implemented by the functions ``ttm'' and ``ttt'' in the MATLAB Tensor Toolbox \cite{bader2006algorithm,tensortool}.

We compare the efficiency and accuracy of Algorithms \ref{RALMRA:alg2} and \ref{RALMRA:alg5} with HOOI \cite{lathauwer2000on}, T-HOSVD \cite{kolda2019tensor}, ST-HOSVD \cite{vannieuwenhoven2012new}, randomized T-HOSVD \cite{minster2020randomized}, randomized ST-HOSVD \cite{minster2020randomized}, Adap-Tucker \cite{che2019randomized}, Tucker-SVD \cite{che2021randomized} and Tucker-SVD-power \cite{che2020computation}. Note that randomized T-HOSVD, Adap-Tucker, Tucker-SVD, Tucker-SVD-power, and Algorithms \ref{RALMRA:alg2} and \ref{RALMRA:alg5} are implemented by MATLAB codes, HOOI is implemented by the function tucker\_als \cite{tensortool}, T-HOSVD and ST-HOSVD are implemented by the function mlsvd \cite{tensorlab} with different parameters, and randomized ST-HOSVD is implemented by the mlsvd\_rsi \cite{tensorlab}. We also compare Algorithms \ref{RALMRA:alg2} and \ref{RALMRA:alg5} with randomized T-HOSVD and randomized ST-HOSVD under subspace iterations. The number of subspace iterations is set to $q=1$. For all algorithms, the processing order is $\{1,2,\dots,N\}$ and the factor matrices are orthonormal. For Algorithms \ref{RALMRA:alg2} and \ref{RALMRA:alg5}, we set $K=10$ and $q=1$, and for Algorithm \ref{RALMRA:alg5}, we set $T_n={\rm ceil}(\alpha \mu_1\dots \mu_{n-1}I_{n+1}\dots I_N)$ with $\alpha=0.2$.

Let ${\bf Q}_n\in\mathbb{R}^{I_n\times \mu_n}$ be derived using any numerical algorithm to $\mathcal{A}\in\mathbb{R}^{I_1\times I_2\times\dots\times I_N}$. For a given multilinear rank $\{\mu_1,\mu_2,\dots,\mu_N\}$, the relative error (RE) is defined in (\ref{RALMRA:eqn8}), and $\widehat{\mathcal{A}}=\mathcal{A}\times_{1}({\bf Q}_{1}{\bf Q}_{1}^\top)\times_{2}({\bf Q}_{2}{\bf Q}_{2}^\top)\dots\times_{N}({\bf Q}_{N}{\bf Q}_{N}^\top)$ is a low  rank approximation in the Tucker format to $\mathcal{A}$.

\begin{figure}[htb]
	\setlength{\tabcolsep}{4pt}
	\renewcommand\arraystretch{1}
	\centering
	\begin{tabular}{c}
		\includegraphics[width=3.8in]{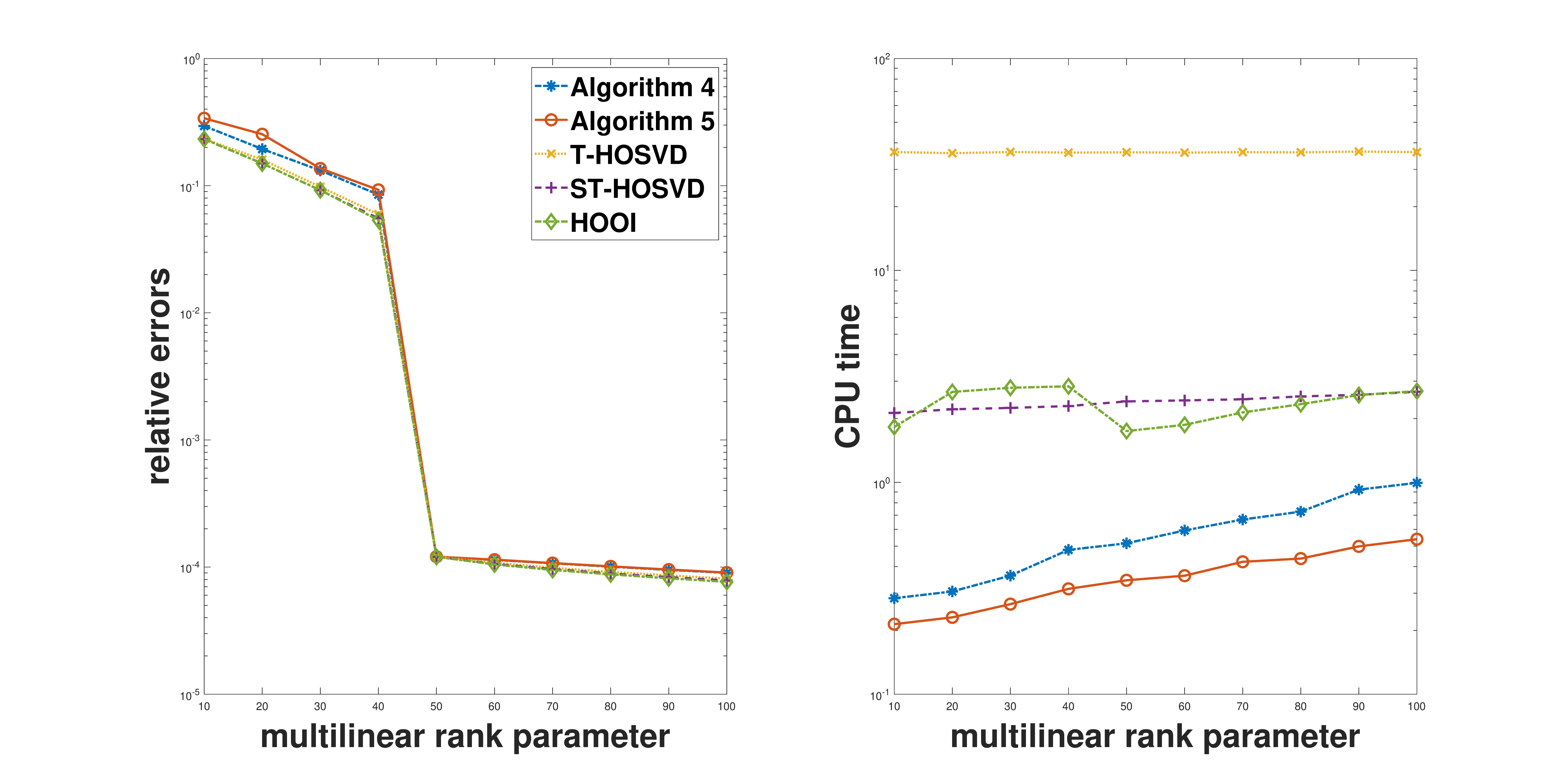}\\
		\includegraphics[width=3.8in]{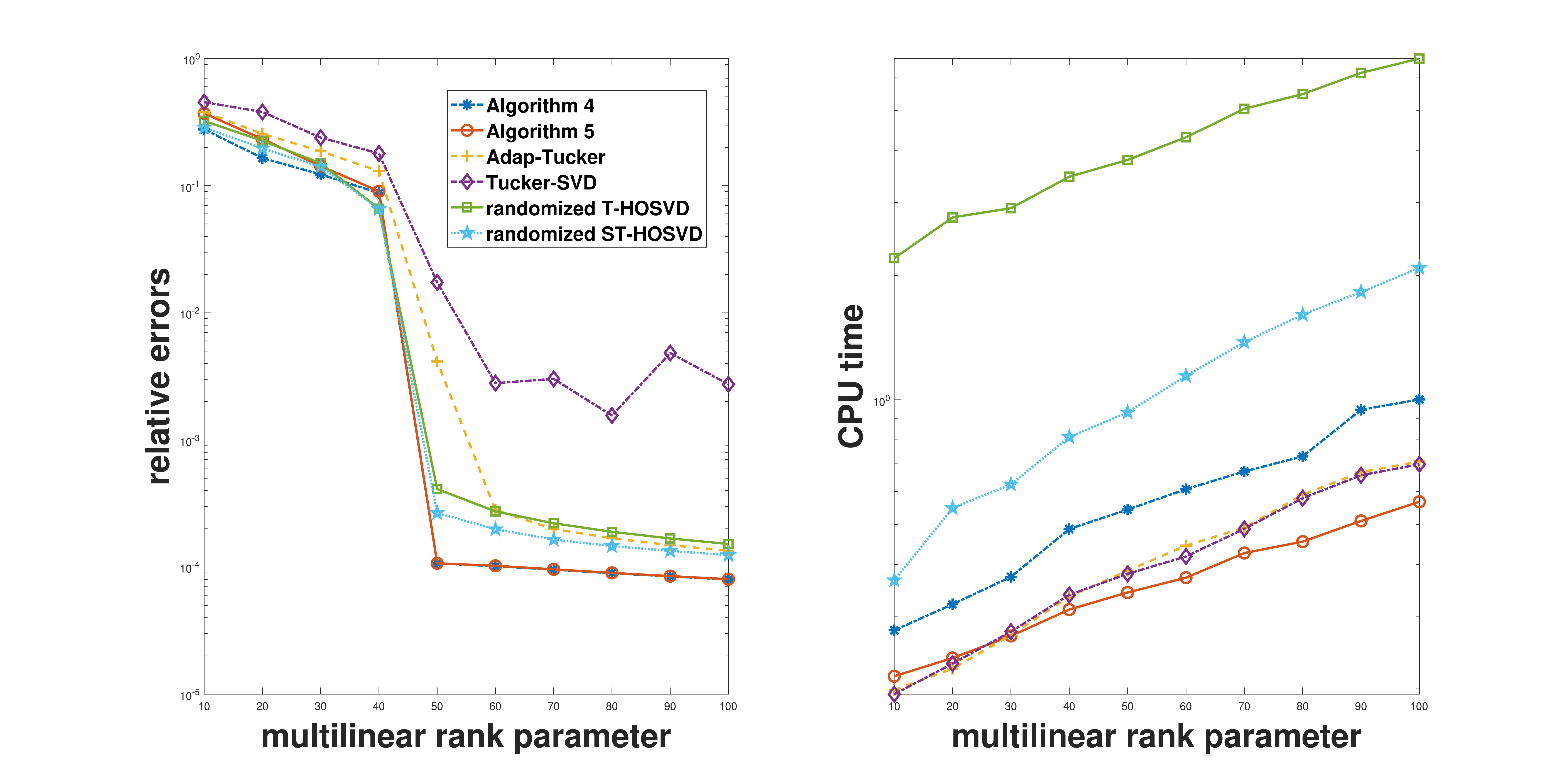}\\
		\includegraphics[width=3.8in]{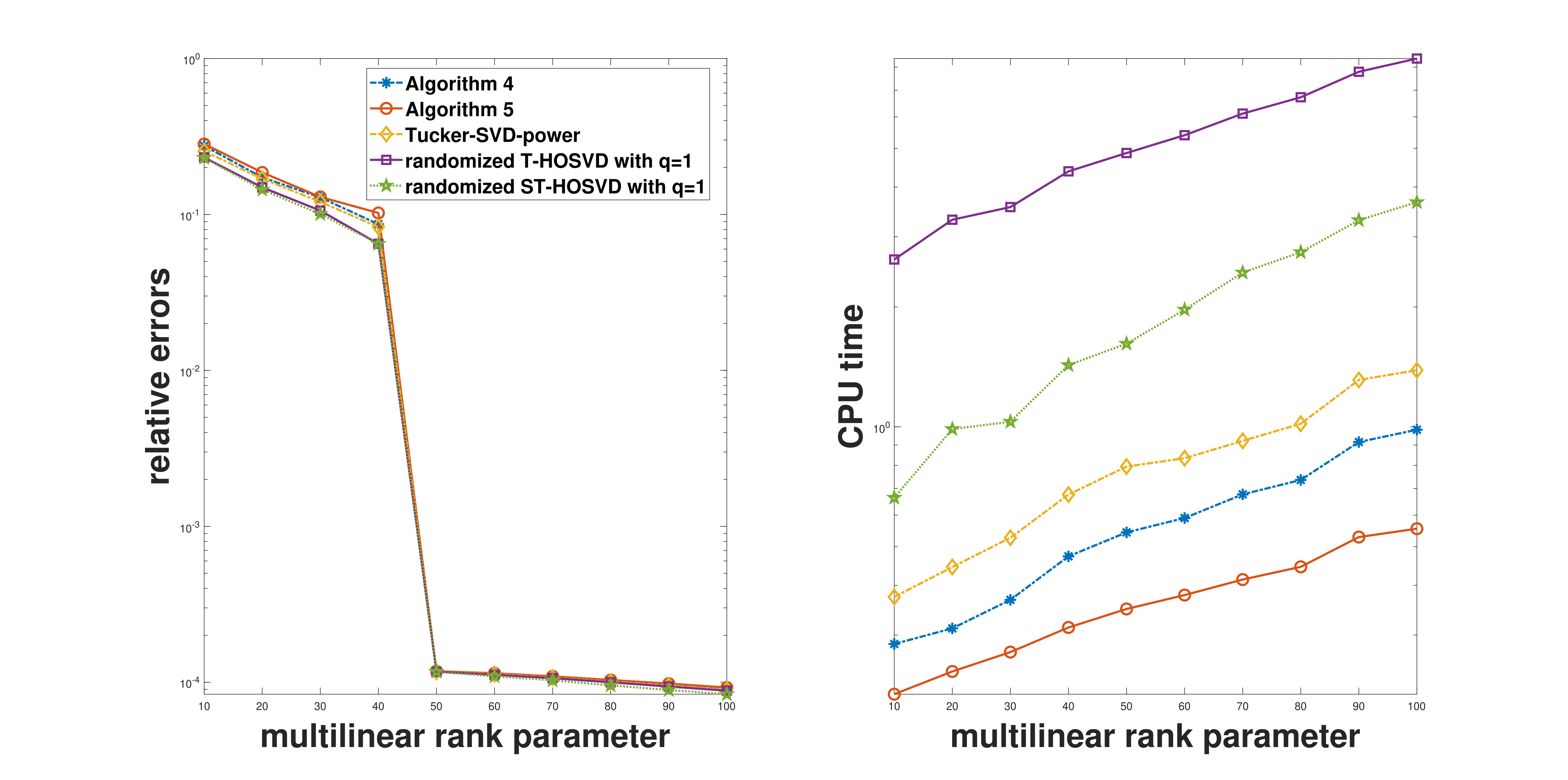}\\
	\end{tabular}
	\caption{Results of applying Algorithms \ref{RALMRA:alg2} and \ref{RALMRA:alg5}, HOOI, T-HOSVD, ST-HOSVD, randomized T-HOSVD, randomized ST-HOSVD, Adap-Tucker, Tucker-SVD and Tucker-SVD-power to the sparse tensor $\mathcal{A}$ with $\mu=5,10,\dots,100$.}\label{RALMRA:fig6}
\end{figure}

\subsection{A sparse tensor}
Consider a sparse tensor $\mathcal{A}\in\mathbb{R}^{600\times 600\times 600}$ such that
\begin{equation*}
\mathcal{A}=\sum_{i=1}^{50}\frac{\gamma}{i^2}\mathbf{x}_i\circ\mathbf{y}_i\circ\mathbf{z}_i+\sum_{i=51}^{600}\frac{1}{i^2}\mathbf{x}_i\circ\mathbf{y}_i\circ\mathbf{z}_i
\end{equation*}
where $\mathbf{x}_i$, $\mathbf{y}_i$ and $\mathbf{z}_i$ are sparse vectors in $\mathbb{R}^{600}$ with only 0.05 sparsity.

In this section, we set $\gamma=1000$. For different multilinear ranks $\{\mu,\mu,\mu\}$, when we apply Algorithms \ref{RALMRA:alg2} and \ref{RALMRA:alg5}, and the existing deterministic and randomized algorithms for finding a low multilinear rank approximation of the sparse tensor $\mathcal{A}$, the values of RE and the CPU time are shown in Figure \ref{RALMRA:fig6}.

In terms of RE, Algorithms \ref{RALMRA:alg2} and \ref{RALMRA:alg5} are similar to HOOI, T-HOSVD, ST-HOSVD, Tucker-SVD-power, randomized T-HOSVD with $q=1$ and randomized ST-HOSVD with $q=1$, and are better than Adap-Tucker, Tucker-SVD, randomized T-HOSVD and randomized ST-HOSVD; in terms of CPU time, Algorithm \ref{RALMRA:alg5} is the fastest one, and Algorithm \ref{RALMRA:alg2} is slower than Adap-Tucker and Tucker-SVD and faster than other algorithms.

\begin{figure}[htb]
	\setlength{\tabcolsep}{4pt}
	\renewcommand\arraystretch{1}
	\centering
	\begin{tabular}{c}
		\includegraphics[width=3.8in]{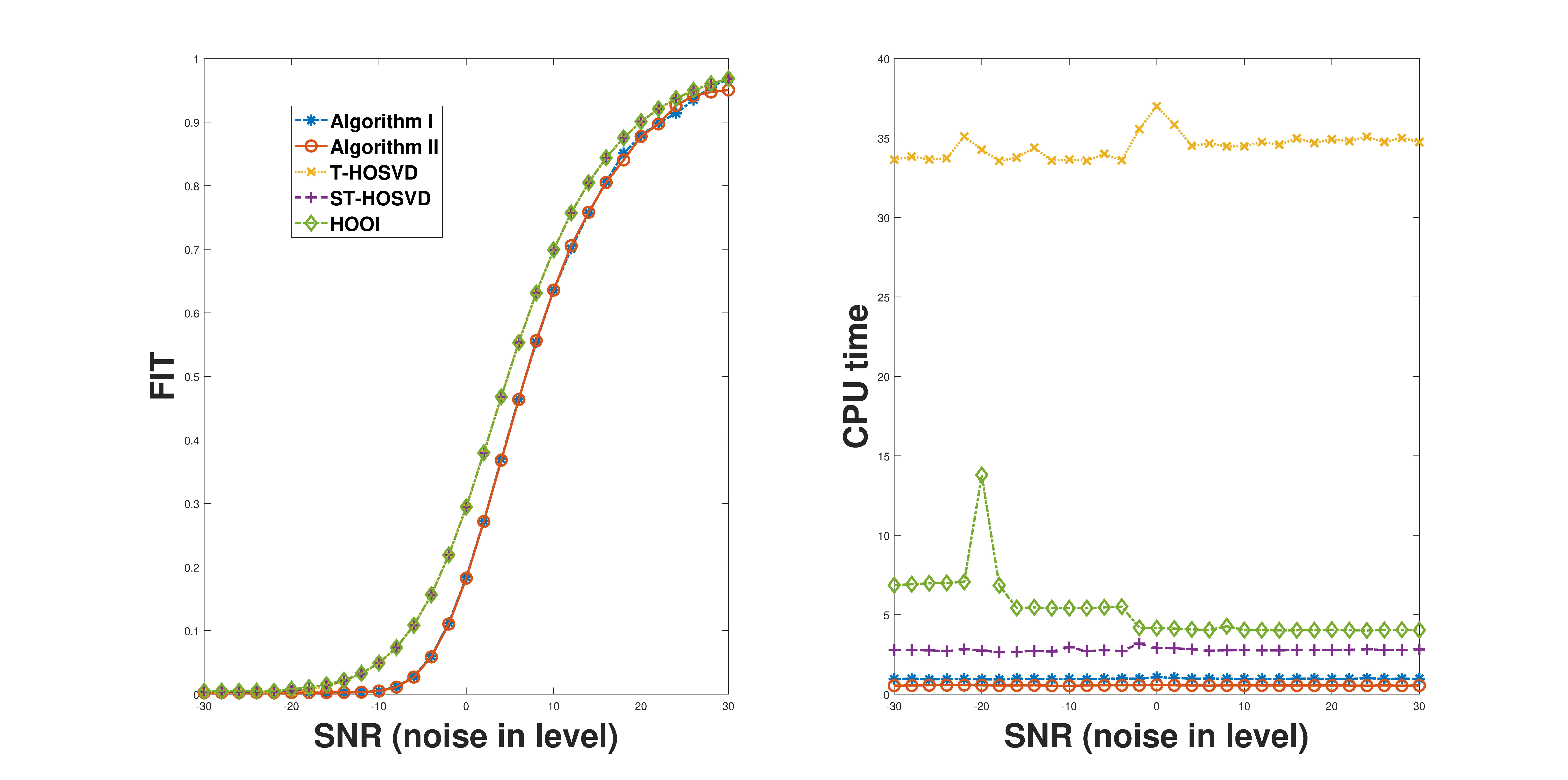}\\
		\includegraphics[width=3.8in]{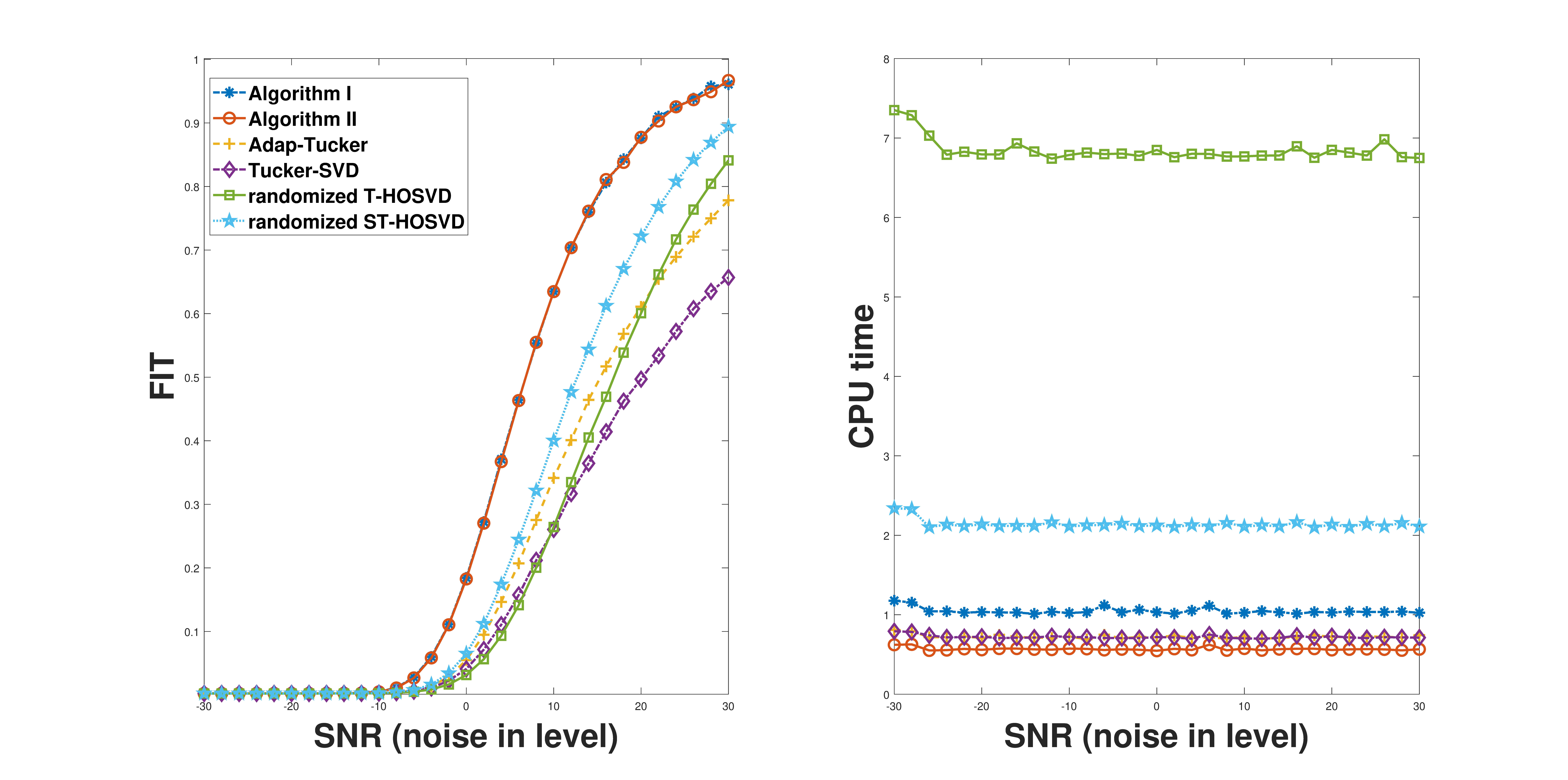}\\
		\includegraphics[width=3.8in]{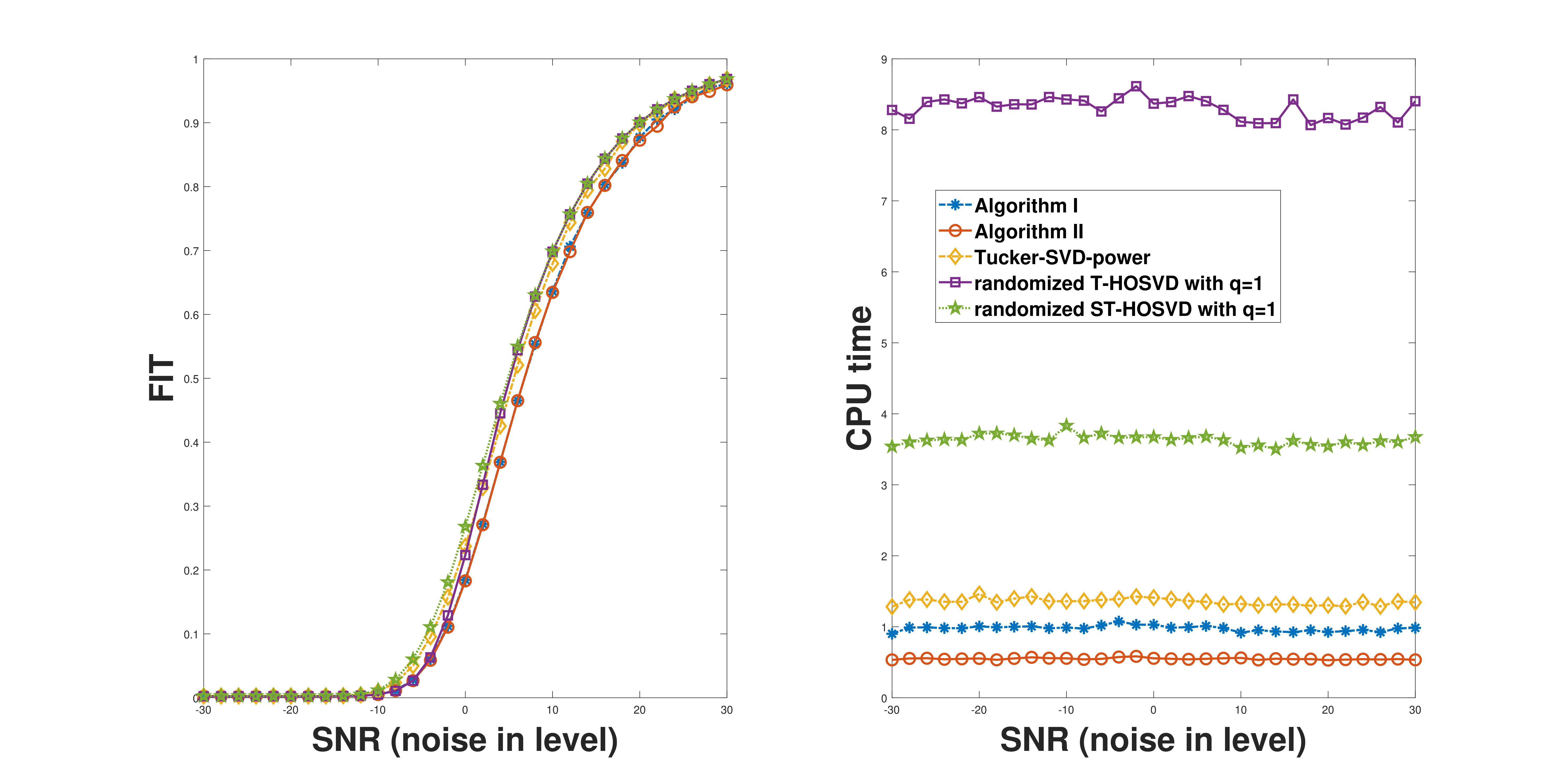}\\
	\end{tabular}
	\caption{Results of applying Algorithms \ref{RALMRA:alg2} and \ref{RALMRA:alg5}, HOOI, T-HOSVD, ST-HOSVD, randomized T-HOSVD, randomized ST-HOSVD, Adap-Tucker, Tucker-SVD and Tucker-SVD-power to the tensor $\mathcal{A}$ with ${\rm SNR}=-30,-28,\dots,28,30$.}\label{RALMRA:fig7}
\end{figure}

\subsection{A low multilinear rank tensor plus the white noise}

Let $\mathcal{B}\in\mathbb{R}^{600\times 600\times 600}$ be given in the Tucker form \cite{caiafa2010generalizing}: $\mathcal{B}=\mathcal{G}\times_1\mathbf{B}_1\times_2\mathbf{B}_2\times_3\mathbf{B}_3$, where the entries of $\mathcal{G}\in\mathbb{R}^{100\times 100\times 100}$ and $\mathbf{B}_n\in\mathbb{R}^{600\times 100}$ are i.i.d. Gaussian variables with zero mean and unit variance. The form of this test tensor $\mathcal{A}$ is given as $\mathcal{A}=\mathcal{B}+\beta\mathcal{N}$, where $\mathcal{N}\in\mathbb{R}^{600\times 600\times 600}$ is an unstructured perturbation tensor with different noise level $\beta$. The following signal-to-noise ratio (SNR) measure will be used:
	\begin{equation}\label{RALMRA:eqn24}
	{\rm SNR}\ [{\rm dB}]=10\log\left(\frac{\|\mathcal{A}\|_F^2}{\|\beta\mathcal{N}\|_F^2}\right).
	\end{equation}
	The FIT value for approximating the tensor $\mathcal{A}$ is defined by
	${\rm FIT}=1-{\rm RE}$,
	where ${\rm RE}$ is given in (\ref{RALMRA:eqn8}).

Here we assume that the desired multilinear rank is given by $\{100,100,100\}$. By applying Algorithms \ref{RALMRA:alg2} and \ref{RALMRA:alg5}, and the existing deterministic and randomized algorithms to the tensor $\mathcal{A}$ with different values of SNR, the related results are given in Figure \ref{RALMRA:fig7}.

It follows from this figure that a) the values of CPU time obtained by Algorithms \ref{RALMRA:alg2} and \ref{RALMRA:alg5} are less than that obtained by other algorithms, b) the values of FIT of Algorithms \ref{RALMRA:alg2} and \ref{RALMRA:alg5} are similar to that obtained by HOOI, T-HOSVD, ST-HOSVD, Tucker-SVD-power, randomized T-HOSVD with $q=1$ and randomized ST-HOSVD with $q=1$, and c) the values of FIT of Algorithms \ref{RALMRA:alg2} and \ref{RALMRA:alg5} are larger than that obtained by Adap-Tucker, Tucker-SVD, randomized T-HOSVD  and randomized ST-HOSVD.

\begin{figure}[htb]
	\setlength{\tabcolsep}{4pt}
	\renewcommand\arraystretch{1}
	\centering
	\begin{tabular}{c}
		\includegraphics[width=3.8in]{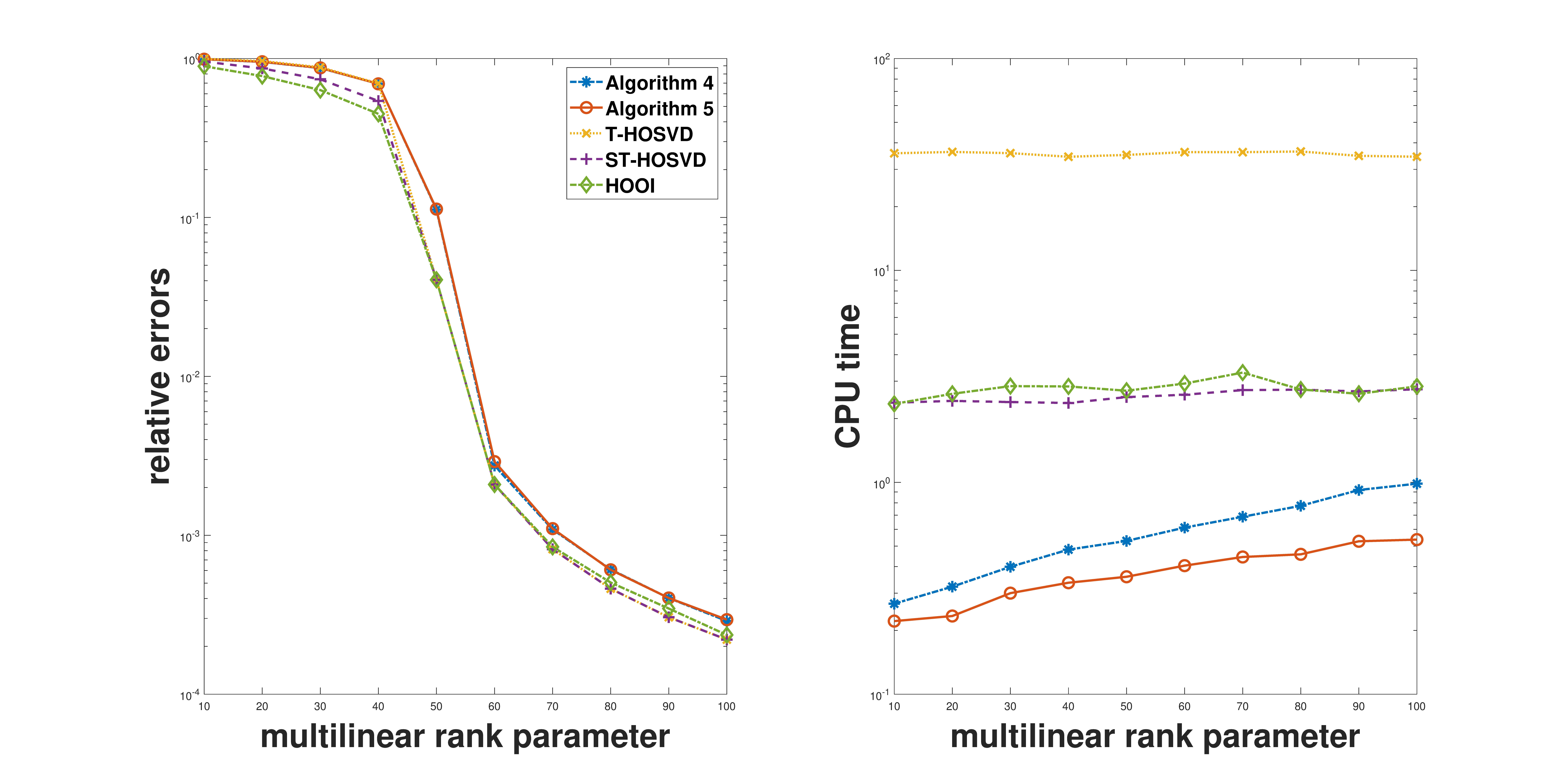}\\
		\includegraphics[width=3.8in]{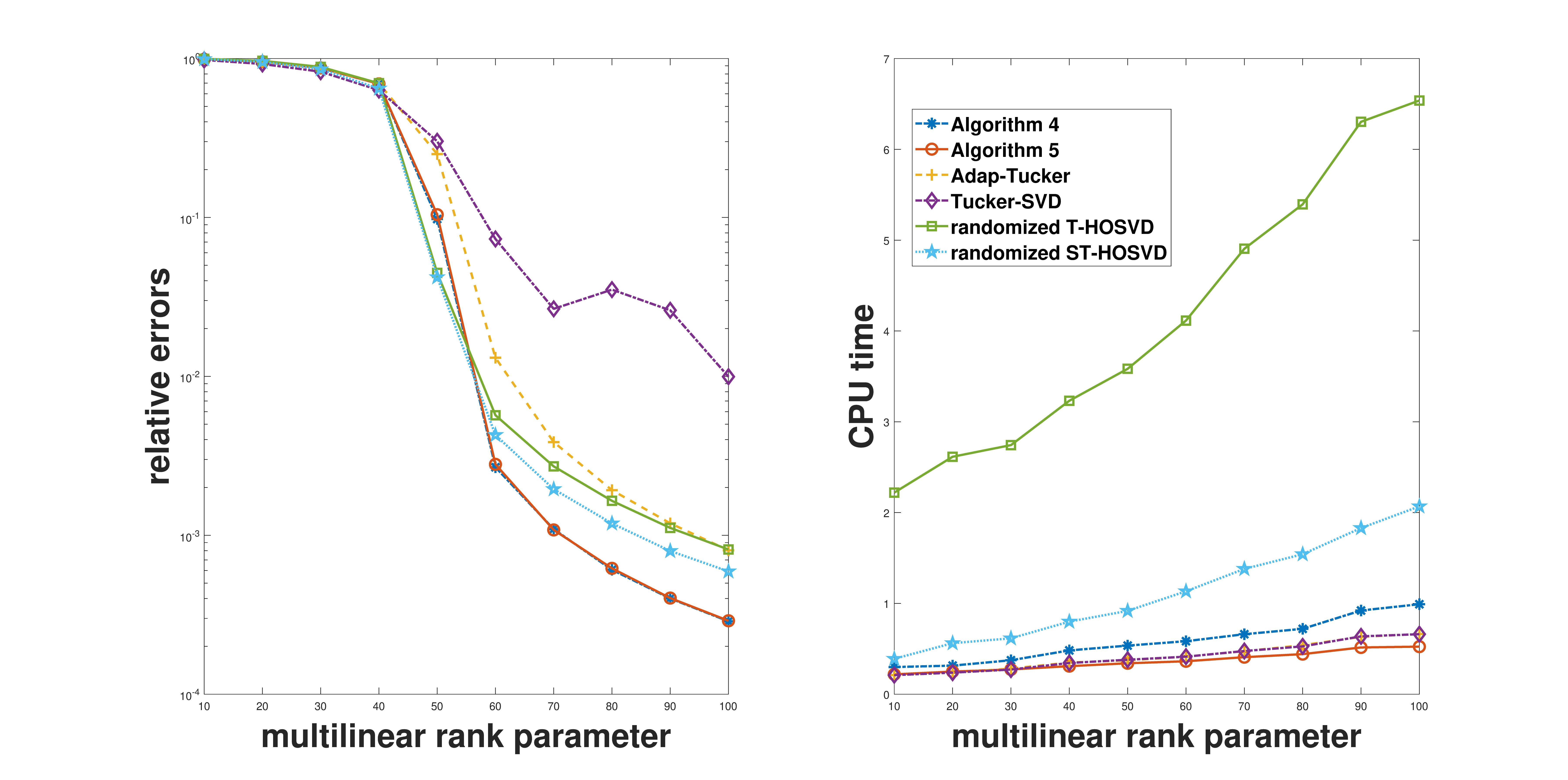}\\
		\includegraphics[width=3.8in]{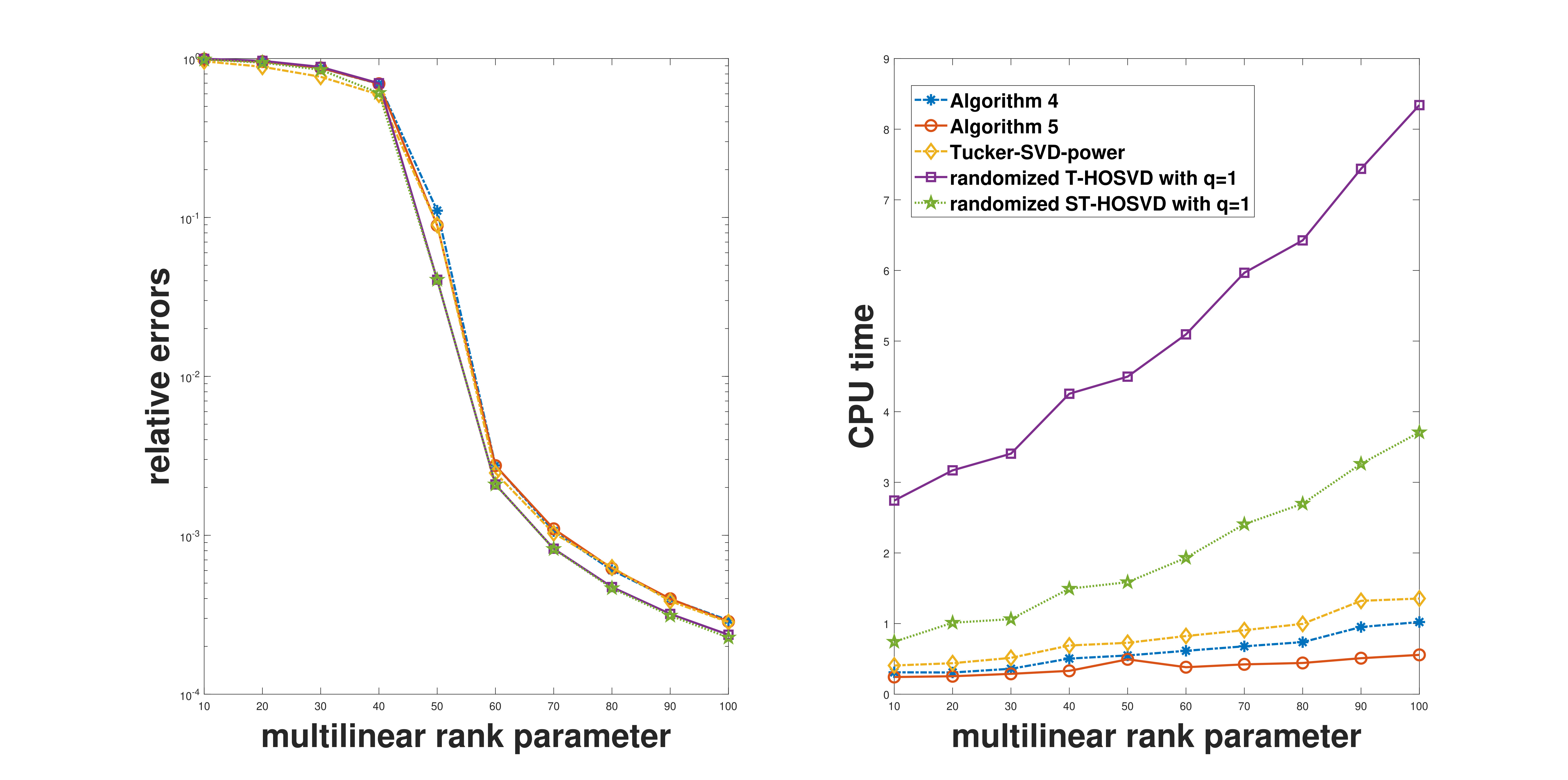}\\
	\end{tabular}
	\caption{Results of applying Algorithms \ref{RALMRA:alg2} and \ref{RALMRA:alg5}, HOOI, T-HOSVD, ST-HOSVD, randomized T-HOSVD, randomized ST-HOSVD, Adap-Tucker, Tucker-SVD and Tucker-SVD-power to the tensor $\mathcal{A}$ with $\mu=5,10,\dots,100$.}\label{RALMRA:fig8}
\end{figure}
\subsection{A more synthetic tensor}
We construct the input tensor $\mathcal{A}\in \mathbb{R}^{600\times 600\times 600}$ as $$\mathcal{A}={\rm tendiag}(\mathbf{v},[600,600,600])\times_1\mathbf{A}_1\times_2\mathbf{A}_2\times_3\mathbf{A}_3,$$
where ``{\rm tendiag}'' is a function in the MATLAB Tensor Toolbox ({\rm \cite{tensortool}}), which converts $\mathbf{v}$ to a diagonal tensor in $\mathbb{R}^{600\times 600\times 600}$, and $\mathbf{A}_n$ is an orthogonal basis for the column space of a standard Gaussian matrix $\mathbf{B}_n\in\mathbb{R}^{600\times 600}$. The vector $\mathbf{v}\in\mathbb{R}^{600}$ is given by
\begin{equation*}
v_i=
\begin{cases}
\begin{aligned}
1,\quad & i=1,2,\dots,50;\\
(i-49)^{-2},\quad & i=51,52,\dots,600.
\end{aligned}
\end{cases}
\end{equation*}

Figure \ref{RALMRA:fig8} illustrates that for different multilinear rank $\{\mu,\mu,\mu\}$ with $\mu=5,10,\dots,100$, in terms of RE, Algorithms \ref{RALMRA:alg2} and \ref{RALMRA:alg5} are similar to and slightly worse than HOOI, T-HOSVD, ST-HOSVD, Tucker-SVD-power, randomized T-HOSVD with $q=1$ and randomized ST-HOSVD with $q=1$, and better than Adap-Tucker, Tucker-SVD, randomized T-HOSVD and randomized ST-HOSVD; in terms of CPU time, Algorithm \ref{RALMRA:alg5} is the fastest one and Algorithms \ref{RALMRA:alg2} and \ref{RALMRA:alg5} are faster than other deterministic and randomized algorithms.

\subsection{Compression of tensors from real-world applications}
In this section, we consider the Extended Yale Face Dataset B\footnote{The Yale Face Database is at \url{http://vision.ucsd.edu/~iskwak/ExtYaleDatabase/ExtYaleB.html}.} \cite{georghiades2001from} and the Washington DC Mall dataset\footnote{ The Washington DC Mall dataset is at \url{https://engineering.purdue.edu/biehl/MultiSpec/hyperspectral.html}.}. The Extended Yale Face Dataset B has $560$ images that contain the first $12$ possible illuminations of $28$ different people under the first poss, where each image has $480\times 640$ pixels in a grayscale range. After selecting 400 images and extracting a part of $400\times 400$ pixels from each image, there is collected into a $400\times 400\times 400$ tensor with real numbers.
	
The sensor system used in this case measured pixel response in 210 bands in the 0.4 to 2.4 um region of the visible and infrared spectrum. Bands in the 0.9 and 1.4 um region where the atmosphere is opaque have been omitted from the data set, leaving 191 bands. The data set contains 1280 scan lines with 307 pixels in each scan line. When we choose the first 600 scan lines, this dataset is collected as a tensor of size $600\times 307\times 191$.

For the first dataset, we set $\mu_1=\mu_2=\mu_3=\mu=20,40,\dots,200$. We also set $\mu_1=40,80,\dots,400$, $\mu_2=20,40,\dots,200$ and $\mu_3=10,20,\dots,100$ for the second dataset. The values of other parameters are the same as that in the above two sections. By applying these algorithms to the tensors from two datasets, the related results are shown in Figures \ref{RALMRA:fig9} and \ref {RALMRA:fig10}, respectively.

For both the Extended Yale Face Dataset B and the Washington DC Mall dataset, in terms of CPU time, Algorithm \ref{RALMRA:alg5} is the fastest one and Algorithms \ref{RALMRA:alg2} and \ref{RALMRA:alg5} are faster than other algorithms; and in terms of RE, Algorithms \ref{RALMRA:alg2} and \ref{RALMRA:alg5} are slightly less than HOOI, T-HOSVD, ST-HOSVD, Tucker-SVD-power, randomized T-HOSVD with $q=1$ and randomized ST-HOSVD with $q=1$, and better than Adap-Tucker, Tucker-SVD, randomized T-HOSVD and randomized ST-HOSVD. It is worthy noting that in terms of RE, for both the Extended Yale Face Dataset B, Algorithm \ref{RALMRA:alg5} is less than Algorithm \ref{RALMRA:alg2}, and for the Washington DC Mall dataset, Algorithm \ref{RALMRA:alg5} is similar to Algorithm \ref{RALMRA:alg2}.

\begin{figure}[htb]
	\setlength{\tabcolsep}{4pt}
	\renewcommand\arraystretch{1}
	\centering
	\begin{tabular}{c}
		\includegraphics[width=3.8in]{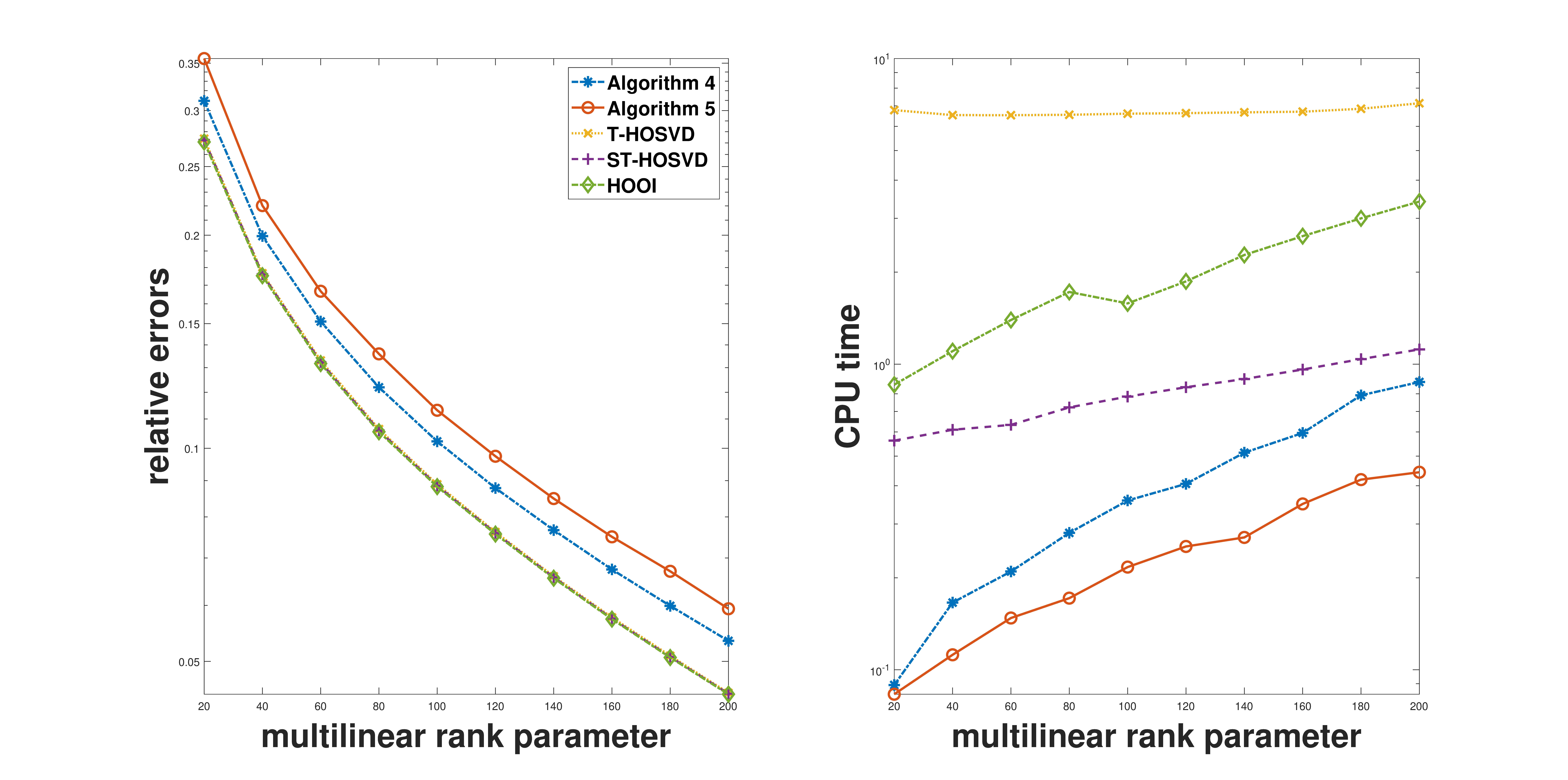}\\
		\includegraphics[width=3.8in]{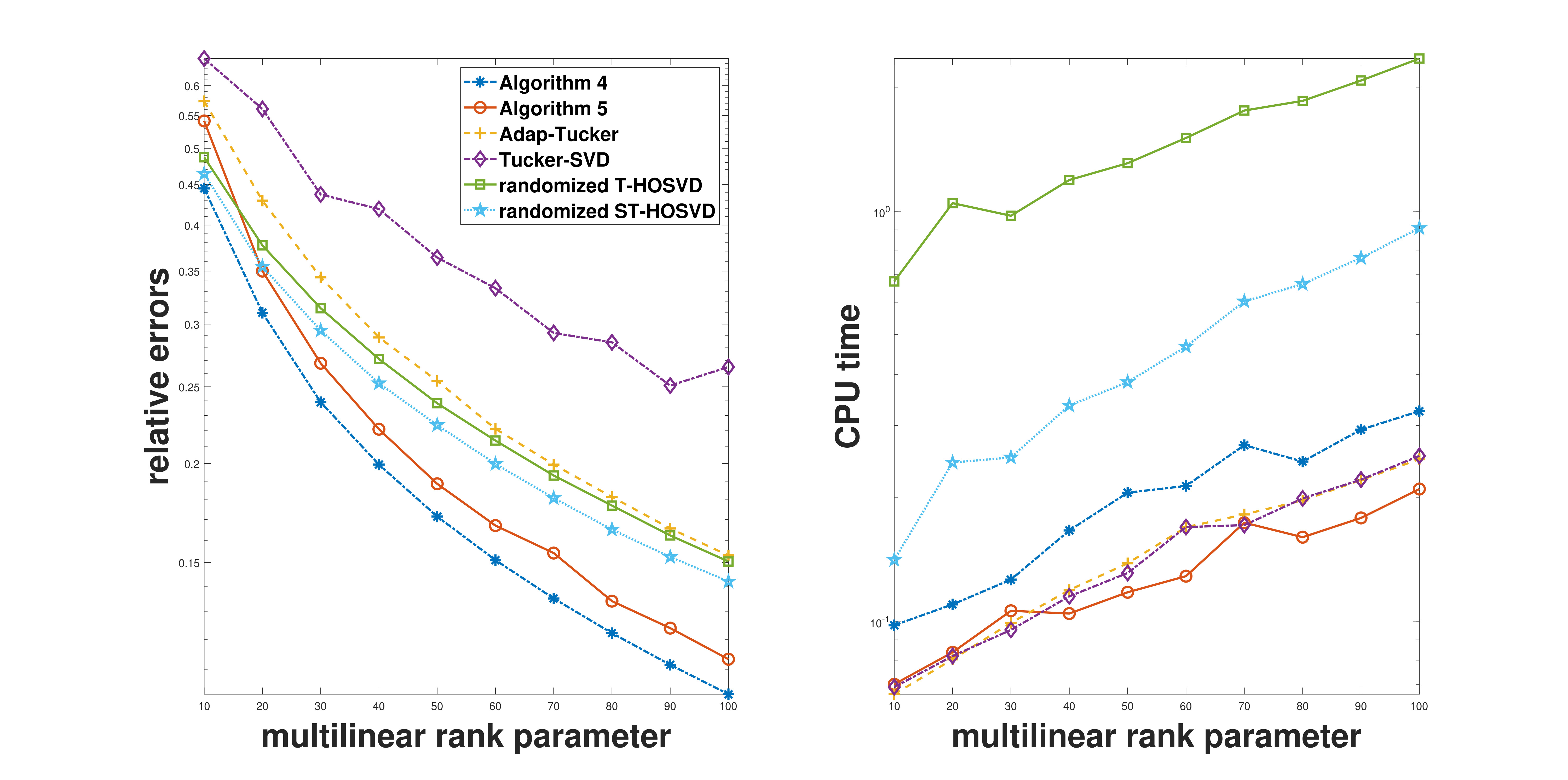}\\
		\includegraphics[width=3.8in]{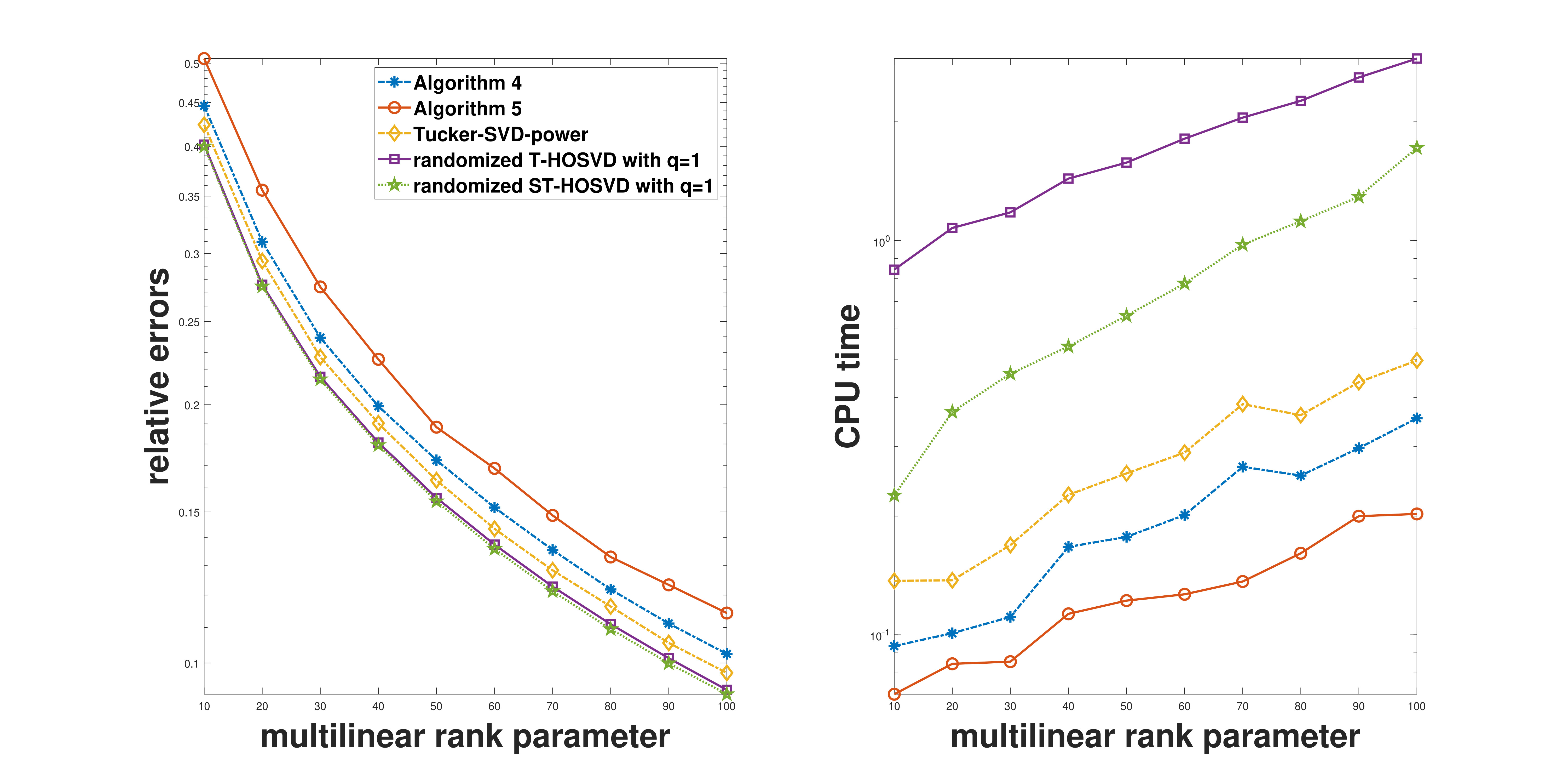}\\
	\end{tabular}
	\caption{Results of applying Algorithms \ref{RALMRA:alg2} and \ref{RALMRA:alg5}, HOOI, T-HOSVD, ST-HOSVD, randomized T-HOSVD, randomized ST-HOSVD, Adap-Tucker, Tucker-SVD and Tucker-SVD-power to the tensor from the Extended Yale Face Dataset B with $\mu=20,40,\dots,200$.}\label{RALMRA:fig9}
\end{figure}

\begin{figure}[htb]
	\setlength{\tabcolsep}{4pt}
	\renewcommand\arraystretch{1}
	\centering
	\begin{tabular}{c}
		\includegraphics[width=3.8in]{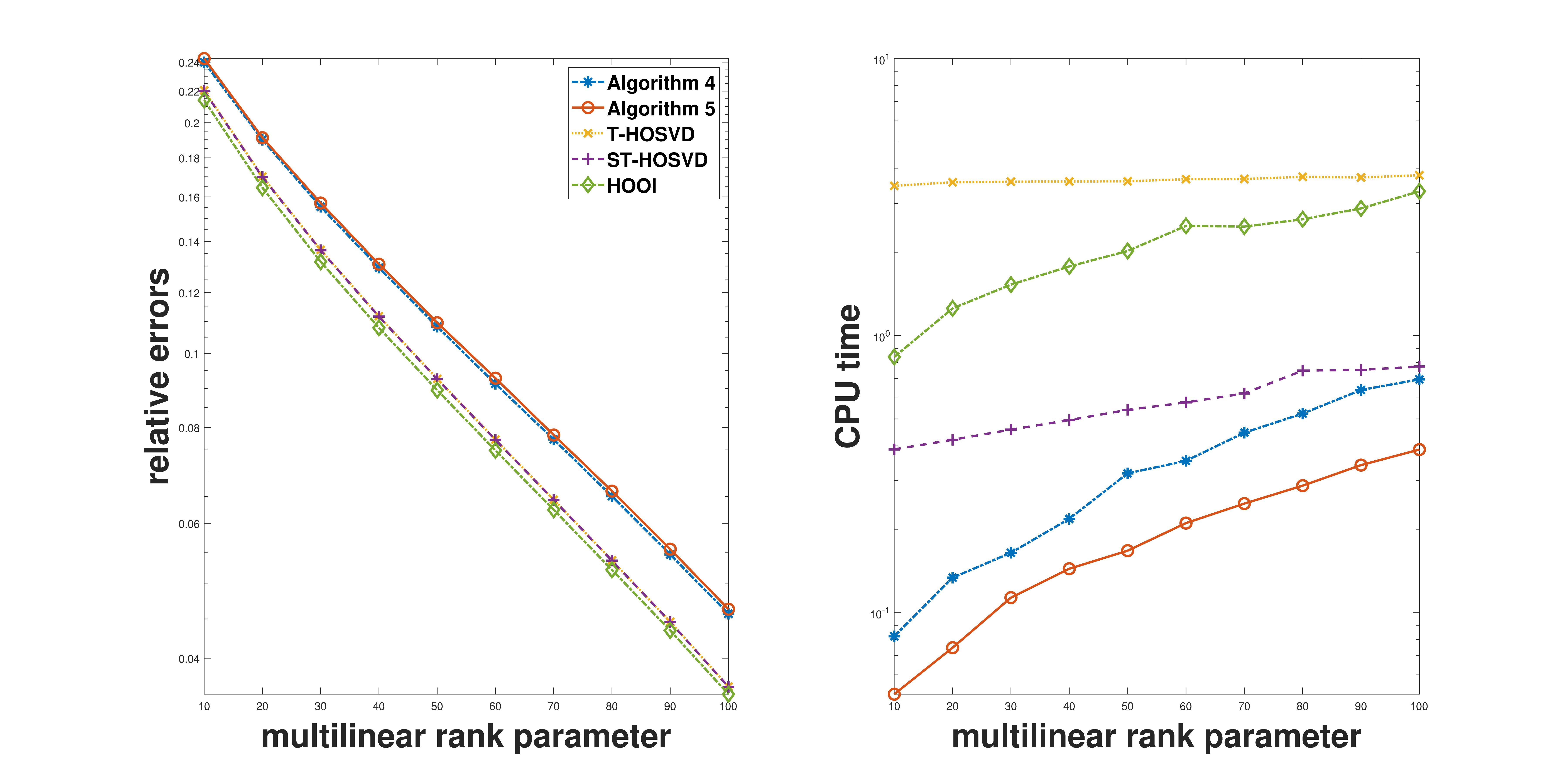}\\
		\includegraphics[width=3.8in]{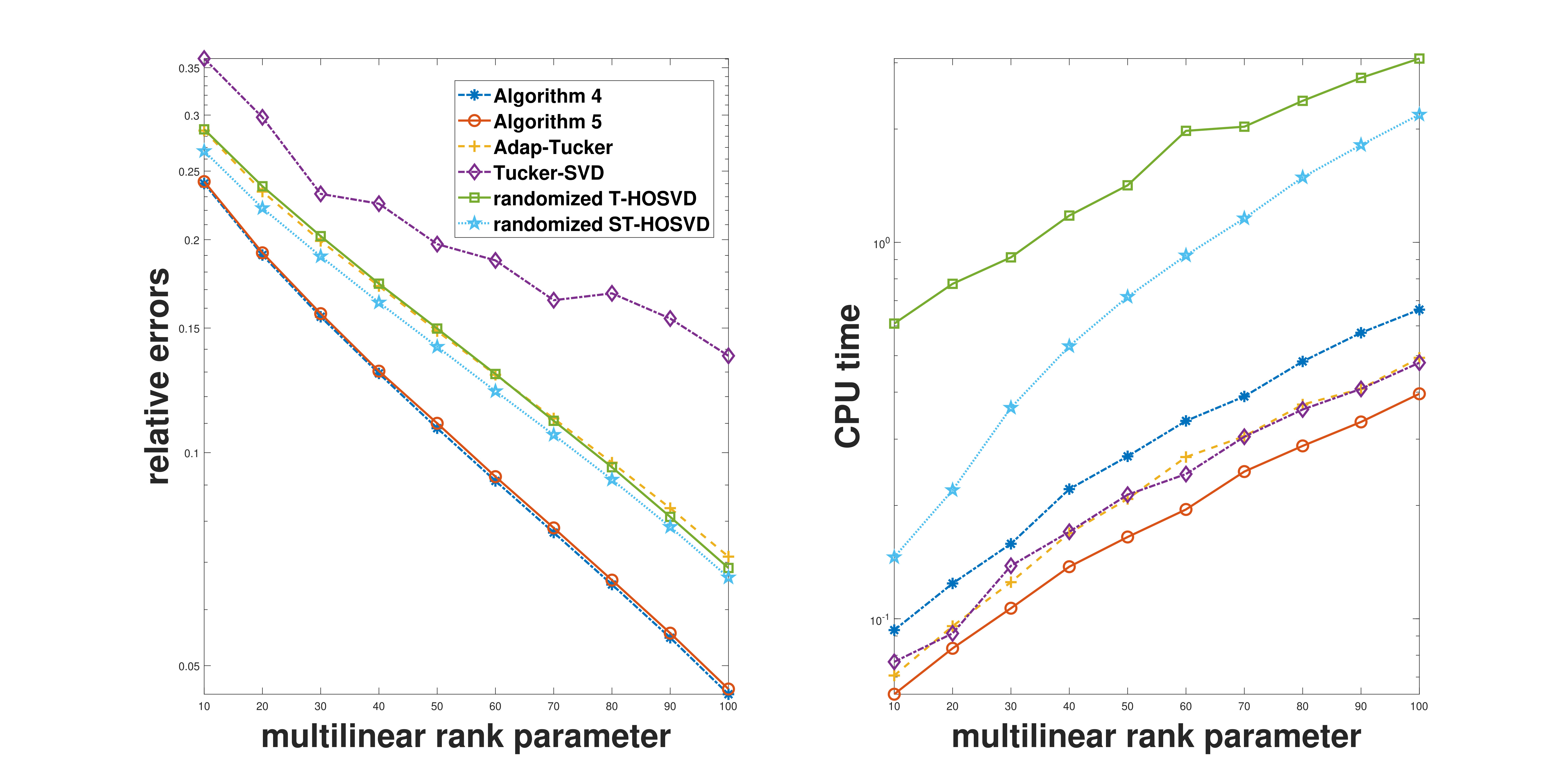}\\
		\includegraphics[width=3.8in]{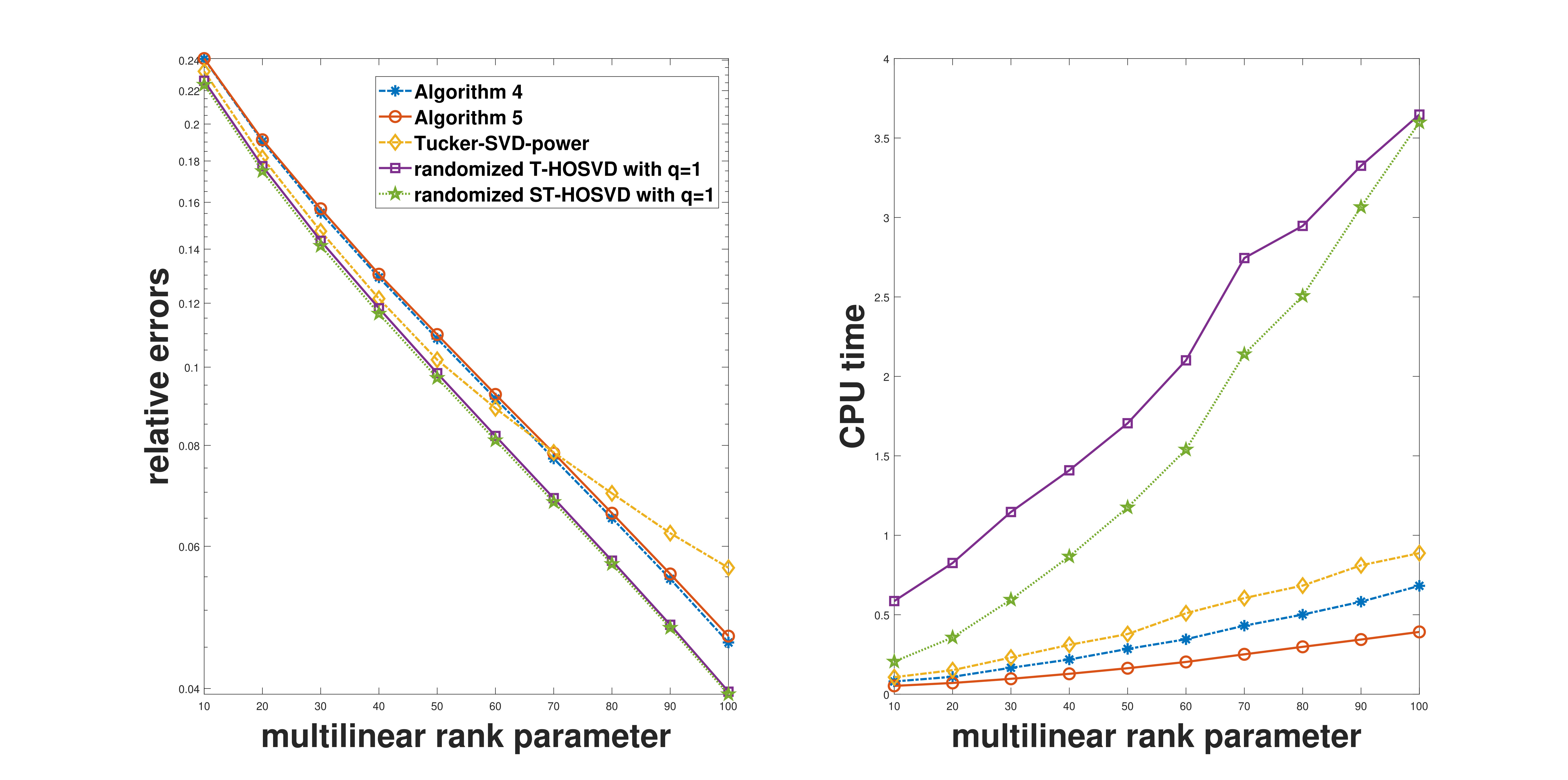}\\
	\end{tabular}
	\caption{Results of applying Algorithms \ref{RALMRA:alg2} and \ref{RALMRA:alg5}, HOOI, T-HOSVD, ST-HOSVD, randomized T-HOSVD, randomized ST-HOSVD, Adap-Tucker, Tucker-SVD and Tucker-SVD-power to the tensor from the Washington DC Mall dataset with $\mu_1=40,80,\dots,400$, $\mu_2=20,40,\dots,200$ and $\mu_3=10,20,\dots,100$.}\label{RALMRA:fig10}
\end{figure}

\subsection{Two examples for fourth-order tensors}

In the above four sections, we compare Algorithms \ref{RALMRA:alg2} and \ref{RALMRA:alg5} with the existing deterministic and randomized algorithms via some third-order tensors. We now introduce two fourth-order tensor to further illustrate the proposed algorithms.

The first tensor $\mathcal{A}\in\mathbb{R}^{200\times 200\times 200\times 200}$ is given by
\begin{equation*}
\mathcal{A}=\mathcal{G}\times_1\mathbf{B}_1\times_2\mathbf{B}_2\times_3\mathbf{B}_3\times_4\mathbf{B}_4+\beta\mathcal{N},
\end{equation*}
where the entries of $\mathcal{G}\in\mathbb{R}^{50\times 50\times 50\times 50}$  and $\mathbf{B}_n\in\mathbb{R}^{200\times 50}$ are i.i.d. Gaussian variables with zero mean and unit variance, and $\mathcal{N}\in\mathbb{R}^{200\times 200\times 200\times 200}$ is an unstructured perturbation tensor with different noise level $\beta$. The signal-to-noise ratio (SNR) measure is given in (\ref{RALMRA:eqn24}) and the definition of RLNE is similar to (\ref{RALMRA:eqn8}) for the case of third-order tensors.

The second tensor $\mathcal{B}\in\mathbb{R}^{200\times 200\times 200\times 200}$ is a sparse tensor such that
\begin{equation*}
\mathcal{B}=\sum_{i=1}^{50}\frac{1000}{i^2}\mathbf{x}_i\circ\mathbf{y}_i\circ\mathbf{z}_i\circ\mathbf{w}_i+\sum_{i=51}^{200}\frac{1}{i^2}\mathbf{x}_i\circ\mathbf{y}_i\circ\mathbf{z}_i\circ\mathbf{w}_i
\end{equation*}
where $\mathbf{x}_i$, $\mathbf{y}_i$, $\mathbf{z}_i$ and $\mathbf{w}_i$ are sparse vectors in $\mathbb{R}^{200}$ with only 0.05 sparsity.

For the tensor $\mathcal{A}$, the desired multilinear rank is set to $\{50,50,50,50\}$. The desired multilinear rank for the tensor $\mathcal{B}$ is denoted by $\{\mu,\mu,\mu,\mu\}$. In Algorithms \ref{RALMRA:alg2} and \ref{RALMRA:alg5}, we assume that $K=10$ and $q=1$. The processing order in Algorithm \ref{RALMRA:alg5} is $\{1,2,3,4\}$. From the description in Section \ref{RALMRA:sec3:3}, in Algorithm \ref{RALMRA:alg5}, we set $T_1={\rm ceil}(\alpha I^3)$, $T_2={\rm ceil}(\alpha \mu I^2)$, $T_3={\rm ceil}(\alpha \mu^2I)$ and $T_4={\rm ceil}(\alpha \mu^3)$ with $\alpha=0.01$.

By applying Algorithms \ref{RALMRA:alg2} and \ref{RALMRA:alg5}, ST-HOSVD, HOOI, randomized ST-HOSVD and randomized ST-HOSVD with $q=1$ to the tensor $\mathcal{A}$ with ${\rm SNR}=-30,-26,\dots,26,30$ and the tensor $\mathcal{B}$ with $\mu=5,10,\dots,50$, the related results are shown in Figures \ref{RALMRA:fig11} and \ref{RALMRA:fig12}, respectively. From these two figures, in terms of CPU time, Algorithm \ref{RALMRA:alg5} is the fastest one and Algorithms \ref{RALMRA:alg2} and \ref{RALMRA:alg5} are faster than ST-HOSVD, HOOI, randomized ST-HOSVD and randomized ST-HOSVD with $q=1$.

In terms of FIT, a) it follows from Figure \ref{RALMRA:fig11} that randomized ST-HOSVD is the worst one, Algorithm \ref{RALMRA:alg5} is worse than Algorithm \ref{RALMRA:alg2}, ST-HOSVD, HOOI and randomized ST-HOSVD with $q=1$, and Algorithm \ref{RALMRA:alg2} is similar to ST-HOSVD, HOOI and randomized ST-HOSVD with $q=1$; and b) it follows from Figure \ref{RALMRA:fig12} that Algorithms \ref{RALMRA:alg2} and \ref{RALMRA:alg5} are slightly worse than ST-HOSVD, HOOI and randomized ST-HOSVD with $q=1$.
\begin{figure}[htb]
	\centering
	\includegraphics[width=3.8in]{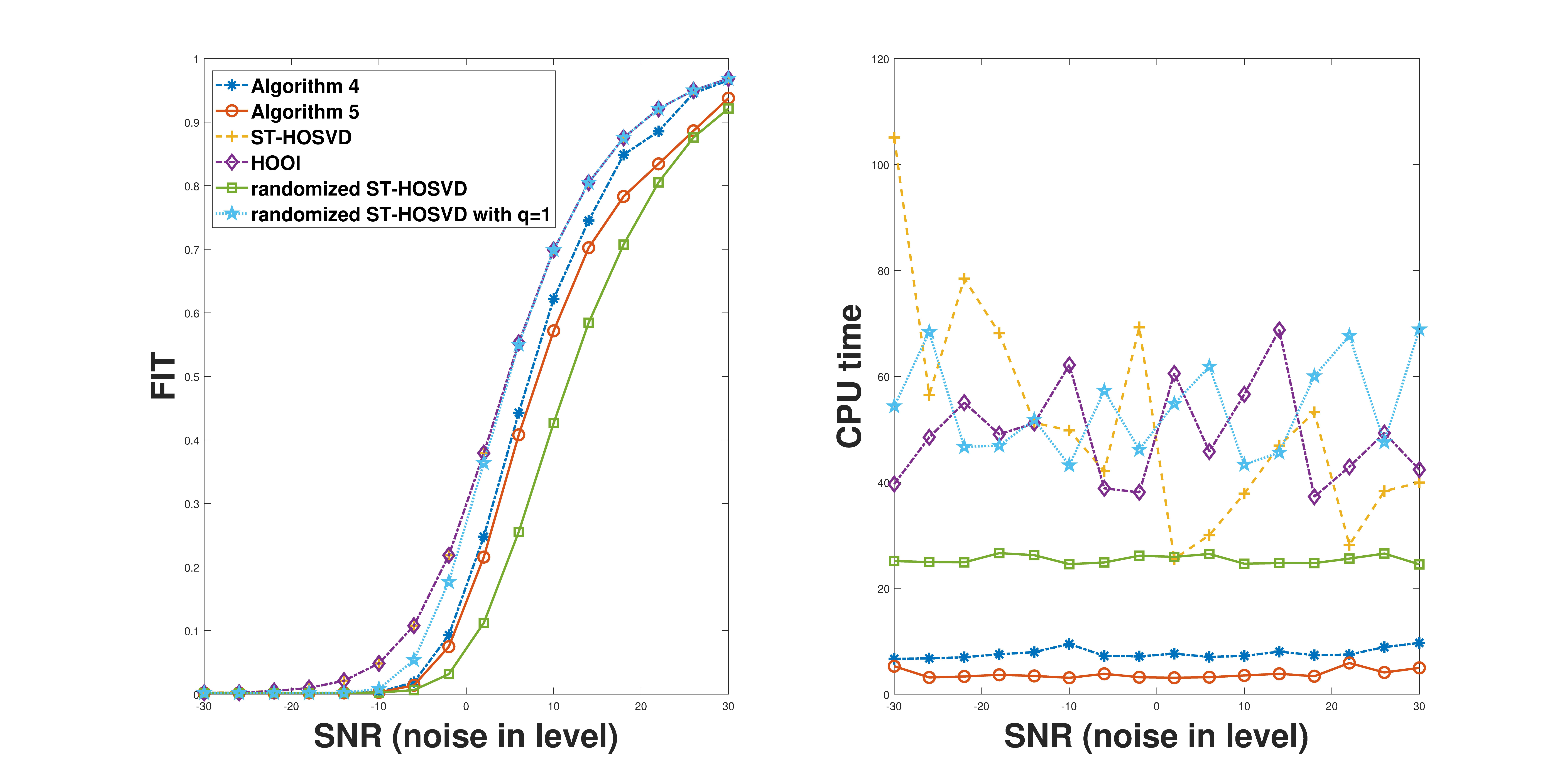}\\
	\caption{Results of applying Algorithms \ref{RALMRA:alg2} and \ref{RALMRA:alg5}, ST-HOSVD, HOOI, randomized ST-HOSVD and randomized ST-HOSVD with $q=1$ to $\mathcal{A}$ with ${\rm SNR}=-30,-28,\dots,28,30$.}\label{RALMRA:fig11}
\end{figure}

\begin{figure}[htb]
	\centering
	\includegraphics[width=3.8in]{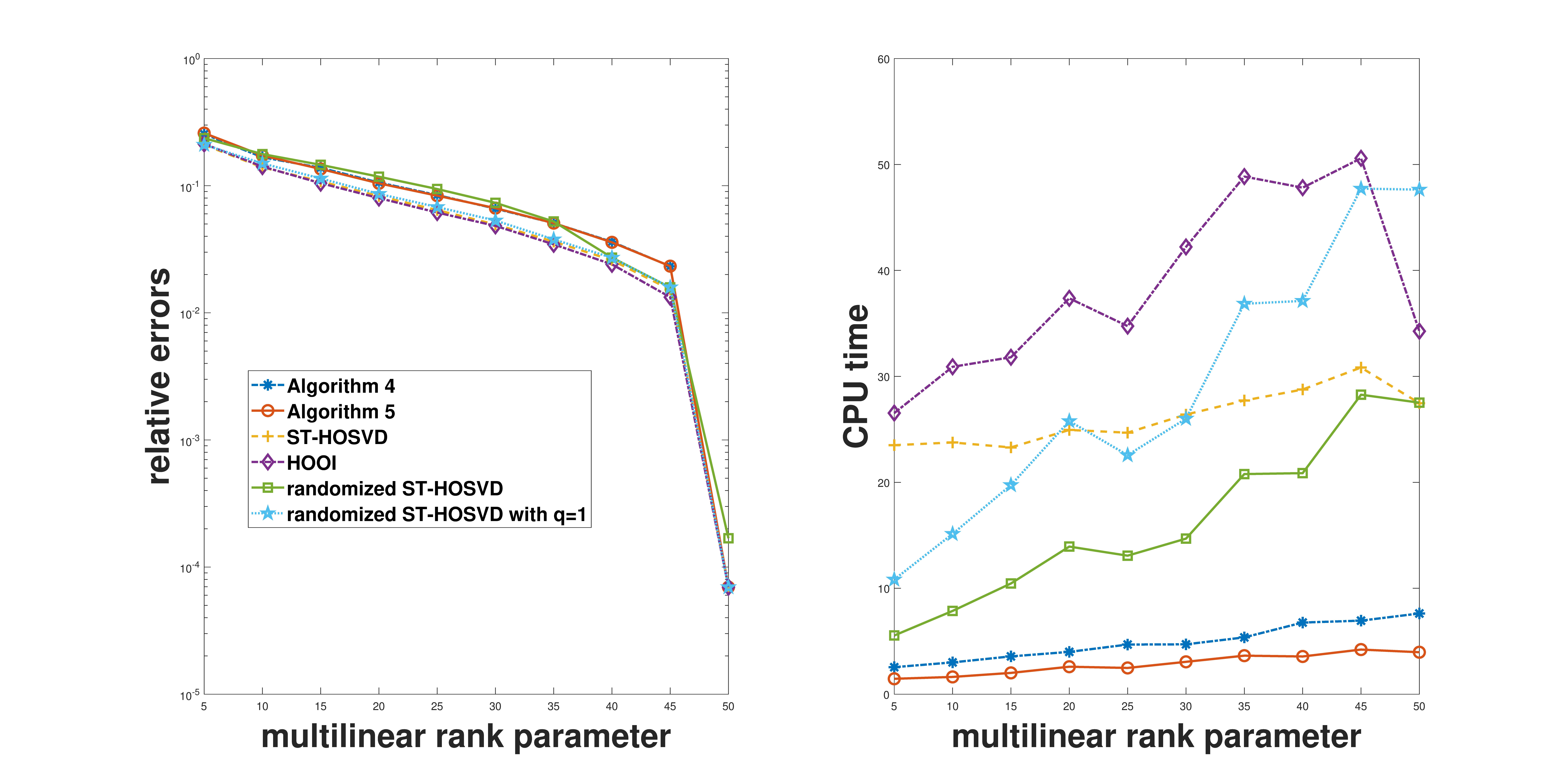}\\
	\caption{Results of applying Algorithms \ref{RALMRA:alg2} and \ref{RALMRA:alg5}, ST-HOSVD, HOOI, randomized ST-HOSVD and randomized ST-HOSVD with $q=1$ to $\mathcal{B}$ with $\mu=5,10,\dots,50$.}\label{RALMRA:fig12}
\end{figure}

\section{Conclusions}
\label{RALMRA:sec7}

For a given multilinear rank, two efficient randomized algorithms for computing the low multilinear rank approximation were designed by combining random projection and power scheme. To reduce the complexities of these two algorithms, we derived two other fast algorithms by applying approximate matrix multiplication into these two algorithms. We analyzed their theoretical results by using the bounds of singular values of standard Gaussian matrices and the theoretical results of approximate matrix multiplication. The comparisons of the proposed algorithms with the existing deterministic and randomized algorithms were displayed by high-dimensional image analysis.

We call Problem \ref{RALMRA:pro1} as the fixed multilinear rank problem for low multilinear rank approximations. We now introduce the fixed-precision problem for low multilinear rank approximations, which summarized in the following problem.
\begin{problem}
	\label{RALMRA:pro2}
	Suppose that $\mathcal{A}\in\mathbb{R}^{I_1\times I_2\times \dots\times I_N}$. For a given $\varepsilon>0$, the goal is to find $N$ positive integers $\mu_n$ and orthonormal matrices ${\bf Q}_{n}\in\mathbb{R}^{I_{n}\times \mu_{n}}$ such that
	\begin{equation*}
	\|\mathcal{A}-\mathcal{A}\times_1(\mathbf{Q}_1\mathbf{Q}_1^\top)\times_2(\mathbf{Q}_2\mathbf{Q}_2^\top)\dots
	\times_N(\mathbf{Q}_N\mathbf{Q}_N^\top)\|_F\leq \varepsilon.
	\end{equation*}
\end{problem}

Che and Wei \cite{che2019randomized} proposed an adaptive randomized algorithm for solving Problem \ref{RALMRA:pro2}. By this algorithm, we can derive a desired multilinear rank for $\mathcal{A}$ with a given $\varepsilon>0$. By using the efficient randomized algorithms for the fixed-precision problem of low rank approximations (\cite{yu2018efficient}) to each mode unfolding matrix, Minster {\it et al.} \cite{minster2020randomized} studied adaptive randomized algorithms to compute low multilinear rank approximations for use in applications where the target multilinear rank is not known beforehand. Hence, one issue is to design fast and efficient algorithms for solving Problem \ref{RALMRA:pro2}.

\section*{Acknowledgements}

 This work is supported by the Hong Kong Innovation and Technology Commission (InnoHK Project CIMDA), the Hong Kong Research Grants Council (Project 11204821),  City University of Hong Kong (Projects 9610034 and 9610460) and the National Natural Science Foundation of China under the grant 12271108.

\end{document}